\documentclass[11pt,reqno]{amsart}
\usepackage[colorlinks=true,allcolors=blue,backref=page]{hyperref}
\usepackage{color}
\usepackage{amsmath, amssymb, amsthm}

\usepackage{mathrsfs}
\usepackage{mathtools}
\usepackage[noabbrev,capitalize,nameinlink]{cleveref}
\usepackage{aliascnt}
\crefname{equation}{}{}
\usepackage{fullpage}
\usepackage[noadjust]{cite}
\usepackage{graphics}
\usepackage{pifont}
\usepackage{tikz}
\usepackage{bbm}
\usepackage[T1]{fontenc}

\usetikzlibrary{arrows.meta}

\usepackage{environ}
\usepackage{framed}
\usepackage{url}
\usepackage[linesnumbered,ruled,vlined]{algorithm2e}
\usepackage[noend]{algpseudocode}
\usepackage[labelfont=bf]{caption}
\usepackage{cite}
\usepackage{framed}
\usepackage[framemethod=tikz]{mdframed}
\usepackage{appendix}
\usepackage{graphicx}
\usepackage[textsize=tiny]{todonotes}
\usepackage{enumerate}
\usepackage[shortlabels]{enumitem}
\allowdisplaybreaks[1]

\apptocmd{\sloppy}{\hbadness 10000\relax}{}{} 

\crefname{algocf}{Algorithm}{Algorithms}

\crefname{equation}{}{} 
\crefname{conjecture}{Conjecture}{Conjectures} 
\AtBeginEnvironment{appendices}{\crefalias{section}{appendix}} 

\crefformat{enumi}{#2#1#3}
\Crefformat{enumi}{#2#1#3}
\crefrangeformat{enumi}{#3#1#4 to~#5#2#6}
\Crefrangeformat{enumi}{#3#1#4 to~#5#2#6}
\crefmultiformat{enumi}{#2#1#3}%
{ and~#2#1#3}{, #2#1#3}{ and~#2#1#3}
\Crefmultiformat{enumi}{#2#1#3}%
{ and~#2#1#3}{, #2#1#3}{ and~#2#1#3}

\usepackage[color,final]{showkeys} 

\colorlet{refkey}{orange!20}
\colorlet{labelkey}{blue!30}

\crefname{algocf}{Algorithm}{Algorithms}

\numberwithin{equation}{section}
\newtheorem{theorem}{Theorem}[section]
\newaliascnt{proposition}{theorem}
\newtheorem{proposition}[proposition]{Proposition}
\aliascntresetthe{proposition}
\newaliascnt{lemma}{theorem}
\newtheorem{lemma}[lemma]{Lemma}
\aliascntresetthe{lemma}
\newaliascnt{claim}{theorem}

\aliascntresetthe{claim}
\newtheorem*{claim*}{Claim}

\newaliascnt{corollary}{theorem}
\newtheorem{corollary}[corollary]{Corollary}
\aliascntresetthe{corollary}
\newaliascnt{conjecture}{theorem}

\newtheorem*{conjecture*}{Conjecture}
\aliascntresetthe{conjecture}
\newtheorem*{question*}{Question}
\newaliascnt{fact}{theorem}
\newtheorem{fact}[fact]{Fact}
\aliascntresetthe{fact}

\theoremstyle{definition}
\newaliascnt{definition}{theorem}
\newtheorem{definition}[definition]{Definition}
\aliascntresetthe{definition}
\newaliascnt{problem}{theorem}

\aliascntresetthe{problem}
\newaliascnt{question}{theorem}

\aliascntresetthe{question}
\newtheorem*{definition*}{Definition}
\newaliascnt{example}{theorem}

\aliascntresetthe{example}
\newaliascnt{setup}{theorem}

\aliascntresetthe{setup}

\crefname{theorem}{Theorem}{Theorems}
\crefname{proposition}{Proposition}{Propositions}
\crefname{lemma}{Lemma}{Lemmas}
\crefname{claim}{Claim}{Claims}
\crefname{corollary}{Corollary}{Corollaries}
\crefname{conjecture}{Conjecture}{Conjectures}
\crefname{fact}{Fact}{Facts}
\crefname{model}{Model}{Models}
\crefname{definition}{Definition}{Definitions}
\crefname{problem}{Problem}{Problems}
\crefname{question}{Question}{Questions}
\crefname{example}{Example}{Examples}
\crefname{setup}{Setup}{Setups}

\theoremstyle{remark}
\newtheorem*{remark}{Remark}

\newcommand{\norm}[1]{\left\lVert#1\right\rVert}

\newcommand{\one}{\mathbbm{1}}
\newcommand{\eps}{\varepsilon}

\newcommand{\mb}{\mathbb}
\newcommand{\mbf}{\mathbf}

\newcommand{\mc}{\mathcal}
\newcommand{\mf}{\mathfrak}
\newcommand{\mr}{\mathrm}
\newcommand{\msf}{\mathsf}
\newcommand{\ol}{\overline}
\newcommand{\on}{\operatorname}

\newcommand{\wt}{\widetilde}

\newcommand{\ve}{\varepsilon}

\newcommand{\Cst}[1]{C_{\ref{#1}}}
\newcommand{\Cstp}[1]{C'_{\ref{#1}}}
\newcommand{\Cof}[2]{C_{\ref{#1}}(#2)}
\newcommand{\Cofp}[2]{C'_{\ref{#1}}(#2)}

\allowdisplaybreaks

\title{Majority Dynamics on sparse random graphs}

\author[Goel]{Gopal Goel}
\author[Sah]{Ashwin Sah}
\email{\{gopal.krishna.goel,ashwinsahmath\}@gmail.com}

\thanks{Sah was supported by NSF Graduate Research Fellowship Program DGE-2141064.}

\begin{document}

\begin{abstract}
Consider a simple undirected graph $G$ on $N$ vertices, with each vertex holding an opinion from one of two options. Starting from the initial opinions on day $1$, we run \emph{majority dynamics}: on each subsequent day, all vertices simultaneously adopt the majority opinion among their neighbors, retaining their current opinion in case of a tie.
A well-studied conjecture of Benjamini, Chan, O'Donnell, Tamuz, and Tan states that if $G$ is drawn from the random binomial model $\mathbb{G}(N, p)$, and if the starting opinions are chosen uniformly at random, then as long as $pN\to\infty$ the network will converge to $99\%$ consensus with high probability.

We confirm this conjecture as long as $pN\ge N^\eps$ for any fixed $\eps > 0$, showing that full unanimity is reached with high probability in $O(1/\eps)$ days.
This result improves on a line of results by Fountoulakis, Kang, and Makai, who achieve $\eps = 1/2$; Chakraborti, Kim, Lee, and Tran, who achieve $\eps = 2/5$; and Jaffe, who achieves $\eps = 1/3$.

The proof technique involves iteratively revealing the ``opinion histories'' for each vertex through time, keeping track of the degrees between every vertex and each of $2^k$ opinion history classes on day $k$. Our analysis of this process requires intricate estimates for degree-constrained random graph models using graph enumeration tools from the work of McKay and Wormald as well as Canfield, Greenhill, and McKay, and their extensions by Liebenau and Wormald. In doing so, we connect the discrete dynamics to a deterministic idealized process, whose leading-order behavior is described by conditional Gaussian probabilities and expectations.
\end{abstract}

\maketitle
\tableofcontents

\section{Introduction}\label{sec:introduction}

The evolution of opinions held by a network of individuals has been studied extensively because of its central role in social systems. Even natural mathematical models of opinion dynamics exhibit complex behavior. One of the most basic is \emph{majority dynamics}, which has been studied in fields such as biophysics \cite{MP43} and psychology \cite{CH56} since the mid-twentieth century; see the survey of Mossel and Tamuz \cite{MT17} for broader context.

Explicitly, let $G$ be a simple graph on a finite vertex set $V$, and give every vertex an initial opinion in $\{\pm1\}$. We call the resulting pair $\mf g=(G,c)$ a \emph{politicized graph}, write $\on{gr}(\mf g)=G$, and write $c_1(\mf g)=c$ for its initial coloring. The opinions evolve in discrete time: on every day, all vertices simultaneously adopt the majority opinion among their neighbors, retaining their current opinion in case of a tie. Thus, for $k\ge1$,
\[
c_{k+1}(\mf g)[x]=
\begin{cases}
+1 & \text{if }\sum_{y\sim_{\on{gr}(\mf g)}x}c_k(\mf g)[y]>0,\\
-1 & \text{if }\sum_{y\sim_{\on{gr}(\mf g)}x}c_k(\mf g)[y]<0,\\
c_k(\mf g)[x] & \text{if }\sum_{y\sim_{\on{gr}(\mf g)}x}c_k(\mf g)[y]=0.
\end{cases}
\]
It follows from a theorem of Goles and Olivos \cite{GO80} that on every finite graph this process eventually reaches either a fixed point or a cycle of period two.

The geometry of the underlying network can affect the outcome in subtle ways, and a substantial literature studies this interaction. Tran and Vu \cite{TV20} discovered a striking \emph{power of few} phenomenon: in a dense random network, a constant-sized discrepancy in the initial opinion counts is likely to grow rapidly to consensus. Berkowitz and Devlin \cite{BD20} sharpened the size of the required discrepancy, and Sah and Sawhney \cite{SS24} established a \emph{power of one}.

The first systematic study of majority dynamics on the Erd\H{o}s--R\'enyi random graph was undertaken by Benjamini, Chan, O'Donnell, Tamuz, and Tan \cite{BCOTT16}. For a finite set $V$ and $p\in[0,1]$, let $\mb G(V,p)$ denote the random graph on $V$ in which every pair of vertices is an edge independently with probability $p$. Benjamini, Chan, O'Donnell, Tamuz, and Tan posed the following conjecture.

\begin{conjecture*}[{\cite[Conjecture~1.5]{BCOTT16}}]
Suppose that $pN\to\infty$, and fix any constant $\eta>0$. Run majority dynamics on $G\sim\mb G(V,p)$, where $|V|=N$, from a uniformly random initial coloring $c$. Then, with probability tending to $1$ as $N\to\infty$, we have
\[
\bigl|\sum_{x\in V}c_t((G,c))[x]\bigr|\ge(1-2\eta)N
\]
for all sufficiently large $t$.
\end{conjecture*}

They proved that for $p\ge CN^{-1/2}$, unanimity is reached by day four with probability at least $0.4$. Much of the intervening work has aimed either to prove this conjecture or to understand how a prescribed initial lead grows. A uniformly random initial coloring typically has a lead of order $\sqrt N$.

Fountoulakis, Kang, and Makai \cite{FKM20} improved the unanimity probability for $p\ge CN^{-1/2}$ to $1-\eps$, where $\eps=\eps(C)\to0$ as $C\to\infty$. Chakraborti, Kim, Lee, and Tran \cite{CKLT21} extended the range to $p\ge CN^{-3/5}\log N$, Jaffe \cite{Jaf25} reached $p\ge CN^{-2/3}$, and Kim and Tran \cite{KT25} obtained a related density threshold $p\gg N^{-2/3}\log^{2/3}N$. These arguments follow the random process for at most three days and then use pseudorandom properties of the graph to amplify the lead. As observed in \cite{KT25}, reaching lower densities requires following the random process for additional days.

There is also a parallel line of work on fixed initial leads. Write $2\Delta$ for the difference between the initial opinion counts. Extending their dense \emph{power of few} result, Tran and Vu \cite{TV23} showed that above the connectivity threshold, the conditions $p\Delta\ge10$ and $p\Delta^2\to\infty$ suffice for the initial majority to reach unanimity with high probability. Jaffe \cite{Jaf25} showed, in the same density range, that for suitable constants $A,B>0$ it is enough to have
\[
\Delta\ge\max\left\{p^{-1/2}\exp\left(A\sqrt{\log(1/p)}\right),\,Bp^{-3/2}N^{-1/2}\right\}.
\]
For $N^{-1}\log^2N\ll p\ll N^{-1/2}\log^{1/4}N$, Kim and Tran \cite{KT25} showed that $p^{3/2}\Delta\gg N^{-1/2}\log N$ suffices. Our theorem instead starts from the $\sqrt N$ scale corresponding to a uniformly random initial condition and follows its evolution through every fixed power-law density. Its linear-response description also suggests the possibility of treating smaller leads, but this would require control of the error terms at a correspondingly smaller scale.

\subsection{Main result}

For a finite set $V$, let $\mr{Polit}(V)$ denote the set of politicized graphs on $V$. We first state our result for a deterministic initial coloring.

\begin{theorem}\label{thm:succinct_main_result}
Fix a real parameter $\theta\in(1/2,1)$ and a constant $T>1$. For all sufficiently large $N\in\mb Z$, all $p\in(T^{-1}N^{-\theta},TN^{-\theta})$, and all $\tau\in[T^{-1},T]$, the following holds.

Let $V$ be a vertex set with $|V|=N$, and let $c:V\to\{\pm1\}$ be a coloring such that
\[
|c^{-1}(1)|=\lfloor N/2\rfloor+\lfloor\tau\sqrt N\rfloor.
\]
Sample $G\sim\mb G(V,p)$, and let $\mf g=(G,c)$. If
\[
k=2\left\lfloor\frac1{1-\theta}\right\rfloor+3,
\]
then with probability $1-o(1)$ we have $c_k(\mf g)[x]=1$ for every $x\in V$.
\end{theorem}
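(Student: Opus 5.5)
The plan is to track the \emph{opinion history} of every vertex day by day, revealing the graph only as much as is needed. Write $d=pN\asymp N^{1-\theta}$. On day $j$, partition $V$ into the $2^j$ classes $V_\sigma$, $\sigma\in\{\pm1\}^j$, where $V_\sigma$ consists of the vertices whose opinions on days $1,\dots,j$ are $\sigma$. For each vertex $x$, record the degree vector $(\deg(x,V_\sigma))_\sigma$. The opinion of $x$ on day $j+1$ is a deterministic function of these degrees, so conditioning on the degree profile up to day $j$ conditions $G$ only through degree constraints between the classes.

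The key step is a conditional-distribution lemma. Conditionally on the history, $G$ is a uniformly random graph with prescribed degrees into each class. Using the McKay--Wormald and Canfield--Greenhill--McKay enumeration, together with the Liebenau--Wormald extensions to sparse and bipartite degree sequences, we want to show that the joint law of the next-day degree vectors is, up to small multiplicative errors, a product of conditional Gaussians. Each vertex's new degrees should be close to a multivariate normal with mean $p|V_\sigma|$ and the appropriate covariance, conditioned on its past degrees. This gives a deterministic idealized process: we track the signed bias $\delta_j=\frac1N\sum_x c_j[x]$ together with the class sizes.

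The linear-response heuristic runs as follows. If the current bias is $\delta_j$, then a vertex's degree sum $\sum_{y\sim x}c_j[y]$ has mean $d\delta_j$ and standard deviation about $\sqrt d$. This should give $\delta_{j+1}\approx c\sqrt d\,\delta_j$ for a constant $c>0$ while $\sqrt d\,\delta_j\ll1$, with $c$ computed from Gaussian conditional expectations (the $\sqrt{2/\pi}$-type constants, corrected by correlations with the past). Starting from $\delta_1\asymp\tau N^{-1/2}$, the bias after $j$ steps is about $d^{j/2}N^{-1/2}$, up to constants. It becomes order one once $j\approx 1/(1-\theta)$. The required precision is that the error terms stay $o(\delta_j)$ at every stage, and in particular $o(N^{-1/2})$ in relative terms on day 2. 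This is why we need exact enumeration rather than pseudorandomness: the CLT-type corrections and the graph's fluctuations must be shown to be negligible at scale $N^{-1/2}$.

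The endgame has three steps.
\begin{enumerate}[(i)]
\item Once $\delta_j\ge\eta$ for a constant $\eta$, one more day gives $\delta\ge1-e^{-cd}$.
\item Then almost all vertices are $+1$. By Chernoff and a union bound over vertices, using the degree concentration $d\gg\log N$, every vertex sees a positive majority within one or two further days.
\end{enumerate}
The parity-adjusted count $2\lfloor 1/(1-\theta)\rfloor+3$ presumably arises because the amplification per step is really $\sqrt d$ per day only in a two-step sense: correlations with the previous day's history halve the effective gain. This means $2\lfloor1/(1-\theta)\rfloor$ days reach order-one bias, and three more days achieve unanimity. I would first compute this carefully in the idealized Gaussian process to calibrate the day count.

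The main obstacle is step two. We must prove the conditional Gaussian approximation for degree-constrained random graphs with $2^j$ classes, with additive errors small enough relative to $\delta_j$, which can be as small as $N^{-1/2}$. In particular, we must control the dependence induced by edges counted in both endpoints' degree vectors. I would try to handle this with a switching/enumeration estimate that expresses the probability of a degree array as a product of binomial factors times $\exp$ of an explicit quadratic correction, and then verify that the correction shifts the bias only at lower order.
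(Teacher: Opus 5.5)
\textbf{The endgame fails as proposed, and the day count is misattributed.} In (i)--(ii) you apply Chernoff and a union bound over vertices to $\deg(x,M_j)$, where $M_j$ is the current minority, as though each neighborhood were fresh randomness. It is not: $M_j$ is a function of $G$.

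You also cannot keep using degree revelation at this stage. Once the normalized shift $p\cdot(\text{lead})/\sqrt{pN}$ is large, the minority history classes are tiny. The enumeration estimates need every class to have linear size and split probabilities bounded away from $0$ and $1$.

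What remains are properties of $G$ that hold uniformly over all vertex sets. The paper uses minimum degree at least $0.9pN$ and $(p,C\sqrt{pN})$-jumbledness, following \cite{CKLT21}. With these, a lead of a large constant times $N/\sqrt{pN}$ pushes the minority below $N/5$ on the next day. After that, the tools only guarantee shrinkage by a factor $O(1/(pN))=O(N^{-(1-\theta)})$ per day; a union bound over candidate minority sets gives the same rate up to logarithms. So about $1/(1-\theta)$ cleanup days are needed, and nothing in your sketch justifies ``one or two''.

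This is the real source of the factor $2$ in $2\lfloor 1/(1-\theta)\rfloor+3$. It is not a ``two-step halving'' of the gain; the gain is $\sqrt{pN}$ per day, exactly as your heuristic says. With $K=\lfloor 1/(1-\theta)\rfloor+1$, the lead is at least $N\log N/\sqrt{pN}$ on day $K$. One jump day brings the minority below $N/5$, and $K$ contraction days bring it below $1$ by day $2K+1$, since $K(1-\theta)>1$.

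\textbf{The precision mechanism in the expansion phase is missing.} You rightly note that the day-$j$ signal is only about $\sqrt N(\sqrt{pN})^{j-1}$ vertices. But the CLT-type corrections cannot be shown negligible at that scale, because they are not. Tie-breaking alone gives $P[\text{class }0\text{ keeps }+1]=\tfrac12+\Theta(1/\sqrt{pN})$ on balanced data. More generally, the conditioned binomial split probabilities differ from their Gaussian limits at relative order $1/\sqrt{pN}$. Class sizes therefore deviate from any Gaussian prediction by order $N/\sqrt{pN}$, which is $\gg\sqrt N(\sqrt{pN})^{j-1}$ for every $j<1/(1-\theta)$.

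The paper makes these corrections cancel rather than vanish:
\begin{itemize}
\item It iterates the exact binomial local template from balanced data to get a deterministic finite-$N$ reference process. This process has the exact color-flip symmetry $\widetilde n[\overline s]=\widetilde n[s]$, so its contribution to the lead is identically zero.
\item It tracks the true sizes as $\widetilde n[s]+\tau\sqrt N(\sqrt{pN})^{k-1}\varepsilon[s]$ up to polynomially smaller errors. The response is computed by a binomial-versus-binomial tilted expansion, not a Gaussian one.
\item This requires carrying inter-class edge totals in the state and solving for tilts $\lambda[s,t]$ that reproduce them under the history conditioning. The conditioned degrees are not centered at $p|V_\sigma|$: the two-endpoint dependence you flag shifts them at order $\sqrt{pN}$, and the tilt is what absorbs this.
\end{itemize}

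Two further consequences follow. The response is a vector $(\varepsilon[t])_t$, so a scalar recursion in $\delta_j$ does not close. And your ``constant $c>0$'' is a genuine theorem that must be proved: the coherence $\varepsilon[s0]>0>\varepsilon[s1]$ for every history $s$.

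By contrast, the enumeration corrections never need to be expanded to lower order in the bias. They enter only as bounded likelihood ratios, which transfer concentration from a tilted independent row model (conditioned on histories and exact edge totals) back to the fiber law.
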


We also prove a uniform-density result, with one threshold for every $p$ from a power-law lower bound through $p=1$.

\begin{theorem}[Uniform density range]\label{thm:uniform-density}
Fix $\theta\in(1/2,1)$ and $T>1$. For every $\eta>0$ there exists $N_0=N_0(\theta,T,\eta)$ such that, for every integer $N\ge N_0$, every
\[
T^{-1}N^{-\theta}\le p\le1,
\]
every $\tau\in[T^{-1},T]$, and every deterministic coloring $c$ of a vertex set $V$ with $|V|=N$ and
\[
|c^{-1}(1)|=\lfloor N/2\rfloor+\lfloor\tau\sqrt N\rfloor,
\]
the graph $G\sim\mb G(V,p)$ satisfies
\[
\mb P\bigl[c_{k_{\rm u}}((G,c))[x]=1\text{ for every }x\in V\bigr]\ge1-\eta,
\qquad k_{\rm u}=2\left\lfloor\frac1{1-\theta}\right\rfloor+3.
\]
\end{theorem}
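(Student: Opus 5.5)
Theorem~\ref{thm:uniform-density} will be deduced from Theorem~\ref{thm:succinct_main_result} plus a separate argument at high densities, glued together by compactness. Fix $\theta,T,\eta$ and set $k_{\rm u}=2\lfloor 1/(1-\theta)\rfloor+3$. Split the range $T^{-1}N^{-\theta}\le p\le1$ into three regimes:
\begin{enumerate}[(i)]
\item the sparse regime $p\le N^{-\theta'}$ for some fixed $\theta'\in(1/2,\theta]$ chosen close to $1/2$;
\item the intermediate regime $N^{-\theta'}\le p\le N^{-1/2}\omega(N)$, where $\omega\to\infty$ slowly;
\item the dense regime $p\ge N^{-1/2}\omega(N)$.
\end{enumerate}

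In regime (i), write $p=N^{-\theta''}$ with $\theta''\in[\theta',\theta+o(1)]$. I would choose the finite grid $\theta'=\theta_0<\theta_1<\dots<\theta_m=\theta$, with $T$ enlarged to cover each gap, so that every $p$ lies in some window $(T'^{-1}N^{-\theta_j},T'N^{-\theta_j})$. Since the grid and $T'$ depend only on $\theta,T$, this cover is uniform in $N$.

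This multiplicative-window cover needs care. A window of constant ratio $T'$ cannot cover a polynomial range $[N^{-\theta},N^{-\theta'}]$ with finitely many exponents. So the step that needs checking is that the proof of Theorem~\ref{thm:succinct_main_result} is uniform over $\theta$ in a compact subinterval of $(1/2,1)$. I expect the argument to depend on $\theta$ only through $k$ and through error exponents that vary continuously, so this uniformity should hold; I would establish it first by rereading the proof's dependencies.

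Granting uniformity, Theorem~\ref{thm:succinct_main_result} gives unanimity at day $2\lfloor 1/(1-\theta'')\rfloor+3\le k_{\rm u}$. Unanimity is a fixed point of the dynamics, so it persists to day $k_{\rm u}$. The parity issue disappears for the same reason: all-$+1$ is absorbing.

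In regimes (ii) and (iii), I would run the early-day analysis directly.

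On day $2$, each vertex's neighborhood sum is approximately Gaussian with mean $\approx 2\tau\sqrt N p$ and variance $\approx Np$. Hence the fraction of $+1$ vertices becomes $1/2+\Theta(\sqrt p)$, a lead of order $N\sqrt p$. This is a standard Berry--Esseen computation combined with a second-moment argument, and it handles the dependence between vertices as in \cite{BCOTT16,FKM20}.

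Once the lead is $\gg N\cdot(Np)^{-1/2}$, the expander-mixing (spectral) property $\lambda_2(G)=O(\sqrt{Np})$ of $\mb G(N,p)$ shows the following. The set of vertices still disagreeing on the next day has size at most $O(N/(\text{lead}/N)^2\cdot (Np)^{-1})$, and this shrinks geometrically; within two more days it drops below the point where every vertex has a $+$ majority. In regime (iii), the lead $N\sqrt p$ exceeds $N(Np)^{-1/2}$ precisely because $p\gg N^{-1/2}$, so unanimity arrives by day $\le 5\le k_{\rm u}$ with probability $1-o(1)$, uniformly in $p$. This is essentially the argument of \cite{FKM20}, made quantitative.

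Regime (ii) is also covered by (i) if we take $\theta'$ arbitrarily close to $1/2$ and allow the window constant $T'$ to absorb the factor $\omega(N)$. To avoid that, I would choose $\omega$ constant but large in terms of $\eta$. The $N^{-1/2}$-scale case then holds with probability $1-\eta/2$, matching the \cite{FKM20} statement that the success probability is $1-\eps(C)$ with $\eps(C)\to0$ as $C\to\infty$.

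Finally, the ``probability $1-o(1)$'' statements must be converted to the threshold $N_0(\theta,T,\eta)$ valid simultaneously for all $p$. I would do this by contradiction. Suppose there are sequences $N_i\to\infty$ and $p_i$ with failure probability $\ge\eta$. Pass to a subsequence along which $\log(1/p_i)/\log N_i$ converges. Each possible limit falls in one of the regimes above, where failure probability $o(1)$ holds uniformly along such sequences, which is a contradiction.

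The main obstacle is the uniformity in $\theta$ of Theorem~\ref{thm:succinct_main_result} near the endpoints. Near $\theta=1/2^+$ it must mesh with the dense argument. Near the top of the range, $k$ jumps at the values $\theta$ with $1/(1-\theta)\in\mb Z$; there I would use a slightly smaller exponent, so that $k$ can only decrease, together with absorption of unanimity.
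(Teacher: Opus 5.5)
Your proposal has a genuine gap. The step you flag as ``should hold'' is false, and it is exactly where the difficulty lies: the proof of Theorem~\ref{thm:succinct_main_result} is not uniform in $\theta$ on compact subintervals of $(1/2,1)$.

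The proof has two polynomial margins:
\begin{itemize}
\item $1-k(1-\theta)$, which makes the normalized shift $\sqrt{pN}^{\,k}/\sqrt N\le T^{k/2}N^{-(1-k(1-\theta))/2}$ small on every linear-response day $k<\frac1{1-\theta}$;
\item $K(1-\theta)-1$, which makes the final lead $\sqrt N\sqrt{pN}^{\,K-1}$ exceed the cleanup threshold $N\log N/\sqrt{pN}$.
\end{itemize}
If $k_0:=\frac1{1-\theta_0}\in\mathbb{Z}$, the first margin tends to $0$ as $\theta\downarrow\theta_0$ and the second as $\theta\uparrow\theta_0$. This includes $\theta_0=1/2$, $k_0=2$, which is precisely where your regime (ii) must meet the dense case. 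At $\theta_0$ itself, Theorem~\ref{thm:edge-day} only covers $p$ within a constant factor of $N^{-\theta_0}$. For $\theta_0=1/2$, no fixed-exponent result covers $p=cN^{-1/2}$ with $c$ below the FKM threshold $L(\eta)$.

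Now take $p=N^{-\theta_0}\omega_N$ with $\omega_N\to\infty$ or $\omega_N\to0$ subpolynomially, e.g.\ $\omega_N=\log\log N$. The normalized response on day $k_0$ is $a_{k_0}:=\sqrt{pN}^{\,k_0}/\sqrt N=\omega_N^{k_0/2}$.
\begin{itemize}
\item It is not polynomially small, so the perturbed-evolution theorem is unavailable on day $k_0$.
\item It is not bounded above and below, so the edge-day theorem is unavailable.
\item For $\omega_N=\log\log N$, the day-$k_0$ lead falls short of the cleanup threshold by the factor $a_{k_0}/\log N\to0$, so you cannot stop there either.
\end{itemize}
Such $p$ lie in no window $(T'^{-1}N^{-\theta},T'N^{-\theta})$ with $\theta,T'$ fixed, so ``using a slightly smaller exponent'' does not help. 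Your subsequence argument breaks on exactly these sequences: their exponents converge to $\theta_0$, yet none of your regimes gives failure probability $o(1)$ along them.

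The missing idea is to choose the terminal day from the response itself rather than in advance. Fix a small $r>0$; the paper takes $r=\min\{\frac14,\delta,\frac{K(1-\theta)-1}{2}\}$ with $\delta$ the faithfulness exponent. Stop on the first day $d$ with $a_d:=\sqrt{pN}^{\,d}/\sqrt N\ge\sqrt{pN}^{\,r-1}$. Before day $d$ every response is polynomially small, so the faithful-trajectory induction runs with constants depending only on upper and lower bounds for $p$, not on a fixed exponent. One checks that $d\le K-1$ uniformly on the whole interval $T^{-1}N^{-\theta}\le p\le BN^{-1/2}$. On the stopping window $\sqrt{pN}^{\,r-1}\le a_d<\sqrt{pN}^{\,r}$, a nonlinear Gaussian threshold estimate replaces the linear expansion, interpolating between the linear regime and the edge day. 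It gives a lead of order $N\min\{a_d,1\}\ge N\sqrt{pN}^{\,r-1}\gg N\log N/\sqrt{pN}$ on day $d+1\le K$. The jump and $K$ contraction steps then finish by day $2K+1$.

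Your dense-range step (FKM for $p\ge L(\eta)N^{-1/2}$, then absorption of unanimity) matches what the paper does. The sparse side needs this adaptive stopping argument instead of a finite cover by fixed exponents.
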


The density quantifier in \Cref{thm:uniform-density} is simultaneous: $N_0$ is chosen before $p$ and applies to every density in the displayed interval. \Cref{sec:uniform-proof} gives a proof sketch, including the response-dependent stopping argument and the use of the dense-range estimates of Fountoulakis, Kang, and Makai \cite{FKM20}; a complete Lean proof accompanies the paper, in the repository \url{https://github.com/gopalkgoel/sparse-majority-dynamics-lean}.

In both theorems the probability bound is uniform over the density, bias parameter, and initial coloring in the displayed ranges. The exponent $\theta$ is fixed before $N$ tends to infinity, and the threshold for $N$ may depend on $\theta$ and $T$. Once all vertices agree, they agree forever.

For uniformly random initial opinions, elementary binomial estimates place the absolute initial lead between two fixed positive multiples of $\sqrt N$ with probability arbitrarily close to one, by taking the lower multiple small and the upper multiple large. We may therefore apply \Cref{thm:succinct_main_result} uniformly in such a fixed window, reversing the two opinions if necessary, and then let the window expand.

\begin{corollary}[Uniformly random initial opinions]\label{cor:random-opinions}
Fix a real parameter $\theta\in(1/2,1)$ and a constant $T>1$, and let
\[
k=2\left\lfloor\frac1{1-\theta}\right\rfloor+3.
\]
For all sufficiently large $N\in\mb Z$ and all $p\in(T^{-1}N^{-\theta},TN^{-\theta})$, the following holds. Let $V$ be a vertex set with $|V|=N$, let $c:V\to\{\pm1\}$ be a uniformly random coloring, and let $G\sim\mb G(V,p)$ be independent of $c$. Then with probability $1-o(1)$ the coloring $c_t((G,c))$ is constant for every $t\ge k$.
\end{corollary}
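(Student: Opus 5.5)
The plan is to derive \Cref{cor:random-opinions} from \Cref{thm:succinct_main_result} by conditioning on the initial coloring and using the opinion-reversal symmetry. Let $S=\sum_{x\in V}c[x]$. For a uniformly random coloring, $|c^{-1}(1)|$ is $\mathrm{Bin}(N,1/2)$, so $(|c^{-1}(1)|-N/2)/\sqrt N$ converges in distribution to a centered Gaussian of variance $1/4$. Fix $\eta>0$. Choose $T'=T'(\eta)\ge T$ large so that, for all large $N$,
\[
\mb P\Bigl[\,T'^{-1}\sqrt N\le \bigl||c^{-1}(1)|-\lfloor N/2\rfloor\bigr|\le T'\sqrt N\,\Bigr]\ge 1-\eta .
\]
This uses the continuity of the Gaussian limit at $0$ for the lower bound, and Chebyshev for the upper bound. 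The window $(T^{-1}N^{-\theta},TN^{-\theta})$ for $p$ is contained in $(T'^{-1}N^{-\theta},T'N^{-\theta})$, so \Cref{thm:succinct_main_result} can be applied with the constant $T'$.

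Condition on $c$ lying in this good event. If $|c^{-1}(1)|\ge\lfloor N/2\rfloor$, write $|c^{-1}(1)|=\lfloor N/2\rfloor+m$ with $m\in[T'^{-1}\sqrt N,T'\sqrt N]$, and pick $\tau$ with $\lfloor\tau\sqrt N\rfloor=m$; for instance $\tau=m/\sqrt N\in[T'^{-1},T']$. Since $G$ is independent of $c$, the conditional law of $G$ is still $\mb G(V,p)$. \Cref{thm:succinct_main_result} then gives that $c_k$ is identically $+1$ with conditional probability at least $1-\delta_N$, where $\delta_N\to0$ uniformly over $\tau\in[T'^{-1},T']$, over $p$ in the window, and over colorings. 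This uniformity is exactly what the theorem provides.

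If instead $|c^{-1}(1)|<\lfloor N/2\rfloor$, apply the same argument to $-c$. Majority dynamics commutes with the global flip $c\mapsto -c$, because both the sign rule and the tie rule are odd. The only care needed is the case of odd $N$: there $|(-c)^{-1}(1)|=N-|c^{-1}(1)|$, which differs from $\lfloor N/2\rfloor+(\lfloor N/2\rfloor-|c^{-1}(1)|)$ by $1$. This shift is absorbed by choosing $\tau$ appropriately, since $m\pm1$ still lies in a slightly enlarged window $[T'^{-1}\sqrt N/2,2T'\sqrt N]$; so I would apply the theorem with constant $2T'$ from the start. In both cases $c_k$ is constant with probability at least $1-\delta_N$.

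Once $c_k$ is constant, every vertex sees a unanimous neighborhood, or has no neighbors and keeps its opinion under the tie rule. Hence $c_t=c_k$ for all $t\ge k$. Averaging over $c$, the failure probability is at most $\eta+\delta_N$, so it is $\limsup\le\eta$; since $\eta$ is arbitrary, it is $o(1)$. The claim is also uniform in $p$, because for each fixed $\eta$ the bound $\delta_N$ is uniform over the window. I expect no real obstacle. The only points needing care are the uniformity of the $o(1)$ in \Cref{thm:succinct_main_result} over $\tau$ in a compact window, which that theorem explicitly asserts, and the parity bookkeeping when reversing opinions.
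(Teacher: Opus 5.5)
Your proposal is correct and is essentially the paper's proof. You condition on $c$ and apply \Cref{thm:succinct_main_result} with an enlarged constant when the normalized lead lies in $[T'^{-1},T']$, passing to $-c$ via the oddness of the update rule when the lead is negative and absorbing the odd-$N$ one-vertex shift into the enlarged constant; you then bound the probability of leaving the window by anticoncentration and Chebyshev and let $T'\to\infty$. The only cosmetic differences are that the paper bounds $\mb{P}[|\tau|<T'^{-1}]$ via the Stirling estimate $\max_i\mb{P}[a=i]\le C/\sqrt N$ rather than the central limit theorem, and uses the constant $T'+1$ instead of $2T'$.
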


Applying the same conditioning and color-flip argument to \Cref{thm:uniform-density} gives unanimity from uniformly random initial opinions, uniformly over $T^{-1}N^{-\theta}\le p\le1$, by day $k_{\rm u}$. In particular, for every fixed $\eps>0$, uniformly over $pN\ge N^\eps$, unanimity is reached with probability $1-o(1)$ in $O(1/\eps)$ days. The binomial window argument is given in \Cref{sec:uniform-proof}. This confirms the conjecture of \cite{BCOTT16} throughout the polynomial average-degree regime, with unanimity in place of near-unanimity. The regime in which $pN$ grows more slowly than every positive power of $N$ remains outside these results.

\subsection{Overview of key ideas}\label{sub:overview}

We now describe the main ideas of the proof. The argument views majority dynamics as an iteratively revealed degree-constrained random graph process. We transfer the relevant properties of this dependent process to a tilted independent model, and then compare its evolution with deterministic parameters governed by conditional expectations of multivariate Gaussians.

The heuristic is that a small lead grows by a factor of order $\sqrt{pN}$ per update. A typical neighborhood contains about $pN$ vertices, and sampling it from a population with a lead of order $\tau\sqrt N$ produces an excess probability of order $\tau\sqrt N\sqrt{pN}/N$ for the leading opinion. Summed across the vertices, this predicts a lead of order $\tau\sqrt N\sqrt{pN}$ on the next day. Since $\sqrt{pN}\asymp N^{(1-\theta)/2}$, it takes roughly $1/(1-\theta)$ updates for a lead of order $\sqrt N$ to become macroscopic. This explains both why earlier analyses of only the first few days encounter a density barrier and why a bounded but arbitrarily long iterative argument is needed here. Throughout the paper, day $1$ means the initial coloring.

\subsubsection{Iterative degree revelation}

On day $k$, we distinguish vertices according to their full opinion histories. A history is a string $s\in\{0,1\}^k$, where $0$ represents opinion $+1$ and $1$ represents opinion $-1$, and we write
\[
\Pi_k(\mf g)[s]
=
\bigl\{x\in V:c_r(\mf g)[x]=(-1)^{s[r-1]}\text{ for every }1\le r\le k\bigr\}.
\]
These $2^k$ classes form the day-$k$ partition of $V$. The transition to day $k+1$ depends only on each vertex's degrees into the day-$k$ classes: for $x\in\Pi_k(\mf g)[s]$, the next opinion is determined by the sign of
\[
\sum_{t\in\{0,1\}^k}(-1)^{t[k-1]}
\deg^{\on{gr}(\mf g)}_{\Pi_k(\mf g)[t]}x,
\]
together with the tie-breaking rule.

This observation lets us reveal only the randomness needed for the next update. The \emph{fine state} consists of the opinion-history partition and the array recording every vertex's degree into every class. Given this state, the unrevealed graph is uniform over graphs with the prescribed within-class and between-class degree sequences. Equivalently, its restrictions to the different class pairs are independent uniform graphs or bipartite graphs with prescribed degree sequences. This gives a universal one-step kernel $K_k$, independent of $p$ and of the initial coloring.

The fine state contains more information than the induction needs. We therefore pass to a \emph{coarse state}, which retains the history partition, the total number of edges between every pair of classes, and whether the degree array satisfies the required regularity bounds. Different fine states with the same coarse data can have different next-step laws, so this projection is not a lumping of the fine chain. Conditioning on the coarse state instead averages the universal fine-state kernel over all compatible fine states. Analyzing this fiber-averaged transition, which is the exact conditional one-day law, is a key structural step in the proof.

\subsubsection{Distributional transference through a solvability tilt}

The main obstacle is that conditioning on the current histories and edge totals introduces substantial dependence. We compare the resulting degree-array law with a simpler row model in which, for a vertex in class $s$, its degrees into the classes $t\in\{0,1\}^k$ are independent binomial variables. Their edge probabilities are allowed a small tilt from $p$, written $\lambda[s,t]$, and the row is conditioned to satisfy the inequalities encoding the vertex's opinion history.

The tilt is chosen to reproduce the prescribed edge totals in conditional expectation. The existence of such a tilt is the \emph{solvability} condition. Once it holds, concentration may first be proved in the independent row model and then transferred back to the true coarse fiber law, after imposing the exact edge totals.

The analytic input enabling this transfer is the asymptotic enumeration of graphs with prescribed degree sequences, beginning with McKay and Wormald \cite{MW90,MW97} and, in the bipartite setting, Canfield, Greenhill, and McKay \cite{CGM08}, and extended across the relevant densities by Liebenau and Wormald \cite{LW17,LW20}; see also the survey \cite{Wor18}. These enumeration formulae compare the probability masses of individual degree arrays satisfying suitable regularity bounds in the graph and binomial models; conditioning on the histories and exact totals then gives the required transference. A polynomial lower bound for the probability of the exact totals allows us to add that conditioning without losing the concentration estimates. The multidimensional Berry--Esseen theorem of Bentkus \cite{Ben05} enters the Fourier local limit estimate used to bound the probability of the exact totals.

The resulting local theorem says, roughly, that from any coarse state satisfying explicit admissibility conditions, the next class sizes and edge totals follow a deterministic local template with high probability, and the next degree array again satisfies the required regularity bounds. This is the probabilistic step that can be iterated.

\subsubsection{Asymptotic evolution of the history classes}

The majority decision for a vertex in a given history class is governed by an alternating sum of binomial degrees. After centering and scaling by $\sqrt{pN}$, the degree row converges to a multivariate Gaussian conditioned on the inequalities recording the previous majority decisions. Consequently, the local template has a universal limiting description in terms of conditional Gaussian probabilities and expectations.

Suppose that opinion $+1$ initially leads by $2\tau\sqrt N+O(1)$. We construct deterministic idealized class sizes $\wt n[s]$ and edge totals $\wt m[s,t]$, depending on $(N,p)$, and universal response coefficients $\varepsilon[s]$. The idealized quantities describe the evolution of the local template from balanced initial data, while $\varepsilon[s]$ describes its first-order response to the initial lead. For every fixed day in the perturbative range, the actual class sizes satisfy
\[
\bigl|\Pi_k(\mf g)[s]\bigr|
=
\wt n[s]
+
\tau\sqrt N\,(\sqrt{pN})^{k-1}\varepsilon[s]
+
\text{a smaller error}
\]
with high probability. The idealized sizes themselves have leading order $\wt n[s]\sim N\nu[s]$. The proportions $\nu[s]$ are defined together with edge-correction parameters $\mu[s,t]$ and Gaussian mean parameters $\gamma[s,t]$ by a universal recursion independent of $N$ and $p$. The response coefficients $\varepsilon[s]$ evolve through a companion linearized recursion built from the same Gaussian data. Thus, for each fixed day, all four families consist of universal real constants.

We emphasize two key points. First, there is no random \emph{idealized dynamics} that exactly realizes $\wt n$ and $\wt m$. They form a deterministic sequence obtained by iterating the local template from balanced initial data. Second, although $\wt n[s]\sim N\nu[s]$, the available bound
\[
\bigl|\wt n[s]-N\nu[s]\bigr|\le \frac{N}{\sqrt{pN}}\log^\ell N
\]
is much larger than the lead scale $\sqrt N(\sqrt{pN})^{k-1}$ on days $k<1/(1-\theta)$. Replacing $\wt n[s]$ by the simpler approximation $N\nu[s]$ would therefore obscure the signal we need to track. To extract the lead at its correct scale, we use the exact color-flip symmetry
\[
\wt n[\ol s]=\wt n[s],
\]
so the idealized contributions cancel when the history classes are summed according to their current opinions. The remaining first-order contribution satisfies
\[
\varepsilon[s0]>0>\varepsilon[s1]
\]
for every history $s$. Thus the initially leading opinion gains vertices coherently across all history classes, rather than suffering cancellation between them.

To turn this asymptotic description into an induction, we call a coarse state \emph{faithful} when its sizes and edge totals lie within prescribed tolerances of the idealized parameters plus their linear response to the initial lead. The Gaussian recursion provides the limiting parameters; inverse-function arguments produce the tilted binomial parameters realizing the required edge totals; and the local theorem propagates faithfulness from one day to the next.

When $k_0:=1/(1-\theta)$ is an integer, the perturbation reaches the boundary of the linear regime on day $k_0$. A separate one-sided argument then supplies the needed macroscopic lead on day $k_0+1$. In all cases, by day
\[
\left\lfloor\frac1{1-\theta}\right\rfloor+1
\]
the leading opinion has a lead of at least order $N\log N/\sqrt{pN}$.

\subsubsection{Contraction phase}

At that point we leave the degree-revelation analysis. An argument of Chakraborti, Kim, Lee, and Tran \cite{CKLT21} shows how the jumbledness and minimum-degree properties of $\mb G(V,p)$ force contraction toward unanimity once the lead is sufficiently large. Specifically, the lead becomes macroscopic after one further day, and thereafter the set of vertices holding the minority opinion shrinks by a factor $O((pN)^{-1})$ on each day. After at most $\lfloor1/(1-\theta)\rfloor+1$ additional days we reach unanimity, yielding the time bound in \Cref{thm:succinct_main_result}.

\subsection{Organization}

\Cref{sec:setup} encodes opinion histories as fine states, constructs the universal transition kernel, and passes to the coarse fiber law used by the induction.
\Cref{sec:local} proves the one-day local transition theorem by transferring concentration from a tilted independent row model and then controlling the resampled degree-constrained graph.
\Cref{sec:universal} develops the universal Gaussian recursion and proves the coherence of its linear response.
\Cref{sec:idealized} constructs the idealized parameters, defines faithful states, and proves that the actual coarse trajectory remains faithful until the lead leaves the perturbative scale.
\Cref{sec:main-proof} combines the resulting lead with the deterministic contraction phase to prove the fixed power-law theorem and its random-initial-opinions corollary, then sketches the uniform-density extension.
\Cref{app:tools} collects the comparison results of Liebenau and Wormald, together with the binomial, graphicality, concentration, and Gaussian-conditioning tools used throughout.
\Cref{app:local-transference} proves the comparison between the coarse fiber law and the conditioned row model.
\Cref{app:graph-enumeration-local} derives the edge-splitting and degree-regularity estimates needed by the transition kernel from single-edge estimates of Liebenau and Wormald.
\Cref{app:inverse} supplies the inverse-function results for the conditional-mean maps underlying the Gaussian and binomial tilts.
\Cref{app:idealized} proves the existence, perturbation, and critical-day statements used in the idealized evolution.

\subsection{Notation and conventions}\label{sec:conventions}

All asymptotic notation is interpreted as the relevant size parameter tends to infinity. In a statement which first fixes constants and then quantifies over all sufficiently large $N$ and over further choices depending on $N$, the implicit constants and $o(1)$-terms may depend on the initially fixed constants but are uniform over all subsequently quantified choices. Thus, for example, the conclusion of \Cref{thm:succinct_main_result} is uniform over $p$ and $\tau$ in the stated ranges.

A statement that asserts the existence of a constant names the constant after the statement and records the parameters on which it depends; for example, \Cref{thm:local-coarse-transition} produces $\Cst{thm:local-coarse-transition}=\Cst{thm:local-coarse-transition}(\theta,k,T,\phi)$. Within that statement and its proof the constant is written without arguments. Every later use writes it as the function it is, evaluated at the parameters in force at that point. For instance, $\Cof{thm:nice_deg}{\theta,T_1}$ is the constant of \Cref{thm:nice_deg} for the parameter value $T_1$ in place of $T$; and the constant of \Cref{lem:gamma_bound} with $T^{1+\theta}$ in place of $T$ and with input $K$ is written $\Cof{lem:gamma_bound}{\theta,T^{1+\theta},K}$. Constants named $C_0,C_1,\ldots$ or $c,c_0,\ldots$ inside a proof are internal to that proof.

Parentheses indicate map application, square brackets index structured data, and subscripts index families and parameters. For example, $\Pi_k(\mf g)[s]$ is the $s$-part of the partition $\Pi_k(\mf g)$; these objects are defined in \Cref{sec:setup}. We retain the usual probabilistic notation $\mb P[\cdot]$ and $\mb E[\cdot]$.

\subsection{Acknowledgments}

We thank Mehtaab Sawhney for many valuable ideas and contributions to this project, including the ideation and early stages.

\smallskip\noindent\textbf{Statement on AI use.} A complete manuscript containing the proof of \Cref{thm:succinct_main_result} was written by the authors without AI assistance by August 2025, with further human revision continuing into early 2026. From August 2025 through August 2026, the authors used GPT-5.5, GPT-5.6, and Claude Fable to compare versions, trace dependencies, reorganize arguments, and assist with rewriting. In September 2026, these systems helped restructure the proof according to its logical dependencies and audit the argument line by line; the audit resolved several minor issues in the original presentation. Between September 6 and 10, GPT-6 Astra produced a complete Lean formalization of the proof under extensive direction from the authors. Claude Fable separately formalized the required results of Liebenau and Wormald \cite{LW17,LW20} in a single session after being supplied the two papers. GPT-6 Astra and Claude Fable were subsequently used to prove and formalize the uniform-density strengthening, \Cref{thm:uniform-density}, including the dense-range argument based on Fountoulakis, Kang, and Makai \cite{FKM20}. The authors reviewed and substantially revised the resulting mathematics and exposition. The mathematical content, final text, and any errors are the responsibility of the authors.

\section{Setup}\label{sec:setup}

\subsection{Politicized graphs and opinion histories}

Throughout this section, fix a finite vertex set $V$.

We index strings in $\{0,1\}^k$ from $0$ to $k-1$, so if $s\in\{0,1\}^k$, then $s=s[0]\cdots s[k-1]$.

For $s\in\{0,1\}^2$, define
\[
\ge_s:=\begin{cases}
\ge & \text{if }s=00,\\
< & \text{if }s=01,\\
> & \text{if }s=10,\\
\le & \text{if }s=11.
\end{cases}
\]

For every $k\ge 1$, define
\[
\mr{Part}_k(V):=\left\{\Pi:\{0,1\}^k\to 2^V:V=\bigsqcup_{s\in\{0,1\}^k}\Pi[s]\right\}.
\]
Thus an element of $\mr{Part}_k(V)$ is a partition of $V$ whose parts are indexed by $\{0,1\}^k$.

For every $k\ge 1$, the day-$k$ partition of $\mf{g}$ is the element $\Pi_k(\mf{g})\in\mr{Part}_k(V)$ given by
\[
\Pi_k(\mf{g})[s]:=\left\{x\in V:c_r(\mf{g})[x]=(-1)^{s[r-1]}\text{ for all }1\le r\le k\right\}.
\]
In particular,
\[
V=\bigsqcup_{s\in\{0,1\}^k}\Pi_k(\mf{g})[s]
\]
for every $k\ge 1$, and $\Pi_k(\mf{g})[s]$ is exactly the set of vertices whose opinion history through day $k$ is the binary string $s$, with $0$ representing $+1$ and $1$ representing $-1$.

For any $u\in\mb{R}^{\{0,1\}^k}$ and $s\in\{0,1\}^k$, let $\mc{I}_s(u)$ denote the assertion that
\[
\sum_{t\in\{0,1\}^k}(-1)^{t[r-1]}u[t]\ge_{s[r-1]s[r]}0
\]
for all $1\le r\le k-1$. For any $u\in\mb{R}^{\{0,1\}^k}$ and $s\in\{0,1\}^{k+1}$, let $\mc{J}_s(u)$ denote the assertion that
\[
\sum_{t\in\{0,1\}^k}(-1)^{t[r-1]}u[t]\ge_{s[r-1]s[r]}0
\]
for all $1\le r\le k$.

For a graph $H$ on $V$, a vertex $x\in V$, and a subset $S\subseteq V$, write $\deg^H_Sx$ for the number of neighbors of $x$ in $S$ in the graph $H$. If $u[t]=\deg^{\on{gr}(\mf{g})}_{\Pi_k(\mf{g})[t]}x$, then the $r$th inequality above records the sign of the day-$r$ neighbor imbalance around $x$, with ties resolved according to the day-$r$ opinion of $x$. Consequently, for every $s\in\{0,1\}^k$ and $b\in\{0,1\}$,
\[
\Pi_{k+1}(\mf{g})[sb]=\left\{x\in \Pi_k(\mf{g})[s]:\mc{J}_{sb}\left(\left(\deg^{\on{gr}(\mf{g})}_{\Pi_k(\mf{g})[t]}x\right)_{t\in\{0,1\}^k}\right)\right\}.
\]

\subsection{State spaces and the iterative degree revelation process}\label{sec:setup-states}

For $\Pi\in\mr{Part}_k(V)$ and a graph $H$ on $V$, define the degree array $\delta(\Pi,H)\in\mb{Z}^{V\times\{0,1\}^k}$ by
\[
\delta(\Pi,H)[x,t]:=\deg^H_{\Pi[t]}x
\]
for $x\in V$ and $t\in\{0,1\}^k$. For $\mbf{d}\in\mb{Z}^{V\times\{0,1\}^k}$, write $\mbf{d}[x,\bullet]=(\mbf{d}[x,t])_{t\in\{0,1\}^k}$ and define
\[
m(\Pi,\mbf{d})[s,t]:=\sum_{x\in\Pi[s]}\mbf{d}[x,t].
\]

A finite sequence of nonnegative integers is \emph{graphical} if it is the degree sequence of a simple graph, and a pair of such sequences is \emph{bipartite graphical} if it is the pair of side-degree sequences of a simple bipartite graph. For $\Pi\in\mr{Part}_k(V)$, let $\mc{D}(\Pi)\subseteq\mb{Z}^{V\times\{0,1\}^k}$ be the set of graph-realizable degree arrays relative to $\Pi$:
\[
\mc{D}(\Pi):=\left\{\delta(\Pi,H):H\text{ is a simple graph on }V\right\}.
\]
We also call the elements of $\mc{D}(\Pi)$ \emph{graphical}.

We fix some terminology, which recurs throughout. The parts $\Pi[s]$ of a partition $\Pi\in\mr{Part}_k(V)$ are called its \emph{blocks}. A \emph{block pair} of $\Pi$ is an unordered pair $\{s,t\}$ of indices $s,t\in\{0,1\}^k$; it is \emph{internal} if $s=t$, in which case it refers to the edges inside the single block $\Pi[s]$, and \emph{bipartite} if $s\ne t$, in which case it refers to the edges between $\Pi[s]$ and $\Pi[t]$. Every simple graph on $V$ decomposes into its restrictions to the $2^k$ internal block pairs and the $\binom{2^k}{2}$ bipartite block pairs of $\Pi$, and these restrictions involve disjoint sets of vertex pairs. When several partitions are in play, we say ``block of $\Pi$'' and ``block pair of $\Pi$''. Graphicality of an array is a condition block pair by block pair:

\begin{fact}[Graphicality is a block-pair condition]\label{fact:graphical-blockwise}
Let $\Pi\in\mr{Part}_k(V)$ and $\mbf{d}\in\mb{Z}^{V\times\{0,1\}^k}$. Then $\mbf{d}\in\mc{D}(\Pi)$ if and only if, for each $s\in\{0,1\}^k$, the sequence $(\mbf{d}[x,s])_{x\in\Pi[s]}$ is graphical, and for each pair $s,t\in\{0,1\}^k$ with $s\ne t$, the pair of sequences $(\mbf{d}[x,t])_{x\in\Pi[s]}$ and $(\mbf{d}[y,s])_{y\in\Pi[t]}$ is bipartite graphical.
\end{fact}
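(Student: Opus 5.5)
The plan is to prove both directions directly from the decomposition of a simple graph on $V$ into its restrictions to the block pairs of $\Pi$. These restrictions live on disjoint sets of vertex pairs, as noted just before the statement.

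\emph{Forward direction.} Suppose $\mbf d=\delta(\Pi,H)$ for some simple graph $H$ on $V$.

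For each $s$, let $H_s$ be the induced subgraph $H[\Pi[s]]$. For $x\in\Pi[s]$ we have $\deg^{H_s}x=\deg^H_{\Pi[s]}x=\mbf d[x,s]$. Hence $(\mbf d[x,s])_{x\in\Pi[s]}$ is the degree sequence of the simple graph $H_s$, so it is graphical.

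For $s\ne t$, let $H_{s,t}$ be the bipartite subgraph of $H$ consisting of the edges with one endpoint in $\Pi[s]$ and the other in $\Pi[t]$. Its two sides are $\Pi[s]$ and $\Pi[t]$. For $x\in\Pi[s]$ its degree is $\deg^H_{\Pi[t]}x=\mbf d[x,t]$, and for $y\in\Pi[t]$ its degree is $\deg^H_{\Pi[s]}y=\mbf d[y,s]$. So the pair of sequences is bipartite graphical.

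\emph{Reverse direction.} Suppose the stated conditions hold.

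For each $s$, choose a simple graph $H_s$ on $\Pi[s]$ realizing $(\mbf d[x,s])_{x\in\Pi[s]}$. For each unordered pair $\{s,t\}$ with $s\ne t$, choose a simple bipartite graph $H_{s,t}$ with sides $\Pi[s]$ and $\Pi[t]$ realizing the pair of sequences. The conditions for $(s,t)$ and for $(t,s)$ describe the same pair of sequences with the sides swapped, so one choice per unordered pair is consistent.

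Let $H$ be the union of all the $H_s$ and all the $H_{s,t}$. Their edge sets lie in pairwise disjoint sets of vertex pairs, because the blocks partition $V$. Therefore $H$ is a simple graph on $V$ with no edge counted twice.

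Now fix $x\in\Pi[s]$ and any $t$, and look at the neighbors of $x$ in $\Pi[t]$. Every edge of $H$ between $x$ and $\Pi[t]$ comes from exactly one piece: $H_s$ if $t=s$, and $H_{s,t}$ if $t\ne s$. Hence $\deg^H_{\Pi[t]}x$ equals the degree of $x$ in that piece, which is $\mbf d[x,t]$ by construction. This gives $\delta(\Pi,H)=\mbf d$, so $\mbf d\in\mc D(\Pi)$.

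There is no real obstacle here. The only point needing care is the bookkeeping just mentioned: the bipartite condition for $(s,t)$ and $(t,s)$ must be recognized as the same requirement, so that one bipartite graph is chosen per unordered pair, and the pieces must be confirmed to have disjoint edge sets so that their union is simple.
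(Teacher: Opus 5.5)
Your proposal is correct and follows the paper's proof: restrict $H$ to each internal and bipartite block pair for the forward direction, and for the converse take the union of realizations chosen block pair by block pair, using that the block pairs occupy disjoint sets of vertex pairs. Your remark that the conditions for $(s,t)$ and $(t,s)$ coincide is the same bookkeeping the paper handles implicitly by choosing one bipartite graph per unordered pair.
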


\begin{proof}
If $\mbf{d}=\delta(\Pi,H)$, then the restriction of $H$ to $\Pi[s]$ realizes $(\mbf{d}[x,s])_{x\in\Pi[s]}$ and the restriction of $H$ to the pairs between $\Pi[s]$ and $\Pi[t]$ realizes the pair $(\mbf{d}[x,t])_{x\in\Pi[s]}$, $(\mbf{d}[y,s])_{y\in\Pi[t]}$. Conversely, given a graph on each $\Pi[s]$ and a bipartite graph between each $\Pi[s]$ and $\Pi[t]$ realizing these sequences, their union $H$ is a simple graph on $V$, because the block pairs involve disjoint sets of vertex pairs, and $\delta(\Pi,H)=\mbf{d}$ by construction.
\end{proof}

The history constraints select the subset
\[
\mc{D}^\mc{I}(\Pi):=\left\{\mbf{d}\in\mc{D}(\Pi):\mc{I}_s(\mbf{d}[x,\bullet])\text{ for all }s\in\{0,1\}^k\text{ and }x\in\Pi[s]\right\}.
\]

The day-$k$ state space is
\[
\mc{S}_k(V):=\left\{(\Pi,\mbf{d}):\Pi\in\mr{Part}_k(V)\text{ and }\mbf{d}\in\mc{D}^\mc{I}(\Pi)\right\}.
\]
For $\sigma=(\Pi,\mbf{d})\in\mc{S}_k(V)$, we write
\[
\on{part}(\sigma):=\Pi,\qquad \on{deg}(\sigma):=\mbf{d}.
\]
This is the convention we use throughout for structured objects: canonical components are accessed by named projection maps rather than repeated unpacking.

Every politicized graph $\mf{g}$ on $V$ determines a state
\[
\sigma_k(\mf{g}):=\left(\Pi_k(\mf{g}),\delta(\Pi_k(\mf{g}),\on{gr}(\mf{g}))\right)\in\mc{S}_k(V).
\]
This information determines the next partition.

\begin{fact}[Reconstruction from a fine state]\label{fact:state-reconstruction}
Let $k\ge1$, let $V$ be finite, and let $(\Pi,\mbf d)\in\mc S_k(V)$. For every simple graph $H$ on $V$ with $\delta(\Pi,H)=\mbf d$ and every coloring $c:V\to\{\pm1\}$ satisfying $c[x]=(-1)^{s[0]}$ for $x\in\Pi[s]$, the politicized graph $\mf g:=(H,c)$ satisfies
\[
c_r(\mf g)[x]=(-1)^{s[r-1]}\qquad(1\le r\le k,\ s\in\{0,1\}^k,\ x\in\Pi[s]).
\]
In particular, $\sigma_k(\mf g)=(\Pi,\mbf d)$, and every state in $\mc S_k(V)$ is attainable.
\end{fact}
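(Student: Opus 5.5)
The plan is to prove the displayed identity by induction on $r$, for $1\le r\le k$. The base case $r=1$ is exactly the hypothesis on $c$: we have $c_1(\mf g)=c$, and $c[x]=(-1)^{s[0]}$ for $x\in\Pi[s]$.

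For the inductive step, suppose $1\le r\le k-1$ and that $c_q(\mf g)[y]=(-1)^{t[q-1]}$ for all $q\le r$, all $t$, and all $y\in\Pi[t]$. Fix $x\in\Pi[s]$. Since $\delta(\Pi,H)=\mbf d$, the day-$r$ neighbor sum at $x$ is
\[
\sum_{y\sim_H x}c_r(\mf g)[y]=\sum_{t\in\{0,1\}^k}(-1)^{t[r-1]}\deg^H_{\Pi[t]}x=\sum_{t\in\{0,1\}^k}(-1)^{t[r-1]}\mbf d[x,t],
\]
where the first equality groups the neighbors of $x$ by the block $\Pi[t]$ containing them and applies the inductive hypothesis at $q=r$. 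Because $(\Pi,\mbf d)\in\mc S_k(V)$, we have $\mbf d\in\mc D^{\mc I}(\Pi)$, so $\mc I_s(\mbf d[x,\bullet])$ holds. Its $r$th inequality states that this sum satisfies $\ge_{s[r-1]s[r]}0$.

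We then check the four cases of $\ge_{ab}$ against the update rule, using the inductive value $c_r(\mf g)[x]=(-1)^{s[r-1]}$.

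\begin{itemize}
\item If $ab=00$, the sum is $\ge0$ and the current opinion is $+1$. A positive sum gives $+1$, and a tie retains $+1$, so $c_{r+1}(\mf g)[x]=+1=(-1)^{0}$.
\item If $ab=01$, the sum is $<0$, so $c_{r+1}(\mf g)[x]=-1$.
\item If $ab=10$, the sum is $>0$, so $c_{r+1}(\mf g)[x]=+1$.
\item If $ab=11$, the sum is $\le0$ and the current opinion is $-1$. A negative sum gives $-1$, and a tie retains $-1$, so $c_{r+1}(\mf g)[x]=-1$.
\end{itemize}

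In each case $c_{r+1}(\mf g)[x]=(-1)^{s[r]}$, which completes the induction. This case check, in particular getting the tie-breaking conventions of $\ge_s$ to match the update rule, is the only step requiring any care. Everything else is bookkeeping.

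For the consequences, the identity says that $\Pi_k(\mf g)[s]\supseteq\Pi[s]$ for every $s$. Both $\Pi_k(\mf g)$ and $\Pi$ partition $V$, so the blocks are equal and $\Pi_k(\mf g)=\Pi$. It follows that
\[
\delta(\Pi_k(\mf g),H)=\delta(\Pi,H)=\mbf d,
\]
and hence $\sigma_k(\mf g)=(\Pi,\mbf d)$.

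For attainability, let $(\Pi,\mbf d)\in\mc S_k(V)$. Since $\mbf d\in\mc D(\Pi)$, there is a simple graph $H$ with $\delta(\Pi,H)=\mbf d$. Define $c$ by $c[x]:=(-1)^{s[0]}$ for $x\in\Pi[s]$; this is well defined because the blocks of $\Pi$ are disjoint. Applying the first part to $(H,c)$ shows that the state $(\Pi,\mbf d)$ is attained.
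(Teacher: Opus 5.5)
Your proposal is correct and follows the same route as the paper: induction on $r$ with the base case given by the hypothesis on $c$, and an inductive step that rewrites the neighbor sum as $\sum_t(-1)^{t[r-1]}\mbf d[x,t]$ and combines the $r$th constraint of $\mc I_s$ with the tie-breaking rule, followed by $\Pi_k(\mf g)=\Pi$ and attainability via $\mbf d\in\mc D(\Pi)$. Your explicit four-case check of $\ge_{s[r-1]s[r]}$, and the partition argument for $\Pi_k(\mf g)=\Pi$, spell out steps the paper leaves implicit.
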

\begin{proof}
The assertion for $r=1$ is the hypothesis on $c$. If it holds for some $r<k$, then for $x\in\Pi[s]$,
\[
\sum_{y\sim_H x}c_r(\mf g)[y]
=\sum_{t\in\{0,1\}^k}(-1)^{t[r-1]}\deg^H_{\Pi[t]}x
=\sum_{t\in\{0,1\}^k}(-1)^{t[r-1]}\mbf d[x,t]
\ge_{s[r-1]s[r]}0.
\]
The last relation is the $r$th constraint in $\mc I_s(\mbf d[x,\bullet])$. Together with $c_r(\mf g)[x]=(-1)^{s[r-1]}$, the tie-breaking rule gives $c_{r+1}(\mf g)[x]=(-1)^{s[r]}$. This proves the assertion by induction, hence $\Pi_k(\mf g)=\Pi$ and $\sigma_k(\mf g)=(\Pi,\mbf d)$. A realizing graph $H$ exists because $\mbf d\in\mc D(\Pi)$, which proves attainability.
\end{proof}

For $\sigma\in\mc{S}_k(V)$, define its refinement $\Phi_k(\sigma)\in\mr{Part}_{k+1}(V)$ by
\[
\Phi_k(\sigma)[sb]:=\left\{x\in\on{part}(\sigma)[s]:\mc{J}_{sb}(\on{deg}(\sigma)[x,\bullet])\right\}
\]
for every $s\in\{0,1\}^k$ and $b\in\{0,1\}$. Because for each fixed $x\in\on{part}(\sigma)[s]$ exactly one of $\mc{J}_{s0}(\on{deg}(\sigma)[x,\bullet])$ and $\mc{J}_{s1}(\on{deg}(\sigma)[x,\bullet])$ holds, this is indeed a partition of $V$. Moreover, if $\sigma=\sigma_k(\mf{g})$, then $\Phi_k(\sigma)=\Pi_{k+1}(\mf{g})$.

\begin{definition*}[The kernel $K_k$]
For every $\sigma=(\Pi,\mbf{d})\in\mc{S}_k(V)$, define $K_k(\sigma,\cdot)$ as follows. For each $s\in\{0,1\}^k$, sample a uniformly random simple graph $H_{s,s}$ on $\Pi[s]$ with degree sequence $(\mbf{d}[x,s])_{x\in\Pi[s]}$. For each unordered pair $\{s,t\}$ of distinct elements $s,t\in\{0,1\}^k$, sample a uniformly random simple bipartite graph $H_{s,t}=H_{t,s}$ between $\Pi[s]$ and $\Pi[t]$ with degree sequences $(\mbf{d}[x,t])_{x\in\Pi[s]}$ and $(\mbf{d}[y,s])_{y\in\Pi[t]}$. Take all these sampled graphs and bipartite graphs to be mutually independent.

Given these sampled objects, define $\mbf{D}\in\mb{Z}^{V\times\{0,1\}^{k+1}}$ by setting, for $x\in\Pi[s]$, $t\in\{0,1\}^k$, and $b\in\{0,1\}$,
\[
\mbf{D}[x,tb]=
\begin{cases}
\deg^{H_{s,s}}_{\Phi_k(\sigma)[tb]}x,& t=s,\\
\deg^{H_{s,t}}_{\Phi_k(\sigma)[tb]}x,& t\ne s.
\end{cases}
\]
Then, for every subset of states $A\subseteq\mc{S}_{k+1}(V)$, set
\[
K_k(\sigma,A):=\mb{P}\left[(\Phi_k(\sigma),\mbf{D})\in A\right].
\]
This is well-defined for every $\sigma\in\mc{S}_k(V)$: the sampled objects exist by \Cref{fact:graphical-blockwise}, and the pair $(\Phi_k(\sigma),\mbf{D})$ always lies in $\mc{S}_{k+1}(V)$. Indeed, let $H$ be the simple graph on $V$ whose edge set is the union of the edge sets of the graphs $H_{s,s}$ and the bipartite graphs $H_{s,t}$; these edge sets are disjoint, so this is well defined. Then $\mbf{D}=\delta(\Phi_k(\sigma),H)$ by construction, so $\mbf{D}\in\mc{D}(\Phi_k(\sigma))$. Moreover, if $x\in\Phi_k(\sigma)[sb]$, then for each $1\le r\le k$,
\[
\sum_{u\in\{0,1\}^{k+1}}(-1)^{u[r-1]}\mbf{D}[x,u]
=
\sum_{t\in\{0,1\}^k}(-1)^{t[r-1]}\mbf{d}[x,t],
\]
because $\Phi_k(\sigma)[t0]\sqcup\Phi_k(\sigma)[t1]=\Pi[t]$ for each $t\in\{0,1\}^k$, so that $\mbf{D}[x,t0]+\mbf{D}[x,t1]=\mbf{d}[x,t]$ and the two sums agree term by term. Since $x\in\Phi_k(\sigma)[sb]$, the condition $\mc{J}_{sb}(\mbf{d}[x,\bullet])$ holds, and by the displayed identity this is the same as the condition $\mc{I}_{sb}(\mbf{D}[x,\bullet])$. As this holds for every $x\in V$, we conclude that $\mbf{D}\in\mc{D}^{\mc I}(\Phi_k(\sigma))$, that is, $(\Phi_k(\sigma),\mbf{D})\in\mc{S}_{k+1}(V)$.
\end{definition*}

\begin{proposition}\label{prop:state-chain}
Fix $p\in(0,1)$, a finite set $V$, and a coloring $c:V\to\{\pm1\}$. Sample $G\sim\mb{G}(V,p)$, and let $\mf{g}=(G,c)$. Then $\left(\sigma_k(\mf{g})\right)_{k\ge 1}$ is a time-inhomogeneous Markov chain on the finite state spaces $\mc{S}_k(V)$ with transition kernels $(K_k)_{k\ge1}$. Equivalently, for every $\sigma\in\mc{S}_k(V)$ with $\mb{P}[\sigma_k(\mf{g})=\sigma]>0$ and every $A\subseteq\mc{S}_{k+1}(V)$,
\[
\mb{P}\left[\sigma_{k+1}(\mf{g})\in A\mid \sigma_k(\mf{g})=\sigma\right]=K_k(\sigma,A).
\]
\end{proposition}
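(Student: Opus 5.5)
The key observation is that $\mb G(V,p)$ assigns to a graph $H$ the probability $p^{e(H)}(1-p)^{\binom N2-e(H)}$, which depends only on the edge count $e(H)$. For a fixed partition $\Pi$, the degree array $\delta(\Pi,H)$ determines $e(H)$, since $2e(H)=\sum_{x,t}\delta(\Pi,H)[x,t]$. Hence, conditional on $\delta(\Pi,H)=\mbf d$, the graph $G$ is uniform on the fiber $\{H:\delta(\Pi,H)=\mbf d\}$. I will also need the full Markov property rather than just the one-step conditional identity. So I condition on the entire past $\sigma_1(\mf g)=\sigma^{(1)},\dots,\sigma_k(\mf g)=\sigma^{(k)}$ and show that the conditional law of $\sigma_{k+1}(\mf g)$ is $K_k(\sigma^{(k)},\cdot)$. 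Since this depends only on $\sigma^{(k)}$, the one-step statement follows by averaging over the earlier states.

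First step: identify the conditioning event. Write $\sigma^{(k)}=(\Pi,\mbf d)$. I claim that the event $\{\sigma_k(\mf g)=(\Pi,\mbf d)\}$ equals $\{\delta(\Pi,G)=\mbf d\}$. One inclusion is immediate. For the other, the coloring $c$ is fixed, and on any attainable event $\Pi[s]$ must consist of vertices with $c[x]=(-1)^{s[0]}$. Given that, \Cref{fact:state-reconstruction} shows that $\delta(\Pi,G)=\mbf d$ forces $\sigma_k(\mf g)=(\Pi,\mbf d)$.

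The earlier states are determined by $\sigma_k(\mf g)$. Indeed, $\Pi_r$ is obtained from $\Pi_k$ by truncating histories, and $\delta(\Pi_r,G)[x,t']=\sum_{t\text{ extends }t'}\mbf d[x,t]$. So conditioning on the whole past is the same as conditioning on $\delta(\Pi,G)=\mbf d$. By the observation above, conditionally $G$ is uniform on $\mc F(\Pi,\mbf d):=\{H:\delta(\Pi,H)=\mbf d\}$.

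Second step: decompose the fiber into block pairs. By the block-pair decomposition in \Cref{fact:graphical-blockwise}, the map $H\mapsto(H_{s,s},H_{s,t})$, sending $H$ to its restrictions to the internal and bipartite block pairs, is a bijection. It maps $\mc F(\Pi,\mbf d)$ onto the product of the sets of simple graphs on $\Pi[s]$ with degree sequence $(\mbf d[x,s])_{x\in\Pi[s]}$ and of the sets of bipartite graphs between $\Pi[s]$ and $\Pi[t]$ with the prescribed side degrees. The uniform measure on a product set is the product of uniform measures. Therefore the restrictions of $G$ have exactly the joint law of the independent samples in the definition of $K_k$.

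Third step: express $\sigma_{k+1}(\mf g)$ through these restrictions. By construction $\Pi_{k+1}(\mf g)=\Phi_k(\sigma_k(\mf g))=\Phi_k(\sigma)$, which is deterministic on the event. For $x\in\Pi[s]$ we have $\deg^G_{\Phi_k(\sigma)[tb]}x=\deg^{H_{s,t}}_{\Phi_k(\sigma)[tb]}x$, because every neighbour of $x$ in $\Pi[t]$ is joined to $x$ through the block pair $\{s,t\}$. Thus $\delta(\Phi_k(\sigma),G)$ is the same function of the restrictions as $\mbf D$ is of the sampled graphs. Since the restrictions and the samples have the same joint law, $\mb P[\sigma_{k+1}(\mf g)\in A\mid\cdots]=K_k(\sigma,A)$.

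The main obstacle is the first step. One must be careful that conditioning on the state loses no information about $G$ beyond $\delta(\Pi,G)$, which is where \Cref{fact:state-reconstruction} and the fixed coloring enter. The remaining steps are bookkeeping.
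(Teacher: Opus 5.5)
Your proof is correct and follows essentially the same route as the paper. You identify $\{\sigma_k(\mf g)=\sigma\}$ with $\{\delta(\Pi,G)=\mbf d\}$ via \Cref{fact:state-reconstruction} and compatibility with $c$, note that $\sigma_k$ determines the earlier states, and show that the conditional law of $G$ is the product of uniform degree-constrained block-pair graphs. The only cosmetic difference is the order of the last step: you get uniformity on the whole fiber first (the $\mb G(V,p)$ weight depends only on $e(H)$, which $\mbf d$ fixes) and then factor it through the block-pair bijection, whereas the paper first uses independence of the block-pair edge sets and then uniformity within each block pair.
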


\begin{proof}
We first explain why the displayed one-step statement, which conditions on the present state only, is equivalent to the Markov property, which conditions on the whole history $(\sigma_1(\mf{g}),\ldots,\sigma_k(\mf{g}))$. The reason is that $\sigma_k(\mf{g})$ determines the earlier states: for $r<k$, the block $\Pi_r(\mf{g})[u]$ is the union of the blocks $\Pi_k(\mf{g})[s]$ over the strings $s$ whose first $r$ bits form $u$, and $\delta(\Pi_r(\mf{g}),G)[x,u]$ is the sum of the entries $\delta(\Pi_k(\mf{g}),G)[x,s]$ over the same strings $s$. Hence conditioning on $\sigma_k(\mf{g})$ is the same as conditioning on the whole history, and it suffices to prove the displayed identity.

Once $\sigma_k(\mf{g})=(\Pi,\mbf{d})$ is known, the next partition is already determined: it is exactly $\Phi_k(\sigma)$. The only remaining randomness in $\sigma_{k+1}(\mf{g})$ is the way the degree-constrained edge sets of the block pairs of $\Pi$ split across the deterministic refinement $\Phi_k(\sigma)$.

Condition on $\sigma_k(\mf{g})=(\Pi,\mbf{d})$. Since this state has positive probability, $\Pi$ is compatible with the fixed initial coloring $c$, in the sense that $x\in\Pi[s]$ implies $c[x]=(-1)^{s[0]}$. Conversely, on the event $\delta(\Pi,G)=\mbf{d}$, \Cref{fact:state-reconstruction} gives $\sigma_k(\mf g)=\sigma$. Thus, for the present state $\sigma$, the event $\{\sigma_k(\mf{g})=\sigma\}$ is exactly the event $\{\delta(\Pi,G)=\mbf{d}\}$.

The random edge sets of the internal and bipartite block pairs of $\Pi$ are independent in $\mb{G}(V,p)$, and the constraints recorded by $\mbf{d}$ involve these disjoint edge sets separately. Therefore the conditional law is a product over the block pairs of $\Pi$. Inside $\Pi[s]$, every simple graph with degree sequence $(\mbf{d}[x,s])_{x\in\Pi[s]}$ has the same number of edges, and hence the same conditional probability; similarly, between $\Pi[s]$ and $\Pi[t]$, every simple bipartite graph with degree sequences $(\mbf{d}[x,t])_{x\in\Pi[s]}$ and $(\mbf{d}[y,s])_{y\in\Pi[t]}$ has the same number of edges. Thus the conditional distribution is exactly the product of uniformly random degree-constrained graphs and bipartite graphs used to define $K_k(\sigma,\cdot)$.

It follows that the conditional law of $\sigma_{k+1}(\mf{g})$ given $\sigma_k(\mf{g})=\sigma$ is $K_k(\sigma,\cdot)$, which depends only on the present state. This is the Markov property.
\end{proof}

\subsection{Coarsening the state space}\label{sec:coarsening}

We next record the coarser information that will be propagated in the inductive estimates. For $\Pi\in\mr{Part}_k(V)$, define the edge-count space
\[
\begin{aligned}
\mc{M}(\Pi):=\Bigl\{m\in\mb{Z}^{\{0,1\}^k\times\{0,1\}^k}:{}& m[s,t]=m[t,s],\quad m[s,s]\in 2\mb{Z},\\
&0\le m[s,t]\le |\Pi[s]|\left(|\Pi[t]|-\one_{s=t}\right)\text{ for all }s,t\in\{0,1\}^k\Bigr\}.
\end{aligned}
\]
Thus $m[s,t]$ records the number of edge incidences from $\Pi[s]$ to $\Pi[t]$; for $s\ne t$ this is the number of edges between the two parts, while $m[s,s]$ is twice the number of edges inside $\Pi[s]$. For every $\mbf{d}\in\mc{D}(\Pi)$, the array $m(\Pi,\mbf{d})$ defined above belongs to $\mc{M}(\Pi)$.

For $p\in(0,1)$ and $\mbf{d}\in\mb{Z}^{V\times\{0,1\}^k}$, let $\kappa_p(\Pi,\mbf{d})$ denote the assertion that
\[
\left|\mbf{d}[x,t]-p|\Pi[t]|\right|\le (p|V|)^{4/7}
\]
for every $x\in V$ and $t\in\{0,1\}^k$.

The day-$k$ coarse state space is the finite set
\[
\mc{Y}_k(V):=\left\{(\Pi,m,\eta):\Pi\in\mr{Part}_k(V),\; m\in\mc{M}(\Pi),\; \eta\in\{0,1\}\right\}.
\]
For $y=(\Pi,m,\eta)\in\mc{Y}_k(V)$, we write
\[
\on{part}(y):=\Pi,\qquad \on{edge}(y):=m,\qquad \on{reg}(y):=\eta.
\]
We use $\mc{Y}_k(V)$ as a common codomain; not every element of $\mc{Y}_k(V)$ need arise from a fine state. For each $p\in(0,1)$, define the coarsening map $\rho_k^{(p)}:\mc{S}_k(V)\to\mc{Y}_k(V)$ by
\[
\rho_k^{(p)}(\sigma):=\left(\on{part}(\sigma),m(\on{part}(\sigma),\on{deg}(\sigma)),\one_{\kappa_p(\on{part}(\sigma),\on{deg}(\sigma))}\right).
\]
Thus $\rho_k^{(p)}$ records the current partition, the edge counts between its parts, and whether the fine degree array is $p$-regular in the sense above.

Say that $y\in\mc{Y}_k(V)$ is $p$-attainable if there exists $\sigma\in\mc{S}_k(V)$ such that $\rho_k^{(p)}(\sigma)=y$. We next define the \emph{fiber law} $\Lambda_y^{(p)}$ of a $p$-attainable coarse state $y$: it is the law of a fine state sampled from the fiber $(\rho_k^{(p)})^{-1}(y)$, with the weights that the random graph $\mb{G}(V,p)$ induces on that fiber. Sample $G\sim\mb{G}(V,p)$ and set
\[
\widetilde\sigma_y(G):=\left(\on{part}(y),\delta(\on{part}(y),G)\right).
\]
Then $\Lambda_y^{(p)}$ is the law of $\widetilde\sigma_y(G)$ conditioned on the event that $\widetilde\sigma_y(G)$ lies in $\mc{S}_k(V)$ and, for this resulting fine state,
\[
\rho_k^{(p)}\left(\widetilde\sigma_y(G)\right)=y.
\]
Since $p\in(0,1)$ and $y$ is $p$-attainable, this conditioning event has positive probability.

Concretely, write $\Pi:=\on{part}(y)$ and $m:=\on{edge}(y)$. Choose independently a uniform graph with $m[s,s]/2$ edges inside each $\Pi[s]$ and a uniform bipartite graph with $m[s,t]$ edges between each unordered pair of distinct blocks $\Pi[s],\Pi[t]$. The fiber law $\Lambda_y^{(p)}$ is the law of $(\Pi,\mbf d)$ for the resulting degree array, conditioned on the vertexwise histories $\mc I_s(\mbf d[x,\bullet])$ and on $\kappa_p(\Pi,\mbf d)$ or its negation according to $\on{reg}(y)$. The local estimates use only $\on{reg}(y)=1$; \Cref{prop:fiber-law-auxiliary} gives the formal identification with the auxiliary degree laws.

For a $p$-attainable coarse state $y\in\mc{Y}_k(V)$ and every event $B\subseteq\mc{Y}_{k+1}(V)$, define
\[
\overline{K}_k^{(p)}(y,B):=\mb{E}_{\sigma\sim\Lambda_y^{(p)}}\left[K_k\left(\sigma,\left(\rho_{k+1}^{(p)}\right)^{-1}(B)\right)\right].
\]
Equivalently, to sample from $\overline{K}_k^{(p)}(y,\cdot)$, first sample $\sigma\sim\Lambda_y^{(p)}$, then sample $\sigma'\sim K_k(\sigma,\cdot)$, and output $\rho_{k+1}^{(p)}(\sigma')$.

\begin{remark}[The coarse one-step law is a fiber average]
The refinement $\Phi_k(\sigma)$ is decided vertex by vertex by the rows $\mbf{d}[x,\bullet]$, so fine states with the same coarse projection can have different next partitions. For instance, with $k=1$ and suitable parameters, rows $(a,b)$ with $a>b$ for every vertex of $\Pi[0]$ send all of that block to $00$, while replacing half the rows by $(a+c,b-c)$ and half by $(a-c,b+c)$, with $c>a-b$, preserves the edge counts but sends only half of the block to $00$. Thus the coarse process is not a lumping of the fine chain: no kernel $Q_k$ satisfies
\[
K_k\left(\sigma,(\rho_{k+1}^{(p)})^{-1}(B)\right)=Q_k\left(\rho_k^{(p)}(\sigma),B\right)
\qquad\text{for all }\sigma\in\mc S_k(V),\ B\subseteq\mc Y_{k+1}(V).
\]
The conditional one-step law instead averages over $\Lambda_y^{(p)}$, as proved in \Cref{prop:coarse-one-step}; this identity supports induction by the law of total probability. The history constraints can couple the block pairs in $\Lambda_y^{(p)}$. \Cref{sec:local} controls this law and then $K_k$ at a typical fine state.
\end{remark}

\begin{proposition}[One-step coarse conditioning]\label{prop:coarse-one-step}
Fix $p\in(0,1)$, a finite set $V$, and a coloring $c:V\to\{\pm1\}$. Sample $G\sim\mb{G}(V,p)$, and let $\mf{g}=(G,c)$. If $y\in\mc{Y}_k(V)$ satisfies
\[
\mb{P}\left[\rho_k^{(p)}(\sigma_k(\mf{g}))=y\right]>0,
\]
then $y$ is $p$-attainable, and for every $B\subseteq\mc{Y}_{k+1}(V)$,
\[
\mb{P}\left[\rho_{k+1}^{(p)}(\sigma_{k+1}(\mf{g}))\in B\mid \rho_k^{(p)}(\sigma_k(\mf{g}))=y\right]=\overline{K}_k^{(p)}(y,B).
\]
\end{proposition}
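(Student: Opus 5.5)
The plan is to decompose the conditioning event $\{\rho_k^{(p)}(\sigma_k(\mf g))=y\}$ into the disjoint fine events $\{\sigma_k(\mf g)=\sigma\}$ over $\sigma\in(\rho_k^{(p)})^{-1}(y)$ with positive probability, apply \Cref{prop:state-chain} on each piece, and then show that the induced mixing weights are exactly those of $\Lambda_y^{(p)}$. Attainability is immediate: if the event has positive probability, some realization gives a fine state $\sigma_k(\mf g)\in\mc S_k(V)$ with $\rho_k^{(p)}(\sigma_k(\mf g))=y$.

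By the law of total probability and \Cref{prop:state-chain},
\[
\mb P\bigl[\rho_{k+1}^{(p)}(\sigma_{k+1}(\mf g))\in B\,\big|\,\rho_k^{(p)}(\sigma_k(\mf g))=y\bigr]
=\sum_{\sigma}\mb P\bigl[\sigma_k(\mf g)=\sigma\,\big|\,\rho_k^{(p)}(\sigma_k(\mf g))=y\bigr]\,K_k\bigl(\sigma,(\rho_{k+1}^{(p)})^{-1}(B)\bigr).
\]
It therefore suffices to show that the conditional law of $\sigma_k(\mf g)$ given $\rho_k^{(p)}(\sigma_k(\mf g))=y$ equals $\Lambda_y^{(p)}$.

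Write $\Pi=\on{part}(y)$. Every $\sigma$ in the fiber has $\on{part}(\sigma)=\Pi$. By the argument in the proof of \Cref{prop:state-chain}, which uses \Cref{fact:state-reconstruction}, if $\Pi$ is compatible with $c$ (meaning $x\in\Pi[s]$ implies $c[x]=(-1)^{s[0]}$), then $\{\sigma_k(\mf g)=\sigma\}=\{\delta(\Pi,G)=\on{deg}(\sigma)\}=\{\widetilde\sigma_y(G)=\sigma\}$ for every $\sigma=(\Pi,\mbf d)\in\mc S_k(V)$. Compatibility holds because the conditioning event has positive probability and $\Pi_k(\mf g)$ is always compatible with $c$.

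Hence, over the fiber, $\{\rho_k^{(p)}(\sigma_k(\mf g))=y\}$ is the disjoint union of the events $\{\widetilde\sigma_y(G)=\sigma\}$. Equivalently, it is the event that $\widetilde\sigma_y(G)\in\mc S_k(V)$ and $\rho_k^{(p)}(\widetilde\sigma_y(G))=y$, and on it $\sigma_k(\mf g)=\widetilde\sigma_y(G)$. Conditioning $G$ on this common event therefore gives identical laws, which is the definition of $\Lambda_y^{(p)}$. Substituting this into the display yields $\overline K_k^{(p)}(y,B)$.

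The only delicate step is the event identification. It requires both directions of \Cref{fact:state-reconstruction}: the event $\{\delta(\Pi,G)=\mbf d\}$ must force the history partition to equal $\Pi$, and this depends on the compatibility of $\Pi$ with the fixed coloring $c$. Everything else is bookkeeping with finite sums over the positive-probability fiber states.
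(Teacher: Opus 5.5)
Your proposal is correct and follows the paper's proof essentially step for step. You use compatibility of $\on{part}(y)$ with $c$ and \Cref{fact:state-reconstruction} to identify the conditioning event with $\{\widetilde\sigma_y(G)\in\mc S_k(V),\ \rho_k^{(p)}(\widetilde\sigma_y(G))=y\}$, so that $\sigma_k(\mf g)$ has conditional law $\Lambda_y^{(p)}$, and then average \Cref{prop:state-chain} over that law by total probability.
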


\begin{proof}
Let $\Pi=\on{part}(y)$ and write
\[
E_y:=\left\{\rho_k^{(p)}(\sigma_k(\mf{g}))=y\right\},\qquad \widetilde\sigma:=\left(\Pi,\delta(\Pi,G)\right).
\]
Since $\mb{P}[E_y]>0$, the partition $\Pi$ is compatible with the fixed coloring $c$: if $x\in\Pi[s]$, then $c[x]=(-1)^{s[0]}$.

We first identify the event $E_y$. On $E_y$ we have $\Pi_k(\mf{g})=\Pi$, hence $\sigma_k(\mf{g})=(\Pi,\delta(\Pi,G))=\widetilde\sigma$; in particular $\widetilde\sigma\in\mc{S}_k(V)$ and $\rho_k^{(p)}(\widetilde\sigma)=y$. Conversely, if $\widetilde\sigma\in\mc S_k(V)$ and $\rho_k^{(p)}(\widetilde\sigma)=y$, the compatibility of $\Pi$ with $c$ and \Cref{fact:state-reconstruction}, applied to the graph $G$, give $\sigma_k(\mf g)=\widetilde\sigma$. Thus $\rho_k^{(p)}(\sigma_k(\mf g))=y$, and therefore
\[
E_y=\left\{\widetilde\sigma\in\mc{S}_k(V),\; \rho_k^{(p)}(\widetilde\sigma)=y\right\}.
\]
In particular, $y$ is $p$-attainable. Since $\sigma_k(\mf{g})=\widetilde\sigma$ on $E_y$, conditional on $E_y$ the state $\sigma_k(\mf{g})$ has law $\Lambda_y^{(p)}$, which by definition is the law of $\widetilde\sigma$ conditioned on the event displayed above.

Let $A=(\rho_{k+1}^{(p)})^{-1}(B)$. Every state $\sigma$ in the support of $\Lambda_y^{(p)}$ satisfies $\mb{P}[\sigma_k(\mf{g})=\sigma]\ge\mb{P}[\sigma_k(\mf{g})=\sigma,\,E_y]>0$, so \Cref{prop:state-chain} applies to it and gives
\[
\mb{P}\left[\sigma_{k+1}(\mf{g})\in A\mid \sigma_k(\mf{g})=\sigma\right]=K_k(\sigma,A).
\]
For such $\sigma$ we also have $\{\sigma_k(\mf{g})=\sigma\}\subseteq E_y$, because $\rho_k^{(p)}(\sigma)=y$. Hence, by the law of total probability over the value of $\sigma_k(\mf{g})$, whose conditional law given $E_y$ is $\Lambda_y^{(p)}$,
\begin{align*}
\mb{P}\left[\rho_{k+1}^{(p)}(\sigma_{k+1}(\mf{g}))\in B\mid E_y\right]
&=\sum_{\sigma}\mb{P}\left[\sigma_k(\mf{g})=\sigma\mid E_y\right]\mb{P}\left[\sigma_{k+1}(\mf{g})\in A\mid\sigma_k(\mf{g})=\sigma\right]\\
&=\mb{E}_{\sigma\sim\Lambda_y^{(p)}}\left[K_k\left(\sigma,(\rho_{k+1}^{(p)})^{-1}(B)\right)\right],
\end{align*}
which is $\overline K_k^{(p)}(y,B)$ by definition.
\end{proof}

\section{The local coarse transition theorem}\label{sec:local}

We prove a one-day estimate for the fiber-averaged law $\overline K_k^{(p)}(y,\cdot)$: under the local admissibility conditions below, the next coarse state follows an explicit template up to the stated errors. The induction applies this estimate through \Cref{prop:coarse-one-step}.

\begin{definition}[Local admissibility]\label{def:local-admissibility}
Fix $k\ge1$, a finite set $V$, $p\in(0,1)$, and constants $T>1$ and $\phi\in(0,1/2)$. Let $y=(\Pi,m,\eta)\in\mc{Y}_k(V)$, write $N:=|V|$ and $n[s]:=|\Pi[s]|$, and let
\[
\lambda\in(0,1)^{\{0,1\}^k\times\{0,1\}^k}.
\]
We call $\lambda$ a \emph{tilt}. It prescribes, for each block $\Pi[s]$ and each block $\Pi[t]$, a binomial success probability $\lambda[s,t]$ with which the degrees from vertices of $\Pi[s]$ into $\Pi[t]$ will be modeled; the name refers to the fact that replacing $p$ by a nearby $\lambda[s,t]$ amounts to an exponential tilt of the binomial law $\mr{Bin}(\cdot,p)$, and the role of the tilt is to make an independent binomial model reproduce the prescribed edge counts $m$ after conditioning on the history constraints (condition \cref{LA:solvability} below). For each fixed $s\in\{0,1\}^k$, let $A_{s,\bullet}$ denote the $\{0,1\}^k$-indexed random vector with independent coordinates
\[
A_{s,t}\sim\mr{Bin}\left(n[t]-\one_{s=t},\lambda[s,t]\right),\qquad t\in\{0,1\}^k,
\]
which models the row $\mbf{d}[x,\bullet]$ of a vertex $x\in\Pi[s]$.
We say that $(y,\lambda)$ is \emph{$(T,\phi,p)$-locally admissible} if the following conditions hold:
\begin{enumerate}[label=\textup{(LA\arabic*)},ref=\textup{(LA\arabic*)},leftmargin=*]
\item\label{LA:reg} The regularity coordinate is true:
\[
\on{reg}(y)=1.
\]
\item\label{LA:sizes} The current part sizes are of order $N$, and the imbalances tested on earlier days are controlled:
\[
n[s]\ge T^{-1}N
\]
for every $s\in\{0,1\}^k$, and
\[
\left|\sum_{t\in\{0,1\}^k}(-1)^{t[r-1]}n[t]\right|\le T\frac{N}{\sqrt{pN}}
\]
for every $1\le r\le k-1$.
\item\label{LA:edge-scale} The current edge counts have the expected scale:
\[
\left|m[s,t]-pn[s]n[t]\right|\le T\frac{N^2p}{\sqrt{pN}}
\]
for all $s,t\in\{0,1\}^k$.
\item\label{LA:positive} The edge-count denominators are positive:
\[
m[s,t]>0
\]
for all $s,t\in\{0,1\}^k$.
\item\label{LA:separation} The current edge-count imbalances are separated from their tie thresholds:
\[
\sum_{t\in\{0,1\}^k}(-1)^{t[r-1]}m[s,t]
\ge_{s[r-1]s[r]}
(-1)^{s[r]}T^{-1}\frac{N^2p}{\sqrt{pN}}
\]
for every $s\in\{0,1\}^k$ and every $1\le r\le k-1$.
\item\label{LA:tilt} The tilt is controlled:
\[
\left|\lambda[s,t]-p\right|\le T\frac{p}{\sqrt{pN}}
\]
for all $s,t\in\{0,1\}^k$.
\item\label{LA:conditioning} The conditioning event is nondegenerate:
\[
\mb{P}\left[\mc{I}_s(A_{s,\bullet})\right]\ge\phi
\]
for every $s\in\{0,1\}^k$.
\item\label{LA:solvability} The tilt solves the edge-count constraints:
\[
n[s]\mb{E}\left[A_{s,t}\mid \mc{I}_s(A_{s,\bullet})\right]=m[s,t]
\]
for all $s,t\in\{0,1\}^k$.
\item\label{LA:split} The next split is nondegenerate:
\[
\phi\le
\mb{P}\left[\mc{J}_{sb}(A_{s,\bullet})\mid \mc{I}_s(A_{s,\bullet})\right]
\le1-\phi
\]
for every $s\in\{0,1\}^k$ and $b\in\{0,1\}$.
\end{enumerate}
We refer to these conditions by their labels \cref{LA:reg}--\cref{LA:split} throughout.
\end{definition}

\begin{definition}[The local template]\label{def:local-template}
Let $k\ge1$, let $n\in\mb{Z}_{\ge1}^{\{0,1\}^k}$, and let $\lambda\in(0,1)^{\{0,1\}^k\times\{0,1\}^k}$. For each $s\in\{0,1\}^k$ let $A_{s,\bullet}=A_{s,\bullet}(n,\lambda)$ be the $\{0,1\}^k$-indexed random vector with independent coordinates $A_{s,t}\sim\mr{Bin}(n[t]-\one_{s=t},\lambda[s,t])$; when $n[t]=|\Pi[t]|$ for a coarse state $y=(\Pi,m,\eta)$, this is the vector of \Cref{def:local-admissibility}. Suppose that $\mb{P}[\mc{I}_s(A_{s,\bullet})]>0$ for every $s\in\{0,1\}^k$, and let $m\in\mb{R}^{\{0,1\}^k\times\{0,1\}^k}$ satisfy $m[s,t]\ne0$ for all $s,t$. The \emph{local template} of $(n,m,\lambda)$ is the triple $(n^{\mr{loc}},\ell^{\mr{loc}},m^{\mr{loc}})$ defined by
\[
n^{\mr{loc}}[sb]:=n[s]\mb{P}\left[\mc{J}_{sb}(A_{s,\bullet})\mid \mc{I}_s(A_{s,\bullet})\right]
\]
for $s\in\{0,1\}^k$ and $b\in\{0,1\}$,
\[
\ell^{\mr{loc}}[sb,t]
:=n[s]\mb{E}\left[A_{s,t}\one_{\mc{J}_{sb}(A_{s,\bullet})}\mid \mc{I}_s(A_{s,\bullet})\right]
\]
for $s,t\in\{0,1\}^k$ and $b\in\{0,1\}$, and
\[
m^{\mr{loc}}[sb,tc]
:=\frac{\ell^{\mr{loc}}[sb,t] \ell^{\mr{loc}}[tc,s]}{m[s,t]}
\]
for $s,t\in\{0,1\}^k$ and $b,c\in\{0,1\}$. For a coarse state $y=(\Pi,m,\eta)$ and a tilt $\lambda$ such that $(y,\lambda)$ is $(T,\phi,p)$-locally admissible, the local template of $(y,\lambda)$ is the local template of $((|\Pi[s]|)_{s\in\{0,1\}^k},m,\lambda)$; conditions \cref{LA:conditioning} and \cref{LA:positive} supply the two hypotheses. The template depends on $y$ only through the sizes $|\Pi[s]|$ and the edge counts $m$; stating it for arbitrary size vectors $n$ allows the same formulas to drive the idealized process of \Cref{sec:idealized}, whose sizes are not the block sizes of any partition.
\end{definition}

\begin{theorem}[Local coarse transition]\label{thm:local-coarse-transition}
Fix $\theta\in(1/2,1)$, an integer $k\ge1$, and constants $T>1$ and $\phi\in(0,1/2)$. There exists a constant $\Cst{thm:local-coarse-transition}=\Cst{thm:local-coarse-transition}(\theta,k,T,\phi)>0$ such that the following holds for all sufficiently large $N$. Let $V$ be a finite set with $|V|=N$, let
\[
p\in\left(T^{-1}N^{-\theta},TN^{-\theta}\right),
\]
and let $y=(\Pi,m,\eta)\in\mc{Y}_k(V)$ be $p$-attainable. Suppose that $(y,\lambda)$ is $(T,\phi,p)$-locally admissible for some
\[
\lambda\in(0,1)^{\{0,1\}^k\times\{0,1\}^k}.
\]
Let $n^{\mr{loc}}$, $\ell^{\mr{loc}}$, and $m^{\mr{loc}}$ be the local template from \Cref{def:local-template}. Let $\mc{L}_{k+1}^{(p)}(y,\lambda)\subseteq\mc{Y}_{k+1}(V)$ be the set of all $y'\in\mc{Y}_{k+1}(V)$ such that, writing $\Pi':=\on{part}(y')$ and $m':=\on{edge}(y')$, the following conditions
hold:
\begin{enumerate}[label=\textup{(L\arabic*)},ref=\textup{(L\arabic*)},leftmargin=*]
\item\label{L:refine} $\Pi'$ refines $\Pi$ one step, in the sense that
\[
\Pi'[s0]\sqcup \Pi'[s1] = \Pi[s]
\]
for every $s\in\{0,1\}^k$.
\item\label{L:reg} The next regularity coordinate is true:
\[
\on{reg}(y')=1.
\]
\item\label{L:sizes} The next part sizes follow the local template:
\[
\left||\Pi'[sb]|-n^{\mr{loc}}[sb]\right|\le \Cst{thm:local-coarse-transition}\sqrt{N}\log N
\]
for every $s\in\{0,1\}^k$ and $b\in\{0,1\}$.
\item\label{L:edges} The next edge counts follow the local template:
\[
\left|m'[sb,tc]-m^{\mr{loc}}[sb,tc]\right|\le \Cst{thm:local-coarse-transition} N^2p\sqrt{\frac{(pN)^{1/7}}{N}}\log N
\]
for every $s,t\in\{0,1\}^k$ and $b,c\in\{0,1\}$.
\end{enumerate}
Then
\[
\overline K_k^{(p)}\left(y,\mc{L}_{k+1}^{(p)}(y,\lambda)\right)=1-o(1).
\]
\end{theorem}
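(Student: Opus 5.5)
By \Cref{prop:coarse-one-step} and the definition of $\overline K_k^{(p)}$, it suffices to show two things. First, a fine state $\sigma\sim\Lambda_y^{(p)}$ lies with probability $1-o(1)$ in a ``good'' set $\mc G$ of fine states. Second, $K_k(\sigma,(\rho_{k+1}^{(p)})^{-1}(\mc L_{k+1}^{(p)}(y,\lambda)))=1-o(1)$ uniformly over $\sigma\in\mc G$. Condition \cref{L:refine} holds automatically because $\Phi_k(\sigma)$ refines $\Pi$.

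For $\sigma=(\Pi,\mbf d)$, the good set $\mc G$ will require three things:
\begin{itemize}
\item \emph{Regularity.} $\kappa_p(\Pi,\mbf d)$ holds, which is automatic from \cref{LA:reg}.
\item \emph{Split sizes.} For each $s$, the number of $x\in\Pi[s]$ with $\mc J_{sb}(\mbf d[x,\bullet])$ is within $C\sqrt N\log N$ of $n^{\mr{loc}}[sb]$.
\item \emph{Split incidences.} For each $s,t,b$, the sum $\sum_{x\in\Pi[s],\,\mc J_{sb}}\mbf d[x,t]$ is within $C N p\sqrt{N}\log N$ of $\ell^{\mr{loc}}[sb,t]$.
\end{itemize}

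To prove $\Lambda_y^{(p)}(\mc G)=1-o(1)$, I would use the transference described in the overview. Consider the row model in which the rows $\mbf d[x,\bullet]$ for $x\in\Pi[s]$ are independent copies of $A_{s,\bullet}$ conditioned on $\mc I_s$. In this model, Hoeffding's inequality gives the two split estimates with failure probability $N^{-\omega(1)}$: the size sums have bounded summands, and the incidence summands are $O(pN)$ after truncating at regularity. By \cref{LA:solvability}, the row-sum totals $\sum_{x\in\Pi[s]}\mbf d[x,t]$ have mean exactly $m[s,t]$. A local limit estimate (Fourier analysis plus Bentkus' Berry--Esseen bound, using \cref{LA:conditioning} for nondegeneracy) shows that these totals hit $m$ exactly with probability at least $N^{-O(1)}$. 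Conditioning on hitting $m$ therefore preserves superpolynomial concentration.

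It remains to compare this conditioned row model with $\Lambda_y^{(p)}$. Restricted to regular arrays, the Liebenau--Wormald and Canfield--Greenhill--McKay enumeration formulas give the probability ratio between the two laws as $\exp(O(\text{small}))$ times a factor depending only on the totals $m$. That factor is constant on the fiber. A further factor arises from quadratic degree-variance terms; since these are concentrated, I would handle them by passing to a sub-event on which the factor is $1+o(1)$. This comparison is the main obstacle. The difficulty is to control the exponents of the enumeration formula uniformly over regular degree arrays in which degrees deviate from the mean by up to $(pN)^{4/7}$, and to show that the deviating terms are themselves concentrated so the density ratio is bounded on a set of probability $1-o(1)$ in both laws.

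Second step, fixing $\sigma\in\mc G$. Within each block pair $\{s,t\}$, $K_k$ draws a uniform (bipartite) graph with the given degrees, and $m'[sb,tc]$ counts the edges between $\Pi'[sb]$ and $\Pi'[tc]$, which are fixed subsets. By switching or edge-probability estimates for uniform degree-constrained graphs, which the Liebenau--Wormald single-edge estimates supply, the probability that $x$ and $y$ are adjacent is $\mbf d[x,t]\mbf d[y,s]/m[s,t]\cdot(1+O((pN)^{-3/7}))$. Hence
\[
\mb E\, m'[sb,tc]\approx \ell[sb,t]\,\ell[tc,s]/m[s,t],
\]
which matches $m^{\mr{loc}}$ within the error in \cref{L:edges} after substituting the estimates from $\mc G$. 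The relative error $(pN)^{1/14}N^{-1/2}$ dominates here. Concentration of $m'$ follows from a martingale or switching concentration inequality. Each new degree $\mbf D[x,tb]$ is concentrated around $\mbf d[x,t]\,|\Pi'[tb]|/n[t]$ with hypergeometric-type fluctuations of order $\sqrt{pN}\log N$. Since $|\Pi'[tb]|\approx n^{\mr{loc}}[tb]$, this yields $\kappa_p$ for the refined partition, and hence \cref{L:reg}, by a union bound over the vertices. Finally, \cref{L:sizes} is exactly the second condition in $\mc G$.
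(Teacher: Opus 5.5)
Your architecture is the paper's: a fiber sample followed by a kernel sample, split sizes and masses proved in the row model conditioned on $\mc{I}_\Pi$ with the polynomial exact-total bound from Fourier analysis and Bentkus, and edge splitting from single-edge probabilities. But the step you call the main obstacle is exactly where an idea is needed, and you do not supply one.

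The events you transfer fail in the row model with probability only $N^{-\omega(1)}$; Hoeffding at scale $\sqrt N\log N$ gives $e^{-c\log^2N}$. On general $\kappa_p$-regular arrays, however, the ratio $\msf{G}_{\Pi,m}(\mbf d)/\msf{R}_{\Pi,\lambda}(\mbf d\mid m(\Pi,\mbf D)=m)$ is controlled only up to $\exp(\pm O((pN)^{2/7}))$ (\Cref{lem:local-enumeration-comparison}\ref{lem:local-enumeration-comparison:weak}). So you need a bounded ratio on an event of probability $1-o(1)$ \emph{under the graphical fiber law}. That event cannot come from the bounded-ratio transfer, which presupposes it, and saying the variance terms are ``concentrated in both laws'' only restates the problem. (Incidentally, the enumeration factor need not tend to $1$: under the history conditioning the row variances are about $pN\,\Sigma_s[t,t]$ rather than $pN\nu[t]$, so it concentrates near a constant. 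Only boundedness is needed.)

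The paper resolves this as follows.
\begin{itemize}
\item Only an \emph{upper} bound $\sum_{x\in\Pi[s]}(\mbf d[x,t]-m[s,t]/n[s])^2\le CpN^2$ is needed to make the Liebenau--Wormald and Canfield--Greenhill--McKay factors bounded.
\item This upper bound fails with probability $e^{-KN}$ in $\mb{G}(V,p)$, for any prescribed $K$ (\Cref{lem:gamma_bound}: a truncated-MGF bound, with a random bisection for internal blocks).
\item The event $\mc{I}_\Pi\cap\kappa_p\cap\{m(\Pi,\mbf D)=m\}$ has $\mb{G}(V,p)$-probability at least $e^{-C_5N}$. This is shown by counting an explicit family of arrays whose entries lie within $c\sqrt{pN}$ of the block averages; these satisfy $\mc{I}_\Pi$ by \cref{LA:separation}, and the strong comparison applies to them.
\end{itemize}
Together these give \Cref{lem:local-gamma-concentration} with error $e^{-N}$. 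That error survives the crude factor $\exp(O((pN)^{2/7}))$, which also yields the row-model counterpart \Cref{lem:local-row-gamma}. Alternatively, you could prove an $e^{-cN}$ tail directly in the row model, where the conditioned rows are independent, pay the polynomial exact-total factor, and push it to the graphical law with the crude factor. Either way, an exponential-versus-subexponential comparison of this kind is the missing idea.

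The kernel step has two further concrete problems.
\begin{itemize}
\item The single-edge precision you quote, relative error $O((pN)^{-3/7})$, does not give \cref{L:edges}. Summed over $\Theta(N^2)$ pairs it permits a bias of order $(pN)^{-3/7}N^2p$, which exceeds the tolerance $N^2p\sqrt{(pN)^{1/7}/N}$ by a factor $p^{-1/2}$; this contradicts your own remark that the error $(pN)^{1/14}N^{-1/2}$ dominates. With the actual degrees $\mbf d[x,t]\mbf d[y,s]$ in the main term, the relative error on $\kappa_p$-regular sequences is $O((pN)^{1/7}/N)$, from the quadratic correction in the corrected expansion (see the remark following the statement of \Cref{lem:edge_enum}). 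You also need a matching estimate for pairs of disjoint edges. The resulting variance $O(N^3p^2(pN)^{1/7})$, together with Chebyshev, is what produces the tolerance in \cref{L:edges}.
\item \cref{L:reg} requires an $\exp(-c\tau^2)$ tail for the degree of a vertex into a fixed subset of a uniformly random degree-constrained (bi)graph, uniform enough to union-bound over $N$ vertices. The enumeration-based bound available here (\Cref{thm:nice_deg}) needs all input degrees within $\sqrt{pN}\log N$ of their means, which $\kappa_p$ does not provide. That is why the paper's good set includes \cref{R:degrees} at scale $\sqrt{pN}(\log N)^{2/3}$. It transfers from the row model by Chernoff exactly like your split estimates, but your $\mc{G}$ must contain it, and your ``hypergeometric-type fluctuations'' claim needs such a tool behind it.
\end{itemize}
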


\Cref{thm:local-coarse-transition} is the only place in the argument where the degree-constrained random graph models have to be analyzed: its proof is where the transference machinery of \Cref{app:local-transference}, which controls the fiber law $\Lambda_y^{(p)}$, and the degree-constrained graph estimates of \Cref{app:graph-enumeration-local}, which control the kernel $K_k$, enter; both rest on the imported enumeration theorems and elementary estimates collected in \Cref{app:tools}. Once the theorem is available, the remainder of the analysis is independent of these tools: it consists of deterministic estimates showing that the local template of \Cref{def:local-template} propagates the quantities we track, together with an induction over days through \Cref{prop:coarse-one-step}. The rest of this section is devoted to the proof of \Cref{thm:local-coarse-transition}.

\subsection{Ambient degree arrays and auxiliary laws}

We next introduce the auxiliary objects used in the proof of
\Cref{thm:local-coarse-transition}. These objects are not part of the state
space $\mc{S}_k(V)$.
They separate an ambient model of independent binomial rows, the row model $\msf{R}_{\Pi,\lambda}$ of \Cref{def:aux-degree-laws} below, from the graphical
degree-array model appearing in the fiber law $\Lambda_y^{(p)}$.

\begin{definition}[Ambient degree arrays]\label{def:ambient-degree-arrays}
For $\Pi\in\mr{Part}_k(V)$, define
\[
\mc{A}(\Pi):=\left\{\mbf{d}\in\mb{Z}^{V\times\{0,1\}^k}:0\le \mbf{d}[x,t]\le |\Pi[t]|-\one_{x\in\Pi[t]}\text{ for all }x\in V,\ t\in\{0,1\}^k\right\}.
\]
For $\mbf{d}\in\mc{A}(\Pi)$, let $\mc{I}_\Pi(\mbf{d})$ denote the assertion that
\[
\mc{I}_s(\mbf{d}[x,\bullet])
\]
holds for every $s\in\{0,1\}^k$ and every $x\in\Pi[s]$.
For $C>0$, $p\in(0,1)$, and $m\in\mc{M}(\Pi)$, let $\Gamma_{C,p}(\Pi,m,\mbf{d})$ denote the assertion that, writing $N:=|V|$,
\[
\frac{1}{pN^2}\sum_{x\in\Pi[s]}\left(\mbf{d}[x,t]-\frac{m[s,t]}{|\Pi[s]|}\right)^2\le C
\]
for every $s,t\in\{0,1\}^k$.
\end{definition}

The ambient space $\mc{A}(\Pi)$ should be distinguished from the graphical space $\mc{D}(\Pi)$. The independent binomial model below lives on $\mc{A}(\Pi)$; graphicality enters through comparison with the graphical law and through a support-compatibility lemma, \Cref{lem:auto-graphicality}.

\begin{definition}[Auxiliary degree-array laws]\label{def:aux-degree-laws}
Fix $\Pi\in\mr{Part}_k(V)$ and $m\in\mc{M}(\Pi)$. The event $m(\Pi,\delta(\Pi,G))=m$ has positive probability for $G\sim\mb{G}(V,p)$, because the constraints defining $\mc{M}(\Pi)$ are exactly those that make the edge counts $m[s,s]/2$ inside $\Pi[s]$ and $m[s,t]$ between $\Pi[s]$ and $\Pi[t]$ realizable by a simple graph. Let $\msf{G}_{\Pi,m}$ be the law of $\delta(\Pi,G)$ conditioned on
\[
m(\Pi,\delta(\Pi,G))=m.
\]
We regard $\msf{G}_{\Pi,m}$ as a probability measure on $\mc{A}(\Pi)$, supported on $\mc{D}(\Pi)\cap\{\mbf{d}:m(\Pi,\mbf{d})=m\}$. This law does not depend on the value of $p\in(0,1)$.
Equivalently, $\msf{G}_{\Pi,m}$ is the product, over the block pairs of $\Pi$, of the degree-array laws obtained from a uniformly random simple graph on $\Pi[s]$ with $m[s,s]/2$ edges for each internal block pair $\{s,s\}$, and from a uniformly random simple bipartite graph between $\Pi[s]$ and $\Pi[t]$ with $m[s,t]$ edges for each bipartite block pair $\{s,t\}$.

For $\lambda\in(0,1)^{\{0,1\}^k\times\{0,1\}^k}$, let $\msf{R}_{\Pi,\lambda}$ be the product law on $\mc{A}(\Pi)$ under which, independently for each $x\in\Pi[s]$ and each $t\in\{0,1\}^k$,
\[
\mbf{D}[x,t]\sim\mr{Bin}\left(|\Pi[t]|-\one_{s=t},\lambda[s,t]\right).
\]
We call $\msf{R}_{\Pi,\lambda}$ the \emph{row model}: under it the rows $\mbf{D}[x,\bullet]$ are independent, and the law of the row of $x\in\Pi[s]$ depends only on the block index $s$.
\end{definition}

The following fact, used in \Cref{sec:local} and in \Cref{app:local-transference}, says that conditioning the row model on the history constraints preserves its product structure. Here $A_{s,\bullet}$ is the random row vector of \Cref{def:local-admissibility}, whose law under $\msf{R}_{\Pi,\lambda}$ is that of $\mbf{D}[x,\bullet]$ for any $x\in\Pi[s]$.

\begin{fact}[Rows of the conditioned row model]\label{fact:conditioned-rows}
Let $\Pi\in\mr{Part}_k(V)$ and $\lambda\in(0,1)^{\{0,1\}^k\times\{0,1\}^k}$, and suppose $\mb{P}[\mc{I}_s(A_{s,\bullet})]>0$ for every $s\in\{0,1\}^k$. Under $\msf{R}_{\Pi,\lambda}(\cdot\mid\mc{I}_\Pi)$, the rows $\mbf{D}[x,\bullet]$, $x\in V$, are independent, and for $x\in\Pi[s]$ the row $\mbf{D}[x,\bullet]$ has the law of $A_{s,\bullet}$ conditioned on $\mc{I}_s(A_{s,\bullet})$. In particular, for each $s$ the rows $(\mbf{D}[x,\bullet])_{x\in\Pi[s]}$ are $n[s]$ independent copies of $A_{s,\bullet}\mid\mc{I}_s(A_{s,\bullet})$.
\end{fact}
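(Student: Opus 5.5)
The plan is to compute the conditioned law directly, since $\msf{R}_{\Pi,\lambda}$ is an explicit product measure and $\mc{I}_\Pi$ is an intersection of events that each involve a single row. Under $\msf{R}_{\Pi,\lambda}$ the entries $\mbf{D}[x,t]$ are mutually independent, so the rows $\mbf{D}[x,\bullet]$, $x\in V$, are independent random vectors. Moreover, for $x\in\Pi[s]$ the row of $x$ has independent coordinates distributed as $\mr{Bin}(|\Pi[t]|-\one_{s=t},\lambda[s,t])$. This is exactly the law of $A_{s,\bullet}$ from \Cref{def:local-admissibility}, with $n[t]=|\Pi[t]|$.

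The key step is to factor the conditioning event row by row:
\[
\mc{I}_\Pi(\mbf{D})=\bigcap_{s\in\{0,1\}^k}\bigcap_{x\in\Pi[s]}E_x,\qquad E_x:=\{\mc{I}_s(\mbf{D}[x,\bullet])\},
\]
where each event $E_x$ depends only on the row $\mbf{D}[x,\bullet]$. By independence of the rows, $\msf{R}_{\Pi,\lambda}[\mc{I}_\Pi]=\prod_{s}\mb{P}[\mc{I}_s(A_{s,\bullet})]^{|\Pi[s]|}$. This is positive by hypothesis, so the conditional law is well defined.

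Now fix arbitrary sets $B_x$ of row values, one for each $x\in V$. Independence of the rows gives
\[
\msf{R}_{\Pi,\lambda}\Bigl[\bigcap_x\{\mbf{D}[x,\bullet]\in B_x\}\Bigm|\mc{I}_\Pi\Bigr]
=\prod_{s}\prod_{x\in\Pi[s]}\frac{\mb{P}[A_{s,\bullet}\in B_x,\ \mc{I}_s(A_{s,\bullet})]}{\mb{P}[\mc{I}_s(A_{s,\bullet})]}
=\prod_{s}\prod_{x\in\Pi[s]}\mb{P}[A_{s,\bullet}\in B_x\mid\mc{I}_s(A_{s,\bullet})].
\]
Because the joint law factorizes over $x$ with the stated marginals, the rows are independent under the conditioned measure. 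For $x\in\Pi[s]$, the row of $x$ has law $A_{s,\bullet}\mid\mc{I}_s(A_{s,\bullet})$. The final assertion is then immediate.

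I expect no real obstacle here. The only point needing care is the observation that $\mc{I}_\Pi$ is a product event across rows, since no constraint couples distinct vertices. That holds because $\mc{I}_s$ is applied to each row separately.
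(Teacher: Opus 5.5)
Your proposal is correct and follows essentially the same route as the paper: you factor $\mc{I}_\Pi$ into one event per row and use the product structure of $\msf{R}_{\Pi,\lambda}$ to write the conditioned law as a product of the conditioned row laws. The paper works with point probabilities $\msf{R}_{\Pi,\lambda}(\mbf{d}\mid\mc{I}_\Pi)$ rather than your product events $\bigcap_x\{\mbf{D}[x,\bullet]\in B_x\}$, which is equivalent on this finite space.
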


\begin{proof}
The event $\mc{I}_\Pi=\bigcap_{x\in V}\mc{I}_{s(x)}(\mbf{D}[x,\bullet])$, where $s(x)$ denotes the block index of $x$, is an intersection of one event per row, each depending on that row alone, and it has positive probability by the hypothesis. Since $\msf{R}_{\Pi,\lambda}$ is a product over rows, for arrays $\mbf{d}\in\mc{A}(\Pi)$ satisfying $\mc{I}_\Pi(\mbf{d})$,
\[
\msf{R}_{\Pi,\lambda}\left(\mbf{d}\mid\mc{I}_\Pi\right)
=\frac{\prod_{x\in V}\mb{P}\left[A_{s(x),\bullet}=\mbf{d}[x,\bullet]\right]}{\prod_{x\in V}\mb{P}\left[\mc{I}_{s(x)}(A_{s(x),\bullet})\right]}
=\prod_{x\in V}\mb{P}\left[A_{s(x),\bullet}=\mbf{d}[x,\bullet]\ \middle|\ \mc{I}_{s(x)}(A_{s(x),\bullet})\right],
\]
and $\msf{R}_{\Pi,\lambda}(\mbf{d}\mid\mc{I}_\Pi)=0$ when $\mc{I}_\Pi(\mbf{d})$ fails, which is also the value of the right-hand side. The right-hand side is a product law with the stated factors.
\end{proof}

\begin{proposition}[Fiber law in auxiliary notation]\label{prop:fiber-law-auxiliary}
Let $p\in(0,1)$ and let $y=(\Pi,m,1)\in\mc{Y}_k(V)$ be $p$-attainable.
If $\sigma\sim\Lambda_y^{(p)}$ and $\mbf{D}:=\on{deg}(\sigma)$, then $\mbf{D}$, viewed as an $\mc{A}(\Pi)$-valued random array, has law
\[
\msf{G}_{\Pi,m}\left(\cdot\mid \mc{I}_\Pi,\kappa_p(\Pi,\cdot)\right).
\]
Equivalently, for every event $E\subseteq\mc{A}(\Pi)$,
\[
\Lambda_y^{(p)}\left(\left\{\sigma:\on{deg}(\sigma)\in E\right\}\right)
=
\msf{G}_{\Pi,m}\left(E\mid \mc{I}_\Pi,\kappa_p(\Pi,\cdot)\right).
\]
\end{proposition}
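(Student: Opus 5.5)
The identity is a direct unwinding of definitions. Both sides are conditionings of the single random array $\delta(\Pi,G)$ for $G\sim\mb G(V,p)$, so the plan is to show that the conditioning events agree and that conditioning in stages gives the same result.

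\emph{Identifying the event.} Write $\widetilde\sigma=(\Pi,\delta(\Pi,G))$. By definition, $\Lambda_y^{(p)}$ is the law of $\widetilde\sigma$ conditioned on $F:=\{\widetilde\sigma\in\mc S_k(V),\ \rho_k^{(p)}(\widetilde\sigma)=y\}$. Put $\mbf D=\delta(\Pi,G)$. Then $\mbf D\in\mc D(\Pi)$ automatically, and $\mbf D\in\mc A(\Pi)$ since degrees into $\Pi[t]$ are at most $|\Pi[t]|-\one_{x\in\Pi[t]}$. Hence $\widetilde\sigma\in\mc S_k(V)$ holds if and only if $\mc I_\Pi(\mbf D)$ holds. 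Moreover, $\rho_k^{(p)}(\widetilde\sigma)=y=(\Pi,m,1)$ holds if and only if $m(\Pi,\mbf D)=m$ and $\kappa_p(\Pi,\mbf D)$ holds, because the partition coordinate matches trivially. Therefore
\[
F=\{m(\Pi,\mbf D)=m\}\cap\{\mc I_\Pi(\mbf D)\}\cap\{\kappa_p(\Pi,\mbf D)\}.
\]

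\emph{Conditioning in stages.} By the definition of $\msf G_{\Pi,m}$, the law of $\mbf D$ conditioned on $E_0:=\{m(\Pi,\mbf D)=m\}$ is $\msf G_{\Pi,m}$; here $\mb P[E_0]>0$, as noted in \Cref{def:aux-degree-laws}. Since $F\subseteq E_0$ and $\mb P[F]>0$ by $p$-attainability, as recorded after the definition of the fiber law, the elementary identity $\mb P[\,\cdot\mid F]=\mb P[\,\cdot\cap F\mid E_0]/\mb P[F\mid E_0]$ gives, for every $E\subseteq\mc A(\Pi)$,
\[
\mb P[\mbf D\in E\mid F]=\msf G_{\Pi,m}\bigl(E\mid\mc I_\Pi,\kappa_p(\Pi,\cdot)\bigr).
\]
The left side equals $\Lambda_y^{(p)}(\{\sigma:\on{deg}(\sigma)\in E\})$ because $\on{deg}(\widetilde\sigma)=\mbf D$. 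This proves the claim. The same computation also shows that the conditioning event on the right has positive $\msf G_{\Pi,m}$-probability, since its probability is $\mb P[F]/\mb P[E_0]>0$.

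\emph{Main obstacle.} There is no substantive obstacle. The only care needed is the event identification: the $\mc S_k$-membership condition reduces exactly to $\mc I_\Pi$ because graphicality is automatic for $\delta(\Pi,G)$, and the partition component of $\rho_k^{(p)}(\widetilde\sigma)$ is fixed to be $\Pi$.
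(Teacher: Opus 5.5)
Your proposal is correct and follows the paper's proof essentially verbatim: you identify the conditioning event as $\{m(\Pi,\mbf D)=m\}\cap\mc I_\Pi\cap\kappa_p(\Pi,\cdot)$, using that graphicality of $\delta(\Pi,G)$ is automatic, and then pass to $\msf G_{\Pi,m}$ by dividing numerator and denominator by $\mb P[m(\Pi,\mbf D)=m]>0$. Your added remark that the conditioning event has positive $\msf G_{\Pi,m}$-probability is a small but welcome explicit check.
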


\begin{proof}
By definition, $\Lambda_y^{(p)}$ is obtained by sampling $G\sim\mb{G}(V,p)$, forming
\[
\widetilde\sigma_y(G)=(\Pi,\delta(\Pi,G)),
\]
and conditioning on
\[
\widetilde\sigma_y(G)\in\mc{S}_k(V),\qquad \rho_k^{(p)}(\widetilde\sigma_y(G))=y.
\]
Writing $\mbf{D}=\delta(\Pi,G)$, the first condition says exactly that $\mbf{D}\in\mc{D}(\Pi)$ and $\mc{I}_\Pi(\mbf{D})$ holds, and the condition $\mbf{D}\in\mc{D}(\Pi)$ is automatic because $\mbf{D}$ is the degree array of the graph $G$. Since $y=(\Pi,m,1)$, the second condition imposes
\[
m(\Pi,\mbf{D})=m,\qquad \kappa_p(\Pi,\mbf{D}).
\]
Thus $\Lambda_y^{(p)}$ is the law of $(\Pi,\mbf{D})$ conditioned on the single event $\{m(\Pi,\mbf{D})=m\}\cap\mc{I}_\Pi\cap\kappa_p(\Pi,\cdot)$. Writing $\mc{B}:=\mc{I}_\Pi\cap\kappa_p(\Pi,\cdot)$ and $\{m\}:=\{\mbf{d}\in\mc{A}(\Pi):m(\Pi,\mbf{d})=m\}$, we therefore have, for every $E\subseteq\mc{A}(\Pi)$,
\begin{align*}
\Lambda_y^{(p)}\left(\left\{\sigma:\on{deg}(\sigma)\in E\right\}\right)
&=\frac{\mb{P}\left[\mbf{D}\in E\cap\{m\}\cap\mc{B}\right]}{\mb{P}\left[\mbf{D}\in\{m\}\cap\mc{B}\right]}
=\frac{\mb{P}\left[\mbf{D}\in E\cap\mc{B}\mid \mbf{D}\in\{m\}\right]}{\mb{P}\left[\mbf{D}\in\mc{B}\mid \mbf{D}\in\{m\}\right]}\\
&=\frac{\msf{G}_{\Pi,m}(E\cap\mc{B})}{\msf{G}_{\Pi,m}(\mc{B})}
=\msf{G}_{\Pi,m}\left(E\mid\mc{B}\right),
\end{align*}
where the second equality divides numerator and denominator by $\mb{P}[\mbf{D}\in\{m\}]>0$, and the third is the definition of $\msf{G}_{\Pi,m}$ as the law of $\mbf{D}=\delta(\Pi,G)$ conditioned on $\{m\}$ (\Cref{def:aux-degree-laws}).
\end{proof}

\subsection{Local transference for a coarse fiber}

The purpose of this subsection is the following transference principle: an event whose failure probability under the independent row model $\msf{R}_{\Pi,\lambda}$, conditioned on the history constraints and on the exact block-pair edge counts, is small also has small failure probability under the fiber law $\Lambda_y^{(p)}$, up to a constant factor and an additive $e^{-N}$. This is what allows all subsequent probabilistic computations to be carried out in the row model, where the rows are independent, rather than in the graphical model. The proof relies on the enumeration comparison and the second-moment regularity results of \Cref{app:local-transference}.

\begin{proposition}[Local degree-array transference]\label{prop:local-degree-transference}
Fix $\theta\in(1/2,1)$, an integer $k\ge1$, and constants $T>1$ and $\phi\in(0,1/2)$. There is a constant $\Cst{prop:local-degree-transference}=\Cst{prop:local-degree-transference}(\theta,k,T,\phi)>0$ such that for all sufficiently large $N$ the following holds. Let $V$ be a finite set with $|V|=N$, let
\[
p\in(T^{-1}N^{-\theta},TN^{-\theta}),
\]
and let $y=(\Pi,m,1)\in\mc{Y}_k(V)$ be $p$-attainable. Suppose that $(y,\lambda)$ is $(T,\phi,p)$-locally admissible. Then for every event $E\subseteq\mc{A}(\Pi)$,
\[
\Lambda_y^{(p)}\left(\sigma:\on{deg}(\sigma)\notin E\right)\le e^{-N}+\Cst{prop:local-degree-transference}\,\msf{R}_{\Pi,\lambda}\left(E^c\mid \mc{I}_\Pi,\ m(\Pi,\mbf{D})=m\right).
\]
In particular, if $\msf{R}_{\Pi,\lambda}(E\mid \mc{I}_\Pi,\ m(\Pi,\mbf{D})=m)=1-o(1)$, then $\Lambda_y^{(p)}(\sigma:\on{deg}(\sigma)\in E)=1-o(1)$.
\end{proposition}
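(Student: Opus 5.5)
By \Cref{prop:fiber-law-auxiliary}, the left-hand side equals $\msf{G}_{\Pi,m}(E^c\mid\mc{I}_\Pi,\kappa_p)$. The plan is to compare this pointwise with $\msf{R}_{\Pi,\lambda}(\cdot\mid\mc{I}_\Pi,\,m(\Pi,\mbf{D})=m)$ on a set of well-behaved arrays. Let $\mc{B}$ be the set of $\mbf{d}\in\mc{A}(\Pi)$ satisfying $\mc{I}_\Pi(\mbf{d})$, $m(\Pi,\mbf{d})=m$, $\kappa_p(\Pi,\mbf{d})$, and the second-moment bound $\Gamma_{C,p}(\Pi,m,\mbf{d})$ for a suitable constant $C=C(\theta,k,T,\phi)$. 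Then
\[
\msf{G}_{\Pi,m}(E^c\mid\mc{I}_\Pi,\kappa_p)\le \msf{G}_{\Pi,m}(\mc{B}^c\mid \mc{I}_\Pi,\kappa_p)+\msf{G}_{\Pi,m}(E^c\cap\mc{B}\mid\mc{I}_\Pi,\kappa_p),
\]
and I will bound the two terms separately.

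\emph{Pointwise comparison.} For an array $\mbf{d}\in\mc{B}$, the enumeration results of McKay--Wormald, Canfield--Greenhill--McKay and Liebenau--Wormald, as collected in \Cref{app:tools} and \Cref{app:local-transference}, apply blockwise. Graphicality is automatic on the regular range (\Cref{lem:auto-graphicality}), and \Cref{fact:graphical-blockwise} lets us work one block pair at a time. On each block pair they express the probability of $\mbf{d}$ under the uniform graph with prescribed edge count as the binomial product probability (with parameter the block-pair edge density) times $\exp(\text{quadratic correction}+o(1))$. The quadratic correction is a function of the centered second moments $\sum_{x}(\mbf{d}[x,t]-m[s,t]/|\Pi[s]|)^2/(pN^2)$ and is therefore bounded on $\Gamma_{C,p}$.

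Once $m(\Pi,\mbf{d})=m$ is fixed, replacing the block-pair density by the tilt $\lambda[s,t]$ changes each binomial factor by a factor depending only on the totals $m[s,t]$, not on $\mbf{d}$. Hence on $\mc{B}$ we get $\msf{G}_{\Pi,m}(\mbf{d})=c_{\Pi,m}\,\msf{R}_{\Pi,\lambda}(\mbf{d})\,e^{O(1)}$ for a normalizing constant $c_{\Pi,m}$ independent of $\mbf{d}$. Summing over $E^c\cap\mc{B}$ and over $\mc{B}$ gives
\[
\msf{G}_{\Pi,m}(E^c\cap\mc{B}\mid\mc{I}_\Pi,\kappa_p)\le e^{O(1)}\frac{\msf{R}_{\Pi,\lambda}(E^c\cap\mc{B}\mid\mc{I}_\Pi,m)}{\msf{R}_{\Pi,\lambda}(\mc{B}\mid \mc{I}_\Pi,m)}\cdot\frac{\msf{G}_{\Pi,m}(\mc{B}\mid\mc{I}_\Pi)}{\msf{G}_{\Pi,m}(\kappa_p\mid\mc{I}_\Pi)}.
\]
The last ratio is at most $1$, since $\mc{B}\subseteq\kappa_p$. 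So it remains to show that $\msf{R}_{\Pi,\lambda}(\mc{B}\mid\mc{I}_\Pi,m)\ge 1/2$.

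\emph{Typicality under the row model.} By \Cref{fact:conditioned-rows}, under $\msf{R}_{\Pi,\lambda}(\cdot\mid\mc{I}_\Pi)$ the rows are independent with nondegenerate conditioning (\cref{LA:conditioning}), and \cref{LA:solvability} makes $m$ the conditional mean of the totals. A Chernoff bound gives $\kappa_p$ and $\Gamma_{C,p}$ with failure probability $e^{-\omega(N)}$ for $\kappa_p$ and $e^{-cN}$ for $\Gamma$ with $C$ large. To transfer these bounds to the conditioning on the exact totals $m(\Pi,\mbf{D})=m$, I need the local limit lower bound $\msf{R}_{\Pi,\lambda}(m(\Pi,\mbf{D})=m\mid\mc{I}_\Pi)\ge N^{-O(1)}$. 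This comes from a Fourier local limit argument using Bentkus' multidimensional Berry--Esseen bound, with \cref{LA:edge-scale} and \cref{LA:tilt} supplying variance nondegeneracy. Dividing the exponentially small failure probabilities by this polynomial lower bound keeps them $e^{-cN}$.

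\emph{The term $\msf{G}_{\Pi,m}(\mc{B}^c\mid\mc{I}_\Pi,\kappa_p)$.} This needs the second-moment regularity results of \Cref{app:local-transference} inside the graph model: $\Gamma_{C,p}$ fails with probability $e^{-2N}$ under $\msf{G}_{\Pi,m}$, using edge-switching or martingale concentration for the uniform graph with fixed edge count. Dividing by $\msf{G}_{\Pi,m}(\mc{I}_\Pi,\kappa_p)$, I need this probability to be at least $e^{-N/2}$. I would obtain that lower bound by running the pointwise comparison in reverse, on $\mc{B}$, against the row-model estimate just proved.

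\emph{Main obstacle.} The delicate point is the circular normalization: the comparison constant $c_{\Pi,m}$ must be pinned down, and $\msf{G}_{\Pi,m}(\mc{I}_\Pi)$ must be shown not to be exponentially small. This rests on the polynomial local limit lower bound for the exact totals, which must hold uniformly across $k$ and all admissible tilts. I expect this to be the hardest step.
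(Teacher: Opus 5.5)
Your architecture is the paper's: restrict to $\Gamma_{C,p}$, compare pointwise through the strong enumeration comparison, then apply Bayes' rule in the row model together with the polynomial exact-total bound. Two of your substitutions are sound.
\begin{itemize}
\item You prove $\msf{R}_{\Pi,\lambda}(\kappa_p\cap\Gamma_{C,p}\mid\mc{I}_\Pi,\ m(\Pi,\mbf{D})=m)\ge\frac12$ directly in the row model. The paper instead imports the $\Gamma$ part from the graph side through the weak comparison (\Cref{lem:local-row-gamma}).
\item You lower-bound $\msf{G}_{\Pi,m}(\mc{I}_\Pi\cap\kappa_p)$ by running the strong comparison backwards. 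The paper instead constructs and counts an explicit family of arrays (\Cref{lem:local-gamma-concentration}).
\end{itemize}

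The gap is in the remaining term $\msf{G}_{\Pi,m}(\Gamma_{C,p}^c\mid\mc{I}_\Pi,\kappa_p)$. The bound you say you need, $\msf{G}_{\Pi,m}(\mc{I}_\Pi\cap\kappa_p)\ge e^{-N/2}$, is not available. For $k\ge2$, every vertex must meet its $k-1$ history constraints, each with probability bounded away from $1$ (and near $\frac12$ when the margin allowed by \cref{LA:separation} is small). So this probability decays like $e^{-cN}$, with a rate the hypotheses do not keep below $\frac12$. Indeed, your reverse comparison certifies only $e^{-O(1)}\cdot\frac12\,\phi^N(N^2p)^{-O(1)}$ (implied constant depending on $C$), which is rate $\log(1/\phi)>\log2>\frac12$.

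Hence a numerator of $e^{-2N}$ proves nothing. The ``main obstacle'' you name, showing that $\msf{G}_{\Pi,m}(\mc{I}_\Pi)$ is not exponentially small, cannot be overcome, because it is exponentially small. The missing idea is the order of constants. The denominator rate ($\log(1/\phi)+1$ in your version, $C_5$ in the paper) does not depend on the $\Gamma$-constant $C$; only the threshold for $N$ does. So one proves $\msf{G}_{\Pi,m}(\Gamma_{C,p}^c\cap\kappa_p)\le e^{-KN}$ with $K\to\infty$ as $C\to\infty$, and fixes $C$ last.

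Your suggested mechanism for that numerator (switchings or martingale concentration in the fixed-edge-count model) does not give such a rate as it stands. On $\kappa_p$, the statistic $\sum_x(\mbf{d}[x,t]-m[s,t]/|\Pi[s]|)^2$ has per-edge Lipschitz constant of order $(pN)^{4/7}$. A bounded-differences bound over about $pN^2$ edges therefore gives only $\exp(-cC^2N(pN)^{-1/7})=e^{-o(N)}$, not even $e^{-2N}$.

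The truncation by $\kappa_p$ is also essential, not cosmetic. Without it, a single vertex of degree about $\sqrt{Cp}\,N$ violates $\Gamma_{C,p}$ with probability $e^{-o(N)}$ in either model. Your stated row-model rates are off for the same reasons: $\kappa_p$ fails with probability about $Ne^{-c(pN)^{1/7}}$, not $e^{-\omega(N)}$, and the $\Gamma$ bound needs the truncation. This is harmless there, since you only need $\frac12$ after dividing by a polynomial.

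The paper gets $e^{-KN}$ by working in $\mb{G}(V,p)$:
\begin{itemize}
\item The exact block-pair edge counts have probability at least $e^{-C_1N}$, by an elementary binomial point-probability bound.
\item One-sided degrees in a bipartite block pair are independent binomials, and a random bisection reduces internal block pairs to that situation.
\item A truncated exponential-moment estimate on the event supplied by $\kappa_p$ bounds the failure probability by $\exp(-(c_TC-O(1))N)$ (\Cref{lem:gamma_bound}).
\end{itemize}
Dividing by the probability of the exact counts and by the denominator above then gives $e^{-N}$ once $C$ is large.
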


\begin{proof}
Throughout the proof we abbreviate $\Gamma:=\Gamma_{\Cof{lem:local-gamma-concentration}{\theta,k,T,\phi},p}(\Pi,m,\cdot)$, $\kappa_p:=\kappa_p(\Pi,\cdot)$, and $\{m\}:=\{\mbf{D}\in\mc{A}(\Pi):m(\Pi,\mbf{D})=m\}$. By \Cref{prop:fiber-law-auxiliary}, it suffices to prove
\begin{equation}\label{eq:transf-goal}
\msf{G}_{\Pi,m}\left(E^c\mid \mc{I}_\Pi\cap\kappa_p\right)\le e^{-N}+\Cst{prop:local-degree-transference}\,\msf{R}_{\Pi,\lambda}\left(E^c\mid\mc{I}_\Pi,\{m\}\right)
\end{equation}
for a suitable constant $\Cst{prop:local-degree-transference}$.

\emph{Step 1: restriction to $\Gamma$.} By \Cref{lem:local-gamma-concentration},
\begin{equation}\label{eq:transf-gamma}
\msf{G}_{\Pi,m}\left(\Gamma^c\mid \mc{I}_\Pi\cap\kappa_p\right)\le e^{-N}.
\end{equation}
Since $E^c\subseteq\Gamma^c\cup(E^c\cap\Gamma)$, and since $\msf{G}_{\Pi,m}(\mc{I}_\Pi\cap\kappa_p)\ge\msf{G}_{\Pi,m}(\Gamma\cap\mc{I}_\Pi\cap\kappa_p)$, Bayes' rule and \eqref{eq:transf-gamma} give
\begin{equation}\label{eq:transf-split}
\msf{G}_{\Pi,m}\left(E^c\mid \mc{I}_\Pi\cap\kappa_p\right)
\le e^{-N}+\frac{\msf{G}_{\Pi,m}\left(E^c\cap\Gamma\cap\mc{I}_\Pi\cap\kappa_p\right)}{\msf{G}_{\Pi,m}\left(\mc{I}_\Pi\cap\kappa_p\right)}
\le e^{-N}+\frac{\msf{G}_{\Pi,m}\left(E^c\cap\Gamma\cap\mc{I}_\Pi\cap\kappa_p\right)}{\msf{G}_{\Pi,m}\left(\Gamma\cap\mc{I}_\Pi\cap\kappa_p\right)}.
\end{equation}

\emph{Step 2: comparison with the row model.} The measure $\msf{G}_{\Pi,m}$ is supported on $\{m\}$, so the numerator and denominator in \eqref{eq:transf-split} are sums of $\msf{G}_{\Pi,m}(\mbf{d})$ over arrays $\mbf{d}\in\{m\}\cap\kappa_p\cap\Gamma$. By \cref{LA:sizes}, \cref{LA:edge-scale}, and \Cref{lem:auto-graphicality}, every such array is graphical, and the restriction to $\Gamma$ in Step 1 is exactly what makes the bounded-ratio comparison \Cref{lem:local-enumeration-comparison}\ref{lem:local-enumeration-comparison:strong} available for every such array (on $\kappa_p$ alone only the weak comparison \ref{lem:local-enumeration-comparison:weak} with the factor $\exp\{O((pN)^{2/7})\}$ holds). Write $C_1:=\Cofp{lem:local-enumeration-comparison}{\theta,k,T,\Cof{lem:local-gamma-concentration}{\theta,k,T,\phi}}$ for the constant of that comparison. Summing the upper bound $\msf{G}_{\Pi,m}(\mbf{d})\le C_1\msf{R}_{\Pi,\lambda}(\mbf{d}\mid\{m\})$ over the arrays in the numerator and the lower bound $\msf{G}_{\Pi,m}(\mbf{d})\ge C_1^{-1}\msf{R}_{\Pi,\lambda}(\mbf{d}\mid\{m\})$ over the arrays in the denominator,
\begin{equation}\label{eq:transf-compare}
\frac{\msf{G}_{\Pi,m}\left(E^c\cap\Gamma\cap\mc{I}_\Pi\cap\kappa_p\right)}{\msf{G}_{\Pi,m}\left(\Gamma\cap\mc{I}_\Pi\cap\kappa_p\right)}
\le C_1^2\,
\frac{\msf{R}_{\Pi,\lambda}\left(E^c\cap\Gamma\cap\mc{I}_\Pi\cap\kappa_p\mid\{m\}\right)}{\msf{R}_{\Pi,\lambda}\left(\Gamma\cap\mc{I}_\Pi\cap\kappa_p\mid\{m\}\right)}.
\end{equation}

\emph{Step 3: Bayes' rule in the row model.} The event $\mc{I}_\Pi\cap\{m\}$ has positive $\msf{R}_{\Pi,\lambda}$-probability by \Cref{lem:local-exact-total-lower-bound}, so we may divide the numerator and the denominator of the right-hand side of \eqref{eq:transf-compare} by $\msf{R}_{\Pi,\lambda}(\mc{I}_\Pi\mid\{m\})$, which converts both into probabilities conditional on $\mc{I}_\Pi\cap\{m\}$:
\begin{equation}\label{eq:transf-bayes}
\frac{\msf{R}_{\Pi,\lambda}\left(E^c\cap\Gamma\cap\mc{I}_\Pi\cap\kappa_p\mid\{m\}\right)}{\msf{R}_{\Pi,\lambda}\left(\Gamma\cap\mc{I}_\Pi\cap\kappa_p\mid\{m\}\right)}
=
\frac{\msf{R}_{\Pi,\lambda}\left(E^c\cap\Gamma\cap\kappa_p\mid\mc{I}_\Pi,\{m\}\right)}{\msf{R}_{\Pi,\lambda}\left(\Gamma\cap\kappa_p\mid\mc{I}_\Pi,\{m\}\right)}
\le
\frac{\msf{R}_{\Pi,\lambda}\left(E^c\mid\mc{I}_\Pi,\{m\}\right)}{\msf{R}_{\Pi,\lambda}\left(\Gamma\cap\kappa_p\mid\mc{I}_\Pi,\{m\}\right)}.
\end{equation}
For the denominator, the chain rule together with \Cref{lem:local-binomial-kappa} and \Cref{lem:local-row-gamma} gives
\begin{equation}\label{eq:transf-denominator}
\msf{R}_{\Pi,\lambda}\left(\Gamma\cap\kappa_p\mid\mc{I}_\Pi,\{m\}\right)
=\msf{R}_{\Pi,\lambda}\left(\kappa_p\mid\mc{I}_\Pi,\{m\}\right)\,
\msf{R}_{\Pi,\lambda}\left(\Gamma\mid\mc{I}_\Pi,\kappa_p,\{m\}\right)
=(1-o(1))(1-o(1))\ge\frac12
\end{equation}
for all sufficiently large $N$. Combining \eqref{eq:transf-split}, \eqref{eq:transf-compare}, \eqref{eq:transf-bayes}, and \eqref{eq:transf-denominator},
\[
\msf{G}_{\Pi,m}\left(E^c\mid \mc{I}_\Pi\cap\kappa_p\right)\le e^{-N}+2C_1^2\,\msf{R}_{\Pi,\lambda}\left(E^c\mid\mc{I}_\Pi,\{m\}\right),
\]
which is \eqref{eq:transf-goal} with $\Cst{prop:local-degree-transference}:=2C_1^2=2\,\Cofp{lem:local-enumeration-comparison}{\theta,k,T,\Cof{lem:local-gamma-concentration}{\theta,k,T,\phi}}^2$; this constant depends only on $\theta,k,T,\phi$. The final sentence of the statement follows because the bound holds uniformly in $E$.
\end{proof}

\subsection{Degree-array concentration in a coarse fiber}

In this subsection we prove that a degree array sampled from the fiber law $\Lambda_y^{(p)}$ is typical in the three respects that the kernel splitting step will need: individual degrees are concentrated, the sizes of the parts of the next partition are close to the local template $n^{\mr{loc}}$, and the degree masses carried by these next parts are close to the local template $\ell^{\mr{loc}}$. The proof works in the row model and transfers to the fiber by \Cref{prop:local-degree-transference}. We first name the two statistics of a degree array that determine the next partition and its degree masses.

For $\Pi\in\mr{Part}_k(V)$, $\mbf{d}\in\mc{A}(\Pi)$, $s\in\{0,1\}^k$, and $b\in\{0,1\}$, define
\[
W_{\mbf{d}}[sb]:=\left\{x\in\Pi[s]:\mc{J}_{sb}(\mbf{d}[x,\bullet])\right\}.
\]
For $s,t\in\{0,1\}^k$ and $b\in\{0,1\}$, define
\[
L_{\mbf{d}}[sb,t]:=\sum_{x\in W_{\mbf{d}}[sb]}\mbf{d}[x,t].
\]
For a fine state $(\Pi,\mbf d)$, $W_{\mbf d}[sb]=\Phi_k((\Pi,\mbf d))[sb]$ is the next part, and $L_{\mbf d}[sb,t]$ is its degree mass into $\Pi[t]$, which $K_k$ splits between the next parts $t0$ and $t1$. Both quantities are defined on all of $\mc A(\Pi)$, including arrays that do not arise from a graph, because the row model lives on this ambient space.

\begin{proposition}[Fiber degree-array estimates]\label{prop:fiber-degree-array-estimates}
Fix $\theta\in(1/2,1)$, an integer $k\ge1$, and constants $T>1$ and $\phi\in(0,1/2)$. There exists a constant $\Cst{prop:fiber-degree-array-estimates}=\Cst{prop:fiber-degree-array-estimates}(\theta,k,T,\phi)>0$ such that the following holds for all sufficiently large $N$.

Let $V$ be a finite set with $|V|=N$, let
\[
p\in(T^{-1}N^{-\theta},TN^{-\theta}),
\]
and let $y=(\Pi,m,1)\in\mc{Y}_k(V)$ be $p$-attainable. Suppose that $(y,\lambda)$ is $(T,\phi,p)$-locally admissible, and let $n^{\mr{loc}}$ and $\ell^{\mr{loc}}$ be the local template from \Cref{def:local-template}. Sample
\[
(\Pi,\mbf{d})\sim\Lambda_y^{(p)}.
\]
Then, with probability $1-o(1)$, the following estimates hold simultaneously:
\begin{enumerate}[label=\textup{(R\arabic*)},ref=\textup{(R\arabic*)},leftmargin=*]
\item\label{R:degrees} Individual degrees are concentrated:
\[
\left|\mbf{d}[x,t]-p|\Pi[t]|\right|
\le
\sqrt{pN}\,(\log N)^{2/3}
\]
for every $x\in V$ and $t\in\{0,1\}^k$.
\item\label{R:sizes} The next part sizes follow the local template:
\[
\left||W_{\mbf{d}}[sb]|-n^{\mr{loc}}[sb]\right|
\le
\Cst{prop:fiber-degree-array-estimates}\sqrt N\log N
\]
for every $s\in\{0,1\}^k$ and $b\in\{0,1\}$.
\item\label{R:masses} The degree masses follow the local template:
\[
\left|L_{\mbf{d}}[sb,t]-\ell^{\mr{loc}}[sb,t]\right|
\le
\Cst{prop:fiber-degree-array-estimates}\frac{N^2p}{\sqrt N}\log N
\]
for every $s,t\in\{0,1\}^k$ and $b\in\{0,1\}$.
\end{enumerate}
\end{proposition}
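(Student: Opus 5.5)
By \Cref{prop:local-degree-transference}, it suffices to show that each of \ref{R:degrees}, \ref{R:sizes}, \ref{R:masses} fails with probability $o(1)$ under the conditioned row model $\msf{R}_{\Pi,\lambda}(\cdot\mid\mc{I}_\Pi,\ m(\Pi,\mbf{D})=m)$. Let $E$ be the intersection of the three events. The proposition gives $\Lambda_y^{(p)}(\on{deg}\notin E)\le e^{-N}+\Cst{prop:local-degree-transference}\cdot o(1)$. So everything reduces to concentration in the row model, first conditioned only on $\mc{I}_\Pi$ and then on the exact totals as well.

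\emph{Step 1: row model conditioned on $\mc{I}_\Pi$.} By \Cref{fact:conditioned-rows}, under $\msf{R}_{\Pi,\lambda}(\cdot\mid\mc{I}_\Pi)$ the rows are independent, and the rows in block $s$ are i.i.d.\ copies of $A_{s,\bullet}\mid\mc{I}_s(A_{s,\bullet})$. Conditioning on an event of probability at least $\phi$ (by \cref{LA:conditioning}) inflates tail probabilities by at most $\phi^{-1}$. Chernoff for $\mr{Bin}(n[t]-\one_{s=t},\lambda[s,t])$, together with $|\lambda[s,t]-p|\,n[t]=O(\sqrt{pN})$ from \cref{LA:tilt}, then gives $|\mbf D[x,t]-p|\Pi[t]||\le\sqrt{pN}(\log N)^{2/3}$ with failure probability $\exp(-c(\log N)^{4/3})$ per entry. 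A union bound over the $N2^k$ entries gives failure probability $N^{-\omega(1)}$.

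Next, $|W[sb]|$ is a sum of $n[s]$ i.i.d.\ Bernoulli variables with mean $\mb P[\mc J_{sb}\mid\mc I_s]$, so its expectation is exactly $n^{\mr{loc}}[sb]$. Hoeffding gives deviation at most $\sqrt N\log N$ with failure probability $N^{-\omega(1)}$.

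Similarly, $L[sb,t]$ is a sum of $n[s]$ i.i.d.\ terms $A_{s,t}\one_{\mc J_{sb}}$ with expectation exactly $\ell^{\mr{loc}}[sb,t]$. Each term is at most $N$, and after truncating on the degree event its range is $pN+O(\sqrt{pN}\log N)$. A cleaner route is to center each term at $p|\Pi[t]|\one_{\mc J_{sb}}$: the term becomes $p|\Pi[t]|\one_{\mc{J}_{sb}}+(A_{s,t}-p|\Pi[t]|)\one_{\mc{J}_{sb}}$, and the two pieces are controlled respectively by the size bound (times $pN$) and by Bernstein with variance $O(pN)$ per term. This gives deviation $O(pN\sqrt N\log N)=O(N^2p\log N/\sqrt N)$, which is the scale claimed in \ref{R:masses}.

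\emph{Step 2: adding the exact-total conditioning.} This is the main obstacle, since conditioning on $m(\Pi,\mbf D)=m$ couples the rows. I would bound
\[
\msf R(F\mid\mc I_\Pi,\{m\})\le\frac{\msf R(F\mid\mc I_\Pi)}{\msf R(\{m\}\mid\mc I_\Pi)}
\]
for each failure event $F$. The lower bound $\msf R(\{m\}\mid\mc I_\Pi)\ge N^{-C}$ is the polynomial local-limit estimate \Cref{lem:local-exact-total-lower-bound}, which uses \cref{LA:solvability} to centre the totals at their means and applies Bentkus' Berry--Esseen bound. Since every failure probability in Step 1 is $N^{-\omega(1)}$, or $\exp(-c\log^2N)$ for the Hoeffding and Bernstein bounds once the $\log N$ factors are taken with enough room, dividing by $N^{-C}$ still leaves $o(1)$.

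The remaining point to check is that the Hoeffding and Bernstein exponents at deviation $\Cst{prop:fiber-degree-array-estimates}\sqrt N\log N$ are genuinely superpolynomial. With deviation of order $\sqrt N\log N$ and variance proxy $O(N)$, the exponent is $\asymp\log^2N$, which is enough. Finally, combine Step 2 with \Cref{prop:local-degree-transference} to conclude.
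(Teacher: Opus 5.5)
Your proposal is correct and follows the paper's proof essentially step for step. You prove (R1)--(R3) under $\msf{R}_{\Pi,\lambda}(\cdot\mid\mc{I}_\Pi)$ using the independent conditioned rows of \Cref{fact:conditioned-rows}, divide by the polynomial exact-total lower bound of \Cref{lem:local-exact-total-lower-bound} via Bayes' rule, and conclude with \Cref{prop:local-degree-transference}. The only cosmetic difference is in (R3): the paper applies Hoeffding directly to the summands truncated to $[0,2pN]$, noting that truncation shifts the means by only $N^{-\omega(1)}$, while your split into $p|\Pi[t]|\one_{\mc J_{sb}}$ plus a centered Bernstein term reaches the same $N^2p\log N/\sqrt N$ scale.
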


\begin{proof}
We prove the proposition with $\Cst{prop:fiber-degree-array-estimates}:=1$. Let $\mc{E}$ denote the event that \cref{R:degrees}, \cref{R:sizes}, and \cref{R:masses} hold. We first prove
\[
\msf{R}_{\Pi,\lambda}\left(\mc{E}\mid \mc{I}_\Pi,\ m(\Pi,\mbf{D})=m\right)=1-o(1).
\]
By \cref{LA:conditioning} the hypothesis of \Cref{fact:conditioned-rows} holds, so under $\msf{R}_{\Pi,\lambda}(\cdot\mid\mc{I}_\Pi)$ the rows $\mbf{D}[x,\bullet]$ are independent, and if $x\in\Pi[s]$ then the row has law
\[
A_{s,\bullet}\mid \mc{I}_s(A_{s,\bullet}).
\]
Conditions \cref{LA:conditioning} and \cref{LA:tilt} give $\mb{P}[\mc{I}_s(A_{s,\bullet})]\ge\phi$ and $|\lambda[s,t]-p|\le T p/\sqrt{pN}$.
For fixed $x\in\Pi[s]$ and $t$, the mean of $A_{s,t}$ satisfies
\begin{align*}
\left|\mb{E}A_{s,t}-p|\Pi[t]|\right|
&=\left|\left(n[t]-\one_{s=t}\right)\lambda[s,t]-p\,n[t]\right|
\le n[t]\left|\lambda[s,t]-p\right|+\lambda[s,t]\\
&\le T\sqrt{pN}+2p
\le\tfrac12\sqrt{pN}(\log N)^{2/3}
\end{align*}
for large $N$, so Chernoff's inequality gives
\[
\mb{P}\left[\left|A_{s,t}-p|\Pi[t]|\right|>
\sqrt{pN}\,(\log N)^{2/3}\right]\le2\exp\left(-\frac{(\log N)^{4/3}}{24}\right)=N^{-\omega(1)}.
\]
After division by $\mb{P}[\mc{I}_s(A_{s,\bullet})]\ge\phi$ and a union bound over the $2^kN$ pairs $(x,t)$, estimate \cref{R:degrees} fails with probability $N^{-\omega(1)}$ under $\msf{R}_{\Pi,\lambda}(\cdot\mid\mc{I}_\Pi)$.

For fixed $s\in\{0,1\}^k$ and $b\in\{0,1\}$, the random variables in the family $\left(\one_{\mc{J}_{sb}(\mbf{D}[x,\bullet])}\right)_{x\in\Pi[s]}$ are mutually independent under $\msf{R}_{\Pi,\lambda}(\cdot\mid\mc{I}_\Pi)$, with common mean
\[
\mb{P}[\mc{J}_{sb}(A_{s,\bullet})\mid\mc{I}_s(A_{s,\bullet})].
\]
Therefore $|W_{\mbf{D}}[sb]|$ is binomial with $n[s]\le N$ trials and mean $n^{\mr{loc}}[sb]$ by \Cref{def:local-template}, and Chernoff's inequality gives
\[
\mb{P}\left[\left||W_{\mbf{D}}[sb]|-n^{\mr{loc}}[sb]\right|>\sqrt N\log N\right]\le2\exp\left(-\frac{N\log^2N}{2(N/4+\sqrt N\log N/3)}\right)=N^{-\omega(1)},
\]
which is estimate \cref{R:sizes} with $\Cst{prop:fiber-degree-array-estimates}=1$.

For fixed $s,t\in\{0,1\}^k$ and $b\in\{0,1\}$, put
\[
X_x:=\mbf{D}[x,t]\one_{\mc{J}_{sb}(\mbf{D}[x,\bullet])},\qquad x\in\Pi[s].
\]
The random variables in the family $(X_x)_{x\in\Pi[s]}$ are mutually independent under $\msf{R}_{\Pi,\lambda}(\cdot\mid\mc{I}_\Pi)$ and have mean
\[
\mb{E}\left[A_{s,t}\one_{\mc{J}_{sb}(A_{s,\bullet})}\mid\mc{I}_s(A_{s,\bullet})\right]=\frac{\ell^{\mr{loc}}[sb,t]}{n[s]},
\]
where the equality is the definition of $\ell^{\mr{loc}}$ in \Cref{def:local-template}.
On the high-probability event \cref{R:degrees} already proved, every $X_x$ is at most $\mbf{D}[x,t]\le p|\Pi[t]|+\sqrt{pN}(\log N)^{2/3}\le2pN$, using $|\Pi[t]|\le N$ and taking $N$ sufficiently large. Therefore, replacing $X_x$ by the truncated variable $X_x':=X_x\one_{X_x\le2pN}$ changes the sum $\sum_{x\in\Pi[s]}X_x$ with probability $N^{-\omega(1)}$ only. The truncation also changes the means, but negligibly: since $0\le X_x\le N$,
\[
0\le\mb{E}[X_x]-\mb{E}[X_x']=\mb{E}\left[X_x\one_{X_x>2pN}\right]\le N\,\mb{P}[X_x>2pN]=N^{-\omega(1)},
\]
so $\left|\sum_{x\in\Pi[s]}\mb{E}[X_x']-\ell^{\mr{loc}}[sb,t]\right|\le n[s]N^{-\omega(1)}=N^{-\omega(1)}$, which is negligible against the error term below. Hoeffding's inequality applied to the $n[s]\le N$ independent variables $X_x'\in[0,2pN]$ gives
\begin{align*}
\mb{P}\left[\left|\sum_{x\in\Pi[s]}X_x'-\sum_{x\in\Pi[s]}\mb{E}[X_x']\right|>\frac12\frac{N^2p}{\sqrt N}\log N\right]
&\le2\exp\left(-\frac{2\cdot\frac14N^3p^2\log^2N}{N(2pN)^2}\right)\\
&=2\exp\left(-\frac{\log^2N}{8}\right)=N^{-\omega(1)},
\end{align*}
and combining the three displays,
\[
\left|\sum_{x\in\Pi[s]}X_x-\ell^{\mr{loc}}[sb,t]\right|
\le\frac12\frac{N^2p}{\sqrt N}\log N+N^{-\omega(1)}\le\frac{N^2p}{\sqrt N}\log N
\]
with failure probability $N^{-\omega(1)}$; this is estimate \cref{R:masses} with $\Cst{prop:fiber-degree-array-estimates}=1$. A union bound over the at most $2^{2k+1}$ choices of $s,t,b$ gives
\begin{equation}\label{eq:E-unconditioned}
\msf{R}_{\Pi,\lambda}(\mc{E}^c\mid\mc{I}_\Pi)=N^{-\omega(1)}.
\end{equation}
By \Cref{lem:local-exact-total-lower-bound},
\begin{equation}\label{eq:exact-total-poly}
\msf{R}_{\Pi,\lambda}\left(m(\Pi,\mbf{D})=m\mid\mc{I}_\Pi\right)\ge (N^2p)^{-\Cof{lem:local-exact-total-lower-bound}{\theta,k,T,\phi}}.
\end{equation}
By Bayes' rule, \eqref{eq:exact-total-poly}, and \eqref{eq:E-unconditioned},
\begin{align*}
\msf{R}_{\Pi,\lambda}\left(\mc{E}^c\mid\mc{I}_\Pi,m(\Pi,\mbf{D})=m\right)
&=\frac{\msf{R}_{\Pi,\lambda}\left(\mc{E}^c\cap\{m(\Pi,\mbf{D})=m\}\mid\mc{I}_\Pi\right)}{\msf{R}_{\Pi,\lambda}\left(m(\Pi,\mbf{D})=m\mid\mc{I}_\Pi\right)}\\
&\le\frac{\msf{R}_{\Pi,\lambda}\left(\mc{E}^c\mid\mc{I}_\Pi\right)}{(N^2p)^{-\Cof{lem:local-exact-total-lower-bound}{\theta,k,T,\phi}}}
\le(N^2p)^{\Cof{lem:local-exact-total-lower-bound}{\theta,k,T,\phi}}N^{-\omega(1)}=N^{-\omega(1)}.
\end{align*}
Hence \Cref{prop:local-degree-transference}, applied to the event $\mc{E}$, gives
\[
\Lambda_y^{(p)}\left(\sigma:\on{deg}(\sigma)\notin\mc{E}\right)\le e^{-N}+\Cof{prop:local-degree-transference}{\theta,k,T,\phi}N^{-\omega(1)}=o(1),
\]
which is the claim.
\end{proof}

\subsection{Splitting the block-pair graphs under the kernel}

We now fix a typical degree array $\mbf{d}$ from the fiber, in the sense of \Cref{prop:fiber-degree-array-estimates}, and study the second stage of the sampling procedure for $\overline{K}_k^{(p)}$: the kernel $K_k((\Pi,\mbf{d}),\cdot)$ samples a uniformly random degree-constrained graph or bipartite graph on each block pair of $\Pi$ and restricts it to the next partition. The next partition is deterministic given $\mbf{d}$, so what has to be shown is that the edges of each block-pair graph split between the next parts in proportion to the degree masses $L_{\mbf{d}}$, and that the new degree array is again regular. The first statement is a second-moment computation based on the edge probabilities in a random graph with prescribed degrees (\Cref{lem:edge_enum}), and the second is a large-deviation estimate for a single degree into a subset (\Cref{thm:nice_deg}); both are proved in \Cref{app:graph-enumeration-local}.

\begin{proposition}[Kernel splitting estimates]\label{prop:kernel-splitting-estimates}
Fix $\theta\in(1/2,1)$, an integer $k\ge1$, and constants $T>1$ and $\phi\in(0,1/2)$. There exists a constant $\Cst{prop:kernel-splitting-estimates}=\Cst{prop:kernel-splitting-estimates}(\theta,k,T,\phi)>0$ such that the following holds for all sufficiently large $N$.

Let $V$ be a finite set with $|V|=N$, let
\[
p\in(T^{-1}N^{-\theta},TN^{-\theta}),
\]
and let $(y,\lambda)$ be $(T,\phi,p)$-locally admissible with $y=(\Pi,m,1)$. Let $\mbf{d}\in\mc{D}^{\mc I}(\Pi)$ satisfy
\[
m(\Pi,\mbf{d})=m
\]
together with the estimates \cref{R:degrees} and \cref{R:sizes} of \Cref{prop:fiber-degree-array-estimates}. Sample
\[
\sigma'=(\Pi',\mbf{D}')\sim K_k((\Pi,\mbf{d}),\cdot).
\]
Then, with probability $1-o(1)$, the following hold simultaneously:
\begin{enumerate}[label=\textup{(S\arabic*)},ref=\textup{(S\arabic*)},leftmargin=*]
\item\label{S:parts} The next partition is the deterministic split:
\[
\Pi'[sb]=W_{\mbf{d}}[sb]
\]
for every $s\in\{0,1\}^k$ and $b\in\{0,1\}$.
\item\label{S:reg} The next degree array is regular: $\kappa_p(\Pi',\mbf{D}')$ holds.
\item\label{S:edges} The next edge counts split proportionally to the degree masses:
\[
\left|
m(\Pi',\mbf{D}')[sb,tc]
-
\frac{L_{\mbf{d}}[sb,t]L_{\mbf{d}}[tc,s]}{m[s,t]}
\right|
\le
\Cst{prop:kernel-splitting-estimates} N^2p\sqrt{\frac{(pN)^{1/7}}{N}}\log N
\]
for every $s,t\in\{0,1\}^k$ and $b,c\in\{0,1\}$.
\end{enumerate}
\end{proposition}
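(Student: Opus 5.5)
Under the kernel, (S1) holds deterministically: by the definition of $K_k$ the output partition is $\Phi_k((\Pi,\mbf d))$, and by definition of $W_{\mbf d}$ we have $\Phi_k((\Pi,\mbf d))[sb]=W_{\mbf d}[sb]$. So the plan is to show that (S2) and (S3) each hold with probability $1-o(1)$ and then take a union bound over the boundedly many ($2^{O(k)}$) block pairs and indices.

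The block-pair graphs $H_{s,s}$ and $H_{s,t}$ are independent uniform degree-constrained graphs. Their degree sequences are near-regular by \cref{R:degrees}: all degrees are $p|\Pi[t]|\pm\sqrt{pN}(\log N)^{2/3}$, and the blocks have linear size by \cref{LA:sizes}. This is exactly the regime of the Liebenau--Wormald estimates, so I would invoke \Cref{lem:edge_enum} and \Cref{thm:nice_deg} on each block pair separately.

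For (S3), fix $s\ne t$ and $b,c$; the internal case $s=t$ is analogous, with a factor $2$ bookkeeping for $m[s,s]$. Then
\[
m(\Pi',\mbf D')[sb,tc]=\sum_{x\in W[sb],\,y\in W[tc]}\one_{xy\in H_{s,t}}.
\]
By \Cref{lem:edge_enum}, $\mb P[xy\in H_{s,t}]=\frac{\mbf d[x,t]\mbf d[y,s]}{m[s,t]}\bigl(1+O(\text{err})\bigr)$, with a relative error governed by the degree deviations. Summing over $x\in W_{\mbf d}[sb]$ and $y\in W_{\mbf d}[tc]$ gives the mean $L_{\mbf d}[sb,t]L_{\mbf d}[tc,s]/m[s,t]$ up to the error. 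For the variance, I would use the two-edge joint probabilities from the same lemma. These show that distinct pairs are nearly uncorrelated, so the variance is $O(N^2p)$ plus covariance terms of size $N^4p^2\cdot\text{err}$. Chebyshev's inequality then yields a deviation of $N^2p\sqrt{(pN)^{1/7}/N}\log N$.

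The main obstacle is bounding the relative error in the edge and two-edge probabilities sharply enough. If the error were of size $(pN)^{-1/2}$ times the degree fluctuation, the total error would be too large. I expect the second-order terms of \Cref{lem:edge_enum} to cancel in aggregate, because $\sum_x(\mbf d[x,t]-\bar d)=0$ exactly for each block, given $m(\Pi,\mbf d)=m$. The residual error would then be about $(pN)^{1/7}/N$ relative in variance, which is where the exponent $1/7$ comes from: $4/7$ regularity gives squared fluctuation $(pN)^{8/7}$. Controlling exactly these covariance terms is the step I would work out most carefully.

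For (S2), fix $x$ and a next block $tc$. Then $\mbf D'[x,tc]$ is the degree of $x$ in the relevant block-pair graph into the fixed subset $W_{\mbf d}[tc]$. I would apply \Cref{thm:nice_deg}, which gives concentration around $\mbf d[x,t]\,|W[tc]|/|\Pi[t]|$ with Gaussian-type tails at scale $\sqrt{pN}\,\mathrm{polylog}$. I would then combine this with \cref{R:degrees} and \cref{R:sizes}, using $|W[tc]|=|\Pi'[tc]|$ from (S1), to obtain
\[
\bigl|\mbf D'[x,tc]-p|\Pi'[tc]|\bigr|\le\sqrt{pN}\,\mathrm{polylog}\,N\ll(pN)^{4/7}.
\]
A union bound over the $2^{k+1}N$ pairs $(x,tc)$ finishes (S2).
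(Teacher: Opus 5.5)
Your proposal follows the paper's proof: (S1) is deterministic; (S2) is \Cref{thm:nice_deg} applied with $S=W_{\mbf d}[tc]$ (a deviation beyond $(pN)^{4/7}$ forces $|\tau|\ge(pN)^{1/14}\ge\log^{100}N$), summed over the $O(pN)$ possible values and then union-bounded over $(x,tc)$; and (S3) is \Cref{lem:edge_enum} applied to each block pair with $U=W_{\mbf d}[sb]$ and $U'=W_{\mbf d}[tc]$, after \cref{LA:sizes}, \cref{LA:split} and \cref{R:sizes} show that these sets and their complements have linear size. One correction: \Cref{lem:edge_enum} already states concentration of the edge count (its proof is exactly your first/second-moment-plus-Chebyshev computation), and no aggregate cancellation is needed there. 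It would not be available anyway, since $\sum_{x\in W_{\mbf d}[sb]}(\mbf d[x,t]-\bar d)$ need not vanish on these degree-selected sets. On the $(pN)^{4/7}$ window, each single-edge and disjoint two-edge probability already has relative error $O((pN)^{1/7}/N)$.
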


\begin{proof}
Assertion \cref{S:parts} is deterministic: by the definition of $K_k$ we have $\Pi'=\Phi_k((\Pi,\mbf{d}))$, and $\Phi_k((\Pi,\mbf{d}))[sb]=W_{\mbf{d}}[sb]$ by the definitions of $\Phi_k$ and $W_{\mbf{d}}$.

We record two consequences of the hypotheses that are used repeatedly below. First, by \cref{LA:sizes}, \cref{LA:split}, and \cref{R:sizes}, for all sufficiently large $N$,
\begin{equation}\label{eq:W-sizes}
\frac{\phi}{2T}N\le |W_{\mbf{d}}[sb]|\le N\qquad\text{for all }s\in\{0,1\}^k\text{ and }b\in\{0,1\},
\end{equation}
since $|W_{\mbf{d}}[sb]|\ge n^{\mr{loc}}[sb]-\Cof{prop:fiber-degree-array-estimates}{\theta,k,T,\phi}\sqrt N\log N\ge\phi\,n[s]-\Cof{prop:fiber-degree-array-estimates}{\theta,k,T,\phi}\sqrt N\log N$ and $n[s]\ge T^{-1}N$. Second, by \cref{R:degrees} and \cref{LA:sizes}, for all sufficiently large $N$,
\begin{equation}\label{eq:d-window}
\left|\mbf{d}[x,t]-p\,n[t]\right|\le\sqrt{pN}\,(\log N)^{2/3}\le\min_{u\in\{0,1\}^k}\left(p\,n[u]\right)^{4/7}\qquad\text{for all }x\in V\text{ and }t\in\{0,1\}^k,
\end{equation}
where the minimum over $u$ is what allows the degrees on either side of a bipartite block pair $\{s,t\}$ to be measured against the size of the other block. Finally, put
\begin{equation}\label{eq:T1}
T_1:=\max\left\{T^{5/2},\,\frac{2T}{\phi}\right\},
\end{equation}
which is the value of the constant $T$ with which \Cref{lem:edge_enum,thm:nice_deg} are applied below: since every block size $n[u]$ lies in $[T^{-1}N,N]$, their hypothesis $p\in(T_1^{-1}n^{-\theta},T_1n^{-\theta})$ holds because $T_1\ge T^{1+\theta}$, the hypothesis $\ell\in[T_1^{-1}n,T_1n]$ holds because $T_1\ge T$, the edge-count hypothesis $|m-p\ell n|\le T_1n^2p/\sqrt{pn}$ (respectively $|m-pn(n-1)/2|\le T_1n^2p/\sqrt{pn}$) follows from \cref{LA:edge-scale} because $TN^2p/\sqrt{pN}\le T^{5/2}n^2p/\sqrt{pn}$ for $n\ge T^{-1}N$ (in the internal case, where $m=m[s,s]/2$, the additional term $\tfrac12p\,n[s]$ from $pn(n-1)/2$ versus $pn^2/2$ is absorbed, since $pn\le n^{3/2}\sqrt p$), and the subset-size hypotheses hold by \eqref{eq:W-sizes} because $T_1\ge2T/\phi$.

We prove \cref{S:edges}. Suppose first that $s\ne t$. Under $K_k((\Pi,\mbf{d}),\cdot)$, the bipartite graph between $\Pi[s]$ and $\Pi[t]$ is uniformly random among all bipartite graphs with side-degree sequences $(\mbf{d}[x,t])_{x\in\Pi[s]}$ and $(\mbf{d}[y,s])_{y\in\Pi[t]}$; it has exactly $\sum_{x\in\Pi[s]}\mbf{d}[x,t]=m(\Pi,\mbf{d})[s,t]=m[s,t]$ edges. We apply \Cref{lem:edge_enum}\cref{lem:edge_enum:bipartite} to this bipartite graph with $\ell=n[s]$, $n=n[t]$, $U=W_{\mbf{d}}[sb]$, $U'=W_{\mbf{d}}[tc]$, and with the constant $T$ of that lemma equal to $T_1$: the hypotheses on $p$, on $\ell$, on the edge count $m[s,t]$, and on the subset sizes hold by the discussion following \eqref{eq:T1}, and the degree hypotheses hold by \eqref{eq:d-window} with $u=t$ (the lemma measures the deviations of both degree sequences, of $(\mbf{d}[x,t])_{x\in\Pi[s]}$ from $p\,n[t]$ and of $(\mbf{d}[y,s])_{y\in\Pi[t]}$ from $p\,n[s]$, in units of $(pn)^{4/7}=(p\,n[t])^{4/7}$). Since $\sum_{x\in W_{\mbf{d}}[sb]}\mbf{d}[x,t]=L_{\mbf{d}}[sb,t]$, $\sum_{y\in W_{\mbf{d}}[tc]}\mbf{d}[y,s]=L_{\mbf{d}}[tc,s]$, and $m(\Pi',\mbf{D}')[sb,tc]$ is the number of edges between $W_{\mbf{d}}[sb]$ and $W_{\mbf{d}}[tc]$, the lemma gives
\[
\left|m(\Pi',\mbf{D}')[sb,tc]-\frac{L_{\mbf{d}}[sb,t]L_{\mbf{d}}[tc,s]}{m[s,t]}\right|
\le
C_1 N^2p\sqrt{\frac{(pN)^{1/7}}{N}}\log N
\]
with probability $1-o(1)$, where $C_1:=2T$: the lemma is stated with $n=n[t]\in[T^{-1}N,N]$, and its error term $n^2p\sqrt{(pn)^{1/7}/n}\log n$ is at most $T^{3/7}N^2p\sqrt{(pN)^{1/7}/N}\log N$ because $n^{-6/7}\le T^{6/7}N^{-6/7}$ (the factor $2$ in $C_1$ is not needed here; it is needed in the internal case with $b=c$ below).

Now suppose $s=t$. Under $K_k((\Pi,\mbf{d}),\cdot)$, the graph on $\Pi[s]$ is uniformly random among all graphs with degree sequence $(\mbf{d}[x,s])_{x\in\Pi[s]}$; it has exactly $m[s,s]/2$ edges. The hypotheses of \Cref{lem:edge_enum}\cref{lem:edge_enum:graph} and \cref{lem:edge_enum:cut} are verified exactly as above, with $n=n[s]$, $U=W_{\mbf{d}}[sb]$, and the constant $T$ of the lemma equal to $T_1$. If $b=c$, the quantity $m(\Pi',\mbf{D}')[sb,sb]$ is twice the number $X$ of edges inside $W_{\mbf{d}}[sb]$, and $L_{\mbf{d}}[sb,s]^2/m[s,s]$ is twice the centering $(\sum_{x\in W_{\mbf{d}}[sb]}\mbf{d}[x,s])^2/(4\cdot m[s,s]/2)$ of \Cref{lem:edge_enum}\cref{lem:edge_enum:graph}, so that lemma gives
\[
\left|m(\Pi',\mbf{D}')[sb,sb]-\frac{L_{\mbf{d}}[sb,s]^2}{m[s,s]}\right|
=2\left|X-\frac{\left(\sum_{x\in W_{\mbf{d}}[sb]}\mbf{d}[x,s]\right)^2}{4\cdot m[s,s]/2}\right|
\le
C_1 N^2p\sqrt{\frac{(pN)^{1/7}}{N}}\log N,
\]
where the factor $2$ is absorbed by $C_1=2T$.
If $b\ne c$, then $W_{\mbf{d}}[sc]=\Pi[s]\setminus W_{\mbf{d}}[sb]$ and $m(\Pi',\mbf{D}')[sb,sc]$ is the number of edges between $W_{\mbf{d}}[sb]$ and $W_{\mbf{d}}[sc]$, so \Cref{lem:edge_enum}\cref{lem:edge_enum:cut} gives
\[
\left|m(\Pi',\mbf{D}')[sb,sc]-\frac{L_{\mbf{d}}[sb,s]L_{\mbf{d}}[sc,s]}{m[s,s]}\right|
\le
C_1 N^2p\sqrt{\frac{(pN)^{1/7}}{N}}\log N.
\]
A union bound over the at most $2^{2k+2}$ choices of $s,t,b,c$ proves \cref{S:edges} with $\Cst{prop:kernel-splitting-estimates}:=C_1$.

It remains to prove \cref{S:reg}. Fix a vertex $x\in\Pi[s]$ and a new part $\Pi'[tc]=W_{\mbf{d}}[tc]$. If $t=s$, then $\mbf{D}'[x,tc]$ is the degree of $x$ into the subset $W_{\mbf{d}}[sc]$ of $\Pi[s]$ in the uniformly random graph on $\Pi[s]$ with degree sequence $(\mbf{d}[z,s])_{z\in\Pi[s]}$; if $t\ne s$, it is the degree of $x$ into the subset $W_{\mbf{d}}[tc]$ of $\Pi[t]$ in the uniformly random bipartite graph between $\Pi[s]$ and $\Pi[t]$ with the side-degree sequences above. We apply \Cref{thm:nice_deg}\cref{thm:nice_deg:graph} in the first case and \Cref{thm:nice_deg}\cref{thm:nice_deg:bipartite} in the second, with $n=n[t]$ (and $\ell=n[s]$ in the bipartite case), $v=x$, $S=W_{\mbf{d}}[tc]$, and the constant $T$ of that theorem equal to $T_1$: the hypotheses on $p$, on $\ell$, on the edge count, and on the size of $S$ hold by the discussion following \eqref{eq:T1}, and the degree hypotheses $|\alpha_i|,|\beta_j|\le\log n$ from \cref{R:degrees}, because $n[u]\ge T^{-1}N$ gives $\sqrt{pN}\,(\log N)^{2/3}\le\sqrt{T}\sqrt{p\,n[u]}\,(\log N)^{2/3}\le\sqrt{p\,n[u]}\log n[t]$ for $u\in\{s,t\}$ and all sufficiently large $N$. Now the event that $\kappa_p(\Pi',\mbf{D}')$ fails at the entry $(x,tc)$ is the event that $\mbf{D}'[x,tc]=t'$ for some integer $t'$ with $|t'-p|W_{\mbf{d}}[tc]||>(pN)^{4/7}$. Writing such a $t'$ as $t'=p|W_{\mbf{d}}[tc]|+\tau\sqrt{p|W_{\mbf{d}}[tc]|}$, we have $|\tau|\ge(pN)^{4/7}/\sqrt{pN}=(pN)^{1/14}\ge\log^{100}N$ for all sufficiently large $N$, so \Cref{thm:nice_deg}, whose constant is $\Cof{thm:nice_deg}{\theta,T_1}$ here, bounds the probability of this event by
\[
\sum_{t'}\exp\left(-\Cof{thm:nice_deg}{\theta,T_1}\tau^2\right)\le O(pN)\exp\left(-\Cof{thm:nice_deg}{\theta,T_1}(pN)^{1/7}\right)=N^{-\omega(1)}
\]
for each fixed $x$ and $tc$. A union bound over all $x\in V$ and the $2^{k+1}$ choices of $tc$ gives \cref{S:reg} with probability $1-o(1)$.
\end{proof}

\subsection{Proof of the local coarse transition theorem}

The proof assembles the two preceding propositions along the two-stage sampling procedure that defines $\overline{K}_k^{(p)}(y,\cdot)$: a fiber sample $\sigma\sim\Lambda_y^{(p)}$ is typical by \Cref{prop:fiber-degree-array-estimates}, a kernel sample from a typical $\sigma$ is well behaved by \Cref{prop:kernel-splitting-estimates}, and the conclusions of the two propositions combine into the four defining conditions of $\mc{L}_{k+1}^{(p)}(y,\lambda)$. The only computation left is to replace the degree masses $L_{\mbf{d}}$ appearing in \cref{S:edges} by their local templates $\ell^{\mr{loc}}$.

\begin{proof}[Proof of \Cref{thm:local-coarse-transition}]
We prove the theorem with
\[
\Cst{thm:local-coarse-transition}:=\Cof{prop:fiber-degree-array-estimates}{\theta,k,T,\phi}+\Cof{prop:kernel-splitting-estimates}{\theta,k,T,\phi}.
\]
By the definition of $\overline K_k^{(p)}(y,\cdot)$, we may sample from it by first sampling
\[
\sigma=(\Pi,\mbf{d})\sim\Lambda_y^{(p)},
\]
then sampling
\[
\sigma'=(\Pi',\mbf{D}')\sim K_k(\sigma,\cdot),
\]
and finally setting $y':=\rho_{k+1}^{(p)}(\sigma')$. We must show that $y'$ satisfies \cref{L:refine}, \cref{L:reg}, \cref{L:sizes}, and \cref{L:edges} with probability $1-o(1)$.

\emph{Step 1: the fiber sample.} Every $\sigma$ in the support of $\Lambda_y^{(p)}$ lies in $\mc{S}_k(V)$ and satisfies $\rho_k^{(p)}(\sigma)=y$; hence
\begin{equation}\label{eq:fiber-support}
\mbf{d}\in\mc{D}^{\mc I}(\Pi)\qquad\text{and}\qquad m(\Pi,\mbf{d})=m.
\end{equation}
By \Cref{prop:fiber-degree-array-estimates}, with probability $1-o(1)$ over $\sigma\sim\Lambda_y^{(p)}$, the array $\mbf{d}$ also satisfies \cref{R:degrees}, \cref{R:sizes}, and \cref{R:masses}. Fix such a $\mbf{d}$ for the rest of the proof. Then \eqref{eq:fiber-support}, \cref{R:degrees}, and \cref{R:sizes} are exactly the hypotheses on $\mbf{d}$ in \Cref{prop:kernel-splitting-estimates}.

\emph{Step 2: the kernel sample.} By \Cref{prop:kernel-splitting-estimates}, with probability $1-o(1)$ over $\sigma'\sim K_k((\Pi,\mbf{d}),\cdot)$, uniformly in the array $\mbf{d}$ fixed in Step 1, the conclusions \cref{S:parts}, \cref{S:reg}, and \cref{S:edges} hold. Fix such a $\sigma'$.

\emph{Step 3: verification of \cref{L:refine}--\cref{L:edges}.} By \cref{S:parts}, $\Pi'[sb]=W_{\mbf{d}}[sb]$ for all $s\in\{0,1\}^k$ and $b\in\{0,1\}$. Since for each $x\in\Pi[s]$ exactly one of $\mc{J}_{s0}(\mbf{d}[x,\bullet])$ and $\mc{J}_{s1}(\mbf{d}[x,\bullet])$ holds, we have $W_{\mbf{d}}[s0]\sqcup W_{\mbf{d}}[s1]=\Pi[s]$, which is \cref{L:refine}. By \cref{S:reg} and the definition of $\rho_{k+1}^{(p)}$, we have $\on{reg}(y')=\one_{\kappa_p(\Pi',\mbf{D}')}=1$, which is \cref{L:reg}. By \cref{S:parts} and \cref{R:sizes},
\[
\left||\Pi'[sb]|-n^{\mr{loc}}[sb]\right|
=
\left||W_{\mbf{d}}[sb]|-n^{\mr{loc}}[sb]\right|
\le \Cof{prop:fiber-degree-array-estimates}{\theta,k,T,\phi}\sqrt N\log N\le \Cst{thm:local-coarse-transition}\sqrt N\log N,
\]
which is \cref{L:sizes}.

It remains to prove \cref{L:edges}. Fix $s,t\in\{0,1\}^k$ and $b,c\in\{0,1\}$, and write $m':=\on{edge}(y')=m(\Pi',\mbf{D}')$. Recall that $m[s,t]>0$ by \cref{LA:positive}. By the definition of $m^{\mr{loc}}$, the triangle inequality, and \cref{S:edges},
\begin{equation}\label{eq:L4-split}
\begin{aligned}
\left|m'[sb,tc]-m^{\mr{loc}}[sb,tc]\right|
&\le
\Cof{prop:kernel-splitting-estimates}{\theta,k,T,\phi}N^2p\sqrt{\frac{(pN)^{1/7}}{N}}\log N\\
&\quad+
\frac{\left|L_{\mbf{d}}[sb,t]L_{\mbf{d}}[tc,s]-\ell^{\mr{loc}}[sb,t]\ell^{\mr{loc}}[tc,s]\right|}{m[s,t]}.
\end{aligned}
\end{equation}
We bound the last term. Since $W_{\mbf{d}}[tc]\subseteq\Pi[t]$ and $m(\Pi,\mbf{d})=m$ by \eqref{eq:fiber-support}, we have
\[
0\le L_{\mbf{d}}[tc,s]\le\sum_{x\in\Pi[t]}\mbf{d}[x,s]=m[t,s]=m[s,t],
\]
and since $\one_{\mc{J}_{sb}(A_{s,\bullet})}\le1$, condition \cref{LA:solvability} gives
\[
0\le\ell^{\mr{loc}}[sb,t]\le n[s]\mb{E}\left[A_{s,t}\mid\mc{I}_s(A_{s,\bullet})\right]=m[s,t].
\]
Writing
\begin{align*}
&L_{\mbf{d}}[sb,t]L_{\mbf{d}}[tc,s]-\ell^{\mr{loc}}[sb,t]\ell^{\mr{loc}}[tc,s]\\
&\qquad=
\left(L_{\mbf{d}}[sb,t]-\ell^{\mr{loc}}[sb,t]\right)L_{\mbf{d}}[tc,s]
+
\ell^{\mr{loc}}[sb,t]\left(L_{\mbf{d}}[tc,s]-\ell^{\mr{loc}}[tc,s]\right)
\end{align*}
and applying \cref{R:masses} to both differences, we obtain
\begin{align*}
\frac{\left|L_{\mbf{d}}[sb,t]L_{\mbf{d}}[tc,s]-\ell^{\mr{loc}}[sb,t]\ell^{\mr{loc}}[tc,s]\right|}{m[s,t]}
&\le
\left|L_{\mbf{d}}[sb,t]-\ell^{\mr{loc}}[sb,t]\right|+\left|L_{\mbf{d}}[tc,s]-\ell^{\mr{loc}}[tc,s]\right|\\
&\le
2\Cof{prop:fiber-degree-array-estimates}{\theta,k,T,\phi}\frac{N^2p}{\sqrt N}\log N.
\end{align*}
Finally,
\[
N^2p\sqrt{\frac{(pN)^{1/7}}{N}}\log N=(pN)^{1/14}\cdot\frac{N^2p}{\sqrt N}\log N,
\]
and $(pN)^{1/14}\ge2$ for all sufficiently large $N$, so the bound just obtained is at most
\[
\Cof{prop:fiber-degree-array-estimates}{\theta,k,T,\phi}N^2p\sqrt{\frac{(pN)^{1/7}}{N}}\log N.
\]
Inserting this into \eqref{eq:L4-split} gives \cref{L:edges} with $\Cst{thm:local-coarse-transition}=\Cof{prop:fiber-degree-array-estimates}{\theta,k,T,\phi}+\Cof{prop:kernel-splitting-estimates}{\theta,k,T,\phi}$.

Steps 1 and 2 each fail with probability $o(1)$, so $y'\in\mc{L}_{k+1}^{(p)}(y,\lambda)$ with probability $1-o(1)$ under the sampling procedure defining $\overline K_k^{(p)}(y,\cdot)$. This proves the theorem.
\end{proof}

\section{The universal recursion}\label{sec:universal}

The local coarse transition theorem describes one day of the coarse process from an admissible state in terms of its local template, which is a functional of the current sizes, edge counts, and tilt. To run the theorem for a bounded number of days we need to know, ahead of time, states around which the template can be controlled uniformly: these are the faithful states of \Cref{sec:idealized}, defined by comparison with an idealized process. The idealized process is, to first order, governed by universal numbers that do not depend on $N$ or $p$: proportions $\nu[s]$ of vertices in each history class, first-order corrections $\mu[s,t]$ to the edge densities between classes, and linear responses $\varepsilon[s]$ of the class sizes to the initial imbalance. This section defines these numbers through a recursion driven by conditioned Gaussian vectors, and proves the sign facts about them on which the whole argument rests: the responses are coherent, $\varepsilon[s0]>0>\varepsilon[s1]$, so that the imbalance is amplified rather than washed out.

Nothing in this section depends on $N$, $p$, or the setup of \Cref{sec:setup,sec:local}, except through the assertions $\mc{I}_s$ and $\mc{J}_{sb}$ of \Cref{sec:setup}, which are statements about real vectors. The analytic input is \Cref{app:inverse}. For a string $s\in\{0,1\}^k$ we write $\ol{s}$ for its bitwise complement.

\subsection{Gaussian row models and the conditional-mean map}

\begin{definition}[Gaussian row model]\label{def:gaussian-row}
Fix $k\ge1$. For $\nu\in\mb{R}_{>0}^{\{0,1\}^k}$ and $\gamma\in\mb{R}^{\{0,1\}^k\times\{0,1\}^k}$, the \emph{Gaussian row model} with means $\gamma$ and variances $\nu$ is the family of random vectors $W_{s,\bullet}=(W_{s,t})_{t\in\{0,1\}^k}$, $s\in\{0,1\}^k$, with all coordinates independent and
\[
W_{s,t}\sim\mc{N}\left(\gamma[s,t],\nu[t]\right).
\]
For $s\in\{0,1\}^k$, $x\in\mb{R}^{\{0,1\}^k}$, and $1\le r\le k$, define the linear forms
\[
Z_{s,r}(x):=(-1)^{s[r]}\sum_{t\in\{0,1\}^k}(-1)^{t[r-1]}x[t]\quad(1\le r\le k-1),\qquad
Z_{s,k}(x):=\sum_{t\in\{0,1\}^k}(-1)^{t[k-1]}x[t],
\]
and the open cone
\[
\mc{C}_s:=\left\{x\in\mb{R}^{\{0,1\}^k}:Z_{s,r}(x)>0\text{ for all }1\le r\le k-1\right\},
\]
with $\mc{C}_s=\mb{R}^{\{0,1\}^k}$ when $k=1$. Thus $\mc{C}_s$ is the set of vectors satisfying every inequality of $\mc{I}_s$ strictly, and for $b\in\{0,1\}$ the set $\mc{C}_s\cap\{(-1)^bZ_{s,k}>0\}$ is the set of vectors satisfying every inequality of $\mc{J}_{sb}$ strictly.
\end{definition}

Since a Gaussian vector with positive variances lies on a given hyperplane with probability $0$, we have $\mb{P}[\mc{I}_s(W_{s,\bullet})]=\mb{P}[W_{s,\bullet}\in\mc{C}_s]>0$ and $\mb{P}[\mc{J}_{sb}(W_{s,\bullet})]=\mb{P}[W_{s,\bullet}\in\mc{C}_s,\ (-1)^bZ_{s,k}(W_{s,\bullet})>0]>0$, so all conditional expectations below are well defined, and conditioning on $\mc{I}_s(W_{s,\bullet})$ or on $\mc{J}_{sb}(W_{s,\bullet})$ is the same as conditioning on the corresponding open set. The $k$ linear forms $Z_{s,1},\ldots,Z_{s,k}$ are linearly independent (their coefficient vectors are pairwise orthogonal).

\begin{proposition}[The conditional-mean map is a strong bijection]\label{prop:cond-mean-bijection}
Fix $k\ge1$, $s\in\{0,1\}^k$, and $\nu\in\mb{R}_{>0}^{\{0,1\}^k}$. For $\gamma\in\mb{R}^{\{0,1\}^k}$ let $W^{\gamma}$ be the random vector with independent coordinates $W^\gamma[t]\sim\mc{N}(\gamma[t],\nu[t])$. Then
\[
F_{s,\nu}\colon\mb{R}^{\{0,1\}^k}\to\mc{C}_s,\qquad F_{s,\nu}(\gamma):=\mb{E}\left[W^\gamma\mid W^\gamma\in\mc{C}_s\right],
\]
is well defined, that is, its values lie in $\mc{C}_s$, and it is a strong bijection in the sense of \Cref{def:strong-bijection}.
\end{proposition}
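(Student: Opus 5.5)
The plan is to recognize $F_{s,\nu}$ as the mean map of a regular exponential family. Then well-definedness, injectivity, nondegeneracy of the Jacobian and surjectivity onto $\mc C_s$ all follow from convex-analytic properties of a log-partition function. Finally I would check these against the clauses of \Cref{def:strong-bijection}. I expect these to amount to a $C^\infty$ bijection onto $\mc C_s$ with everywhere invertible derivative, hence a $C^\infty$ inverse, possibly together with properness or continuity of the inverse.

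\emph{Exponential family structure.} Write $\mu_0$ for the law $\bigotimes_t\mc N(0,\nu[t])$ restricted to $\mc C_s$. For $\gamma\in\mb R^{\{0,1\}^k}$ put $\eta[t]:=\gamma[t]/\nu[t]$. The density of $W^\gamma$ is proportional to $e^{\langle\eta,x\rangle}$ times the density of $\mc N(0,\nu)$. Hence the law of $W^\gamma$ conditioned on $\mc C_s$ is $e^{\langle\eta,x\rangle-\psi(\eta)}\,d\mu_0(x)$, where
\[
\psi(\eta):=\log\int_{\mc C_s}e^{\langle\eta,x\rangle}\,d\mu_0(x).
\]
This integral is finite for every $\eta$ by Gaussian tails and positive because $\mc C_s$ is a nonempty open set: the forms $Z_{s,r}$ are linearly independent, so the cone has nonempty interior. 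Differentiation under the integral gives $\psi\in C^\infty$, $\nabla\psi(\eta)=F_{s,\nu}(\gamma)$, and $\nabla^2\psi(\eta)=\on{Cov}(W^\gamma\mid W^\gamma\in\mc C_s)$. This covariance is positive definite, since a measure with positive density on an open set is not supported on any affine hyperplane. So $\psi$ is strictly convex. Consequently $\nabla\psi$ is injective, and its Jacobian is invertible everywhere. Therefore $F_{s,\nu}=\nabla\psi\circ\on{diag}(1/\nu)$ is a $C^\infty$ injection with invertible derivative, and the inverse function theorem makes it a local diffeomorphism.

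\emph{Values lie in $\mc C_s$.} For $1\le r\le k-1$ the form $Z_{s,r}$ is linear, so
\[
Z_{s,r}(F_{s,\nu}(\gamma))=\mb E[Z_{s,r}(W^\gamma)\mid\mc C_s].
\]
The integrand is strictly positive on the conditioning set, which has positive measure, so the expectation is $>0$. Hence $F_{s,\nu}(\gamma)\in\mc C_s$.

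\emph{Surjectivity, the main step.} Fix $c\in\mc C_s$ and consider $g(\eta):=\psi(\eta)-\langle\eta,c\rangle$, which is strictly convex and smooth. I would show $g$ is coercive. For a unit vector $u$, since $c$ lies in the open cone $\mc C_s$, which is the interior of the closed convex hull of the support of $\mu_0$, there is an open set $U\subseteq\mc C_s$ of positive $\mu_0$-mass on which $\langle u,x\rangle\ge\langle u,c\rangle+\delta_u$ for some $\delta_u>0$. Then
\[
g(tu)\ge\log\mu_0(U)+t\delta_u\to\infty \qquad (t\to\infty).
\]
By compactness of the unit sphere and convexity, we can choose $\delta_u$ and $\mu_0(U)$ uniformly in $u$ using continuity in $u$, which gives $g(\eta)\ge a|\eta|-b$. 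So $g$ attains a minimum, at which $\nabla\psi(\eta)=c$. Hence $F_{s,\nu}$ is onto $\mc C_s$, and the inverse function theorem gives a smooth inverse.

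The same coercivity estimate, made uniform for $c$ in compact subsets of $\mc C_s$, yields properness: preimages of compact sets are bounded. I expect this uniform coercivity to be the only delicate point, and it is the step I would spend care on if \Cref{def:strong-bijection} demands quantitative properness or locally bounded inverse derivatives.
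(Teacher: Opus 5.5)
Your proposal is correct and is essentially the paper's proof. The paper proves the statement for an arbitrary nonempty open convex set (\Cref{thm:orthant-bijection}) using the same ingredients: the log-partition function, strict convexity from a positive definite conditional covariance, and a ball-based coercivity estimate for $\Phi(\vartheta)-\vartheta\cdot y$. It uses a separating-hyperplane argument where you use linearity of the forms $Z_{s,r}$ to show the mean lies in the cone, and it then specializes via \Cref{cor:cone-bijection} after noting that the rows of $M_s$ are pairwise orthogonal, hence of full rank.

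Your closing concern about properness is unnecessary. \Cref{def:strong-bijection} asks only for a bijection whose inverse is smooth, and your smooth bijection with everywhere invertible Jacobian already gives that through the inverse function theorem.
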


\begin{proof}
For $k=1$ the cone $\mc{C}_s$ is all of $\mb{R}^{\{0,1\}}$, the conditioning is on a sure event, and $F_{s,\nu}$ is the identity, which is a strong bijection. Let $k\ge2$, and let $M_s\in\mb{R}^{(k-1)\times2^k}$ be the matrix whose $r$th row is the coefficient vector $\mu_r$ of $Z_{s,r}$, that is, $\mu_r[t]=(-1)^{s[r]}(-1)^{t[r-1]}$. Then $\mc{C}_s=\{x:M_sx>0\}$. The rows are nonzero, and they are pairwise orthogonal: for $r\ne r'$, flipping bit $r-1$ of $t$ is a bijection of $\{0,1\}^k$ that changes the sign of $\mu_r[t]\mu_{r'}[t]$, so $\sum_t\mu_r[t]\mu_{r'}[t]=0$. Hence $M_s$ has rank $k-1$, and \Cref{cor:cone-bijection} with $d=2^k$, $M=M_s$, and $\Sigma=\operatorname{diag}(\nu)$, which is positive definite, shows that $F_{s,\nu}=F_{\mc{C}_s}$ is a strong bijection onto $\mc{C}_s$.
\end{proof}

\subsection{Proportions and edge-density corrections}

\begin{definition}[The universal sequences $\nu$, $\gamma$, $\mu$]\label{def:universal-nu-mu}
We define, for every $k\ge1$, numbers $\nu[s]>0$ for $s\in\{0,1\}^k$ and $\gamma[s,t],\mu[s,t]\in\mb{R}$ for $s,t\in\{0,1\}^k$, by induction on $k$.

For $k=1$: $\nu[0]=\nu[1]:=1/2$, and $\gamma[s,t]=\mu[s,t]:=0$ for all $s,t\in\{0,1\}$.

Step $k\to k+1$: suppose $\nu$ and $\mu$ are defined on $\{0,1\}^k$ and satisfy
\begin{equation}\label{eq:cone-membership}
\left(\nu[t]\,\mu[s,t]\right)_{t\in\{0,1\}^k}\in\mc{C}_s\qquad\text{for every }s\in\{0,1\}^k
\end{equation}
(\Cref{lem:universal-well-defined} shows that this holds). Let
\[
\gamma[s,\bullet]:=F_{s,\nu}^{-1}\left(\left(\nu[t]\,\mu[s,t]\right)_{t\in\{0,1\}^k}\right)\qquad(s\in\{0,1\}^k),
\]
with $F_{s,\nu}$ the map of \Cref{prop:cond-mean-bijection}, and let $W_{s,\bullet}$, $s\in\{0,1\}^k$, be the Gaussian row model with means $\gamma$ and variances $\nu$, so that
\begin{equation}\label{eq:gamma-defining}
\mb{E}\left[W_{s,t}\mid\mc{I}_s(W_{s,\bullet})\right]=\nu[t]\,\mu[s,t]\qquad\text{for all }s,t\in\{0,1\}^k.
\end{equation}
For $s,t\in\{0,1\}^k$ and $b,c\in\{0,1\}$ define
\begin{align}
\nu[sb]&:=\nu[s]\,\mb{P}\left[\mc{J}_{sb}(W_{s,\bullet})\mid\mc{I}_s(W_{s,\bullet})\right],\label{eq:nu-recursion}\\
\mu[sb,tc]&:=\frac{1}{\nu[t]}\mb{E}\left[W_{s,t}\mid\mc{J}_{sb}(W_{s,\bullet})\right]+\frac{1}{\nu[s]}\mb{E}\left[W_{t,s}\mid\mc{J}_{tc}(W_{t,\bullet})\right]-\mu[s,t].\label{eq:mu-recursion}
\end{align}
\end{definition}

The Gaussian row model $W_{s,\bullet}$ attached to day $k$ in this definition is used throughout the section; we refer to it as \emph{the day-$k$ Gaussian row model}. The meaning of the three sequences is the following. In the idealized process of \Cref{sec:idealized}, a vertex of the history class $s$ has degree vector whose fluctuations, in units of $\sqrt{pN}$, are governed by $W_{s,\bullet}$: the class $s$ has proportion $\nu[s]$ of the vertices, its members have on average $\sqrt{pN}\,\nu[t]\mu[s,t]$ more neighbors in class $t$ than the trivial $pN\nu[t]$, a relative excess of $\mu[s,t]/\sqrt{pN}$, and $\gamma[s,\bullet]$ is the mean that makes a Gaussian row conditioned on the history $\mc{I}_s$ reproduce these excesses.

\begin{lemma}[The recursion is well defined]\label{lem:universal-well-defined}
For every $k\ge1$ the quantities of \Cref{def:universal-nu-mu} are defined on $\{0,1\}^k$, and:
\begin{enumerate}[label=\textup{(\roman*)},ref=\textup{(\roman*)}]
\item\label{univ:positive} $\nu[s]>0$ for every $s\in\{0,1\}^k$, and $\sum_{s\in\{0,1\}^k}\nu[s]=1$;
\item\label{univ:symmetric} $\mu[s,t]=\mu[t,s]$ for all $s,t\in\{0,1\}^k$;
\item\label{univ:cone} \eqref{eq:cone-membership} holds, that is, $Z_{s,r}\big((\nu[t]\mu[s,t])_t\big)>0$ for every $s\in\{0,1\}^k$ and $1\le r\le k-1$.
\end{enumerate}
\end{lemma}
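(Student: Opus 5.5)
The proof goes by induction on $k$, proving \ref{univ:positive}--\ref{univ:cone} together. The cone condition at day $k$ is exactly what makes the step $k\to k+1$ of \Cref{def:universal-nu-mu} well defined via \Cref{prop:cond-mean-bijection}.

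\emph{Base case $k=1$.} Here $\nu[0]=\nu[1]=1/2$, so \ref{univ:positive} holds, and $\mu\equiv0$ is symmetric. Condition \ref{univ:cone} is vacuous, since there is no $r$ with $1\le r\le k-1$ and $\mc{C}_s=\mb{R}^{\{0,1\}}$. Hence $\gamma$, the day-$1$ Gaussian row model, and the quantities on $\{0,1\}^2$ are all defined.

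\emph{Inductive step.} Assume \ref{univ:positive}--\ref{univ:cone} on $\{0,1\}^k$. Then $\gamma[s,\bullet]=F_{s,\nu}^{-1}(\cdots)$ exists, and \eqref{eq:gamma-defining} holds.
\begin{itemize}
\item For \ref{univ:positive}: $\nu[sb]=\nu[s]\,\mb{P}[\mc{J}_{sb}\mid\mc{I}_s]$ is positive, because the remark after \Cref{def:gaussian-row} gives $\mb{P}[\mc{J}_{sb}(W_{s,\bullet})]>0$. Exactly one of $\mc{J}_{s0},\mc{J}_{s1}$ holds on $\mc{I}_s$, so $\nu[s0]+\nu[s1]=\nu[s]$, and the total remains $1$.
\item For \ref{univ:symmetric}: the right side of \eqref{eq:mu-recursion} is visibly invariant under $(s,b)\leftrightarrow(t,c)$, once we use $\mu[s,t]=\mu[t,s]$ from the inductive hypothesis.
\end{itemize}

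\emph{Cone condition.} This is the only real computation. Fix $sb\in\{0,1\}^{k+1}$ and $1\le r\le k$. We must show
\[
(-1)^{(sb)[r]}\sum_{t\in\{0,1\}^k}\sum_{c\in\{0,1\}}(-1)^{t[r-1]}\,\nu[tc]\,\mu[sb,tc]>0 .
\]
Since $r-1\le k-1$, the sign depends only on $t$, so we can first sum over $c$ inside \eqref{eq:mu-recursion} for fixed $t$. The three terms behave as follows.
\begin{itemize}
\item First term: $\sum_c\nu[tc]/\nu[t]=1$, so it contributes $\mb{E}[W_{s,t}\mid\mc{J}_{sb}(W_{s,\bullet})]$.
\item Second term: the law of total expectation over the split $\mc{I}_t=\mc{J}_{t0}\sqcup\mc{J}_{t1}$ gives
\[
\sum_c\nu[tc]\,\mb{E}[W_{t,s}\mid\mc{J}_{tc}]=\nu[t]\,\mb{E}[W_{t,s}\mid\mc{I}_t].
\]
By \eqref{eq:gamma-defining} this equals $\nu[t]\nu[s]\mu[t,s]$, so after dividing by $\nu[s]$ the contribution is $\nu[t]\mu[s,t]$, using symmetry.
\item Third term: it contributes $-\nu[t]\mu[s,t]$, which cancels the second exactly.
\end{itemize}
The left side therefore reduces to
\[
\mb{E}\Bigl[(-1)^{(sb)[r]}\sum_t(-1)^{t[r-1]}W_{s,t}\ \Big|\ \mc{J}_{sb}(W_{s,\bullet})\Bigr].
\]
For $r\le k-1$ the integrand is $Z_{s,r}(W_{s,\bullet})$. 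For $r=k$ it is $(-1)^bZ_{s,k}(W_{s,\bullet})$. In both cases it is almost surely strictly positive on $\mc{J}_{sb}$, by the description of $\mc{J}_{sb}$ as the open set $\mc{C}_s\cap\{(-1)^bZ_{s,k}>0\}$ up to a null set. It is also integrable, being Gaussian conditioned on a positive-probability event. Hence the conditional expectation is strictly positive, which is \ref{univ:cone} at day $k+1$.

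The main obstacle is spotting the cancellation between the second and third terms of \eqref{eq:mu-recursion}. This rests on \eqref{eq:gamma-defining} together with the inductive symmetry \ref{univ:symmetric}, which is why the three claims must be carried through the induction together. The remaining steps are routine bookkeeping.
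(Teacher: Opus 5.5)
Your proof is correct and follows the paper's argument: the same joint induction on (i)--(iii), with the key identity $x[t0]+x[t1]=\mb{E}[W_{s,t}\mid\mc{J}_{sb}(W_{s,\bullet})]$ obtained by cancelling the second and third terms of \eqref{eq:mu-recursion} through the law of total expectation, \eqref{eq:gamma-defining}, and the inductive symmetry. The only difference is cosmetic: you get positivity of the conditional mean directly from the almost-sure strict positivity of $Z_{s,r}$ (resp.\ $(-1)^bZ_{s,k}$) on $\mc{J}_{sb}$, whereas the paper cites \Cref{lem:open-conditioning} to place the conditional mean inside the open convex set $\mc{C}_s\cap\{(-1)^bZ_{s,k}>0\}$.
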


\begin{proof}
We prove by induction on $k$ that the day-$k$ quantities are defined and satisfy \ref{univ:positive}--\ref{univ:cone}. For $k=1$, $\nu[0]=\nu[1]=1/2$ are positive and sum to $1$, $\mu\equiv0$ is symmetric, and \ref{univ:cone} is vacuous because $\mc{C}_s=\mb{R}^{\{0,1\}}$; also $\gamma=0=F_{s,\nu}^{-1}(0)$, since $F_{s,\nu}$ is the identity for $k=1$.

Suppose the claims hold for some $k\ge1$. By \ref{univ:cone}, condition \eqref{eq:cone-membership} holds, so $\gamma[s,\bullet]$ is defined through \Cref{prop:cond-mean-bijection}, the day-$k$ Gaussian row model is defined, and \eqref{eq:gamma-defining} holds. All conditional expectations in \eqref{eq:nu-recursion} and \eqref{eq:mu-recursion} are defined because $\mb{P}[\mc{J}_{sb}(W_{s,\bullet})]>0$, as noted after \Cref{def:gaussian-row}. We verify the three claims on $\{0,1\}^{k+1}$.

\ref{univ:positive}: $\nu[sb]>0$ because $\mb{P}[\mc{J}_{sb}(W_{s,\bullet})\mid\mc{I}_s(W_{s,\bullet})]>0$. For a vector $x$ satisfying $\mc{I}_s(x)$, exactly one of $\mc{J}_{s0}(x)$ and $\mc{J}_{s1}(x)$ holds, since the two differ only in the $k$th inequality, which is $\ge_{s[k-1]0}$ in one and $\ge_{s[k-1]1}$ in the other, and these two relations are complementary. Hence
\[
\nu[s0]+\nu[s1]=\nu[s]\left(\mb{P}[\mc{J}_{s0}(W_{s,\bullet})\mid\mc{I}_s(W_{s,\bullet})]+\mb{P}[\mc{J}_{s1}(W_{s,\bullet})\mid\mc{I}_s(W_{s,\bullet})]\right)=\nu[s],
\]
and summing over $s$ gives $\sum_{u\in\{0,1\}^{k+1}}\nu[u]=\sum_{s\in\{0,1\}^k}\nu[s]=1$.

\ref{univ:symmetric}: the right-hand side of \eqref{eq:mu-recursion} is invariant under exchanging $(s,b)$ with $(t,c)$, because $\mu[s,t]=\mu[t,s]$ by the induction hypothesis.

\ref{univ:cone}: fix $s\in\{0,1\}^k$ and $b\in\{0,1\}$, and let $x\in\mb{R}^{\{0,1\}^{k+1}}$ be the vector $x[u]:=\nu[u]\mu[sb,u]$. For $t\in\{0,1\}^k$, \eqref{eq:mu-recursion} gives
\begin{align*}
x[t0]+x[t1]&=\frac{\nu[t0]+\nu[t1]}{\nu[t]}\,\mb{E}\left[W_{s,t}\mid\mc{J}_{sb}(W_{s,\bullet})\right]\\
&\quad+\frac{1}{\nu[s]}\sum_{c\in\{0,1\}}\nu[tc]\,\mb{E}\left[W_{t,s}\mid\mc{J}_{tc}(W_{t,\bullet})\right]-\left(\nu[t0]+\nu[t1]\right)\mu[s,t].
\end{align*}
The first coefficient is $1$ by \ref{univ:positive}. In the second term, $\nu[tc]=\nu[t]\,\mb{P}[\mc{J}_{tc}(W_{t,\bullet})\mid\mc{I}_t(W_{t,\bullet})]$ by \eqref{eq:nu-recursion}, and since $\mc{J}_{t0}$ and $\mc{J}_{t1}$ partition $\mc{I}_t$, the law of total expectation gives $\sum_c\mb{P}[\mc{J}_{tc}\mid\mc{I}_t]\,\mb{E}[W_{t,s}\mid\mc{J}_{tc}]=\mb{E}[W_{t,s}\mid\mc{I}_t(W_{t,\bullet})]=\nu[s]\mu[t,s]$ by \eqref{eq:gamma-defining}; so the second term equals $\nu[t]\mu[t,s]$. The third term is $-\nu[t]\mu[s,t]$. By \ref{univ:symmetric} the last two cancel, and
\begin{equation}\label{eq:children-sum}
x[t0]+x[t1]=\mb{E}\left[W_{s,t}\mid\mc{J}_{sb}(W_{s,\bullet})\right]\qquad(t\in\{0,1\}^k).
\end{equation}
Now let $1\le r\le k$. For $u=tc\in\{0,1\}^{k+1}$ we have $u[r-1]=t[r-1]$, and $(sb)[r]=s[r]$ for $r\le k-1$ while $(sb)[k]=b$. Hence, by \eqref{eq:children-sum},
\[
Z_{sb,r}(x)=(-1)^{(sb)[r]}\sum_{t}(-1)^{t[r-1]}\left(x[t0]+x[t1]\right)
=\begin{cases}
\mb{E}\left[Z_{s,r}(W_{s,\bullet})\mid\mc{J}_{sb}\right]&\text{if }1\le r\le k-1,\\[2pt]
(-1)^b\,\mb{E}\left[Z_{s,k}(W_{s,\bullet})\mid\mc{J}_{sb}\right]&\text{if }r=k,
\end{cases}
\]
where $\mc{J}_{sb}$ abbreviates $\mc{J}_{sb}(W_{s,\bullet})$.
Conditioning on $\mc{J}_{sb}(W_{s,\bullet})$ is conditioning on the nonempty open convex set $\mc{C}_s\cap\{(-1)^bZ_{s,k}>0\}$, on which the linear forms $Z_{s,r}$ ($r\le k-1$) and $(-1)^bZ_{s,k}$ are positive; by \Cref{lem:open-conditioning} the conditional mean of $W_{s,\bullet}$ lies in this set, so all the displayed quantities are positive. Thus $x\in\mc{C}_{sb}$, which is \eqref{eq:cone-membership} on $\{0,1\}^{k+1}$.
\end{proof}

\begin{lemma}[Bit-flip symmetry]\label{lem:universal-symmetry}
For every $k\ge1$ and $s,t\in\{0,1\}^k$,
\[
\nu[\ol{s}]=\nu[s],\qquad\gamma[\ol{s},\ol{t}]=\gamma[s,t],\qquad\mu[\ol{s},\ol{t}]=\mu[s,t].
\]
Consequently $\sum_{t\in\{0,1\}^k}(-1)^{t[r-1]}\nu[t]=0$ for every $1\le r\le k$.
\end{lemma}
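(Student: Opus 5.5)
We argue by induction on $k$, proving the three identities simultaneously. For $k=1$ they are immediate: $\nu[0]=\nu[1]=1/2$ and $\gamma\equiv\mu\equiv0$. The key tool is the linear involution $R\colon\mb{R}^{\{0,1\}^k}\to\mb{R}^{\{0,1\}^k}$ given by $(Rx)[t]:=-x[\ol t]$. Since $(-1)^{\ol t[r-1]}=-(-1)^{t[r-1]}$, we get
\[
\sum_t(-1)^{t[r-1]}(Rx)[t]=\sum_t(-1)^{t[r-1]}x[t]
\]
after reindexing $t\mapsto\ol t$. Combined with $(-1)^{\ol s[r]}=-(-1)^{s[r]}$, this gives $Z_{\ol s,r}(Rx)=-Z_{s,r}(x)$ for $r\le k-1$ and $Z_{\ol s,k}(Rx)=Z_{s,k}(x)$.

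The sign change in the first identity is not what we want, so we instead use $x\mapsto -Rx$, that is, $(Sx)[t]:=x[\ol t]$. For this map,
\[
Z_{\ol s,r}(Sx)=(-1)^{\ol s[r]}\cdot\Bigl(-\sum_t(-1)^{t[r-1]}x[t]\Bigr)=Z_{s,r}(x)\quad(r\le k-1),\qquad Z_{\ol s,k}(Sx)=-Z_{s,k}(x).
\]
Hence $S(\mc C_s)=\mc C_{\ol s}$, and $S$ maps $\mc C_s\cap\{(-1)^bZ_{s,k}>0\}$ onto $\mc C_{\ol s}\cap\{(-1)^{1-b}Z_{\ol s,k}>0\}$. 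In other words, $S$ carries $\mc I_s$ to $\mc I_{\ol s}$ and $\mc J_{sb}$ to $\mc J_{\ol{sb}}$. These are exactly the strict-inequality versions, which suffice because the relevant Gaussian laws give hyperplanes probability zero.

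For the inductive step, assume the identities hold on $\{0,1\}^k$. Since $\nu[\ol t]=\nu[t]$, the map $S$ pushes the law $\bigotimes_t\mc N(\gamma[t],\nu[t])$ forward to $\bigotimes_t\mc N(\gamma[\ol t],\nu[t])$. Then $F_{\ol s,\nu}(S\gamma)=S F_{s,\nu}(\gamma)$. The target vector $(\nu[t]\mu[\ol s,t])_t$ equals $S$ applied to $(\nu[t]\mu[s,t])_t$, by the inductive hypothesis $\mu[\ol s,\ol t]=\mu[s,t]$ together with $\nu[\ol t]=\nu[t]$. By injectivity of $F_{\ol s,\nu}$ (\Cref{prop:cond-mean-bijection}) we conclude $\gamma[\ol s,\bullet]=S\gamma[s,\bullet]$, i.e.\ $\gamma[\ol s,\ol t]=\gamma[s,t]$. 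This is the day-$k$ $\gamma$ statement, so the $\gamma$ identity is proved at each day $k$ from the $\nu,\mu$ identities at day $k$.

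Consequently $W_{\ol s,\bullet}$ has the same law as $SW_{s,\bullet}$. Using the event correspondence above:
\begin{itemize}
\item \eqref{eq:nu-recursion} gives $\nu[\ol{sb}]=\nu[\ol s]\,\mb P[\mc J_{\ol s\ol b}\mid\mc I_{\ol s}]=\nu[s]\,\mb P[\mc J_{sb}\mid\mc I_s]=\nu[sb]$.
\item For \eqref{eq:mu-recursion}, we have $\mb E[W_{\ol s,\ol t}\mid\mc J_{\ol s\ol b}]=\mb E[W_{s,t}\mid\mc J_{sb}]$, and the same holds for the $(t,c)$ term. Together with $\mu[\ol s,\ol t]=\mu[s,t]$, this yields $\mu[\ol{sb},\ol{tc}]=\mu[sb,tc]$.
\end{itemize}

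The consequence follows by pairing $t$ with $\ol t$ in the sum: $\nu[\ol t]=\nu[t]$ while $(-1)^{\ol t[r-1]}=-(-1)^{t[r-1]}$, so the terms cancel in pairs.

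The main obstacle is only bookkeeping. One must check carefully that the tie conventions in $\ge_{s[r-1]s[r]}$ flip correctly under complement. Since we pass to the open cones, where ties have probability zero, this reduces to the strict sign computations already done.
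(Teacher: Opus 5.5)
Your proposal is correct and follows essentially the same route as the paper. Your map $S$, once you discard the sign-flipped version, is exactly the paper's reflection $R$, and the rest matches step for step: induction on $k$, the intertwining $F_{\ol s,\nu}\circ S=S\circ F_{s,\nu}$ combined with injectivity to get $\gamma[\ol s,\bullet]=S\gamma[s,\bullet]$, the identification $W_{\ol s,\bullet}\overset{d}{=}SW_{s,\bullet}$ with corresponding events up to null sets, and the final pairing of $t$ with $\ol t$.
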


\begin{proof}
Let $R\colon\mb{R}^{\{0,1\}^k}\to\mb{R}^{\{0,1\}^k}$ be the reflection $(Rx)[t]:=x[\ol{t}]$, an involution. Substituting $u=\ol{t}$ and using $(-1)^{\ol{t}[i]}=-(-1)^{t[i]}$,
\begin{align*}
Z_{\ol{s},r}(Rx)&=(-1)^{1-s[r]}\sum_{u}(-1)^{1-u[r-1]}x[u]=Z_{s,r}(x)\qquad(1\le r\le k-1),\\
Z_{\ol{s},k}(Rx)&=\sum_u(-1)^{1-u[k-1]}x[u]=-Z_{s,k}(x).
\end{align*}
Consequently $R\mc{C}_s=\mc{C}_{\ol{s}}$, and $R$ maps $\mc{C}_s\cap\{(-1)^bZ_{s,k}>0\}$ onto $\mc{C}_{\ol{s}}\cap\{(-1)^{\ol{b}}Z_{\ol{s},k}>0\}$; in words, $R$ carries the strict forms of $\mc{I}_s$ and $\mc{J}_{sb}$ to those of $\mc{I}_{\ol{s}}$ and $\mc{J}_{\ol{s}\,\ol{b}}$.

We prove the three identities by induction on $k$; for $k=1$ they hold because $\nu[0]=\nu[1]$ and $\gamma=\mu=0$. Suppose they hold on $\{0,1\}^k$. If $W$ has independent coordinates $W[t]\sim\mc{N}(\gamma[t],\nu[t])$ and $\nu[\ol{t}]=\nu[t]$, then $RW$ has independent coordinates $(RW)[t]=W[\ol{t}]\sim\mc{N}((R\gamma)[t],\nu[t])$; therefore
\[
F_{\ol{s},\nu}(R\gamma)=\mb{E}\left[RW\mid RW\in\mc{C}_{\ol{s}}\right]=R\,\mb{E}\left[W\mid W\in\mc{C}_s\right]=R\,F_{s,\nu}(\gamma),
\]
that is, $F_{\ol{s},\nu}\circ R=R\circ F_{s,\nu}$, and hence $F_{\ol{s},\nu}^{-1}\circ R=R\circ F_{s,\nu}^{-1}$. The vector $v_s:=(\nu[t]\mu[s,t])_t$ satisfies $v_{\ol{s}}[t]=\nu[t]\mu[\ol{s},t]=\nu[\ol{t}]\mu[s,\ol{t}]=v_s[\ol{t}]$ by the induction hypothesis (applied to the pair $(\ol{s},t)$), so $v_{\ol{s}}=Rv_s$, and therefore $\gamma[\ol{s},\bullet]=F^{-1}_{\ol{s},\nu}(Rv_s)=R\,\gamma[s,\bullet]$, which is $\gamma[\ol{s},\ol{t}]=\gamma[s,t]$. It follows that the day-$k$ Gaussian row model satisfies $W_{\ol{s},\bullet}\overset{d}{=}RW_{s,\bullet}$, and by the first paragraph the events $\mc{I}_{\ol{s}}(W_{\ol{s},\bullet})$ and $\mc{J}_{\ol{s}\,\ol{b}}(W_{\ol{s},\bullet})$ correspond, up to null sets, to $\mc{I}_s(W_{s,\bullet})$ and $\mc{J}_{sb}(W_{s,\bullet})$ under this identification. Hence
\[
\mb{P}\left[\mc{J}_{\ol{s}\,\ol{b}}(W_{\ol{s},\bullet})\mid\mc{I}_{\ol{s}}(W_{\ol{s},\bullet})\right]=\mb{P}\left[\mc{J}_{sb}(W_{s,\bullet})\mid\mc{I}_s(W_{s,\bullet})\right],\qquad
\mb{E}\left[W_{\ol{s},\ol{t}}\mid\mc{J}_{\ol{s}\,\ol{b}}(W_{\ol{s},\bullet})\right]=\mb{E}\left[W_{s,t}\mid\mc{J}_{sb}(W_{s,\bullet})\right],
\]
and \eqref{eq:nu-recursion}, \eqref{eq:mu-recursion} give $\nu[\ol{s}\,\ol{b}]=\nu[sb]$ and $\mu[\ol{s}\,\ol{b},\ol{t}\,\ol{c}]=\mu[sb,tc]$, completing the induction. Finally, for $1\le r\le k$, pairing $t$ with $\ol{t}$ in $\sum_t(-1)^{t[r-1]}\nu[t]$ shows that the sum equals its own negative, hence vanishes.
\end{proof}

\subsection{The linear response}

\begin{lemma}[Conditional covariance]\label{lem:cond-cov-pd}
For every $k\ge1$ and $s\in\{0,1\}^k$, the matrix
\[
\Sigma_s:=\mr{Cov}\left(W_{s,\bullet}\mid\mc{I}_s(W_{s,\bullet})\right)\in\mb{R}^{\{0,1\}^k\times\{0,1\}^k},
\]
with entries $\Sigma_s[t,t']=\mb{E}[W_{s,t}W_{s,t'}\mid\mc{I}_s(W_{s,\bullet})]-\mb{E}[W_{s,t}\mid\mc{I}_s(W_{s,\bullet})]\,\mb{E}[W_{s,t'}\mid\mc{I}_s(W_{s,\bullet})]$,
is positive definite, where $W_{s,\bullet}$ is the day-$k$ Gaussian row model.
\end{lemma}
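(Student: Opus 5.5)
The plan is to show directly that $v^\top\Sigma_s v>0$ for every nonzero $v\in\mb{R}^{\{0,1\}^k}$. Since $v^\top\Sigma_s v=\mr{Var}\bigl(\langle v,W_{s,\bullet}\rangle\mid\mc{I}_s(W_{s,\bullet})\bigr)$, this amounts to checking that the linear functional $\langle v,W_{s,\bullet}\rangle$ is not almost surely constant on the conditioning event. By \Cref{lem:universal-well-defined} and \Cref{def:universal-nu-mu}, the day-$k$ Gaussian row model is well defined. Its law is a nondegenerate Gaussian with density $g$ that is strictly positive on all of $\mb{R}^{\{0,1\}^k}$, since the covariance is $\operatorname{diag}(\nu)$ and $\nu[t]>0$. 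As noted after \Cref{def:gaussian-row}, conditioning on $\mc{I}_s(W_{s,\bullet})$ agrees up to a null set with conditioning on the nonempty open cone $\mc{C}_s$. The conditional law therefore has density $g\one_{\mc{C}_s}/\mb{P}[W_{s,\bullet}\in\mc{C}_s]$, which is positive on the nonempty open set $\mc{C}_s$.

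The second moments are finite because the Gaussian has all moments and $\mb{P}[\mc{C}_s]>0$. Hence $\Sigma_s$ is well defined and positive semidefinite, being a covariance matrix. Now suppose $v\ne0$ and $v^\top\Sigma_s v=0$. Then $\langle v,W_{s,\bullet}\rangle=c$ almost surely under the conditional law, for some constant $c$. Equivalently, the hyperplane $\{x:\langle v,x\rangle=c\}$ carries full conditional mass.

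A hyperplane has Lebesgue measure zero, while the conditional law is absolutely continuous with respect to Lebesgue measure. The hyperplane therefore has conditional probability $0$, not $1$, which is a contradiction. So $v^\top\Sigma_s v>0$ for every $v\ne 0$, and $\Sigma_s$ is positive definite.

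There is no real obstacle here. The only point needing care is that $\mc{C}_s$ has nonempty interior, so that the conditional law is a genuine absolutely continuous distribution with positive mass. This holds because $\mc{C}_s$ is cut out by $k-1$ linearly independent strict inequalities in $2^k$ dimensions; for $k=1$ it is the whole space. Alternatively, this follows from the remark after \Cref{def:gaussian-row} that $\mb{P}[W_{s,\bullet}\in\mc{C}_s]>0$.
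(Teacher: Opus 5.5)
Your proposal is correct and takes essentially the same approach as the paper. The paper's proof just invokes \Cref{lem:open-conditioning} for the nonempty open set $\mc{C}_s$ and the covariance $\operatorname{diag}(\nu)$. The positive-definiteness part of that lemma is exactly your argument: a vanishing conditional variance of $u\cdot X$ would force the absolutely continuous conditioned law onto a Lebesgue-null hyperplane.
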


\begin{proof}
Conditioning on $\mc{I}_s(W_{s,\bullet})$ is conditioning on the nonempty open convex set $\mc{C}_s$, and $W_{s,\bullet}$ is Gaussian with the positive definite covariance $\operatorname{diag}(\nu)$. \Cref{lem:open-conditioning} applies.
\end{proof}

\begin{definition}[The universal sequences $\varepsilon$, $\beta$]\label{def:universal-epsilon}
We define, for every $k\ge1$, numbers $\varepsilon[s]$ for $s\in\{0,1\}^k$ and $\beta[s,t]$ for $s,t\in\{0,1\}^k$, by induction on $k$.

For $k=1$: $\varepsilon[0]:=1$ and $\varepsilon[1]:=-1$.

Step $k\to k+1$: suppose $\varepsilon$ is defined on $\{0,1\}^k$, and let $W_{s,\bullet}$ be the day-$k$ Gaussian row model. For $s\in\{0,1\}^k$ let $\beta[s,\bullet]\in\mb{R}^{\{0,1\}^k}$ be the unique solution of the linear system
\begin{equation}\label{eq:beta-defining}
\sum_{t'\in\{0,1\}^k}\beta[s,t']\,\Sigma_s[t',t]=\varepsilon[t]\qquad(t\in\{0,1\}^k),
\end{equation}
which exists by \Cref{lem:cond-cov-pd}; equivalently, the random variable $B_s:=\sum_{t'\in\{0,1\}^k}\beta[s,t']W_{s,t'}$ satisfies $\mr{Cov}(B_s,W_{s,t}\mid\mc{I}_s(W_{s,\bullet}))=\varepsilon[t]$ for every $t$. For $s\in\{0,1\}^k$ and $b\in\{0,1\}$ define
\begin{equation}\label{eq:epsilon-recursion}
\varepsilon[sb]:=\nu[sb]\left(\mb{E}\left[B_s\mid\mc{J}_{sb}(W_{s,\bullet})\right]-\mb{E}\left[B_s\mid\mc{I}_s(W_{s,\bullet})\right]\right).
\end{equation}
\end{definition}

The role of these quantities will be made precise in \Cref{sec:idealized}. There, an initial imbalance of $\tau\sqrt N$ vertices will be represented, relative to the balanced idealized process, by the first-order class-size displacement $\tau\sqrt N\sqrt{pN}^{k-1}\varepsilon[s]$ on day $k$, together with an effective-tilt displacement $\tau\frac{\sqrt{pN}^{k-1}}{\sqrt N}\beta[s,t]$ in direction $t$. Equation \eqref{eq:beta-defining} is the compatibility relation that will make this tilt reproduce the neighboring class-size responses $\varepsilon[t]$, while \eqref{eq:epsilon-recursion} is the corresponding linearized update through the next majority decision.

\begin{lemma}[Symmetries of $\varepsilon$]\label{lem:epsilon-symmetry}
For every $k\ge1$ and $s,t\in\{0,1\}^k$:
\begin{enumerate}[label=\textup{(\roman*)},ref=\textup{(\roman*)}]
\item\label{eps:children} $\varepsilon[s0]+\varepsilon[s1]=0$;
\item\label{eps:flip} $\varepsilon[\ol{s}]=-\varepsilon[s]$ and $\beta[\ol{s},\ol{t}]=-\beta[s,t]$.
\end{enumerate}
\end{lemma}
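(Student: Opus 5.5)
Part \ref{eps:children} comes straight from the recursion \eqref{eq:epsilon-recursion}, using the facts already established in the proof of \Cref{lem:universal-well-defined}. Fix $s\in\{0,1\}^k$. Up to null sets, the events $\mc{J}_{s0}(W_{s,\bullet})$ and $\mc{J}_{s1}(W_{s,\bullet})$ partition $\mc{I}_s(W_{s,\bullet})$. Also $\nu[sb]=\nu[s]\,\mb{P}[\mc{J}_{sb}\mid\mc{I}_s]$ by \eqref{eq:nu-recursion}. Hence
\[
\varepsilon[s0]+\varepsilon[s1]=\nu[s]\Bigl(\sum_{b}\mb{P}[\mc{J}_{sb}\mid\mc{I}_s]\,\mb{E}[B_s\mid\mc{J}_{sb}]-\mb{E}[B_s\mid\mc{I}_s]\Bigr)=0
\]
by the law of total expectation, with $\sum_b\mb{P}[\mc{J}_{sb}\mid\mc{I}_s]=1$. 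This covers every string of length at least $2$. For $k=1$ the statement \ref{eps:children} is about children of the single-bit strings, so the same computation applies; the base case $\varepsilon[0]+\varepsilon[1]=0$ is immediate from the definition.

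For \ref{eps:flip} I will argue by induction on $k$, reusing the reflection $R$ from the proof of \Cref{lem:universal-symmetry}. For $k=1$ we have $\varepsilon[\ol{0}]=\varepsilon[1]=-1=-\varepsilon[0]$. Here $\beta$ is the day-$1$ object solving \eqref{eq:beta-defining} with $\Sigma_s=\operatorname{diag}(\nu)$, since the conditioning is trivial. So $\beta[s,t]=\varepsilon[t]/\nu[t]$, and the flip identity holds. In general, suppose $\varepsilon[\ol{t}]=-\varepsilon[t]$ on $\{0,1\}^k$.
\begin{itemize}
\item \emph{Step 1.} From the proof of \Cref{lem:universal-symmetry}, $W_{\ol{s},\bullet}\overset{d}{=}RW_{s,\bullet}$, and $R$ maps $\mc{C}_s$ onto $\mc{C}_{\ol{s}}$. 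It follows that $\Sigma_{\ol{s}}[\ol{t'},\ol{t}]=\Sigma_s[t',t]$.
\item \emph{Step 2.} Set $\beta'[\ol{s},\ol{t'}]:=-\beta[s,t']$. Substituting into \eqref{eq:beta-defining} for $\ol{s}$ gives
\[
\sum_{t'}\beta'[\ol{s},\ol{t'}]\,\Sigma_{\ol{s}}[\ol{t'},\ol{t}]=-\varepsilon[t]=\varepsilon[\ol{t}].
\]
By uniqueness of the solution (\Cref{lem:cond-cov-pd}), $\beta[\ol{s},\ol{t}]=-\beta[s,t]$. This proves the $\beta$ half on day $k$.
\item \emph{Step 3.} We get $B_{\ol{s}}=\sum_{t'}\beta[\ol{s},\ol{t'}]W_{\ol{s},\ol{t'}}$, which has the same law as $-B_s$ under the identification $W_{\ol{s},\bullet}=RW_{s,\bullet}$. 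The events $\mc{I}_{\ol{s}}$ and $\mc{J}_{\ol{s}\,\ol{b}}$ correspond to $\mc{I}_s$ and $\mc{J}_{sb}$, and \Cref{lem:universal-symmetry} gives $\nu[\ol{s}\,\ol{b}]=\nu[sb]$. Therefore \eqref{eq:epsilon-recursion} yields $\varepsilon[\ol{s}\,\ol{b}]=-\varepsilon[sb]$, which closes the induction.
\end{itemize}

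The main care point is bookkeeping, not analysis. The flip $\ol{sb}=\ol{s}\,\ol{b}$ must be matched with the correspondence $\mc{J}_{sb}\leftrightarrow\mc{J}_{\ol{s}\,\ol{b}}$. One must also check that the conditional covariance transforms by conjugation with the permutation $R$, so that uniqueness of the solution of \eqref{eq:beta-defining} can be invoked.
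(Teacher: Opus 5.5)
Your proposal is correct and follows the paper's proof essentially step for step. Part (i) uses the law of total expectation over the partition of $\mc{I}_s$ into $\mc{J}_{s0}$ and $\mc{J}_{s1}$. Part (ii) is an induction on $k$ using the reflection $R$, the identity $\Sigma_{\ol s}[\ol{t'},\ol t]=\Sigma_s[t',t]$, uniqueness of the solution of \eqref{eq:beta-defining}, and $B_{\ol s}=-B_s$ under the identification $W_{\ol s,\bullet}=RW_{s,\bullet}$.

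Your separate day-$1$ computation $\beta[s,t]=\varepsilon[t]/\nu[t]$ is correct but redundant, since the inductive step already derives the $\beta$ identity on day $k$ (including $k=1$) from the $\varepsilon$ identity on day $k$.
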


\begin{proof}
\ref{eps:children}: by \eqref{eq:epsilon-recursion}, \eqref{eq:nu-recursion}, and \Cref{lem:universal-well-defined}\ref{univ:positive},
\[
\varepsilon[s0]+\varepsilon[s1]=\nu[s]\sum_{b\in\{0,1\}}\mb{P}\left[\mc{J}_{sb}(W_{s,\bullet})\mid\mc{I}_s(W_{s,\bullet})\right]\mb{E}\left[B_s\mid\mc{J}_{sb}(W_{s,\bullet})\right]-\nu[s]\,\mb{E}\left[B_s\mid\mc{I}_s(W_{s,\bullet})\right]=0,
\]
because $\mc{J}_{s0}$ and $\mc{J}_{s1}$ partition $\mc{I}_s$ and the law of total expectation applies.

\ref{eps:flip}: we prove $\varepsilon[\ol{s}]=-\varepsilon[s]$ by induction on $k$, deriving the identity for $\beta$ along the way. For $k=1$ it is $\varepsilon[1]=-\varepsilon[0]$. Suppose $\varepsilon[\ol{t}]=-\varepsilon[t]$ for all $t\in\{0,1\}^k$. With $R$ the reflection of the proof of \Cref{lem:universal-symmetry}, that proof gives $W_{\ol{s},\bullet}\overset{d}{=}RW_{s,\bullet}$ with corresponding conditioning events, hence
\[
\Sigma_{\ol{s}}[\ol{t},\ol{t}']=\mr{Cov}\left(W_{\ol{s},\ol{t}},W_{\ol{s},\ol{t}'}\mid\mc{I}_{\ol{s}}\right)=\mr{Cov}\left(W_{s,t},W_{s,t'}\mid\mc{I}_s\right)=\Sigma_s[t,t'].
\]
Define $\beta'[t]:=-\beta[s,\ol{t}]$. Then for every $t$,
\[
\sum_{t'}\beta'[t']\,\Sigma_{\ol{s}}[t',t]=-\sum_{t'}\beta[s,\ol{t}']\,\Sigma_s[\ol{t}',\ol{t}]=-\sum_{u}\beta[s,u]\,\Sigma_s[u,\ol{t}]=-\varepsilon[\ol{t}]=\varepsilon[t],
\]
by \eqref{eq:beta-defining} for $s$ and the induction hypothesis. So $\beta'$ solves \eqref{eq:beta-defining} for $\ol{s}$, and by uniqueness $\beta[\ol{s},t]=-\beta[s,\ol{t}]$ for all $t$, that is, $\beta[\ol{s},\ol{t}]=-\beta[s,t]$. Consequently, under the identification $W_{\ol{s},\bullet}=RW_{s,\bullet}$,
\[
B_{\ol{s}}=\sum_t\beta[\ol{s},t]\,W_{s,\ol{t}}=\sum_u\beta[\ol{s},\ol{u}]\,W_{s,u}=-B_s,
\]
and since the conditioning events correspond, \eqref{eq:epsilon-recursion} together with $\nu[\ol{s}\,\ol{b}]=\nu[sb]$ gives $\varepsilon[\ol{s}\,\ol{b}]=-\varepsilon[sb]$.
\end{proof}

\begin{theorem}[Coherence of the linear response]\label{thm:coherence}
For every $k\ge1$ and $s\in\{0,1\}^k$,
\[
\varepsilon[s0]>0\qquad\text{and}\qquad\varepsilon[s1]<0.
\]
\end{theorem}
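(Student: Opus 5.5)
By \Cref{lem:epsilon-symmetry}\ref{eps:children} we have $\varepsilon[s1]=-\varepsilon[s0]$, so it suffices to prove $\varepsilon[s0]>0$. I would argue by induction on $k$ with a strengthened hypothesis. The base case is $\varepsilon[0]=1>0>-1=\varepsilon[1]$. Since $\mc J_{s0}=\mc I_s\cap\{Z_{s,k}\ge0\}$, and $\nu[s0]>0$ by \Cref{lem:universal-well-defined}, \eqref{eq:epsilon-recursion} gives
\[
\varepsilon[s0]=\frac{\nu[s0]}{\mb P[\mc J_{s0}\mid\mc I_s]}\,\mr{Cov}\bigl(B_s,\one_{\{Z_{s,k}(W_{s,\bullet})>0\}}\ \big|\ \mc I_s(W_{s,\bullet})\bigr).
\]
So the goal is to show that the linear statistic $B_s$ is positively correlated with the event $\{Z_{s,k}>0\}$ under the Gaussian restricted to the cone $\mc C_s$.

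The first piece of information is the linear one. The induction hypothesis says $\operatorname{sign}\varepsilon[t]=(-1)^{t[k-1]}$. Combined with \eqref{eq:beta-defining}, this gives
\[
\mr{Cov}(B_s,Z_{s,k}\mid\mc I_s)=\sum_t(-1)^{t[k-1]}\varepsilon[t]=\sum_t|\varepsilon[t]|>0.
\]
Linear correlation does not by itself imply correlation with an indicator, so I would exploit the geometry of the cone. Let $e$ be the coefficient vector of $Z_{s,k}$. Because the coefficient rows of $Z_{s,1},\dots,Z_{s,k}$ are pairwise orthogonal, translating $W$ along $e$ leaves every $Z_{s,r}$ with $r<k$ unchanged, so $\mc C_s$ is invariant under such translations. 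I would write $W_{s,\bullet}=R+\xi\,\operatorname{diag}(\nu)e$, with $\xi$ the $\operatorname{diag}(\nu)^{-1}$-orthogonal coordinate, so that $\xi$ is a one-dimensional Gaussian independent of $R$. The cone constraint then depends only on $Z_{s,r}(R)$, $r<k$, since these forms annihilate $\operatorname{diag}(\nu)e$ whenever the relevant sums $\sum_t(-1)^{t[r-1]+t[k-1]}\nu[t]$ vanish. I expect this vanishing to follow from the bit-flip symmetry of \Cref{lem:universal-symmetry}, upgraded to symmetry under flipping a single coordinate pattern. If it does not follow, I would instead translate along $e$ itself and accept a correlated decomposition.

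Given such a decomposition, the law of total covariance splits the covariance above into two terms. The within-fiber term is $(\beta[s,\bullet]\cdot\operatorname{diag}(\nu)e)\,\mb E\bigl[\mr{Cov}(\xi,\one_{\{\xi>c(R)\}}\mid R)\bigr]$, whose sign is that of $\beta[s,\bullet]\cdot\operatorname{diag}(\nu)e$. The across-fiber term is $\mr{Cov}\bigl(\mb E[B_s\mid R],\mb P[Z_{s,k}>0\mid R]\mid\mc I_s\bigr)$.

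The main obstacle is this across-fiber term, together with establishing the sign of $\beta\cdot\operatorname{diag}(\nu)e$. I would try to strengthen the inductive hypothesis so that $\beta[s,\bullet]$, and not merely $\varepsilon$, is controlled. The target would be that $B_s$ is, up to an additive combination of $Z_{s,1},\dots,Z_{s,k-1}$, a positive multiple of $Z_{s,k}$, or at least monotone in $\xi$ with the $R$-dependence entering only through $\{Z_{s,r}(R)\}_{r<k}$. I would then use the FKG/Harris inequality for the log-concave restricted Gaussian, in coordinates adapted to the orthogonal forms, to sign the remaining covariance. Verifying that this strengthened structure propagates through \eqref{eq:beta-defining}, and then through \eqref{eq:epsilon-recursion} into day $k+1$, is the step I expect to require the real work.
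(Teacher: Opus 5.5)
Your reduction to $\varepsilon[s0]>0$, the identity $\varepsilon[s0]=\nu[s]\,\mr{Cov}(B_s,\one_{\{Z_{s,k}>0\}}\mid\mc I_s)$, and the computation $\mr{Cov}(B_s,Z_{s,k}\mid\mc I_s)=\sum_t|\varepsilon[t]|>0$ are all correct. However, the proposal stops exactly at the crux, and the route you sketch for it does not go through.

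The vanishing of $\sum_t(-1)^{t[r-1]+t[k-1]}\nu[t]$ that your decomposition needs is false in general. The flip symmetry of \Cref{lem:universal-symmetry} leaves each summand invariant, so it forces no cancellation. Concretely, running \eqref{eq:nu-recursion} and \eqref{eq:mu-recursion} for two steps gives $\nu[sbc]=\tfrac14\,\mb P[G+(-1)^{s+c}\sqrt{2/\pi}>0]$ with $G\sim\mc N(0,1)$. Hence at $k=3$, $r=1$ one gets $\sum_t(-1)^{t[0]+t[2]}\nu[t]=\mb P[|G|<\sqrt{2/\pi}]\ne0$; this is the period-two correlation between days $1$ and $3$. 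So $Z_{s,k}$ is genuinely correlated with the constraint forms, and translating along $\operatorname{diag}(\nu)e$ changes $Z_{s,1}$.

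When those sums do vanish, your split actually closes at once: $Z_{s,k}$ is a multiple of $\xi$, so $c(R)\equiv0$ and the across-fiber term is $0$. In the correlated case, however, you leave the across-fiber term and the sign of $\beta[s,\bullet]\cdot\operatorname{diag}(\nu)e$ to an unformulated strengthened induction plus FKG. Log-concavity of a restricted Gaussian does not by itself give positive association, so that step is also unjustified.

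The missing idea is that the existing induction hypothesis already gives the structure you hoped to propagate. Besides the covariance with $Z_{s,k}$, \eqref{eq:beta-defining} and \Cref{lem:epsilon-symmetry}\ref{eps:children} (group $t=t'b$) give $\mr{Cov}(B_s,Z_{s,r}\mid\mc I_s)=(-1)^{s[r]}\sum_t(-1)^{t[r-1]}\varepsilon[t]=0$ for every $r\le k-1$.

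Write $B_s-\mb EB_s=\sum_{r<k}\alpha_r(Z_{s,r}-\mb EZ_{s,r})+X$, with $X$ unconditionally uncorrelated with, hence independent of, $(Z_{s,r})_{r<k}$. Since $\mc I_s$ is an event of these forms, $X$ stays independent of them after conditioning. Therefore $0=\sum_{r'<k}\alpha_{r'}\mr{Cov}(Z_{s,r'},Z_{s,r}\mid\mc I_s)$ for all $r<k$. Positive definiteness of this conditional covariance (\Cref{lem:open-conditioning}) then forces every $\alpha_r=0$. So $B_s-\mb EB_s$ is independent of the history constraints, with no strengthening of the induction.

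Now let $Y\ne0$ be the residual of $Z_{s,k}$ after regressing on $(Z_{s,r})_{r<k}$. Then $\{Z_{s,k}>0\}=\{Y>U\}$ with $U$ a function of $(Z_{s,r})_{r<k}$. Write $X=\alpha Y+X'$, where $\alpha=\mr{Cov}(B_s,Z_{s,k}\mid\mc I_s)/\mr{Var}(Y)>0$ and $X'$ is independent of $(Y,(Z_{s,r})_{r<k})$. It follows that $\mb E[B_s\mid\mc I_s(W_{s,\bullet})]=\mb EB_s$ and
$\varepsilon[s0]=\nu[s0]\,\alpha\,\mb E[\one_{\mc I_s(W_{s,\bullet})}\varphi(U)]/\mb P[\mc J_{s0}(W_{s,\bullet})]$,
where $\varphi(u):=\mb E[Y\one_{\{Y>u\}}]>0$ for every $u$ because $Y$ is a centered nondegenerate Gaussian. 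This gives $\varepsilon[s0]>0$ directly: no across-fiber term survives, and no FKG argument is needed.
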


\begin{proof}
By \Cref{lem:epsilon-symmetry}\ref{eps:children} it suffices to prove $\varepsilon[s0]>0$. We proceed by induction on $k$, using at day $k$ the following consequence of the statement for smaller $k$:
\begin{equation}\label{eq:lead-sum-positive}
S_k:=\sum_{t\in\{0,1\}^k}(-1)^{t[k-1]}\varepsilon[t]>0 .
\end{equation}
For $k=1$, $S_1=\varepsilon[0]-\varepsilon[1]=2$ by definition; for $k\ge2$, grouping $t=t'b$ with $t'\in\{0,1\}^{k-1}$ gives $S_k=\sum_{t'}(\varepsilon[t'0]-\varepsilon[t'1])=2\sum_{t'}\varepsilon[t'0]$ by \Cref{lem:epsilon-symmetry}\ref{eps:children}, which is positive by the statement for $k-1$. We also use
\begin{equation}\label{eq:history-sum-zero}
\sum_{t\in\{0,1\}^k}(-1)^{t[r-1]}\varepsilon[t]=0\qquad(1\le r\le k-1),
\end{equation}
which holds because for $r\le k-1$ the bit $t[r-1]$ depends only on $t'$ when $t=t'b$, so the sum is $\sum_{t'}(-1)^{t'[r-1]}(\varepsilon[t'0]+\varepsilon[t'1])=0$.

Fix $k\ge1$ and $s\in\{0,1\}^k$, let $W:=W_{s,\bullet}$ be the day-$k$ Gaussian row model, and write $Z_r:=Z_{s,r}(W)$ for $1\le r\le k$. Up to null sets, $\mc{I}_s(W)$ is the event $E:=\{Z_1>0,\ldots,Z_{k-1}>0\}$ and $\mc{J}_{s0}(W)$ is $E\cap\{Z_k>0\}$. Let $\mc{G}$ be the linear span of the centered variables $W_t-\mb{E}W_t$, $t\in\{0,1\}^k$, a finite-dimensional space of centered jointly Gaussian random variables, with the inner product $\langle X,Y\rangle:=\mb{E}[XY]=\mr{Cov}(X,Y)$; in $\mc{G}$, orthogonal families are independent. The centered variables $\ol{Z}_r:=Z_r-\mb{E}Z_r$ and $\ol{B}:=B_s-\mb{E}B_s$ lie in $\mc{G}$. The $\ol{Z}_r$, $1\le r\le k$, are linearly independent: their coefficient vectors are nonzero and pairwise orthogonal in $\mb{R}^{\{0,1\}^k}$ (see the proof of \Cref{prop:cond-mean-bijection}), and $a\mapsto\sum_ta[t](W_t-\mb{E}W_t)$ is injective because its image has variance $\sum_ta[t]^2\nu[t]>0$ for $a\ne0$.

\emph{Step 1: the covariances of $B_s$ with the constraints.} By \eqref{eq:beta-defining}, $\mr{Cov}(B_s,W_t\mid E)=\varepsilon[t]$ for every $t$, so by linearity
\begin{align*}
\mr{Cov}(B_s,Z_r\mid E)&=(-1)^{s[r]}\sum_t(-1)^{t[r-1]}\varepsilon[t]=0\qquad(1\le r\le k-1),\\
\mr{Cov}(B_s,Z_k\mid E)&=\sum_t(-1)^{t[k-1]}\varepsilon[t]=S_k>0,
\end{align*}
by \eqref{eq:history-sum-zero} and \eqref{eq:lead-sum-positive}.

\emph{Step 2: $B_s$ is independent of the history constraints.} Decompose $\ol{B}=\sum_{r<k}\alpha_r\ol{Z}_r+X$ with $X\in\mc{G}$ orthogonal to $\ol{Z}_1,\ldots,\ol{Z}_{k-1}$ (orthogonal projection). Then $X$ is independent of $(Z_r)_{r<k}$, and since $E$ is an event of $(Z_r)_{r<k}$, under the law conditioned on $E$ the variable $X$ is still independent of $(Z_r)_{r<k}$; hence $\mr{Cov}(X,Z_r\mid E)=0$ for $r<k$, and Step 1 gives
\[
0=\mr{Cov}(B_s,Z_r\mid E)=\sum_{r'<k}\alpha_{r'}\mr{Cov}(Z_{r'},Z_r\mid E)\qquad(1\le r\le k-1).
\]
The matrix $(\mr{Cov}(Z_{r'},Z_r\mid E))_{r',r<k}$ is the covariance of the nondegenerate Gaussian vector $(Z_r)_{r<k}\in\mb{R}^{k-1}$ conditioned on the open orthant $E$, hence positive definite by \Cref{lem:open-conditioning}. Therefore $\alpha_r=0$ for all $r$, and $\ol{B}=X$: the centered variable $B_s-\mb{E}B_s$ is independent of $(Z_r)_{r<k}$. (For $k=1$ there is nothing to prove in this step.)

\emph{Step 3: the component of $Z_k$ orthogonal to the constraints.} Decompose $\ol{Z}_k=\sum_{r<k}\delta_r\ol{Z}_r+Y$ with $Y\in\mc{G}$ orthogonal to $\ol{Z}_1,\ldots,\ol{Z}_{k-1}$. By linear independence $Y\ne0$, so $\mr{Var}(Y)>0$. The pair $(X,Y)$ is independent of $(Z_r)_{r<k}$, so conditioning on $E$ does not change its joint law, and using Step 2,
\[
S_k=\mr{Cov}(B_s,Z_k\mid E)=\mr{Cov}\Big(X,\sum_{r<k}\delta_rZ_r+Y\ \Big|\ E\Big)=\mr{Cov}(X,Y).
\]
Write $X=\alpha Y+X'$ with $\alpha:=\mr{Cov}(X,Y)/\mr{Var}(Y)=S_k/\mr{Var}(Y)>0$ and $X'\in\mc{G}$ orthogonal to $Y$; then $X'$ is orthogonal to $Y$ and to all $\ol{Z}_r$, $r<k$, hence independent of $(Y,(Z_r)_{r<k})$.

\emph{Step 4: the conditional means.} Since $B_s=\mb{E}B_s+X$ and $X$ is independent of $E$ with mean $0$, we have $\mb{E}[B_s\mid E]=\mb{E}B_s$, and \eqref{eq:epsilon-recursion} reads
\[
\varepsilon[s0]=\nu[s0]\,\mb{E}\left[X\mid E\cap\{Z_k>0\}\right].
\]
Put $U:=-\mb{E}Z_k-\sum_{r<k}\delta_r\ol{Z}_r$, a function of $(Z_r)_{r<k}$, so that $\{Z_k>0\}=\{Y>U\}$. The variable $X'$ is centered and independent of $(Y,U,\mathbf{1}_E)$, so $\mb{E}[X'\mid E\cap\{Y>U\}]=0$ and
\[
\mb{E}\left[X\mid E\cap\{Z_k>0\}\right]=\alpha\,\mb{E}\left[Y\mid E\cap\{Y>U\}\right]=\alpha\,\frac{\mb{E}\left[\mathbf{1}_E\,\phi(U)\right]}{\mb{P}\left[E\cap\{Y>U\}\right]},\qquad\phi(u):=\mb{E}\left[Y\mathbf{1}_{Y>u}\right],
\]
where we conditioned on $(Z_r)_{r<k}$ and used that $Y$ is independent of $(U,\mathbf{1}_E)$. For a centered Gaussian $Y$ with positive variance, $\phi(u)>0$ for every $u\in\mb{R}$: if $u\ge0$ then $Y\mathbf{1}_{Y>u}\ge0$ with positive probability of being positive, and if $u<0$ then $\phi(u)=\mb{E}Y-\mb{E}[Y\mathbf{1}_{Y\le u}]=-\mb{E}[Y\mathbf{1}_{Y\le u}]>0$. Since $\mb{P}[E]>0$, we get $\mb{E}[\mathbf{1}_E\phi(U)]>0$, hence $\mb{E}[X\mid E\cap\{Z_k>0\}]>0$, and $\varepsilon[s0]>0$ because $\nu[s0]>0$ by \Cref{lem:universal-well-defined}\ref{univ:positive}.
\end{proof}

\begin{corollary}[Sign identities]\label{cor:sign-identities}
For every $k\ge1$:
\begin{enumerate}[label=\textup{(\roman*)},ref=\textup{(\roman*)}]
\item\label{sign:history} $\sum_{t\in\{0,1\}^k}(-1)^{t[r-1]}\varepsilon[t]=0$ for every $1\le r\le k-1$, and $\sum_{t\in\{0,1\}^k}\varepsilon[t]=0$;
\item\label{sign:lead} $\sum_{t\in\{0,1\}^k}(-1)^{t[k-1]}\varepsilon[t]=2\sum_{t'\in\{0,1\}^{k-1}}\varepsilon[t'0]>0$, where for $k=1$ the right-hand side is read as $2\varepsilon[0]=2$;
\item\label{sign:nu} $\sum_{t\in\{0,1\}^k}(-1)^{t[r-1]}\nu[t]=0$ for every $1\le r\le k$;
\item\label{sign:separation} $Z_{s,r}\big((\nu[t]\mu[s,t])_t\big)>0$ for every $s\in\{0,1\}^k$ and $1\le r\le k-1$.
\end{enumerate}
\end{corollary}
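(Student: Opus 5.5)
The corollary merely collects facts already established, so the plan is to read each item off an earlier result.

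For \ref{sign:history}, I would repeat the grouping argument from the proof of \Cref{thm:coherence}, equation \eqref{eq:history-sum-zero}. For $k=1$ the range $1\le r\le k-1$ is empty. For $k\ge2$, write $t=t'b$ with $t'\in\{0,1\}^{k-1}$. When $r\le k-1$, the sign $(-1)^{t[r-1]}=(-1)^{t'[r-1]}$ depends only on $t'$, so
\[
\sum_t(-1)^{t[r-1]}\varepsilon[t]=\sum_{t'}(-1)^{t'[r-1]}\bigl(\varepsilon[t'0]+\varepsilon[t'1]\bigr)=0
\]
by \Cref{lem:epsilon-symmetry}\ref{eps:children} applied on day $k-1$. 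The total sum $\sum_t\varepsilon[t]$ vanishes by the same grouping for $k\ge2$, and for $k=1$ directly from $\varepsilon[0]+\varepsilon[1]=1-1=0$. Alternatively, pair $t$ with $\ol t$ and use \Cref{lem:epsilon-symmetry}\ref{eps:flip}.

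For \ref{sign:lead}, the same grouping with $r=k$ gives $(-1)^{t[k-1]}=(-1)^b$. Hence the sum equals $\sum_{t'}(\varepsilon[t'0]-\varepsilon[t'1])=2\sum_{t'}\varepsilon[t'0]$, again by \Cref{lem:epsilon-symmetry}\ref{eps:children}. Every term is positive by \Cref{thm:coherence} on day $k-1$. For $k=1$ the sum is $\varepsilon[0]-\varepsilon[1]=2$.

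Item \ref{sign:nu} is exactly the final sentence of \Cref{lem:universal-symmetry}. Item \ref{sign:separation} is \Cref{lem:universal-well-defined}\ref{univ:cone}.

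None of these steps presents a real obstacle. The only care needed is the degenerate case $k=1$ in \ref{sign:history} and \ref{sign:lead}, and making sure the day-$(k-1)$ facts are invoked with the correct indexing.
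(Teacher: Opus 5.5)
Your proposal is correct and matches the paper's proof essentially step for step. You use the same $t=t'b$ grouping with \Cref{lem:epsilon-symmetry}\ref{eps:children} for \ref{sign:history} and \ref{sign:lead}, the same $k=1$ base cases, positivity from \Cref{thm:coherence}, and the same citations of \Cref{lem:universal-symmetry} and \Cref{lem:universal-well-defined}\ref{univ:cone} for \ref{sign:nu} and \ref{sign:separation}.
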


\begin{proof}
\ref{sign:history}: for $k\ge2$ and $1\le r\le k-1$, write $t=t'b$ with $t'\in\{0,1\}^{k-1}$; the bit $t[r-1]=t'[r-1]$ does not depend on $b$, so the sum is $\sum_{t'}(-1)^{t'[r-1]}(\varepsilon[t'0]+\varepsilon[t'1])=0$ by \Cref{lem:epsilon-symmetry}\ref{eps:children}. The same grouping without the sign gives $\sum_t\varepsilon[t]=0$ for $k\ge2$, and for $k=1$ it is $\varepsilon[0]+\varepsilon[1]=0$.
\ref{sign:lead}: for $k\ge2$, $(-1)^{t[k-1]}=(-1)^b$ when $t=t'b$, so the sum is $\sum_{t'}(\varepsilon[t'0]-\varepsilon[t'1])=2\sum_{t'}\varepsilon[t'0]$ by \Cref{lem:epsilon-symmetry}\ref{eps:children}, which is positive by \Cref{thm:coherence}; for $k=1$ the sum is $\varepsilon[0]-\varepsilon[1]=2$.
\ref{sign:nu} is the last assertion of \Cref{lem:universal-symmetry}, and \ref{sign:separation} is \Cref{lem:universal-well-defined}\ref{univ:cone}.
\end{proof}

\begin{lemma}[Universal nondegeneracy constants]\label{lem:universal-nondegeneracy}
For every $k\ge1$ there are constants $\phi^\ast_k\in(0,1/2)$ and $\varsigma^\ast_k>0$, depending only on $k$, such that for the day-$k$ Gaussian row model and all $s\in\{0,1\}^k$, $b\in\{0,1\}$, $1\le r\le k-1$,
\[
\mb{P}\left[\mc{I}_s(W_{s,\bullet})\right]\ge\phi^\ast_k,\qquad
\phi^\ast_k\le\mb{P}\left[\mc{J}_{sb}(W_{s,\bullet})\mid\mc{I}_s(W_{s,\bullet})\right]\le1-\phi^\ast_k,\qquad
Z_{s,r}\big((\nu[t]\mu[s,t])_t\big)\ge\varsigma^\ast_k .
\]
\end{lemma}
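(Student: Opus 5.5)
For each fixed $k$, the universal objects of \Cref{def:universal-nu-mu} form a finite family of real numbers indexed by the finitely many $s\in\{0,1\}^k$, $b\in\{0,1\}$ and $1\le r\le k-1$. So it is enough to show that each displayed quantity is strictly positive, resp. strictly below $1$. Then we can take $\phi^\ast_k$ to be the minimum of all the probabilities, all the conditional split probabilities and their complements $1-\mb{P}[\mc{J}_{sb}\mid\mc{I}_s]$, together with $1/4$. We take $\varsigma^\ast_k$ to be the minimum of the finitely many values $Z_{s,r}((\nu[t]\mu[s,t])_t)$, or $1$ when $k=1$. These constants depend only on $k$ because the recursion involves no $N$ or $p$.

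The first step is to show $\mb{P}[\mc{I}_s(W_{s,\bullet})]>0$. This is the remark after \Cref{def:gaussian-row}: $W_{s,\bullet}$ is a nondegenerate Gaussian vector with covariance $\operatorname{diag}(\nu)$, $\nu[t]>0$ by \Cref{lem:universal-well-defined}\ref{univ:positive}, and $\mc{C}_s$ is a nonempty open set. The set is nonempty because the forms $Z_{s,r}$ are linearly independent, so the system $Z_{s,r}>0$ is solvable. A nondegenerate Gaussian gives positive mass to every nonempty open set, and for $k=1$ the probability is simply $1$.

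The second step treats the split probabilities. Up to a null set, $\mc{I}_s$ is the disjoint union of the open sets $\mc{C}_s\cap\{Z_{s,k}>0\}$ and $\mc{C}_s\cap\{Z_{s,k}<0\}$, which represent $\mc{J}_{s0}$ and $\mc{J}_{s1}$. Both are nonempty, since $Z_{s,k}$ is linearly independent of $Z_{s,1},\dots,Z_{s,k-1}$ and can therefore take either sign on the open cone $\mc{C}_s$. Hence both have positive Gaussian mass, so each conditional probability lies strictly in $(0,1)$, and its complement is the probability of the other child.

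The third step is the separation bound. Strict positivity of $Z_{s,r}((\nu[t]\mu[s,t])_t)$ is exactly \Cref{lem:universal-well-defined}\ref{univ:cone}, equivalently \Cref{cor:sign-identities}\ref{sign:separation}. No step here is a genuine obstacle. The only point needing care is to check that the open sets above are nonempty, which linear independence of the forms supplies, and to make sure $\phi^\ast_k<1/2$ holds, which the cap at $1/4$ guarantees.
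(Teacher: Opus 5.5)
Your proposal is correct and matches the paper's proof: the quantities are finitely many and each is strictly positive (respectively below $1$), by positivity of nondegenerate Gaussian mass on nonempty open sets, by the two children partitioning $\mc{I}_s$ up to a null set, and by \Cref{lem:universal-well-defined}\ref{univ:cone}, after which one takes minima. The differences are cosmetic. You cap at $1/4$ to force $\phi^\ast_k<1/2$, where the paper halves the minimum. You also spell out, via linear independence of the forms $Z_{s,r}$, that the open sets $\mc{C}_s\cap\{\pm Z_{s,k}>0\}$ are nonempty, a point the paper leaves implicit in the remark after \Cref{def:gaussian-row}.
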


\begin{proof}
There are finitely many quantities involved: $2^k$ probabilities $\mb{P}[\mc{I}_s(W_{s,\bullet})]$, $2^{k+1}$ conditional probabilities $\mb{P}[\mc{J}_{sb}(W_{s,\bullet})\mid\mc{I}_s(W_{s,\bullet})]$, and $2^k(k-1)$ numbers $Z_{s,r}((\nu[t]\mu[s,t])_t)$. Each $\mb{P}[\mc{I}_s(W_{s,\bullet})]$ is positive, as noted after \Cref{def:gaussian-row}. Each $\mb{P}[\mc{J}_{sb}\mid\mc{I}_s]$ is positive for the same reason, and is less than $1$ because $\mb{P}[\mc{J}_{s0}\mid\mc{I}_s]+\mb{P}[\mc{J}_{s1}\mid\mc{I}_s]=1$ (proof of \Cref{lem:universal-well-defined}) with both terms positive. Each $Z_{s,r}((\nu[t]\mu[s,t])_t)$ is positive by \Cref{cor:sign-identities}\ref{sign:separation}. Let $\phi^\ast_k$ be one half of the minimum of all the probabilities $\mb{P}[\mc{I}_s]$, $\mb{P}[\mc{J}_{sb}\mid\mc{I}_s]$, and $1-\mb{P}[\mc{J}_{sb}\mid\mc{I}_s]$, so that $\phi^\ast_k\in(0,1/4]$ and the first two displayed inequalities hold; and let $\varsigma^\ast_k$ be the minimum of the numbers $Z_{s,r}((\nu[t]\mu[s,t])_t)$ when $k\ge2$, and $\varsigma^\ast_1:=1$.
\end{proof}

These constants are the universal versions of the admissibility conditions \cref{LA:separation}, \cref{LA:conditioning}, and \cref{LA:split}: in \Cref{sec:idealized} the binomial row models attached to faithful states are shown to be close to the day-$k$ Gaussian row model, and $\phi^\ast_k$, $\varsigma^\ast_k$ supply the constants $\phi$ and $T^{-1}$ there.

\section{The idealized process and faithful states}\label{sec:idealized}

The local coarse transition theorem is applied one day at a time, starting from the day-$1$ coarse state, for a bounded number $K_\theta$ of days. To do so we need a set of coarse states on which its hypotheses hold uniformly and which is mapped into itself, up to the $1-o(1)$ probability, by the fiber-averaged transition. This section defines that set, the \emph{faithful} states, as the coarse states whose sizes and edge counts follow an \emph{idealized process} attached to $(N,p)$, perturbed linearly in the direction given by the universal responses $\varepsilon$ of \Cref{sec:universal}, up to a polynomially small error. The idealized process is deterministic; it is the local template of \Cref{def:local-template} iterated on itself, with sizes rounded to integers, and its tilts exist by the inverse-function theorems of \Cref{app:inverse}. The section states the results and the definitions; the proofs are in \Cref{app:idealized}.

Throughout, $\theta\in(1/2,1)$ is fixed and we write
\[
D_\theta:=\left\lfloor\frac{1}{1-\theta}\right\rfloor+1,
\]
the number of days for which the idealized process is needed. The integer $\frac{1}{1-\theta}$, when $\frac1{1-\theta}\in\mb{Z}$, plays a special role (\Cref{thm:edge-day}); we then write $k_0:=\frac{1}{1-\theta}$.

\subsection{The idealized process}

\begin{definition}[Logit tilt]\label{def:logit-tilt}
For $p\in(0,1)$, $N\ge1$, $g\in\mb{R}$, and $v>0$, let
\[
\Lambda_{N,p}(g,v):=\frac{\frac{p}{1-p}\,e^{g/(v\sqrt{pN})}}{1+\frac{p}{1-p}\,e^{g/(v\sqrt{pN})}}\in(0,1),
\]
the success probability whose log-odds exceed those of $p$ by $g/(v\sqrt{pN})$. By Taylor expansion, $\Lambda_{N,p}(g,v)=p+\frac{g}{v}\frac{p(1-p)}{\sqrt{pN}}+O\big(\frac{g^2}{v^2}\frac{p}{pN}\big)$ uniformly for $|g|/v$ bounded.
\end{definition}

\begin{theorem}[The idealized process]\label{thm:idealized-process}
Fix $\theta\in(1/2,1)$, an integer $D\ge1$, and $T>1$. There is a constant $\ell=\ell(\theta,D,T)\ge1$ such that for all sufficiently large $N$ and all $p\in(T^{-1}N^{-\theta},TN^{-\theta})$ there exist, for $1\le k\le D$, vectors
\[
\wt{n}_k\in\mb{Z}^{\{0,1\}^k},\qquad\wt{m}_k\in\mb{R}^{\{0,1\}^k\times\{0,1\}^k},\qquad\text{and, for }1\le k\le D-1,\qquad\wt{\lambda}_k\in(0,1)^{\{0,1\}^k\times\{0,1\}^k},
\]
written $\wt{n}[s]$, $\wt{m}[s,t]$, $\wt{\lambda}[s,t]$ for $s,t\in\{0,1\}^k$, with the following properties.
\begin{enumerate}[label=\textup{(\roman*)},ref=\textup{(\roman*)}]
\item\label{ideal:base} (Base case.) $\wt{n}[0]=\wt{n}[1]=\lfloor N/2\rfloor$, and $\wt{m}[s,t]=p\,\wt{n}[s]\left(\wt{n}[t]-\one_{s=t}\right)$ for $s,t\in\{0,1\}$.
\item\label{ideal:solvable} (Solvability.) For $1\le k\le D-1$ and all $s,t\in\{0,1\}^k$: $\wt{n}[s]\ge1$, $\wt{m}[s,t]>0$, and, writing $\wt{A}_{s,\bullet}:=A_{s,\bullet}(\wt{n}_k,\wt{\lambda}_k)$ for the row vector of \Cref{def:local-template},
\[
\mb{P}\left[\mc{I}_s(\wt{A}_{s,\bullet})\right]>0\qquad\text{and}\qquad\wt{n}[s]\,\mb{E}\left[\wt{A}_{s,t}\mid\mc{I}_s(\wt{A}_{s,\bullet})\right]=\wt{m}[s,t].
\]
Moreover $\wt{\lambda}_k$ is the unique element of $(0,1)^{\{0,1\}^k\times\{0,1\}^k}$ with this property.
\item\label{ideal:evolution} (Evolution.) For $1\le k\le D-1$, if $(\wt{n}^{\mr{loc}},\wt{\ell}^{\mr{loc}},\wt{m}^{\mr{loc}})$ denotes the local template of $(\wt{n}_k,\wt{m}_k,\wt{\lambda}_k)$, then for all $s,t\in\{0,1\}^k$ and $b,c\in\{0,1\}$,
\[
\wt{n}[sb]=\left\lfloor\wt{n}^{\mr{loc}}[sb]\right\rfloor\qquad\text{and}\qquad\wt{m}[sb,tc]=\wt{m}^{\mr{loc}}[sb,tc].
\]
\item\label{ideal:symmetry} (Symmetry.) $\wt{n}[\ol{s}]=\wt{n}[s]$, $\wt{m}[\ol{s},\ol{t}]=\wt{m}[s,t]$, and $\wt{\lambda}[\ol{s},\ol{t}]=\wt{\lambda}[s,t]$ for all $s,t$.
\item\label{ideal:asymptotics} (Asymptotics.) For $1\le k\le D$ and all $s,t\in\{0,1\}^k$,
\[
\left|\wt{n}[s]-N\nu[s]\right|\le\frac{N}{\sqrt{pN}}\log^\ell N,\qquad
\left|\wt{m}[s,t]-p\,\wt{n}[s]\,\wt{n}[t]\left(1+\frac{\mu[s,t]}{\sqrt{pN}}\right)\right|\le p\,\wt{n}[s]\,\wt{n}[t]\,\frac{\log^\ell N}{pN},
\]
and for $1\le k\le D-1$,
\[
\left|\wt{\lambda}[s,t]-\Lambda_{N,p}\left(\gamma[s,t],\nu[t]\right)\right|\le\frac{\log^\ell N}{N}.
\]
\end{enumerate}
By \ref{ideal:base}, \ref{ideal:solvable}, and \ref{ideal:evolution}, the family is determined by $(N,p)$; we call it the \emph{idealized process} and suppress its dependence on $N$ and $p$.
\end{theorem}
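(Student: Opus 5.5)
We will construct the family by induction on $k$ and prove \ref{ideal:solvable}--\ref{ideal:asymptotics} together. The base case \ref{ideal:base} fixes $\wt n_1,\wt m_1$; its symmetry is immediate, and its asymptotics hold with $\nu[s]=1/2$ and $\mu\equiv0$. At stage $k\le D-1$ we assume $\wt n_k,\wt m_k$ are defined, symmetric, and satisfy the asymptotic bounds of \ref{ideal:asymptotics} with some exponent $\ell_k$. We then (a) produce $\wt\lambda_k$ solving the solvability equation, (b) define $\wt n_{k+1},\wt m_{k+1}$ by \ref{ideal:evolution}, and (c) verify symmetry and the asymptotics at day $k+1$ with a larger exponent $\ell_{k+1}$. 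The final constant is $\ell:=\ell_D$.

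\emph{Step (a): the tilt.} For each $s$, consider the map
\[
\lambda[s,\bullet]\mapsto\wt n[s]\,\mb E\bigl[\wt A_{s,t}\mid\mc I_s(\wt A_{s,\bullet})\bigr].
\]
Parametrize $\lambda[s,t]=\Lambda_{N,p}(g[t],\nu[t])$ and write $A_{s,t}=p(\wt n[t]-\one_{s=t})+\sqrt{pN}\,\nu[t]^{1/2}\cdot(\text{standardized})$. By a multivariate local CLT/Berry--Esseen argument (as in \Cref{app:tools}), the rescaled conditional-mean map
\[
g\mapsto\frac{1}{\sqrt{pN}}\Bigl(\frac{1}{N}\mb E[A_{s,t}\mid\mc I_s]-p\wt n[t]/N\Bigr)
\]
is $O(\log^{O(1)}N/\sqrt{pN})$-close in $C^1$ on compact sets to the Gaussian map $F_{s,\nu}$ of \Cref{prop:cond-mean-bijection}. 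The inductive hypothesis says the target $(\wt m[s,t]-p\wt n[s]\wt n[t])/(p\wt n[s]\sqrt{pN})$ is within $\log^{\ell_k}N/\sqrt{pN}$ of $(\nu[t]\mu[s,t])_t\in\mc C_s$. The quantitative inverse-function results of \Cref{app:inverse}, applied to the strong bijection $F_{s,\nu}$, then give a unique solution $g$ with $|g-\gamma[s,\bullet]|\le\log^{O(1)}N/\sqrt{pN}$. Global uniqueness in $(0,1)^{\{0,1\}^k}$ follows from strict monotonicity of the binomial exponential-family conditional mean, since the conditional covariance is positive definite. Translating back through \Cref{def:logit-tilt} gives $|\wt\lambda-\Lambda_{N,p}(\gamma,\nu)|\le p\log^{O(1)}N/(pN)\le\log^{\ell}N/N$. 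Positivity of $\mb P[\mc I_s]$ comes from closeness to $\phi^\ast_k$ in \Cref{lem:universal-nondegeneracy}.

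\emph{Steps (b)--(c).} Define the day-$(k+1)$ quantities by \ref{ideal:evolution}. Symmetry follows from uniqueness of $\wt\lambda$: the reflection $R$ of \Cref{lem:universal-symmetry} maps a solution for $(\wt n,\wt m)$ to a solution for the reflected data, which equal the original data by induction, so $\wt\lambda[\ol s,\ol t]=\wt\lambda[s,t]$. Applying $R$ then to the template formulas gives symmetry of $\wt n^{\mr{loc}}$ and $\wt m^{\mr{loc}}$; the floor does not break this since equal numbers have equal floors. For the asymptotics, the same Gaussian approximation shows
\[
\mb P[\mc J_{sb}\mid\mc I_s]=\mb P[\mc J_{sb}(W_{s,\bullet})\mid\mc I_s(W_{s,\bullet})]+O(\log^{O(1)}N/\sqrt{pN}),
\]
which gives $\wt n[sb]=N\nu[sb]+O(N\log^{O(1)}N/\sqrt{pN})$ via \eqref{eq:nu-recursion}. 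Similarly,
\[
\wt\ell^{\mr{loc}}[sb,t]=p\wt n[sb]\wt n[t]+p\wt n[s]\sqrt{pN}\,\mb E[W_{s,t}\mathbf 1_{\mc J_{sb}}\mid\mc I_s]+O(\cdot).
\]
Expanding $\wt m^{\mr{loc}}=\wt\ell\wt\ell/\wt m$ to first order in $1/\sqrt{pN}$ reproduces exactly \eqref{eq:mu-recursion}. This is the same computation as \eqref{eq:children-sum} read in reverse, with the $-\mu[s,t]$ term coming from the denominator.

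\emph{Main obstacle.} The difficulty is the error scale in the edge estimate. The claimed error $p\wt n\wt n\log^\ell N/(pN)$ is second order in $1/\sqrt{pN}$, so the first-order Gaussian approximation of the binomial conditional expectations must hold with error $O(\log^{O(1)}N/pN)$, not merely $1/\sqrt{pN}$. I would handle this by writing the binomial row as its Gaussian proxy plus an Edgeworth-type correction. The key observation is that the $O(1/\sqrt{pN})$ skewness correction to $\mb E[A\mathbf 1_{\mc J}\mid\mc I]$, once multiplied by the prefactor $p\wt n\sqrt{pN}$, contributes only at order $p\wt n^2/(pN)^{1/2}\cdot(pN)^{-1/2}$, which is acceptable. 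Lattice and boundary effects from the non-strict inequalities give $O(1/\sqrt{pN})$ relative corrections to probabilities, which likewise fall into the tolerance after the same scaling. Controlling these uniformly in a neighborhood of $\gamma$, with $\log$ losses from truncating the rows at $\sqrt{pN}\log N$, is the technical heart of the argument. I expect it to be deferred, as the paper indicates, to \Cref{app:idealized}.
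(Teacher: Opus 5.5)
Your plan is correct, and its skeleton is the paper's (\Cref{app:idealized}):
\begin{itemize}
\item induction on $k$;
\item the tilt obtained by inverting a small perturbation of $F_{s,\nu}$;
\item global uniqueness from strict convexity of the binomial exponential family restricted to the constraint set. As in \Cref{lem:tilt-uniqueness}, this needs the constrained lattice support to span affinely, which holds for large $N$ because it contains a box of side $\asymp\sqrt{pN}$ around $p\wt n+\sqrt{pN}\,(\nu[t]\mu[s,t])_t$;
\item symmetry of $\wt\lambda$ deduced from uniqueness;
\item the first-order expansion of $\wt\ell^{\mr{loc}}\wt\ell^{\mr{loc}}/\wt m$ reproducing \eqref{eq:mu-recursion}.
\end{itemize}

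The tools differ in two places. First, the paper proves only uniform ($C^0$) closeness of the rescaled conditional-mean map to $F_{s,\nu}$ (\Cref{lem:gaussian-limits}, from the Riemann-sum comparison of \Cref{thm:general_poly_calc}). It then produces a preimage by Brouwer's theorem (\Cref{thm:perturbed-bijection}), so it never has to control derivatives. Your $C^1$ inverse-function route also works, because the logit-derivative of a conditioned binomial mean is the conditioned covariance, whose Gaussian limit is \Cref{lem:gaussian-limits}\ref{gl:cov}. It gives local uniqueness for free, but the theorem asserts uniqueness on all of $(0,1)^{\{0,1\}^k\times\{0,1\}^k}$, so you still need the convexity argument.

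Second, your ``main obstacle'' is not one, and no Edgeworth expansion is needed. The plain Gaussian comparison already gives $(\mb E[\wt A_{s,t}\mid\mc J_{sb}(\wt A_{s,\bullet})]-p\wt n[t])/\sqrt{pN}$ up to $O(\log^{O(1)}N/\sqrt{pN})$. That is, it gives the conditional mean up to an absolute error $O(\log^{O(1)}N)$. Since $\wt\ell^{\mr{loc}}[sb,t]=\wt n^{\mr{loc}}[sb]\,\mb E[\wt A_{s,t}\mid\mc J_{sb}(\wt A_{s,\bullet})]$ and this conditional mean is $\asymp pN$, the relative error of $\wt m[sb,tc]/(p\,\wt n^{\mr{loc}}[sb]\,\wt n^{\mr{loc}}[tc])$ against $1+\mu[sb,tc]/\sqrt{pN}$ is already $O(\log^{O(1)}N/(pN))$. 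Your skewness estimate is a special case of this. What matters is to keep the factors $\wt n^{\mr{loc}}$ exact in the edge formula, rather than replacing split probabilities by their Gaussian limits, whose errors are first order.

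Two points you should make explicit.
\begin{itemize}
\item The comparison with the \emph{unshifted} cone $\mc C_s$ and the unshifted last threshold is valid only because $M_s\wt n_k=0$ and $Z_{s,k}(\wt n_k)=0$ exactly. This follows by pairing $t$ with $\ol t$, using the symmetry in your induction hypothesis. The bound $|\wt n[t]-N\nu[t]|\le N\log^{\ell}N/\sqrt{pN}$ alone would permit threshold shifts $p\,M_s\wt n_k/\sqrt{pN}$ of order $\log^{\ell}N$ standard deviations.
\item Symmetry via uniqueness for integer-valued rows needs the tie-inclusive equivalences $\mc I_s(Rx)\Leftrightarrow\mc I_{\ol s}(x)$ and $\mc J_{sb}(Rx)\Leftrightarrow\mc J_{\ol s\,\ol b}(x)$, where $(Rx)[t]=x[\ol t]$. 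These can be read off from the table defining $\ge_{ab}$. The reflection argument in \Cref{lem:universal-symmetry} handles only the strict forms, up to null sets.
\end{itemize}

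Finally, several of your displayed normalizations carry stray factors of $1/N$ or $p$. The right ones are:
\begin{itemize}
\item $(\mb E[A_{s,t}\mid\mc I_s(A_{s,\bullet})]-p\wt n[t])/\sqrt{pN}$ for the map;
\item $(\wt m[s,t]/\wt n[s]-p\wt n[t])/\sqrt{pN}$ for the target;
\item $\wt n[s]\sqrt{pN}\,\mb E[W_{s,t}\mathbf 1_{\mc J_{sb}}\mid\mc I_s]$, with no factor $p$, for the correction term in $\wt\ell^{\mr{loc}}$.
\end{itemize}
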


The bound on $|\wt n[s]-N\nu[s]|$ in \Cref{thm:idealized-process}\ref{ideal:asymptotics} is too large to resolve the imbalance $\tau\sqrt N\sqrt{pN}^{k-1}$ on days $k<\frac1{1-\theta}$. Faithfulness therefore tracks the actual sizes around $\wt n[s]$, with error polynomially smaller than the imbalance.

\subsection{Faithful coarse states}

\begin{definition}[Faithful coarse states]\label{def:faithful}
Fix $\theta\in(1/2,1)$ and $T>1$, and let $N$ be large enough that \Cref{thm:idealized-process} applies with $D=D_\theta$ and this $T$; let $p\in(T^{-1}N^{-\theta},TN^{-\theta})$, let $V$ be a set with $|V|=N$, and let $1\le k\le D_\theta$. For $T'\ge1$, $\delta>0$, and $\tau>0$, the set $\mc{G}^{(p)}_k(T',\delta,\tau)\subseteq\mc{Y}_k(V)$ of \emph{$(T',\delta,\tau)$-faithful} coarse states consists of the $y=(\Pi,m,1)\in\mc{Y}_k(V)$ such that, writing $n[s]:=|\Pi[s]|$,
\begin{enumerate}[label=\textup{(F\arabic*)},ref=\textup{(F\arabic*)},leftmargin=*]
\item\label{F:sizes} $\displaystyle\left|n[s]-\wt{n}[s]-\tau\sqrt N\sqrt{pN}^{\,k-1}\varepsilon[s]\right|\le T'\sqrt N\sqrt{pN}^{\,k-1}N^{-\delta}$ for every $s\in\{0,1\}^k$;
\item\label{F:edges} $\displaystyle\left|m[s,t]-\wt{m}[s,t]\left(1+\tau\frac{\sqrt{pN}^{\,k-1}}{\sqrt N}\left(\frac{\varepsilon[s]}{\nu[s]}+\frac{\varepsilon[t]}{\nu[t]}\right)\right)\right|\le T'\frac{\sqrt{pN}^{\,k-1}}{\sqrt N}N^{-\delta}N^2p$ for all $s,t\in\{0,1\}^k$.
\end{enumerate}
In particular $\on{reg}(y)=1$ for every faithful state.
\end{definition}

The two scales in \Cref{def:faithful} are those of the imbalance: on day $k$ the leading side is ahead by order $\sqrt N\sqrt{pN}^{k-1}$ vertices, which is $o(N)$ exactly when $k<\frac{1}{1-\theta}+1$, and the edge counts respond at relative order $\sqrt{pN}^{k-1}/\sqrt N$. The next two theorems are the deterministic core of the induction: a faithful state is admissible with an explicit tilt, and its local template is faithful on the next day. Their proofs are in \Cref{app:idealized}.

\begin{theorem}[Perturbed evolution]\label{thm:perturbed-evolution}
Fix $\theta\in(1/2,1)$, an integer $k$ with $1\le k<\frac{1}{1-\theta}$, and constants $T>1$ and $\delta>0$. There exist constants $T_1=T_1(\theta,k,T,\delta)\ge T$, $\delta_1=\delta_1(\theta,k,T,\delta)>0$, and $\phi_1=\phi_1(\theta,k,T,\delta)\in(0,1/2)$ such that for all sufficiently large $N$, all $p\in(T^{-1}N^{-\theta},TN^{-\theta})$, all $\tau\in[T^{-1},T]$, and every $y=(\Pi,m,1)\in\mc{G}^{(p)}_k(T,\delta,\tau)$ with $\Pi\in\mr{Part}_k(V)$, $|V|=N$, the following hold, where $n[s]:=|\Pi[s]|$. There is a unique tilt $\lambda_y\in(0,1)^{\{0,1\}^k\times\{0,1\}^k}$ such that, with $A_{s,\bullet}:=A_{s,\bullet}(n,\lambda_y)$,
\[
\mb{P}\left[\mc{I}_s(A_{s,\bullet})\right]>0\qquad\text{and}\qquad n[s]\,\mb{E}\left[A_{s,t}\mid\mc{I}_s(A_{s,\bullet})\right]=m[s,t]\qquad\text{for all }s,t\in\{0,1\}^k,
\]
and it satisfies:
\begin{enumerate}[label=\textup{(\alph*)},ref=\textup{(\alph*)}]
\item\label{pe:admissible} $(y,\lambda_y)$ is $(T_1,\phi_1,p)$-locally admissible.
\item\label{pe:tilt} For all $s,t\in\{0,1\}^k$, writing
\[
\vartheta_y[s,t]:=\log\frac{\lambda_y[s,t]\left(n[t]-\one_{s=t}-p\,\wt{n}[t]\right)}{\left(1-\lambda_y[s,t]\right)p\,\wt{n}[t]}
\qquad\text{and}\qquad
\wt\vartheta[s,t]:=\log\frac{\wt{\lambda}[s,t]\left(\wt{n}[t]-\one_{s=t}-p\,\wt{n}[t]\right)}{\left(1-\wt{\lambda}[s,t]\right)p\,\wt{n}[t]},
\]
we have
\[
\left|\vartheta_y[s,t]-\wt\vartheta[s,t]-\tau\frac{\sqrt{pN}^{\,k-1}}{\sqrt N}\beta[s,t]\right|
\le T_1\frac{\sqrt{pN}^{\,k-1}}{\sqrt N}N^{-\delta_1}.
\]
\item\label{pe:template} The local template $(n^{\mr{loc}},\ell^{\mr{loc}},m^{\mr{loc}})$ of $(y,\lambda_y)$ satisfies, for all $s,t\in\{0,1\}^k$ and $b,c\in\{0,1\}$,
\begin{align*}
\left|n^{\mr{loc}}[sb]-\wt{n}[sb]-\tau\sqrt N\sqrt{pN}^{\,k}\varepsilon[sb]\right|&\le T_1\sqrt N\sqrt{pN}^{\,k}N^{-\delta_1},\\
\left|m^{\mr{loc}}[sb,tc]-\wt{m}[sb,tc]\left(1+\tau\frac{\sqrt{pN}^{\,k}}{\sqrt N}\left(\frac{\varepsilon[sb]}{\nu[sb]}+\frac{\varepsilon[tc]}{\nu[tc]}\right)\right)\right|&\le T_1\frac{\sqrt{pN}^{\,k}}{\sqrt N}N^{-\delta_1}N^2p.
\end{align*}
\end{enumerate}
\end{theorem}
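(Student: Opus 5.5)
The plan is a perturbation argument around the idealized process of \Cref{thm:idealized-process}, at the scale $h:=\tau\sqrt{pN}^{\,k-1}/\sqrt N$. The hypothesis $k<\frac1{1-\theta}$ makes $h$ polynomially small: $h\le TN^{-\eta}$ for some $\eta=\eta(\theta,k)>0$. We then linearize the conditional-mean maps of the binomial row model around the idealized tilt $\wt\lambda_k$ and identify the resulting linear coefficients with the Gaussian data $(\nu,\gamma,\Sigma_s,\beta,\varepsilon)$ of \Cref{sec:universal}.

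\emph{Step 1: existence, uniqueness, and parametrization of $\lambda_y$.} Fix $s$ and view the map $\vartheta[s,\bullet]\mapsto n[s]\,\mb{E}[A_{s,\bullet}\mid\mc{I}_s(A_{s,\bullet})]$ as a conditional mean of an exponential family in the log-odds parameters. The log-odds are shifted so that $\vartheta=0$ corresponds to mean $p\wt n[t]$; this is the purpose of the normalization in \ref{pe:tilt}. The Jacobian of this map is $n[s]$ times the conditional covariance of the row given $\mc{I}_s$. After rescaling by $\sqrt{pN}$, a local CLT/Berry--Esseen comparison with the day-$k$ Gaussian row model shows that this covariance is $pN\,\Sigma_s+o(pN)$, and $\Sigma_s$ is positive definite by \Cref{lem:cond-cov-pd}.

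The quantitative inverse-function results of \Cref{app:inverse} (strong bijection, as in \Cref{prop:cond-mean-bijection}) then give a unique solution for the target $m[s,\bullet]$. We start from $\wt\vartheta[s,\bullet]$, which solves the problem for the targets $(\wt n,\wt m)$ by \Cref{thm:idealized-process}\ref{ideal:solvable}. The target displacement from \ref{F:sizes}--\ref{F:edges} is
\[
m[s,t]-\wt m[s,t]\approx h\,\wt m[s,t]\Big(\tfrac{\varepsilon[s]}{\nu[s]}+\tfrac{\varepsilon[t]}{\nu[t]}\Big).
\]
Part of this is absorbed by the change $n[s]-\wt n[s]\approx h N\varepsilon[s]$ in the row count. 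What remains, per row, is $\approx hpN\,\varepsilon[t]$ in direction $t$, together with the shift of the binomial trial counts $n[t]$, which the centering in $\vartheta_y$ already accounts for. Solving the linearized system with Jacobian $pN\Sigma_s$ in units of $\sqrt{pN}$ gives a log-odds displacement $h\beta[s,t]$, by exactly the defining relation \eqref{eq:beta-defining}. The quadratic Taylor remainder is $O(h^2)$ and the CLT error is $O((pN)^{-c})$ relative; both are $h\,N^{-\delta_1}$ for small $\delta_1$. This proves \ref{pe:tilt}, and it also gives $|\lambda_y-p|\lesssim p/\sqrt{pN}$.

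\emph{Step 2: admissibility \ref{pe:admissible}.} \ref{LA:reg} holds because faithful states have $\on{reg}=1$, and \ref{LA:solvability} holds by construction. \ref{LA:sizes}, \ref{LA:edge-scale}, and \ref{LA:positive} follow from \ref{F:sizes}, \ref{F:edges}, and \Cref{thm:idealized-process}\ref{ideal:asymptotics}, since $h\ll(pN)^{-1/2}\cdot\mathrm{polylog}$ fails only when $k$ reaches $\frac1{1-\theta}$. The imbalance bound in \ref{LA:sizes} uses \Cref{cor:sign-identities}\ref{sign:nu} and \ref{sign:history} together with the symmetry $\wt n[\ol s]=\wt n[s]$. \ref{LA:tilt} comes from Step 1. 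For \ref{LA:conditioning}, \ref{LA:split}, and \ref{LA:separation}, the binomial row, after centering and scaling, is within $o(1)$ in distribution (via Bentkus' Berry--Esseen theorem) of the day-$k$ Gaussian row model. We therefore take $\phi_1:=\phi^\ast_k/2$ and $T_1^{-1}\le\varsigma^\ast_k/2$ from \Cref{lem:universal-nondegeneracy}. Separation uses $m[s,t]\approx p\wt n[s]\wt n[t](1+\mu[s,t]/\sqrt{pN})$ with the alternating sum of $\nu[t]$ vanishing, so that the leading term is $N^2p\,Z_{s,r}((\nu\mu)_t)/\sqrt{pN}$.

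\emph{Step 3: template \ref{pe:template}.} We differentiate $n^{\mr{loc}}[sb]=n[s]\,\mb{P}[\mc{J}_{sb}\mid\mc{I}_s]$ along the path from $(\wt n,\wt\lambda)$ to $(n,\lambda_y)$. To first order the variation equals $(n[s]-\wt n[s])\,\mb{P}[\mc{J}_{sb}\mid\mc{I}_s]$ plus $n[s]$ times the conditional covariance of $\one_{\mc{J}_{sb}}$ with the exponential-family sufficient statistic $h\sqrt{pN}\sum_t\beta[s,t]\cdot(\text{scaled row})$. In Gaussian form this is
\[
N h\sqrt{pN}\Big(\varepsilon[s]\tfrac{\nu[sb]}{\nu[s]}+\nu[sb]\big(\mb{E}[B_s\mid\mc{J}_{sb}]-\mb{E}[B_s\mid\mc{I}_s]\big)\Big).
\]
The first summand is the contribution of the row count; combined with the second and \eqref{eq:epsilon-recursion}, the bookkeeping must produce exactly $\tau\sqrt N\sqrt{pN}^{\,k}\varepsilon[sb]$. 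The same computation for $\ell^{\mr{loc}}$ and then the formula $m^{\mr{loc}}=\ell\ell/m$ gives the edge statement, using \eqref{eq:mu-recursion}, \eqref{eq:children-sum}, and the rounding $\wt n[sb]=\lfloor\wt n^{\mr{loc}}\rfloor$, which costs $O(1)$.

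The main obstacle is this last identification. We must verify that the linear response of the \emph{discrete} binomial conditioned model matches the Gaussian quantities with relative error $N^{-\delta_1}$, while keeping separate the large but symmetric discrepancy $\wt n-N\nu$, which cannot be linearized away. The plan is to never expand around $N\nu$. Instead we expand around the idealized state itself and bound only the \emph{derivative} discrepancy between the binomial and Gaussian models by $(pN)^{-c}$, using a smooth local CLT for exponential tilts of the conditioned row. The error $h\,(pN)^{-c}+h^2$ is then of the required form.
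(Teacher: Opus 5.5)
Your overall route is the paper's. You expand around the idealized binomial state itself, which is \Cref{thm:tilted_expansion} feeding \Cref{lem:tilt-vs-linear-map}, and compare only first-order coefficients with the day-$k$ Gaussian model. You read off $\Sigma_s\sigma=\varepsilon$, so $\sigma=\beta[s,\bullet]$, and get the split response from \eqref{eq:epsilon-recursion}. But the identification you call the main obstacle is wrong as displayed. The row-count contribution $(n[s]-\wt n[s])\,\mb{P}[\mc{J}_{sb}\mid\mc{I}_s]$ is $\approx Nh\,\varepsilon[s]\nu[sb]/\nu[s]=\tau\sqrt N\sqrt{pN}^{\,k-1}\varepsilon[s]\nu[sb]/\nu[s]$, without the factor $\sqrt{pN}$ you put on it. With your display, adding the two summands and using \eqref{eq:epsilon-recursion} gives $Nh\sqrt{pN}\,(\varepsilon[sb]+\varepsilon[s]\nu[sb]/\nu[s])$, so no bookkeeping yields $\varepsilon[sb]$. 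The correct accounting is as follows. The tilt term alone equals $Nh\sqrt{pN}\,\varepsilon[sb]$ by \eqref{eq:epsilon-recursion}. The row-count term is smaller by $1/\sqrt{pN}\le N^{-\delta_1}$ and goes into the error; the paper's Step~4 does exactly this. The edge statement works the same way. The relative $O(h)$ changes of $\mb{E}[A_{s,t}\mid\mc{J}_{sb}]$ (from the tilt and the trial counts) and of $m[s,t]$ are lower order. The main term comes only from the ratios $n^{\mr{loc}}[sb]/\wt n^{\mr{loc}}[sb]$ and $n^{\mr{loc}}[tc]/\wt n^{\mr{loc}}[tc]$. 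You do not need \eqref{eq:mu-recursion} or \eqref{eq:children-sum}, since $\wt m[sb,tc]=\wt m^{\mr{loc}}[sb,tc]$ exactly.

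The Step~1 remainder is also not $O(h^2)$. For $k\ge2$ the conditioned row has third cumulant of order $(pN)^{3/2}$: truncating to the cone at $O(1)$ standard deviations gives order-one skewness. So the quadratic term in the conditional mean is $O(h^2(pN)^{3/2})$ against a linear term of order $h\,pN$, and the log-odds remainder is $O(h^2\sqrt{pN})$. Likewise the split-probability remainder is $O(h^2pN)$ against a linear term $h\sqrt{pN}$. The relevant small parameter is therefore $h\sqrt{pN}=\tau\sqrt{pN}^{\,k}/\sqrt N=O(N^{-(1-k(1-\theta))/2})$, and this is where $k<\frac1{1-\theta}$ is really used. $h$ itself is still polynomially small at $k=\frac1{1-\theta}$, where linearization genuinely fails and \Cref{thm:edge-day} takes over. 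So an error analysis that needs only $h\to0$ would "prove" a false statement. With $h^2\sqrt{pN}$ in place of $h^2$, your conclusion survives.

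A smaller point: uniqueness of $\lambda_y$ is global. It comes from strict convexity of the discrete log-partition function on the conditioned lattice support, which contains a full box inside the cone (\Cref{lem:tilt-uniqueness}); your exponential-family framing already gives this. It does not come from \Cref{app:inverse}, which only produces a solution near the predicted tilt and concerns the Gaussian map.
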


The quantity $\vartheta_y[s,t]$ in \ref{pe:tilt} is the effective tilt of the law $\mr{Bin}(n[t]-\one_{s=t},\lambda_y[s,t])$ around the center $p\,\wt{n}[t]$ in the sense of the first formula of \Cref{lem:binomial_estimate}, and $\wt\vartheta[s,t]$ is that of the idealized law; the statement says that the tilt of a faithful state is the idealized tilt shifted by $\tau\frac{\sqrt{pN}^{k-1}}{\sqrt N}\beta[s,t]$, which is the tilt that reproduces the size shifts $\tau\sqrt N\sqrt{pN}^{k-1}\varepsilon[t]$ of the neighboring blocks (\Cref{def:universal-epsilon}).

When $\frac{1}{1-\theta}=k_0$ is an integer, the normalized perturbation $\sqrt{pN}^{\,k_0}/\sqrt N$ is of constant order, so the error in the linearization behind \Cref{thm:perturbed-evolution} need not be smaller than its main term. More explicitly, if
\[
\beta_0:=\frac{\sqrt{pN}^{\,k_0-1}}{\sqrt N},
\]
then the current part-size displacement $\beta_0N=\sqrt N\sqrt{pN}^{\,k_0-1}$ is still $o(N)$, but the resulting displacement of the next majority statistic, measured in its natural $\sqrt{pN}$ standard-deviation scale, is
\[
\beta_0\sqrt{pN}=\frac{\sqrt{pN}^{\,k_0}}{\sqrt N}=\Theta(1).
\]
Thus the next conditional split probabilities differ from their balanced values by a fixed amount, and multiplying by part sizes of order $N$ produces an order-$N$ displacement. At this terminal step $\wt n[s]=N\nu[s]+o(N)$, so it is enough to record a macroscopic signed gap from $N\nu[s]$, rather than a finer linear-response expansion around $\wt n[s]$. The following theorem makes this argument uniform and gives the macroscopic lead on the next day.

\begin{theorem}[The edge day]\label{thm:edge-day}
Fix $\theta\in(1/2,1)$ with $k_0:=\frac{1}{1-\theta}\in\mb{Z}$, and constants $T>1$ and $\delta>0$. There exist constants $T_1=T_1(\theta,T,\delta)\ge T$, $\delta_1=\delta_1(\theta,T,\delta)>0$, $\phi_1=\phi_1(\theta,T,\delta)\in(0,1/2)$, and $\zeta=\zeta(\theta,T)>0$ such that for all sufficiently large $N$, all $p\in(T^{-1}N^{-\theta},TN^{-\theta})$, all $\tau\in[T^{-1},T]$, and every $y=(\Pi,m,1)\in\mc{G}^{(p)}_{k_0}(T,\delta,\tau)$ with $\Pi\in\mr{Part}_{k_0}(V)$, $|V|=N$, the following hold, where $n[s]:=|\Pi[s]|$. There is a unique tilt $\lambda_y$ with $\mb{P}[\mc{I}_s(A_{s,\bullet})]>0$ and $n[s]\,\mb{E}[A_{s,t}\mid\mc{I}_s(A_{s,\bullet})]=m[s,t]$ for all $s,t\in\{0,1\}^{k_0}$, where $A_{s,\bullet}:=A_{s,\bullet}(n,\lambda_y)$, and it satisfies:
\begin{enumerate}[label=\textup{(\alph*)},ref=\textup{(\alph*)}]
\item\label{ed:admissible} $(y,\lambda_y)$ is $(T_1,\phi_1,p)$-locally admissible.
\item\label{ed:tilt} $\left|\lambda_y[s,t]-\Lambda_{N,p}(\gamma[s,t],\nu[t])\right|\le\frac{p}{\sqrt{pN}}N^{-\delta_1}$ for all $s,t\in\{0,1\}^{k_0}$.
\item\label{ed:sizes} The local template of $(y,\lambda_y)$ satisfies $(-1)^b\left(n^{\mr{loc}}[sb]-N\nu[sb]\right)\ge\zeta N$ for all $s\in\{0,1\}^{k_0}$ and $b\in\{0,1\}$.
\end{enumerate}
\end{theorem}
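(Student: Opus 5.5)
The plan is to treat the edge day as a perturbation of the Gaussian row model in which the relevant displacement is no longer infinitesimal. In the $\sqrt{pN}$-normalized coordinates it becomes a fixed, $\Theta(1)$ shift of the mean of the last linear form $Z_{s,k_0}$. Everything except part~\ref{ed:sizes} follows the pattern of \Cref{thm:perturbed-evolution}. Part~\ref{ed:sizes} uses only the one-sided monotonicity established in the proof of \Cref{thm:coherence}.

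\emph{Step 1 (existence, uniqueness and closeness of the tilt).} I will normalize each binomial row $A_{s,\bullet}(n,\lambda)$ as $(A_{s,t}-p\,n[t])/\sqrt{pN}$. This row is compared with a Gaussian row of variances $n[t]/N$ and means $g[t]/\nu[t]\cdot n[t]/N$, where $\lambda[s,t]=\Lambda_{N,p}(g[t],\nu[t])$. Faithfulness \cref{F:sizes} gives $n[t]=N\nu[t]+O(N\log^\ell N/\sqrt{pN})$, using \Cref{thm:idealized-process}\ref{ideal:asymptotics} together with $\tau\sqrt N\sqrt{pN}^{k_0-1}=\Theta(N/\sqrt{pN})$. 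Likewise \cref{F:edges} shows that the target normalized conditional means $m[s,t]/(n[s]\sqrt{pN})-p\,n[t]/\sqrt{pN}$ equal $\nu[t]\mu[s,t]$ plus an $O(1)$ correction coming from the $\varepsilon$-terms. Since the perturbation is not small on this day, that correction is a genuine bounded perturbation of the target vector.

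A multidimensional local CLT then shows that the binomial conditional-mean map is $N^{-c}$-close, in $C^1$ on compacts, to $F_{s,\nu}$; for this I will use Berry--Esseen (Bentkus) and smoothing of the cone indicator, as in \Cref{app:inverse}. The strong-bijection property of \Cref{prop:cond-mean-bijection} then gives existence and uniqueness of $\lambda_y$ by the quantitative inverse-function results of \Cref{app:inverse}, with $g[s,\bullet]$ close to the preimage of the target vector.

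\emph{Step 2 (part~\ref{ed:tilt}).} Here I must show that this preimage is within $N^{-\delta_1}$ of $\gamma[s,\bullet]$. The plan is to exploit the structure of the $\varepsilon$-correction in the edge targets. The perturbed target equals $\nu[t]\mu[s,t]$ plus the mean shift induced by the part-size shift $\tau\beta_0\varepsilon[t]N$ of the neighbouring blocks. The key observation is that this shift enters the binomial model through $n[t]$ itself, since the row centre moves to $p\,n[t]$. After recentring at $p\,n[t]$ rather than $p\,\wt n[t]$, the edge-count displacement in \cref{F:edges} is exactly the one produced by the size displacement, up to the $N^{-\delta}$ error. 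The residual is the $\beta[s,t]$-shift of \Cref{thm:perturbed-evolution}\ref{pe:tilt}, and I will check that it is absorbed into the recentred coordinates. I expect this bookkeeping to be the main obstacle: the normalizations in \cref{F:edges} versus $\Lambda_{N,p}$ must be matched exactly. My first attempt will track the first-order term of $\Lambda_{N,p}$ around $p\,n[t]$ and use the Taylor bound in \Cref{def:logit-tilt} for the remainder.

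\emph{Step 3 (parts~\ref{ed:admissible} and~\ref{ed:sizes}).} Admissibility \cref{LA:reg}--\cref{LA:solvability} follows from faithfulness and Step 1 exactly as in \Cref{thm:perturbed-evolution}, with $\phi_1$ taken to be half of $\phi^\ast_{k_0}$ from \Cref{lem:universal-nondegeneracy}. The separation condition \cref{LA:separation} comes from $\varsigma^\ast_{k_0}$, because the $\Theta(N/\sqrt{pN})$ shift changes only the $r=k_0$ form and not the earlier ones, by \Cref{cor:sign-identities}\ref{sign:history}.

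For \cref{LA:split} and \ref{ed:sizes}, I compute the conditional law of the normalized row given $\mc I_s$ in the Gaussian limit. By Steps~2--4 of the proof of \Cref{thm:coherence}, the earlier constraints are unchanged, while $Z_{s,k_0}$ has its mean shifted by $c_s:=\tau\lim\beta_0\sqrt{pN}\,S_{k_0}\cdot(\text{positive factor})$. Here $c_s>0$ is bounded away from $0$ and $\infty$ uniformly in $\tau\in[T^{-1},T]$. The probability $\mb P[Z_{s,k_0}>0\mid E]$ is strictly increasing in such a shift. This gives $n^{\mr{loc}}[s0]-N\nu[s0]\ge\zeta N$, and the $s1$ case follows by complementation. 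Compactness of the parameter range yields uniform $\zeta(\theta,T)$ and a uniform upper bound $1-\phi_1$.
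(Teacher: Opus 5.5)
Your route is the paper's: recentre each row at the actual sizes $p\,n[t]$, show that the tilt stays near $\Lambda_{N,p}(\gamma[s,t],\nu[t])$, read the order-one effect as a fixed displacement of the last majority form $Z_{s,k_0}$ by $\tau\beta_0\sqrt{pN}\,S_{k_0}$, and get $\zeta$ from strict monotonicity of the split probability plus compactness.

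\emph{Step 1 contradicts Step 2.} As written, Step~1 asserts the opposite of what makes part (b) true. Your Step~2 recovers the right statement, but treats it as the main open obstacle, whereas it is a one-line cancellation. With centring at $p\,n[t]$, the $\tau\beta_0\varepsilon$ terms of \ref{F:sizes} and \ref{F:edges} cancel exactly at first order:
\[
\frac{m[s,t]}{n[s]}-p\,n[t]=p\,\wt n[t]\,\frac{\mu[s,t]}{\sqrt{pN}}+O\Bigl(\sqrt{pN}\bigl(N^{-\delta}+\tfrac{\log^\ell N}{\sqrt{pN}}+\beta_0\bigr)\Bigr).
\]
So the normalized target is $(\nu[t]\mu[s,t])_t+O(N^{-\delta'})$ with $\delta':=\min\{\delta,(1-\theta)/4\}$. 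Inverting $F_{s,\nu}$ there gives all of (b). The order-one correction $\tau\beta_0\sqrt{pN}\,\varepsilon[t]$ appears only if you centre at $p\,\wt n[t]$.

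Nor is there a residual $\beta$-shift to absorb. The variable $\sum_t(\varepsilon[t]/\nu[t])W_{s,t}$ is uncorrelated with, hence independent of, $Z_{s,1},\ldots,Z_{s,k_0-1}$ by \Cref{cor:sign-identities}. Its covariance with each $W_{s,t}$ given $\mc I_s$ is therefore still $\varepsilon[t]$, i.e.\ $\beta[s,t]=\varepsilon[t]/\nu[t]$. So the shift in \Cref{thm:perturbed-evolution}\ref{pe:tilt} is entirely the change of trial counts.

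In your own frame, with $\hat A:=(A_{s,\bullet}-p\,n)/\sqrt{pN}$, the Gaussian means are $\gamma[s,\bullet]$ and what moves is the threshold. The event $\mc J_{sb}$ becomes $(-1)^b\bigl(Z_{s,k_0}(\hat A)+u_n\bigr)>0$ with
\[
u_n=\frac{p\,Z_{s,k_0}(n)}{\sqrt{pN}}=\tau(pN^\theta)^{k_0/2}S_{k_0}+O(N^{-\delta}).
\]
Here $\beta_0\sqrt{pN}=(pN^\theta)^{k_0/2}$ exactly, your ``positive factor'' is $1$, and no limit is needed (none exists in general).

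\emph{Uniqueness is not established.} Closeness of the binomial conditional-mean map to the strong bijection $F_{s,\nu}$, even in $C^1$ on compacts, gives existence through \Cref{thm:perturbed-bijection}. It gives at best uniqueness within the window of tilts where the Gaussian comparison holds. The statement, however, asserts uniqueness among all tilts in $(0,1)^{\{0,1\}^{k_0}\times\{0,1\}^{k_0}}$.

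The missing idea is an exponential-family argument. In logit coordinates $\vartheta$, the binomial map is the gradient of $\psi(\vartheta)=\log\sum_{a\in S}\binom{\eta}{a}e^{\vartheta\cdot a}$. Here $\eta_t=n[t]-\one_{s=t}$ and $S$ is the set of lattice points of the box satisfying $\mc I_s$. The function $\psi$ is strictly convex, hence has injective gradient, as soon as $S$ lies in no affine hyperplane (\Cref{lem:tilt-uniqueness}). This holds here because $p\max_{r<k_0}|Z_{s,r}(n)|=o(\sqrt{pN})$ puts a box of side of order $\sqrt{pN}$ around $p\,n+\sqrt{pN}(\nu[t]\mu[s,t])_t$ inside $S$.

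\emph{The choice $\phi_1=\phi^\ast_{k_0}/2$ fails \cref{LA:split}.} The split probabilities are $\pi_b(u_n)+o(1)$, where $\pi_b(u):=\mb P[\mc J^u_{sb}(W_{s,\bullet})\mid\mc I_s(W_{s,\bullet})]$. They are displaced from $\nu[sb]/\nu[s]\in[\phi^\ast_{k_0},1-\phi^\ast_{k_0}]$ by an order-one amount (that displacement is exactly part (c)), so one of them can fall below $\phi^\ast_{k_0}/2$. Take $\phi_1$ instead from a lower bound on $\mb P[\mc J^u_{sb}(W_{s,\bullet})]$ uniform over a compact subinterval of $(0,\infty)$ containing all possible $u_n$, as your closing sentence half-suggests.

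The strict monotonicity you invoke does hold, because $Z_{s,k_0}(W_{s,\bullet})$ conditioned on $W_{s,\bullet}\in\mc C_s$ has a positive density on all of $\mb R$.
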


\subsection{The faithful trajectory}

The next proposition compares the errors of \Cref{thm:local-coarse-transition} with the tolerances of \Cref{def:faithful}; it is where the constant $C_{\ref{thm:local-coarse-transition}}$ is evaluated at the admissibility parameters produced by \Cref{thm:perturbed-evolution}.

\begin{proposition}[The local theorem preserves faithfulness]\label{prop:faithful-inclusion}
Fix $\theta\in(1/2,1)$, an integer $k$ with $1\le k<\frac{1}{1-\theta}$, and constants $T>1$, $\delta>0$; let $T_1,\delta_1,\phi_1$ be the constants of \Cref{thm:perturbed-evolution}, and put
\[
T_2:=2T_1,\qquad\delta_2:=\min\left\{\delta_1,\tfrac{1-\theta}{8}\right\}.
\]
For all sufficiently large $N$, all $p\in(T^{-1}N^{-\theta},TN^{-\theta})$, all $\tau\in[T^{-1},T]$, and every $y\in\mc{G}^{(p)}_k(T,\delta,\tau)$, with $\lambda_y$ the tilt of \Cref{thm:perturbed-evolution} and $\mc{L}^{(p)}_{k+1}(y,\lambda_y)$ the set of \Cref{thm:local-coarse-transition} formed with the constant $\Cof{thm:local-coarse-transition}{\theta,k,T_1,\phi_1}$,
\[
\mc{L}^{(p)}_{k+1}(y,\lambda_y)\subseteq\mc{G}^{(p)}_{k+1}(T_2,\delta_2,\tau).
\]
\end{proposition}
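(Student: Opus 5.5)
Fix $y'\in\mc{L}^{(p)}_{k+1}(y,\lambda_y)$ and write $n'[u]:=|\on{part}(y')[u]|$ and $m':=\on{edge}(y')$. We must check \cref{F:sizes} and \cref{F:edges} on day $k+1$ with constants $(T_2,\delta_2)$, and also that $\on{reg}(y')=1$. Regularity is immediate from \cref{L:reg}. For the rest, the plan is a triangle inequality through the local template. \Cref{thm:perturbed-evolution}\ref{pe:template} compares the template $(n^{\mr{loc}},m^{\mr{loc}})$ with the idealized day-$(k+1)$ quantities plus their linear response, with error $T_1\sqrt N\sqrt{pN}^{\,k}N^{-\delta_1}$ for sizes and $T_1\frac{\sqrt{pN}^{\,k}}{\sqrt N}N^{-\delta_1}N^2p$ for edges. \Cref{thm:local-coarse-transition}, applied with constant $C:=\Cof{thm:local-coarse-transition}{\theta,k,T_1,\phi_1}$, compares $y'$ with the template. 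So it suffices to show that each local-theorem error is at most $T_1$ times the corresponding faithful tolerance at exponent $\delta_2$. Since $\delta_2\le\delta_1$, the template errors are already at most $T_1$ times the tolerance, and summing gives the factor $T_2=2T_1$.

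For sizes, \cref{L:sizes} gives an error of $C\sqrt N\log N$. We need this to be at most $T_1\sqrt N\sqrt{pN}^{\,k}N^{-\delta_2}$. Since $k\ge1$ and $pN\ge T^{-1}N^{1-\theta}$, we have $\sqrt{pN}^{\,k}\ge T^{-1/2}N^{(1-\theta)/2}$. Also $\delta_2\le(1-\theta)/8<(1-\theta)/2$. The ratio is therefore at least a positive power of $N$, which beats $C\log N/T_1$ for large $N$.

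For edges, \cref{L:edges} gives an error of $CN^2p\sqrt{(pN)^{1/7}/N}\log N=CN^2p\,N^{-1/2}(pN)^{1/14}\log N$. The required bound is $T_1N^2p\,N^{-1/2}\sqrt{pN}^{\,k}N^{-\delta_2}$. Dividing, we need $(pN)^{1/14}\log N\ll (pN)^{k/2}N^{-\delta_2}$. With $k\ge1$, it is enough that $(pN)^{3/7}\ge N^{\delta_2}\log^2N$. This holds because $(pN)^{3/7}\gtrsim N^{3(1-\theta)/7}$ and $\delta_2\le(1-\theta)/8<3(1-\theta)/7$. This comparison is exactly why $\delta_2$ is capped at $(1-\theta)/8$.

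The point needing care is matching the centerings. In \cref{F:sizes} and \cref{F:edges} on day $k+1$, the centerings are $\wt n[sb]$ and $\wt m[sb,tc]$, with $\sqrt{pN}^{\,(k+1)-1}=\sqrt{pN}^{\,k}$ and response $\varepsilon[sb]$. These are exactly the expressions in \Cref{thm:perturbed-evolution}\ref{pe:template}, so the two inequalities combine with no further expansion. I also need $k+1\le D_\theta$ so that the day-$(k+1)$ idealized quantities exist; this follows from $k<\frac1{1-\theta}$. Finally, $\tau$ is unchanged, and the thresholds "sufficiently large $N$" depend only on $\theta,k,T,\delta$, so the bounds are uniform in $p,\tau,y$.
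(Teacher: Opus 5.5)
Your proposal is correct and follows the paper's proof essentially step for step. Like the paper, you use a triangle inequality through the local template, absorb the template error of \Cref{thm:perturbed-evolution}\ref{pe:template} via $\delta_2\le\delta_1$, and dominate the errors of \cref{L:sizes} and \cref{L:edges} using $\sqrt{pN}^{\,k}N^{-\delta_2}\ge T^{-1/2}N^{(1-\theta)/2-\delta_2}$, $\sqrt{pN}^{\,k}/(pN)^{1/14}\ge(pN)^{3/7}$, and $\delta_2\le(1-\theta)/8<3(1-\theta)/7$, which yields the factor $T_2=2T_1$.
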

\begin{proof}
Let $y'\in\mc{L}^{(p)}_{k+1}(y,\lambda_y)$, and write $\Pi':=\on{part}(y')$, $m':=\on{edge}(y')$; by \cref{L:reg}, $y'=(\Pi',m',1)$. Write $C:=\Cof{thm:local-coarse-transition}{\theta,k,T_1,\phi_1}$ and let $(n^{\mr{loc}},\ell^{\mr{loc}},m^{\mr{loc}})$ be the local template of $(y,\lambda_y)$. For \ref{F:sizes} at day $k+1$, the triangle inequality, \cref{L:sizes}, and \Cref{thm:perturbed-evolution}\ref{pe:template} give, for all $s\in\{0,1\}^k$ and $b\in\{0,1\}$,
\[
\left||\Pi'[sb]|-\wt n[sb]-\tau\sqrt N\sqrt{pN}^{\,k}\varepsilon[sb]\right|\le C\sqrt N\log N+T_1\sqrt N\sqrt{pN}^{\,k}N^{-\delta_1}.
\]
Since $\delta_2\le\delta_1$, the second term is at most $T_1\sqrt N\sqrt{pN}^kN^{-\delta_2}$. For the first, because $k\ge1$ and $\delta_2\le(1-\theta)/8$,
\[
\sqrt{pN}^{\,k}N^{-\delta_2}\ge(pN)^{1/2}N^{-\delta_2}\ge T^{-1/2}N^{(1-\theta)/2-\delta_2}\ge T^{-1/2}N^{3(1-\theta)/8},
\]
so $C\log N\le T_1\sqrt{pN}^kN^{-\delta_2}$ for large $N$. Hence the left-hand side is at most $2T_1\sqrt N\sqrt{pN}^kN^{-\delta_2}=T_2\sqrt N\sqrt{pN}^kN^{-\delta_2}$. For \ref{F:edges} at day $k+1$, similarly, by \cref{L:edges} and \Cref{thm:perturbed-evolution}\ref{pe:template},
\begin{multline*}
\left|m'[sb,tc]-\wt m[sb,tc]\left(1+\tau\frac{\sqrt{pN}^{\,k}}{\sqrt N}\left(\frac{\varepsilon[sb]}{\nu[sb]}+\frac{\varepsilon[tc]}{\nu[tc]}\right)\right)\right|\\
\le CN^2p\sqrt{\frac{(pN)^{1/7}}{N}}\log N+T_1\frac{\sqrt{pN}^{\,k}}{\sqrt N}N^{-\delta_1}N^2p .
\end{multline*}
The first term is at most $T_1\frac{\sqrt{pN}^k}{\sqrt N}N^{-\delta_2}N^2p$ as soon as $C(pN)^{1/14}\log N\le T_1\sqrt{pN}^kN^{-\delta_2}$, and $\sqrt{pN}^k/(pN)^{1/14}\ge(pN)^{3/7}\ge T^{-3/7}N^{3(1-\theta)/7}$, while $\delta_2\le(1-\theta)/8<3(1-\theta)/7$; so this holds for large $N$. Both conditions of \Cref{def:faithful} therefore hold with the constants $(T_2,\delta_2)$, and $y'\in\mc{G}^{(p)}_{k+1}(T_2,\delta_2,\tau)$.
\end{proof}

\begin{proposition}[Day one]\label{prop:day-one}
Fix $\theta\in(1/2,1)$ and $T>1$, and put $\delta_0:=\frac{1-\theta}{4}$. For all sufficiently large $N$, all $p\in(T^{-1}N^{-\theta},TN^{-\theta})$, and all $\tau\in[T^{-1},T]$, the following holds. Let $V$ be a set with $|V|=N$, let $c\colon V\to\{\pm1\}$ satisfy $|c^{-1}(1)|=\lfloor N/2\rfloor+\lfloor\tau\sqrt N\rfloor$, sample $G\sim\mb{G}(V,p)$, and let $\mf{g}=(G,c)$. Then
\[
\mb{P}\left[\rho_1^{(p)}\left(\sigma_1(\mf{g})\right)\in\mc{G}^{(p)}_1(1,\delta_0,\tau)\right]=1-o(1).
\]
\end{proposition}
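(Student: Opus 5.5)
The plan is to verify directly that the day-one coarse state $\rho_1^{(p)}(\sigma_1(\mf g))=(\Pi_1(\mf g),m,\eta)$ meets the three requirements of faithfulness: $\eta=1$, \cref{F:sizes} and \cref{F:edges}, all with $k=1$, $T'=1$ and $\delta=\delta_0$. On day one the partition is deterministic, with $\Pi_1[0]=c^{-1}(1)$ and $\Pi_1[1]=c^{-1}(-1)$. The idealized data are given by \Cref{thm:idealized-process}\ref{ideal:base}, and \Cref{def:universal-nu-mu,def:universal-epsilon} give $\nu[0]=\nu[1]=1/2$, $\varepsilon[0]=1$ and $\varepsilon[1]=-1$. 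So the only randomness is in $m$ and in the regularity bit, and both are sums of independent Bernoulli variables.

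\emph{Sizes.} Condition \cref{F:sizes} is deterministic. Write $n[0]=\lfloor N/2\rfloor+\lfloor\tau\sqrt N\rfloor$, so that $n[1]=\lceil N/2\rceil-\lfloor\tau\sqrt N\rfloor$. Then
\[
n[s]-\wt n[s]-\tau\sqrt N\,\varepsilon[s]=O(1)\qquad(s\in\{0,1\}),
\]
and $O(1)\le N^{1/2-\delta_0}$ for large $N$.

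\emph{Edges.} Here $m[0,1]\sim\mr{Bin}(n[0]n[1],p)$ and $m[s,s]=2\,\mr{Bin}(\binom{n[s]}{2},p)$. The target in \cref{F:edges} is
\[
\wt m[s,t]\Bigl(1+\tfrac{\tau}{\sqrt N}\bigl(2\varepsilon[s]+2\varepsilon[t]\bigr)\Bigr),
\qquad \wt m[s,t]=p\,\wt n[s]\bigl(\wt n[t]-\one_{s=t}\bigr).
\]
This factor equals $1\pm4\tau/\sqrt N$ on the diagonal and $1$ off the diagonal.

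First compare expectations. Expanding $\mb E\,m[0,0]=p\,n[0](n[0]-1)$ with $n[0]=\wt n[0]+\tau\sqrt N+O(1)$ gives
\[
\mb E\,m[0,0]=p\,\wt n[0]^2\Bigl(1+\tfrac{4\tau}{\sqrt N}\Bigr)+O(pN\tau^2+pN),
\]
using $2\tau\sqrt N/\wt n[0]=4\tau/\sqrt N+O(N^{-1})$. The diagonal term $m[1,1]$ is handled the same way. For the off-diagonal term,
\[
n[0]n[1]=\wt n[0]\wt n[1]-\tau^2N+O(\sqrt N),
\]
so its relative error is $O(1/N)$. In all cases the deterministic discrepancy is $O(pN)$. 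This is far below the tolerance $N^{-\delta_0}N^{3/2}p$.

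Next the fluctuations. Each $m[s,t]$ has standard deviation $O(N\sqrt p)$. Chernoff's inequality bounds the deviation by $N\sqrt p\log N$ with probability $1-N^{-\omega(1)}$. This is at most $N^{3/2-\delta_0}p$ exactly when $\sqrt{pN}\,N^{-\delta_0}\ge\log N$. Since $pN\ge T^{-1}N^{1-\theta}$ and $\delta_0=(1-\theta)/4$, the left side is at least $T^{-1/2}N^{(1-\theta)/4}$, so this holds for large $N$.

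\emph{Regularity.} For each $x$ and each $t\in\{0,1\}$, the degree $\deg_{\Pi_1[t]}x$ is binomial with mean $p(|\Pi_1[t]|-\one_{x\in\Pi_1[t]})$. This mean is within $p$ of $p|\Pi_1[t]|$. A deviation of $(pN)^{4/7}$ is $(pN)^{1/14}$ standard deviations. Chernoff's inequality gives a failure probability of $\exp(-c(pN)^{1/7})=N^{-\omega(1)}$, because $pN\ge T^{-1}N^{1-\theta}$. A union bound over the $2N$ pairs $(x,t)$ gives $\kappa_p$ with probability $1-o(1)$, so $\eta=1$. Finally, $m\in\mc M(\Pi_1)$ holds automatically, and intersecting the three high-probability events proves the claim.

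The only step needing care is the second-order expansion of the diagonal edge count. There one must check that the linear response $\pm4\tau/\sqrt N$ matches $\varepsilon[s]/\nu[s]+\varepsilon[t]/\nu[t]$ with $\nu=1/2$, and that all remaining deterministic terms are $O(pN)$. The rest is routine Chernoff bounds.
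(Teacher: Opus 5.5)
Your proof is correct and follows the paper's argument essentially step by step. The sizes are checked deterministically, the edge counts by a Chernoff bound after expanding $p\,n[s](n[t]-\one_{s=t})$ around $\wt m[s,t]$ with the response factor $1+\frac{\tau}{\sqrt N}(2\varepsilon[s]+2\varepsilon[t])$, and $\kappa_p$ by Chernoff plus a union bound. One harmless slip: for odd $N$ your display $n[0]n[1]=\wt n[0]\wt n[1]-\tau^2N+O(\sqrt N)$ omits an extra term $\wt n[0]$ (coming from $\lceil N/2\rceil-\lfloor N/2\rfloor=1$), but the error is still $O(N)$, i.e.\ a relative $O(1/N)$, which is all you actually use.
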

\begin{proof}
Write $\Pi:=\Pi_1(\mf g)$, $\mbf d:=\delta(\Pi,G)$, $n[s]:=|\Pi[s]|$, and $y:=\rho_1^{(p)}(\sigma_1(\mf g))=(\Pi,m,\one_{\kappa_p(\Pi,\mbf d)})$ with $m[s,t]=\sum_{x\in\Pi[s]}\mbf d[x,t]$. By definition, $\Pi[0]=c^{-1}(1)$ and $\Pi[1]=c^{-1}(-1)$, so $n[0]=a:=\lfloor N/2\rfloor+\lfloor\tau\sqrt N\rfloor$ and $n[1]=N-a$. At day $1$ we have $\wt n[0]=\wt n[1]=\lfloor N/2\rfloor$, $\wt m[s,t]=p\wt n[s](\wt n[t]-\one_{s=t})$, $\nu[0]=\nu[1]=\frac12$, and $\varepsilon[0]=1=-\varepsilon[1]$, so $\varepsilon[s]/\nu[s]=2(-1)^s$ for $s\in\{0,1\}$.

\emph{Sizes.} Deterministically, $|n[0]-\wt n[0]-\tau\sqrt N|=|\lfloor\tau\sqrt N\rfloor-\tau\sqrt N|<1$ and $|n[1]-\wt n[1]+\tau\sqrt N|=|N-2\lfloor N/2\rfloor-\lfloor\tau\sqrt N\rfloor+\tau\sqrt N|<2$, while the tolerance in \ref{F:sizes} is $\sqrt NN^{-\delta_0}\ge2$ for large $N$.

\emph{Edge counts.} For $s\ne t$, $m[s,t]$ is the number of edges of $G$ between $\Pi[s]$ and $\Pi[t]$, a $\mr{Bin}(n[s]n[t],p)$ variable; and $m[s,s]$ is twice the number of edges inside $\Pi[s]$, twice a $\mr{Bin}(\binom{n[s]}{2},p)$ variable. In both cases $\mb E\,m[s,t]=pn[s](n[t]-\one_{s=t})$ and $\mr{Var}\,m[s,t]\le2N^2p$, so by Chernoff's inequality, with probability $1-o(1)$,
\begin{equation}\label{eq:day-one-chernoff}
\left|m[s,t]-pn[s]\left(n[t]-\one_{s=t}\right)\right|\le2N\sqrt p\log N\qquad\text{for all }s,t\in\{0,1\}.
\end{equation}
Next, $n[s]=\wt n[0]+(-1)^s\lfloor\tau\sqrt N\rfloor+\one_{s=1}(N-2\lfloor N/2\rfloor)$, and since $\lfloor\tau\sqrt N\rfloor/\wt n[0]=2\tau/\sqrt N+O(1/N)$ (recall $\tau\le T$), both $n[s]$ and $n[s]-1$ equal $(\wt n[0]-\one)\big(1+(-1)^s\frac{2\tau}{\sqrt N}+O(\frac1N)\big)$ with the corresponding $\one\in\{0,1\}$. Hence
\begin{align*}
pn[s]\left(n[t]-\one_{s=t}\right)&=\wt m[s,t]\left(1+\frac{2\tau}{\sqrt N}\left((-1)^s+(-1)^t\right)+O\left(\frac1N\right)\right)\\
&=\wt m[s,t]\left(1+\frac{\tau}{\sqrt N}\left(\frac{\varepsilon[s]}{\nu[s]}+\frac{\varepsilon[t]}{\nu[t]}\right)\right)+O(Np),
\end{align*}
the cross term $4\tau^2/N$ being part of the $O(1/N)$. Combining with \eqref{eq:day-one-chernoff}, the left-hand side of \ref{F:edges} is at most $2N\sqrt p\log N+O(Np)\le3N\sqrt p\log N$, while the tolerance is $N^{-\delta_0}N^{3/2}p$. Since $\sqrt p\ge T^{-1/2}N^{-\theta/2}$, the ratio of the tolerance to $N\sqrt p\log N$ is at least $T^{-1/2}N^{(1-\theta)/2-\delta_0}/\log N=T^{-1/2}N^{(1-\theta)/4}/\log N$, which tends to infinity; so \ref{F:edges} holds for large $N$ on the event \eqref{eq:day-one-chernoff}.

\emph{Regularity.} For $x\in V$ and $t\in\{0,1\}$, $\mbf d[x,t]=\deg^G_{\Pi[t]}x\sim\mr{Bin}(n[t]-\one_{x\in\Pi[t]},p)$, whose mean differs from $pn[t]$ by at most $p$. By Chernoff's inequality, $\mb P[|\mbf d[x,t]-pn[t]|>(pN)^{4/7}]\le2\exp(-c(pN)^{1/7})$ for an absolute constant $c>0$ and large $N$, and a union bound over the $2N$ pairs $(x,t)$ gives $\mb P[\neg\kappa_p(\Pi,\mbf d)]\le4N\exp(-c(pN)^{1/7})=o(1)$, since $(pN)^{1/7}\ge T^{-1/7}N^{(1-\theta)/7}$. On the intersection of the two events, $y=(\Pi,m,1)\in\mc G^{(p)}_1(1,\delta_0,\tau)$.
\end{proof}

\begin{theorem}[The faithful trajectory]\label{thm:faithful-trajectory}
Fix $\theta\in(1/2,1)$ and $T>1$, and let $K_\theta:=\lceil\frac{1}{1-\theta}\rceil$, so that $K_\theta=D_\theta$ if $\frac1{1-\theta}\notin\mb{Z}$ and $K_\theta=k_0=D_\theta-1$ otherwise. There exist constants $T^\ast=T^\ast(\theta,T)>1$ and $\delta^\ast=\delta^\ast(\theta,T)>0$ such that for all sufficiently large $N$, all $p\in(T^{-1}N^{-\theta},TN^{-\theta})$, and all $\tau\in[T^{-1},T]$, the following holds. Let $V$, $c$, $G$, and $\mf{g}=(G,c)$ be as in \Cref{prop:day-one}. Then with probability $1-o(1)$:
\begin{enumerate}[label=\textup{(\roman*)},ref=\textup{(\roman*)}]
\item\label{traj:faithful} $\rho_k^{(p)}(\sigma_k(\mf{g}))\in\mc{G}^{(p)}_k(T^\ast,\delta^\ast,\tau)$ for every $1\le k\le K_\theta$;
\item\label{traj:edge} if $\frac{1}{1-\theta}=k_0\in\mb{Z}$, then in addition $(-1)^{s[k_0]}\left(|\Pi_{k_0+1}(\mf{g})[s]|-N\nu[s]\right)\ge\frac{\zeta}{2}N$ for every $s\in\{0,1\}^{k_0+1}$, where $\zeta=\zeta(\theta,T^\ast)$ is the constant of \Cref{thm:edge-day} for the parameters $(\theta,T^\ast,\delta^\ast)$; we arrange $T^\ast\ge T$, so that these parameters cover the ranges of $p$ and $\tau$.
\end{enumerate}
\end{theorem}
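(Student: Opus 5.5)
We propose to prove both parts by a finite induction over days, built on the coarse Markov structure of \Cref{prop:coarse-one-step}. First we fix the constants. Put $T_{(1)}:=\max\{T,1\}$ and $\delta_{(1)}:=\delta_0=\frac{1-\theta}{4}$, the day-one parameters from \Cref{prop:day-one}; note that $\mc G_1^{(p)}(1,\delta_0,\tau)\subseteq\mc G_1^{(p)}(T_{(1)},\delta_0,\tau)$. For each $1\le k<K_\theta$ (all of these satisfy $k<\frac1{1-\theta}$) we then define $(T_{(k+1)},\delta_{(k+1)})$ to be the pair $(T_2,\delta_2)$ produced by \Cref{prop:faithful-inclusion} when it is applied with parameters $(\theta,k,T_{(k)},\delta_{(k)})$. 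Finally we take $T^\ast:=\max_{k\le K_\theta}T_{(k)}$ and $\delta^\ast:=\min_k\delta_{(k)}$. Faithfulness is monotone, getting weaker as $T'$ grows and as $\delta$ shrinks, so membership in $\mc G_k^{(p)}(T_{(k)},\delta_{(k)},\tau)$ implies membership in $\mc G_k^{(p)}(T^\ast,\delta^\ast,\tau)$. There is one more requirement: the range of $p$ in later steps must still be covered. Since $T_{(k)}\ge T$, the hypothesis $p\in(T^{-1}N^{-\theta},TN^{-\theta})$ places $p$ inside every larger window, and likewise for $\tau$.

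Next comes the induction. Let $E_k$ denote the event $\rho_k^{(p)}(\sigma_k(\mf g))\in\mc G_k^{(p)}(T_{(k)},\delta_{(k)},\tau)$. \Cref{prop:day-one} gives $\mb P[E_1]=1-o(1)$. For the step, \Cref{prop:coarse-one-step} and the law of total probability give
\[
\mb P[E_k\setminus E_{k+1}]\le\sum_{y}\mb P[\rho_k^{(p)}(\sigma_k(\mf g))=y]\bigl(1-\overline K_k^{(p)}(y,\mc G_{k+1}^{(p)}(T_{(k+1)},\delta_{(k+1)},\tau))\bigr),
\]
where the sum runs over faithful $y$ with positive probability. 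Each such $y$ is $p$-attainable. By \Cref{thm:perturbed-evolution}\ref{pe:admissible}, the pair $(y,\lambda_y)$ is $(T_1,\phi_1,p)$-locally admissible. \Cref{thm:local-coarse-transition}, applied with $(\theta,k,T_1,\phi_1)$, then gives $\overline K_k^{(p)}(y,\mc L_{k+1}^{(p)}(y,\lambda_y))=1-o(1)$, and \Cref{prop:faithful-inclusion} places $\mc L_{k+1}^{(p)}(y,\lambda_y)$ inside the faithful set at day $k+1$. The essential point is that the $o(1)$ here is uniform over $y$. This uniformity is exactly what the conventions of \Cref{sec:conventions} provide, because the constants depend only on $(\theta,k,T,\delta)$. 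Taking the union over the at most $K_\theta$ steps proves \ref{traj:faithful}.

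For \ref{traj:edge} we run one more step from day $k_0=K_\theta$, now using \Cref{thm:edge-day} in place of \Cref{thm:perturbed-evolution}. We apply it with parameters $(\theta,T^\ast,\delta^\ast)$ to a state $y\in\mc G^{(p)}_{k_0}(T^\ast,\delta^\ast,\tau)$. This yields admissibility of $(y,\lambda_y)$ with constants $(T_1,\phi_1)$, together with the one-sided bound $(-1)^b(n^{\mr{loc}}[sb]-N\nu[sb])\ge\zeta N$. \Cref{thm:local-coarse-transition} then shows that with conditional probability $1-o(1)$, uniformly in $y$, the next partition satisfies \cref{L:refine} and \cref{L:sizes}; that is, $|\Pi'[sb]|=n^{\mr{loc}}[sb]+O(\sqrt N\log N)$. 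Together with \Cref{prop:coarse-one-step}, which identifies $\Pi'$ with $\Pi_{k_0+1}(\mf g)$, this gives $(-1)^{b}(|\Pi_{k_0+1}(\mf g)[sb]|-N\nu[sb])\ge\zeta N-O(\sqrt N\log N)\ge\frac\zeta2N$. Writing $s'=sb$, so that $b=s'[k_0]$, this is the asserted inequality. As before, we sum over $y$ by total probability.

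We expect the main obstacle to be bookkeeping rather than any new estimate. First, the constant chain must be set up so that every cited theorem is invoked with parameters it actually covers; in particular, the edge-day step must be applied with $(T^\ast,\delta^\ast)$ and not with $(T_{(k_0)},\delta_{(k_0)})$, and this is why monotonicity matters. Second, we must confirm that the $o(1)$ in \Cref{thm:local-coarse-transition} is uniform over all faithful states $y$, so that averaging it over the random current state is legitimate.
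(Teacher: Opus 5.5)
Your proposal is correct and follows the paper's proof essentially step for step. It uses the same recursive constant chain through \Cref{prop:faithful-inclusion}; your choice of $T^\ast$ as a maximum and $\delta^\ast$ as a minimum is equivalent to the paper's, and your unstated claim $T_{(k)}\ge T$ holds because $T_2=2T_1\ge2T_{(k)}$ by \Cref{thm:perturbed-evolution}. The induction is the paper's one, through \Cref{prop:coarse-one-step}, \Cref{thm:perturbed-evolution}, \Cref{thm:local-coarse-transition} and \Cref{prop:faithful-inclusion} with a uniform $o(1)$, and the extra edge-day step likewise applies \Cref{thm:edge-day} at $(\theta,T^\ast,\delta^\ast)$ together with \cref{L:sizes}.
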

\begin{proof}
Write $y_j:=\rho_j^{(p)}(\sigma_j(\mf g))$ for the day-$j$ coarse state of $\mf g$, and $\mc G_j(T',\delta'):=\mc G^{(p)}_j(T',\delta',\tau)$. Note that $\mc G_j(T',\delta')\subseteq\mc G_j(T'',\delta'')$ whenever $T'\le T''$ and $\delta'\ge\delta''$, since the tolerances in \Cref{def:faithful} increase.

\emph{Constants.} Define $(T^\circ_j,\delta^\circ_j)$ for $1\le j\le K_\theta$ recursively: $T^\circ_1:=T$ and $\delta^\circ_1:=\frac{1-\theta}{4}$; and for $1\le j<K_\theta$ (so that $j<\frac1{1-\theta}$), let $(T_2,\delta_2)$ be the constants of \Cref{prop:faithful-inclusion} for the parameters $(\theta,j,T^\circ_j,\delta^\circ_j)$, and put $T^\circ_{j+1}:=\max\{T_2,T^\circ_j\}$ and $\delta^\circ_{j+1}:=\delta_2$. These depend only on $\theta$ and $T$; $T^\circ_j$ is nondecreasing in $j$, and $\delta^\circ_j$ is nonincreasing because $\delta_2\le\delta_1<\delta$ in \Cref{prop:faithful-inclusion} and \Cref{thm:perturbed-evolution}. Set $T^\ast:=T^\circ_{K_\theta}\ge T$ and $\delta^\ast:=\delta^\circ_{K_\theta}$; then $\mc G_j(T^\circ_j,\delta^\circ_j)\subseteq\mc G_j(T^\ast,\delta^\ast)$ for every $j\le K_\theta$.

\emph{Induction.} We show by induction on $j$ that $\mb P[y_j\in\mc G_j(T^\circ_j,\delta^\circ_j)]=1-o(1)$ for $1\le j\le K_\theta$. For $j=1$ this is \Cref{prop:day-one}, since $1\le T^\circ_1$. Let $1\le j<K_\theta$ and suppose the claim holds for $j$. Fix $y\in\mc G_j(T^\circ_j,\delta^\circ_j)$ with $\mb P[y_j=y]>0$. By \Cref{prop:coarse-one-step}, $y$ is $p$-attainable and $\mb P[y_{j+1}\in B\mid y_j=y]=\overline K^{(p)}_j(y,B)$ for every $B\subseteq\mc Y_{j+1}(V)$. Since $T^\circ_j\ge T$, the ranges $p\in(T^{-1}N^{-\theta},TN^{-\theta})$ and $\tau\in[T^{-1},T]$ are contained in those of \Cref{thm:perturbed-evolution} with the parameters $(\theta,j,T^\circ_j,\delta^\circ_j)$, and that theorem provides the tilt $\lambda_y$ and constants $T_1,\phi_1$, depending only on $\theta,T$, such that $(y,\lambda_y)$ is $(T_1,\phi_1,p)$-locally admissible. Since $T_1\ge T^\circ_j\ge T$, the range of $p$ is also contained in that of \Cref{thm:local-coarse-transition} with the parameters $(\theta,j,T_1,\phi_1)$, which gives $\overline K^{(p)}_j(y,\mc L^{(p)}_{j+1}(y,\lambda_y))=1-o(1)$, where, by the convention of \Cref{sec:conventions}, the $o(1)$ is a function of $N$ alone, uniform over all $p$ in the range and all admissible $(y,\lambda)$. By \Cref{prop:faithful-inclusion} with the parameters $(\theta,j,T^\circ_j,\delta^\circ_j)$, $\mc L^{(p)}_{j+1}(y,\lambda_y)\subseteq\mc G_{j+1}(T_2,\delta_2)\subseteq\mc G_{j+1}(T^\circ_{j+1},\delta^\circ_{j+1})$. Therefore
\begin{align*}
\mb P\left[y_{j+1}\in\mc G_{j+1}(T^\circ_{j+1},\delta^\circ_{j+1})\right]&\ge\sum_{\substack{y\in\mc G_j(T^\circ_j,\delta^\circ_j)\\ \mb P[y_j=y]>0}}\mb P[y_j=y]\,\overline K^{(p)}_j\left(y,\mc L^{(p)}_{j+1}(y,\lambda_y)\right)\\
&\ge(1-o(1))\,\mb P\left[y_j\in\mc G_j(T^\circ_j,\delta^\circ_j)\right]=1-o(1),
\end{align*}
which completes the induction. Since there are finitely many days, a union bound gives \ref{traj:faithful}: with probability $1-o(1)$, $y_j\in\mc G_j(T^\circ_j,\delta^\circ_j)\subseteq\mc G_j(T^\ast,\delta^\ast)$ for all $1\le j\le K_\theta$.

\emph{The integer case.} Suppose $\frac1{1-\theta}=k_0\in\mb Z$, so $K_\theta=k_0$. Fix $y\in\mc G_{k_0}(T^\ast,\delta^\ast)$ with $\mb P[y_{k_0}=y]>0$. \Cref{thm:edge-day} with the parameters $(\theta,T^\ast,\delta^\ast)$, whose ranges contain ours since $T^\ast\ge T$, provides $\lambda_y$ and constants $T_1',\phi_1',\zeta$ such that $(y,\lambda_y)$ is $(T_1',\phi_1',p)$-locally admissible and its local template satisfies $(-1)^b(n^{\mr{loc}}[sb]-N\nu[sb])\ge\zeta N$ for all $s\in\{0,1\}^{k_0}$, $b\in\{0,1\}$. Since $T_1'\ge T^\ast\ge T$, the density range is contained in that of \Cref{thm:local-coarse-transition} with the parameters $(\theta,k_0,T_1',\phi_1')$. By that theorem and \Cref{prop:coarse-one-step}, conditionally on $y_{k_0}=y$, with probability $1-o(1)$ the state $y_{k_0+1}$ lies in $\mc L^{(p)}_{k_0+1}(y,\lambda_y)$, and then \cref{L:sizes} gives, for every $s\in\{0,1\}^{k_0}$ and $b\in\{0,1\}$,
\[
(-1)^b\left(|\Pi_{k_0+1}(\mf g)[sb]|-N\nu[sb]\right)\ge\zeta N-\Cof{thm:local-coarse-transition}{\theta,k_0,T_1',\phi_1'}\sqrt N\log N\ge\tfrac{\zeta}{2}N
\]
for large $N$, where we used $\on{part}(y_{k_0+1})=\Pi_{k_0+1}(\mf g)$. Summing over $y$ as above and combining with \ref{traj:faithful} at day $k_0$ gives \ref{traj:edge}, since for $s\in\{0,1\}^{k_0+1}$ the sign $(-1)^{s[k_0]}$ is $(-1)^b$ when $s$ is written as $s'b$.
\end{proof}

\subsection{The lead}

For a politicized graph $\mf{g}$ on $V$ and $k\ge1$, define the \emph{lead} on day $k$ by
\begin{equation}\label{eq:lead}
\Delta_k(\mf{g}):=\sum_{s\in\{0,1\}^k}(-1)^{s[k-1]}\left|\Pi_k(\mf{g})[s]\right|=\left|c_k(\mf{g})^{-1}(1)\right|-\left|c_k(\mf{g})^{-1}(-1)\right|,
\end{equation}
the number of vertices holding opinion $+1$ on day $k$ minus the number holding $-1$; the two expressions agree because $x\in\Pi_k(\mf{g})[s]$ has $c_k(\mf{g})[x]=(-1)^{s[k-1]}$.

\begin{corollary}[The lead at the end of the expansion phase]\label{cor:lead}
Fix $\theta\in(1/2,1)$ and $T>1$, and let $K:=\lfloor\frac{1}{1-\theta}\rfloor+1=D_\theta$. For all sufficiently large $N$, all $p\in(T^{-1}N^{-\theta},TN^{-\theta})$, and all $\tau\in[T^{-1},T]$, with $V$, $c$, $G$, $\mf{g}$ as in \Cref{prop:day-one},
\[
\mb{P}\left[\Delta_K(\mf{g})\ge\frac{N}{\sqrt{pN}}\log N\right]=1-o(1).
\]
\end{corollary}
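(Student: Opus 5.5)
We split according to whether $\frac1{1-\theta}$ is an integer, and in both cases read the lead off the size information of \Cref{thm:faithful-trajectory} via the identity $\Delta_K(\mf g)=\sum_s(-1)^{s[K-1]}|\Pi_K(\mf g)[s]|$ from \eqref{eq:lead}.

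\emph{Non-integer case.} Here $K=D_\theta=K_\theta$, so \Cref{thm:faithful-trajectory}\ref{traj:faithful} gives, with probability $1-o(1)$, that the day-$K$ coarse state lies in $\mc G^{(p)}_K(T^\ast,\delta^\ast,\tau)$. Summing \ref{F:sizes} with signs $(-1)^{s[K-1]}$, we get
\[
\Delta_K(\mf g)=\sum_s(-1)^{s[K-1]}\wt n[s]+\tau\sqrt N\sqrt{pN}^{\,K-1}\sum_s(-1)^{s[K-1]}\varepsilon[s]+O\bigl(2^KT^\ast\sqrt N\sqrt{pN}^{\,K-1}N^{-\delta^\ast}\bigr).
\]
The first sum vanishes exactly by the symmetry $\wt n[\ol s]=\wt n[s]$ of \Cref{thm:idealized-process}\ref{ideal:symmetry}, since $s\mapsto\ol s$ flips the sign $(-1)^{s[K-1]}$. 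By \Cref{cor:sign-identities}\ref{sign:lead}, the second sum is a positive universal constant $c_K$. Hence $\Delta_K\ge\frac12T^{-1}c_K\sqrt N\sqrt{pN}^{\,K-1}$ for large $N$. It remains to compare with $N\log N/\sqrt{pN}$. The ratio is of order $\sqrt{pN}^{\,K}/(\sqrt N\log N)\asymp N^{K(1-\theta)/2-1/2}/\log N$. Since $K>\frac1{1-\theta}$ strictly, the exponent $K(1-\theta)-1$ is positive, so the ratio tends to infinity.

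\emph{Integer case.} Here $K=k_0+1$, and \Cref{thm:faithful-trajectory}\ref{traj:edge} gives, with probability $1-o(1)$, $(-1)^{s[k_0]}(|\Pi_K[s]|-N\nu[s])\ge\frac\zeta2N$ for all $s\in\{0,1\}^K$. Then
\[
\Delta_K=\sum_s(-1)^{s[K-1]}N\nu[s]+\sum_s(-1)^{s[K-1]}\bigl(|\Pi_K[s]|-N\nu[s]\bigr).
\]
The first sum vanishes by \Cref{cor:sign-identities}\ref{sign:nu} with $r=K$. Each term of the second sum is at least $\frac\zeta2N$, so $\Delta_K\ge2^{K-1}\zeta N\gg N\log N/\sqrt{pN}$.

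The only delicate point is the non-integer case. There one must avoid replacing $\wt n$ by $N\nu$, whose error $N\log^\ell N/\sqrt{pN}$ could swamp the signal. The exact symmetry of \Cref{thm:idealized-process} removes that term, and the faithfulness error $N^{-\delta^\ast}$ is polynomially smaller than the main term, so the argument closes.
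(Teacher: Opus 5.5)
Your proposal is correct and follows the paper's proof essentially line by line. In the non-integer case you sum \ref{F:sizes} at day $K$ with signs $(-1)^{s[K-1]}$, cancel the idealized sizes by the exact symmetry $\wt n[\ol s]=\wt n[s]$, and conclude from $S_K>0$ (\Cref{cor:sign-identities}\ref{sign:lead}) together with $K(1-\theta)>1$. In the integer case you sum \Cref{thm:faithful-trajectory}\ref{traj:edge} against $\sum_s(-1)^{s[K-1]}\nu[s]=0$ to get $\Delta_K\ge 2^{k_0}\zeta N$, exactly as the paper does.
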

\begin{proof}
Let $T^\ast,\delta^\ast$ be the constants of \Cref{thm:faithful-trajectory}, and let $E$ be its event of probability $1-o(1)$. It suffices to show that $\Delta_K(\mf g)\ge\frac{N}{\sqrt{pN}}\log N$ on $E$ for large $N$.

Suppose first that $\frac1{1-\theta}\notin\mb Z$, so that $K=K_\theta$. On $E$, condition \ref{F:sizes} at day $K$ with the constants $(T^\ast,\delta^\ast)$ gives, for every $s\in\{0,1\}^K$,
\[
|\Pi_K(\mf g)[s]|=\wt n[s]+\tau\sqrt N\sqrt{pN}^{\,K-1}\varepsilon[s]+O\left(\sqrt N\sqrt{pN}^{\,K-1}N^{-\delta^\ast}\right).
\]
Summing with the signs $(-1)^{s[K-1]}$: the first sum $\sum_s(-1)^{s[K-1]}\wt n[s]$ vanishes, because pairing $s$ with $\ol s$ and using $\wt n[\ol s]=\wt n[s]$ (\Cref{thm:idealized-process}\ref{ideal:symmetry}) shows that it equals its own negative; the second is $\tau\sqrt N\sqrt{pN}^{K-1}S_K$ with $S_K:=\sum_s(-1)^{s[K-1]}\varepsilon[s]=2\sum_{s'\in\{0,1\}^{K-1}}\varepsilon[s'0]>0$ by \Cref{cor:sign-identities}\ref{sign:lead}. Hence, for large $N$,
\[
\Delta_K(\mf g)\ge\tau S_K\sqrt N\sqrt{pN}^{\,K-1}-O\left(\sqrt N\sqrt{pN}^{\,K-1}N^{-\delta^\ast}\right)\ge\frac{S_K}{2T}\sqrt N\sqrt{pN}^{\,K-1}.
\]
Finally $\sqrt N\sqrt{pN}^{K-1}\big/\big(\frac{N}{\sqrt{pN}}\log N\big)=\sqrt{pN}^{\,K}/(\sqrt N\log N)\ge T^{-K/2}N^{(K(1-\theta)-1)/2}/\log N\to\infty$, because $K=\lfloor\frac1{1-\theta}\rfloor+1>\frac1{1-\theta}$ gives $K(1-\theta)>1$. So $\Delta_K(\mf g)\ge\frac{N}{\sqrt{pN}}\log N$ for large $N$.

Suppose now that $\frac1{1-\theta}=k_0\in\mb Z$, so that $K=k_0+1$. On $E$, \Cref{thm:faithful-trajectory}\ref{traj:edge} gives
\[
(-1)^{s[K-1]}\left(|\Pi_K(\mf g)[s]|-N\nu[s]\right)\ge\tfrac\zeta2N\qquad\text{for every }s\in\{0,1\}^K .
\]
Sum over the $2^K$ strings $s$ and use $\sum_s(-1)^{s[K-1]}\nu[s]=0$, which is \Cref{cor:sign-identities}\ref{sign:nu} with $r=K$:
\[
\Delta_K(\mf g)=\sum_{s\in\{0,1\}^K}(-1)^{s[K-1]}|\Pi_K(\mf g)[s]|\ge N\sum_s(-1)^{s[K-1]}\nu[s]+2^K\frac{\zeta}{2}N=2^{k_0}\zeta N,
\]
which exceeds $\frac{N}{\sqrt{pN}}\log N$ for large $N$.
\end{proof}

\section{Proof of the main theorem}\label{sec:main-proof}

The proof has two phases. In the expansion phase, \Cref{cor:lead} gives a lead of at least $\frac{N}{\sqrt{pN}}\log N$ after $K=\lfloor\frac1{1-\theta}\rfloor+1$ days. In the contraction phase, one further update reduces the minority to at most $N/5$ vertices, after which each update contracts it by a factor at most $16C_{\mr{J}}^2/(pN)$. This is a deterministic consequence of two properties that $\mb{G}(V,p)$ has with high probability, minimum degree and jumbledness, using the argument of \cite{CKLT21}.

\subsection{The contraction phase}

\begin{definition}[Jumbled graphs]\label{def:jumbled}
Let $G$ be a graph on $V$, let $p\in(0,1)$, and let $\beta>0$. For $U,W\subseteq V$ let $e_G(U,W)$ be the number of pairs $(u,w)\in U\times W$ with $uw\in E(G)$; for disjoint $U,W$ this is the number of edges between $U$ and $W$, and in general $e_G(U,W)=\sum_{u\in U}\deg^G_Wu$. The graph $G$ is \emph{$(p,\beta)$-jumbled} if
\[
\left|e_G(U,W)-p|U||W|\right|\le\beta\sqrt{|U||W|}\qquad\text{for all }U,W\subseteq V .
\]
\end{definition}

\begin{lemma}[Pseudorandomness of $\mb{G}(V,p)$]\label{lem:cklt-jumbled}
There is an absolute constant $C_{\mr{J}}>0$ such that the following holds. Fix a real parameter $\theta\in(1/2,1)$ and a constant $T>1$. For all sufficiently large $N\in\mb{Z}$ and $p\in(T^{-1}N^{-\theta},TN^{-\theta})$, if $V$ is a set with $|V|=N$ and $G\sim\mb{G}(V,p)$, then with probability $1-o(1)$ every vertex of $G$ has degree at least $0.9\,pN$ and $G$ is $\big(p,C_{\mr{J}}\sqrt{pN}\big)$-jumbled.
\end{lemma}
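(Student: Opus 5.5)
The plan is to prove the two properties separately and finish with a union bound. The minimum-degree property is a routine Chernoff estimate. The jumbledness is the standard spectral fact that $\mb G(V,p)$ is $(p,O(\sqrt{pN}))$-jumbled once $pN\gg\log N$, which holds here since $pN\ge T^{-1}N^{1-\theta}$.

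\emph{Minimum degree.} For each $x\in V$ we have $\deg^Gx\sim\mr{Bin}(N-1,p)$, with mean $p(N-1)\ge0.99\,pN$ for large $N$. Chernoff's inequality gives $\mb P[\deg^Gx<0.9\,pN]\le\exp(-c\,pN)$ for an absolute constant $c>0$. Since $pN\ge T^{-1}N^{1-\theta}$, a union bound over the $N$ vertices gives failure probability $N\exp(-cT^{-1}N^{1-\theta})=o(1)$.

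\emph{Jumbledness.} Let $A$ be the adjacency matrix of $G$, and set $M:=A-p(J-I)$, where $J$ is the all-ones matrix. Then $M$ is symmetric with independent centered entries above the diagonal, each bounded by $1$ and of variance at most $p$. I would invoke a standard spectral bound for sparse random matrices, for instance Feige--Ofek or the moment method of Vu, or the Bandeira--van Handel inequality. Each gives $\|M\|\le C_0\sqrt{pN}$ with probability $1-o(1)$ whenever $pN\ge C\log N$, with $C_0$ absolute. Given such a bound, for all $U,W\subseteq V$ we have
\[
e_G(U,W)-p|U||W|=\one_U^{\top}M\one_W-p|U\cap W|.
\]
The first term is bounded by $\|M\|\sqrt{|U||W|}$. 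The second satisfies $p|U\cap W|\le p\sqrt{|U||W|}\le\sqrt{pN}\sqrt{|U||W|}$. So the lemma holds with $C_{\mr J}:=C_0+1$. Note that $e_G(U,W)$ counts ordered pairs, so edges inside $U\cap W$ are counted twice; this matches $\one_U^\top A\one_W$ exactly, since $A$ is symmetric with zero diagonal.

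\emph{Main obstacle.} The only nontrivial input is the spectral norm bound $\|M\|=O(\sqrt{pN})$ with an \emph{absolute} constant. The naive alternative, a union bound of Chernoff estimates over all pairs $(U,W)$, only yields $\beta=O(\sqrt{pN\log N})$ for large sets, and degrades further for small sets. I would therefore cite the spectral result directly, since our density $pN\ge N^{1-\theta}$ is far above the $\log N$ threshold where heavy-degree vertices can spoil the bound. If a self-contained argument were preferred, I would follow Feige--Ofek: discretize the unit sphere by an $\eps$-net, split the pairs $(x,y)$ in the quadratic form $\sum_{x,y}u_xM_{xy}v_y$ into light pairs, handled by Bernstein's inequality together with the net union bound, and heavy pairs, handled by a discrepancy property controlling $e_G$ on small sets via the bounded maximum degree $O(pN)$.
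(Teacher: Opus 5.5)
Your proof is correct. The minimum-degree half is the paper's argument verbatim, but for jumbledness you use a different key input.

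The paper never touches the spectrum. It quotes the jumbledness statement of \cite[Lemma~4.1]{CKLT21} (restating \cite[Corollary~2.3]{KS06}) and only checks that its hypotheses $\log^2N\le pN$ and $p\le0.99$ hold in the density window. You instead import an operator-norm bound $\|A-p(J-I)\|=O(\sqrt{pN})$. You then pass to jumbledness through the exact identity $e_G(U,W)-p|U||W|=\one_U^{\top}M\one_W-p|U\cap W|$, and you handle it correctly.

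Your route derives exactly the form the paper needs: the two-set bound with $\sqrt{|U||W|}$, for arbitrary and possibly overlapping $U,W$, with ordered pairs counted as in \Cref{def:jumbled}. This is how \Cref{lem:cklt-contraction} uses it, since there $U\subseteq V$ and $U$ may meet $M_j$. The paper's route needs no random-matrix theory, but it relies on the cited lemma already being stated in that form.

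The price on your side is checking that the norm bound has an absolute constant in this range. Bandeira--van Handel plus Talagrand concentration, or Feige--Ofek, give this for $pN\ge C\log N$. Vu's moment bound needs $pN\gg\log^4N$; that is harmless in this window, but it is not covered by your blanket ``whenever $pN\ge C\log N$''.

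Your closing remark needs a correction, though it does not affect the proof. A Chernoff--union-bound argument is not lossy for large sets. Even the crude count of $4^N$ pairs gives deviation $O(\sqrt{pN}\sqrt{|U||W|})$, with no $\sqrt{\log N}$ factor, once $|U||W|\gtrsim N/p$. With the refined count $\binom{N}{|U|}\binom{N}{|W|}$, the same holds for all pairs of comparable size, using $pN\ge\log^2N$ for small ones.

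What a union bound cannot handle is strongly unbalanced pairs, such as $|U|=1$ and $|W|\asymp N$, where the Bernstein tail is in its Poisson regime. Those pairs are exactly what the maximum-degree/discrepancy step of Feige--Ofek, or the norm bound you cite, takes care of.
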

\begin{proof}
The degree of a vertex is a $\mr{Bin}(N-1,p)$ variable with mean $p(N-1)\ge0.95pN$ for large $N$, so by Chernoff's inequality it is less than $0.9pN$ with probability at most $\exp(-cpN)$ for an absolute constant $c>0$; a union bound over the $N$ vertices gives probability at most $N\exp(-cpN)=o(1)$, because $pN\ge T^{-1}N^{1-\theta}\gg\log N$. For jumbledness we use \cite[Lemma~4.1]{CKLT21}, restating \cite[Corollary~2.3]{KS06}, in the range $\log^2N\le pN$ and $p\le0.99$: with probability $1-o(1)$, $\mb{G}(V,p)$ is $(p,\beta)$-jumbled with $\beta=O(\sqrt{pN})$. Both restrictions hold uniformly in our density window for large $N$, since $pN\ge T^{-1}N^{1-\theta}\gg\log^2N$ and $p\le TN^{-\theta}=o(1)$.
\end{proof}

The next lemma uses the argument of \cite[Lemma~4.2]{CKLT21}, with numerical constants sufficient for the minimum-degree bound above. Its update rule retains the current opinion in a tie, as in \Cref{sec:setup}.

\begin{lemma}[One day of majority dynamics on a jumbled graph]\label{lem:cklt-contraction}
Let $G$ be a $(p,\beta)$-jumbled graph on a set $V$ with $|V|=N\ge1$, let $c\colon V\to\{\pm1\}$ be any coloring, let $\mf{g}=(G,c)$, and let $j\ge1$. Write $M_i:=c_i(\mf{g})^{-1}(-1)$.
\begin{enumerate}[label=\textup{(\roman*)},ref=\textup{(\roman*)}]
\item\label{cklt:jump} If $\Delta_j(\mf{g})\ge10\beta/p$, then $|M_{j+1}|\le N/5$.
\item\label{cklt:contract} If every vertex of $G$ has degree at least $0.9pN$ and $|M_j|\le N/5$, then
\[
|M_{j+1}|\le\frac{16\beta^2}{(pN)^2}|M_j|.
\]
\end{enumerate}
\end{lemma}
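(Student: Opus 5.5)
Both parts follow the argument of \cite[Lemma~4.2]{CKLT21}: apply the jumbledness inequality of \Cref{def:jumbled} to a well-chosen pair of vertex sets, and play it against a counting bound coming from the update rule. Write $P_i:=V\setminus M_i$, so that $\Delta_j(\mf g)=|P_j|-|M_j|$. A vertex $x$ lies in $M_{j+1}$ only if $\deg^G_{M_j}x\ge\deg^G_{P_j}x$; ties keep the current opinion, and a tie can only put $x$ in $M_{j+1}$ when $x\in M_j$, which this weak inequality still covers.

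\emph{Part \ref{cklt:jump}.} Let $U:=M_{j+1}$ and suppose $|U|>N/5$. Summing the inequality above over $x\in U$ gives $e_G(U,M_j)\ge e_G(U,P_j)$. Jumbledness applied to the pairs $(U,M_j)$ and $(U,P_j)$ gives
\[
p|U|\bigl(|P_j|-|M_j|\bigr)\le \beta\sqrt{|U|}\bigl(\sqrt{|M_j|}+\sqrt{|P_j|}\bigr)\le \beta\sqrt{2N|U|}.
\]
Hence $\Delta_j\le\beta\sqrt{2N}/(p\sqrt{|U|})<\beta\sqrt{10}/p<10\beta/p$, using $|U|>N/5$. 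This contradicts the hypothesis, so $|M_{j+1}|\le N/5$. The only point to check is the constant $\sqrt{2}\cdot\sqrt5=\sqrt{10}<10$.

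\emph{Part \ref{cklt:contract}.} Again put $U:=M_{j+1}$. Each $x\in U$ has $\deg^G_{M_j}x\ge\tfrac12\deg^G x\ge0.45pN$, so $e_G(U,M_j)\ge0.45pN|U|$. Jumbledness bounds the same quantity above:
\[
e_G(U,M_j)\le p|U||M_j|+\beta\sqrt{|U||M_j|}\le 0.2pN|U|+\beta\sqrt{|U||M_j|},
\]
where we used $|M_j|\le N/5$. Combining the two bounds gives $0.25pN|U|\le\beta\sqrt{|U||M_j|}$. If $U$ is empty the claim is trivial; otherwise squaring gives $|U|\le 16\beta^2|M_j|/(pN)^2$.

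Nothing here is deep. The main care is the bookkeeping: $e_G(U,W)$ counts ordered pairs, so overlapping sets such as $U\cap M_j\neq\emptyset$ are handled correctly by the degree-sum identity in \Cref{def:jumbled}. The other point is that the tie rule never sends a vertex with a strict $P_j$-majority to $M_{j+1}$.
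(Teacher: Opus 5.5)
Your proof is correct and follows essentially the same route as the paper: both parts sum the majority inequality $\deg^G_{M_j}x\ge\deg^G_{P_j}x$ over $U=M_{j+1}$ and play it against jumbledness, and your part (ii) is identical to the paper's. The only cosmetic difference is in part (i). You apply jumbledness to the complementary pairs $(U,M_j)$ and $(U,P_j)$ and argue by contradiction (your inequality in fact gives $|M_{j+1}|\le N/50$). The paper instead compares $\tfrac12 e_G(U,V)$ with $e_G(U,M_j)$ and concludes $|M_{j+1}|\le 9N/100$ directly.
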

\begin{proof}
Write $U:=M_{j+1}$, $x:=|U|$, and $m:=|M_j|$, so that $\Delta_j(\mf{g})=N-2m$. A vertex $v\in U$ has at least as many neighbors in $M_j$ as in its complement: either strictly more of its neighbors hold $-1$ on day $j$, or there is a tie and $v$ already holds $-1$. Thus
\[
\tfrac12\deg^G_Vv\le\deg^G_{M_j}v,
\qquad\text{and hence}\qquad
\tfrac12 e_G(U,V)\le e_G(U,M_j).
\]
Both conclusions are immediate if $x=0$, so assume $x>0$.

\ref{cklt:jump}: Jumbledness gives
\[
\tfrac12\bigl(pxN-\beta\sqrt{xN}\bigr)
\le\tfrac12 e_G(U,V)
\le e_G(U,M_j)
\le pxm+\beta\sqrt{xm}.
\]
Using $m\le N$ and $\Delta_j(\mf{g})=N-2m$, we obtain
\[
10\beta x\le px\,\Delta_j(\mf{g})
\le\beta\sqrt{xN}+2\beta\sqrt{xm}
\le3\beta\sqrt{xN}.
\]
Since $\beta>0$, squaring and canceling $x>0$ gives $100x\le9N$, and therefore $x\le9N/100\le N/5$.

\ref{cklt:contract}: The minimum-degree bound and the same majority inequality give
\[
\frac{9}{20}pNx\le e_G(U,M_j)
\le pxm+\beta\sqrt{xm}.
\]
Since $m\le N/5$,
\[
\frac{pN}{4}x
\le p\left(\frac{9N}{20}-m\right)x
\le\beta\sqrt{xm}.
\]
Both sides are nonnegative; squaring and canceling $x>0$ yields $(pN)^2x\le16\beta^2m$, proving the assertion because $pN>0$.
\end{proof}

\subsection{Proof of \Cref{thm:succinct_main_result} and \Cref{cor:random-opinions}}

\begin{proof}[Proof of \Cref{thm:succinct_main_result}]
Let $K:=\lfloor\frac1{1-\theta}\rfloor+1$, so that $2K+1=k$. Let $E_1$ be the event $\Delta_K(\mf{g})\ge\frac{N}{\sqrt{pN}}\log N$ of \Cref{cor:lead}, and $E_2$ the event of \Cref{lem:cklt-jumbled} that $G$ has minimum degree at least $0.9pN$ and is $(p,\beta)$-jumbled with $\beta:=C_{\mr{J}}\sqrt{pN}$. Both have probability $1-o(1)$, hence so does $E_1\cap E_2$, and it suffices to show that $c_k(\mf{g})\equiv1$ on this event for large $N$.

Write $M_j:=c_j(\mf{g})^{-1}(-1)$. For large $N$ we have $\log N\ge10C_{\mr{J}}$, so on $E_1\cap E_2$,
\[
\Delta_K(\mf{g})\ge\frac{N}{\sqrt{pN}}\log N
\ge\frac{10C_{\mr{J}}N}{\sqrt{pN}}
=\frac{10\beta}{p}.
\]
Part~\ref{cklt:jump} of \Cref{lem:cklt-contraction} therefore gives $|M_{K+1}|\le N/5$. Put
\[
\rho:=\frac{16\beta^2}{(pN)^2}
=\frac{16C_{\mr{J}}^2}{pN}
\le\frac{16C_{\mr{J}}^2T}{N^{1-\theta}},
\]
so $0<\rho\le1$ for large $N$. Successive applications of \ref{cklt:contract} give
\[
|M_{K+1+r}|\le\rho^r\frac N5\le\frac N5
\qquad(0\le r\le K).
\]
The second inequality keeps the hypothesis of \ref{cklt:contract} valid at every step. Since $K(1-\theta)>1$, taking $r=K$ yields
\[
\left|c_k(\mf{g})^{-1}(-1)\right|
=|M_{2K+1}|
\le\frac{N\rho^K}{5}
\le\frac{(16C_{\mr{J}}^2T)^K}{5}N^{1-K(1-\theta)}<1
\]
for large $N$. Since the left-hand side is a nonnegative integer, it is $0$: every vertex has opinion $+1$ on day $k$.
\end{proof}

\begin{proof}[Proof of \Cref{cor:random-opinions}]
Let $a:=|c^{-1}(1)|\sim\mr{Bin}(N,1/2)$ and $\tau:=(a-\lfloor N/2\rfloor)/\sqrt N$, so that $a=\lfloor N/2\rfloor+\lfloor\tau\sqrt N\rfloor$ exactly, since $\tau\sqrt N$ is an integer. Fix $T'>1$ and let $\epsilon_{T'}(N)$ denote the $o(1)$ of \Cref{thm:succinct_main_result} for the parameters $(\theta,T'+1)$; by the convention of \Cref{sec:conventions} it is a function of $N$ alone, uniform over $p\in((T'+1)^{-1}N^{-\theta},(T'+1)N^{-\theta})$ and $\tau\in[(T'+1)^{-1},T'+1]$. Suppose $T'\ge T$, so that our range of $p$ is contained in this one.

Condition on $c$. Since $G$ is independent of $c$, its conditional law is $\mb{G}(V,p)$. If $\tau\in[T'^{-1},T']$, then $(G,c)$ is in the setting of \Cref{thm:succinct_main_result} with the parameters $(\theta,T'+1)$, so $\mb{P}[c_k(\mf{g})\not\equiv1\mid c]\le\epsilon_{T'}(N)$. If $\tau\le-T'^{-1}$, consider $\mf{g}':=(G,-c)$: it has $|(-c)^{-1}(1)|=N-a=\lfloor N/2\rfloor+\lfloor\tau'\sqrt N\rfloor$ with $\tau':=(N-a-\lfloor N/2\rfloor)/\sqrt N\in[-\tau,-\tau+N^{-1/2}]\subseteq[T'^{-1},T'+1]$ when also $\tau\ge-T'$, so $\mb{P}[c_k(\mf{g}')\not\equiv1\mid c]\le\epsilon_{T'}(N)$; and majority dynamics commutes with the global flip, $c_t((G,-c))=-c_t((G,c))$ for all $t$, because the update rule of \Cref{sec:setup} is odd under $c\mapsto-c$ (the sign of the neighborhood sum flips, and a tie remains a tie). Hence
\[
\mb{P}\left[c_k(\mf{g})\text{ is not constant}\right]\le\mb{P}\left[|\tau|<T'^{-1}\right]+\mb{P}\left[|\tau|>T'\right]+\epsilon_{T'}(N).
\]
Now $\tau\sqrt N=a-\lfloor N/2\rfloor$ with $a\sim\mr{Bin}(N,1/2)$: by Chebyshev's inequality $\mb{P}[|\tau|>T']\le\mb{P}[|a-N/2|>T'\sqrt N-1]\le\frac{N/4}{(T'\sqrt N-1)^2}\le\frac{1}{2T'^2}$ for large $N$, and since $\max_i\mb{P}[a=i]=\binom{N}{\lfloor N/2\rfloor}2^{-N}\le C/\sqrt N$ for an absolute constant $C$ by Stirling's formula, $\mb{P}[|\tau|<T'^{-1}]\le(2T'^{-1}\sqrt N+1)\,C/\sqrt N\le3CT'^{-1}$ for large $N$. Therefore
\[
\limsup_{N\to\infty}\ \sup_{p}\ \mb{P}\left[c_k(\mf{g})\text{ is not constant}\right]\le3CT'^{-1}+\frac{1}{2T'^2}
\]
for every $T'\ge T$, the supremum being over $p\in(T^{-1}N^{-\theta},TN^{-\theta})$; letting $T'\to\infty$ shows that the probability is $o(1)$, uniformly in $p$. Finally, if $c_k(\mf{g})$ is constant then so is $c_t(\mf{g})$ for every $t\ge k$: every vertex then sees only neighbors of its own opinion, so the neighborhood sum has the sign of that opinion or vanishes, and in either case the opinion is kept.
\end{proof}

\subsection{The uniform density range}\label{sec:uniform-proof}

Only the main steps of the uniform-density argument are recorded here; every detail is checked in the accompanying \href{https://github.com/gopalkgoel/sparse-majority-dynamics-lean}{Lean formalization}. Put
\[
\alpha_0:=1-\theta,\qquad S:=\sqrt{pN},\qquad
K:=\left\lfloor\frac1{\alpha_0}\right\rfloor+1.
\]

\emph{Reduction to the sparse range.} Densities of order $N^{-1/2}$ and above are covered by the dense-range argument of Fountoulakis, Kang, and Makai \cite{FKM20}, whose estimates are formalized alongside the present argument and yield the following fixed-initial-coloring statement: there is $L=L(T,\eta)$ such that, for all sufficiently large $N$, every $p\ge L/\sqrt N$ (including $p=1$), and every coloring $c$ as in \Cref{thm:uniform-density}, unanimity holds after four updates with probability at least $1-\eta$. Since $\theta>1/2$ gives $K\ge3$ and unanimity persists once reached, this proves \Cref{thm:uniform-density} for $p\ge L/\sqrt N$. Set $B:=\max\{2T,L+1\}$. It remains to prove the theorem for
\[
T^{-1}N^{-\theta}\le p\le BN^{-1/2},
\]
and the rest of this subsection concerns only this sparse interval. The argument below is run with $B$ in place of $T$, so that it covers the density lower endpoint $T^{-1}N^{-\theta}\ge B^{-1}N^{-\theta}$ and the bias range $[T^{-1},T]\subseteq[B^{-1},B]$; the two thresholds for $N$ are combined by taking the larger one, giving a single $N_0$ independent of $p$ and of $c$.

\emph{The sparse interval.} The local estimates and the finite-horizon idealized evolution hold uniformly under independent lower and upper bounds on $p$. The balanced finite reference process and its exact color-flip cancellation supply the reference trajectory throughout this interval.

The point requiring care is when to leave the linear-response regime. The class-size displacement on day $d$ has scale $\sqrt N S^{d-1}$, and the normalized shift seen by the next majority decision has scale
\[
a_d:=\frac{S^d}{\sqrt N}.
\]
For a fixed power law this is $N^{(d\alpha_0-1)/2}$ up to a constant factor. At a reciprocal exponent, say $\alpha_0=1/m$, the quantity $a_m$ has constant order and the critical-day argument applies. For a density varying with $N$, however, its effective exponent $\alpha_N:=\log(pN)/\log N$ can approach $1/m$ at an arbitrary rate. Then
\[
a_m=N^{(m\alpha_N-1)/2}
\]
may tend to zero, remain bounded, or tend to infinity, even subpolynomially. Thus no predetermined day is uniformly guaranteed to be safely linear or already critical.

Instead we stop according to the response itself. On every day $k$ before the stopping point, the actual trajectory is faithful in the sense of \Cref{def:faithful}: the number of vertices in each history class $s$ equals its idealized value $\wt n[s]$ plus the first-order response $\tau\sqrt N S^{k-1}\varepsilon[s]$, and each inter-class edge count equals its idealized value corrected by the corresponding first-order relative response, in each case up to an error of at most $CN^{-\delta}$ times the scale of that response, for constants $C$ and $\delta>0$ depending only on $\theta$ and $B$. This is the induction of \Cref{thm:faithful-trajectory}, run on the present interval. With $\delta$ fixed, choose
\[
r:=\min\left\{\frac14,\ \delta,\ \frac{K\alpha_0-1}{2}\right\}>0.
\]
This choice gives $r/4<\delta$ and $(K-r)\alpha_0>1$. Stop on the first day $d$ for which $a_d\ge S^{r-1}$. At the upper end of the sparse interval the first response is still below this threshold, since
\[
\frac{a_1}{S^{r-1}}=\frac{S^{2-r}}{\sqrt N}=O(N^{-r/4}).
\]
At the lower density endpoint the threshold has been crossed by day $K-1$, because
\[
\frac{a_{K-1}}{S^{r-1}}
=\frac{S^{K-r}}{\sqrt N}
\ge T^{-(K-r)/2}N^{((K-r)\alpha_0-1)/2}\longrightarrow\infty.
\]
These two inequalities are uniform over the whole sparse interval. By minimality, the stopping response lies in the window
\[
S^{r-1}\le a_d<S^r,
\qquad d\le K-1.
\]

On this window the nonlinear Gaussian threshold estimate replaces the linear expansion and gives a lead of order $N\min\{a_d,1\}$ after the next update. The relative error is bounded, up to constants and a fixed logarithmic power, by
\[
\frac{(\log N)^\ell}{S^r}+S^rN^{-\delta}=o(1).
\]
Here the lower density bound makes the first term vanish, while $S=O(N^{1/4})$ and $r/4<\delta$ handle the second. Moreover $S\min\{a_d,1\}\ge S^r\gg\log N$, so this lead exceeds the cleanup threshold $N\log N/S$. Only the next class sizes are needed at this terminal update; there is no need to propagate a further faithful edge-count state. Since $d\le K-1$, the cleanup lead is present by day $K$. The jump and $K$ contraction updates of \Cref{lem:cklt-contraction} therefore give unanimity by day
\[
2K+1=2\left\lfloor\frac1{1-\theta}\right\rfloor+3.
\]

For random initial opinions, repeat the binomial window and color-flip argument of \Cref{cor:random-opinions} with \Cref{thm:uniform-density} in place of \Cref{thm:succinct_main_result}; enlarging the window parameter only enlarges the density interval.

\appendix
\addtocontents{toc}{\protect\setcounter{tocdepth}{1}}
\crefalias{section}{appendix}
\crefalias{subsection}{appendix}

\section{Enumeration, graphicality, and probabilistic tools}
\label{app:tools}

This appendix collects the external inputs and elementary estimates used in \Cref{app:local-transference,app:graph-enumeration-local}. Nothing here refers to the setup of \Cref{sec:setup,sec:local}; each result is either quoted from the literature or proved from scratch, and the two later appendices cite only results from here and from the main text.

\subsection{Degree-sequence laws and enumeration comparisons}
\label{app:tools-enumeration}

A \emph{bigraph} is a bipartite graph with a distinguished ordered bipartition; its degree sequence is the pair of degree sequences of the two parts. We write $G(n,m)$ for a uniformly random simple graph on $[n]$ with $m$ edges, $G(\ell,n,m)$ for a uniformly random bigraph with parts $[\ell]$ and $[n]$ and $m$ edges, and $\mb{G}(n,p)$, respectively $\mb{G}(\ell,n,p)$, for the corresponding models in which each admissible pair is an edge independently with probability $p$; thus $\mb{G}(n,p)=\mb{G}([n],p)$. We have the following definitions of degree sequences in graphs and bigraphs respectively.

\begin{definition}
\label{def:graph_deg_seq}
    Let $\mc{D}_{n,m}^{\text{graph}}$ be the set of all sequences $\mbf{d}\in\{0,\ldots,n-1\}^n$ such that $\sum_i d_i = 2m$. Let $\mb{P}_{G(n,m)}$ be the probability distribution on $\mc{D}_{n,m}^{\text{graph}}$ given by sampling the degrees of the random graph $G(n,m)$. Let $\mb{P}_{B(n,m)}$ be the probability distribution on $\mc{D}_{n,m}^{\text{graph}}$ given by sampling $d_i\sim \mr{Bin}(n-1,\tfrac{1}{2})$ and conditioning on $\sum_i d_i=2m$.
\end{definition}

\begin{definition}
\label{def:bigraph_deg_seq}
    Let $\mc{D}_{\ell,n,m}^{\text{bigraph}}$ be the set of all sequences $\mbf{d}=(\mbf{s},\mbf{t})\in\{0,\ldots,n\}^\ell\times\{0,\ldots,\ell\}^n$ such that $\sum_i s_i = \sum_j t_j = m$. Let $\mb{P}_{G(\ell,n,m)}$ be the probability distribution on $\mc{D}_{\ell,n,m}^{\text{bigraph}}$ given by sampling the degrees of the random bigraph $G(\ell,n,m)$. Let $\mb{P}_{B(\ell,n,m)}$ be the probability distribution on $\mc{D}_{\ell,n,m}^{\text{bigraph}}$ given by sampling $s_i\sim \mr{Bin}(n,\tfrac{1}{2})$ and $t_j\sim\mr{Bin}(\ell,\tfrac{1}{2})$, and conditioning on $\sum_i s_i=\sum_j t_j = m$.
\end{definition}

In both binomial laws, the choice of success probability $1/2$ is immaterial after conditioning on the displayed total sums; any fixed success probability in $(0,1)$ gives the same conditional law.
We use the following comparison results of Liebenau and Wormald \cite{LW17,LW20}.

\begin{theorem}[\cite{LW17}, Theorem 1.4]
\label{thm:graph_counting}
    Let $\mu_0>0$ be a sufficiently small constant, and let $1/2<\alpha<3/5$. Let $n$ and $m$ be integers, and let $d=2m/n$. Assume that $\mu=\tfrac{d}{n-1}$ satisfies $\mu\le\mu_0$, and for all fixed $K>0$, $\log^K n/n=o(\mu)$. Uniformly over all $\mbf{d}\in\mc{D}_{n,m}^{\text{graph}}$ with $|d_i-d|\le d^\alpha$ for all $i$, we have
    \[\mb{P}_{G(n,m)}(\mbf{d}) = \mb{P}_{B(n,m)}(\mbf{d})\exp\left(\frac{1}{4} - \frac{\gamma_2(\mbf{d})^2}{4\mu^2(1-\mu)^2}\right)\left(1+O\left(\frac{\log^2 n}{\sqrt{n}} + d^{5\alpha-3}\right)\right),\]
    where $\gamma_2(\mbf{d}) = \tfrac{1}{(n-1)^2}\sum_i (d_i-d)^2$.
\end{theorem}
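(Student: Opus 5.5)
This statement is imported verbatim from Liebenau and Wormald \cite[Theorem~1.4]{LW17}, so within this paper the plan is simply to cite it: we check that our notation for $\mc D^{\text{graph}}_{n,m}$, $\mb P_{G(n,m)}$ and $\mb P_{B(n,m)}$ agrees with theirs, and that the hypotheses ($\mu\le\mu_0$, $\log^Kn/n=o(\mu)$ for every fixed $K$, and $|d_i-d|\le d^\alpha$ with $1/2<\alpha<3/5$) are exactly the ones under which their theorem is stated. One point needs explicit checking. The binomial model there may be written with success probability $\mu$, whereas ours uses $1/2$. By the remark after \Cref{def:bigraph_deg_seq}, conditioning independent $\mr{Bin}(n-1,q)$ variables on $\sum_i d_i=2m$ gives a law that does not depend on $q$, because the weight $q^{d_i}(1-q)^{n-1-d_i}$ multiplies to $q^{2m}(1-q)^{n(n-1)-2m}$ on the conditioning event. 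So the two formulations agree.

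If one instead wanted to sketch why the result holds, I would follow the Liebenau--Wormald recursive method. Start from the ratio $\mb P_{G(n,m)}(\mbf d)/\mb P_{G(n,m)}(\mbf d-e_a+e_b)$ of probabilities of adjacent degree sequences. Express it through the probability that a given edge $ab$ is present in a uniform random graph with degree sequence $\mbf d$. Estimate that edge probability by a switching-type recursion. Then verify that the explicit formula $\mb P_{B(n,m)}(\mbf d)\exp(\tfrac14-\gamma_2^2/(4\mu^2(1-\mu)^2))$ satisfies the same ratio relations up to the stated error. Finally, integrate these relations along paths in the set of near-regular sequences, and normalize using the fact that both sides are probability measures that are concentrated on that set.

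The main obstacle, which is also the bulk of \cite{LW17}, is the edge-probability estimate. It must be accurate to relative error $O(\log^2n/\sqrt n+d^{5\alpha-3})$ uniformly in the sparse range $\mu\gg\log^Kn/n$, and the errors must be controlled after summing along paths of length about $n d^{\alpha}$. We do not reproduce that argument here and treat the theorem as an external input.
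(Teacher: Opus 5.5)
Your proposal matches the paper, which imports this result from Liebenau and Wormald without proof. Your check that conditioning independent $\mathrm{Bin}(n-1,q)$ degrees on total sum $2m$ gives a law independent of $q$ is the same observation the paper records after defining the binomial degree-sequence laws, and the recursive-method sketch is optional context that the paper does not rely on.
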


\begin{theorem}[\cite{LW20}, Theorem 1.1, bipartite case]
\label{thm:bigraph_counting}
    Let $\mu_0>0$ be a sufficiently small constant, and let $1/2<\alpha<3/5$. Let $\ell,n,m$ be integers such that
    \[\mu:=\frac{m}{n\ell}<\mu_0\text{ and }(\ell+n)^{5-5\alpha}=o(\ell n m^{3-5\alpha}),\]
    and for all fixed $K>0$, $\ell\log^K n + n\log^K\ell = o(m)$. Let $s=m/\ell$ and $t=m/n$.
    
    Uniformly over all $(\mbf{s},\mbf{t})\in\mc{D}_{\ell,n,m}^{\text{bigraph}}$ with $|s_i-s|\le s^\alpha$ for all $i$ and $|t_j-t|\le t^\alpha$ for all $j$, we have
    \[\mb{P}_{G(\ell,n,m)}((\mbf{s},\mbf{t})) = \mb{P}_{B(\ell,n,m)}((\mbf{s},\mbf{t}))H(\mbf{s},\mbf{t})\left(1+O\left(\frac{\log^2 \ell}{\sqrt{\ell}} + \frac{\log^2 n}{\sqrt{n}} + \min\{s,t\}^{5\alpha-5}\frac{m^2}{\ell n}\right)\right),\]
    where
    \[H(\mbf{s},\mbf{t}) = \exp\left(-\frac{1}{2}\left(1-\frac{\sigma^2(\mbf{s})}{s(1-\mu)}\right)\left(1-\frac{\sigma^2(\mbf{t})}{t(1-\mu)}\right)\right)\]
    where $\sigma^2(\mbf{s}) = \tfrac{1}{\ell}\sum_i (s_i-s)^2$ and $\sigma^2(\mbf{t}) = \tfrac{1}{n}\sum_j (t_j-t)^2$.
\end{theorem}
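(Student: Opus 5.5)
This statement is imported from \cite{LW20} rather than proved from scratch, so the plan is to derive it from the bipartite enumeration formula there and translate it into the present normalization. Write $\mc{B}(\mbf{s},\mbf{t})$ for the number of simple bigraphs with parts $[\ell]$, $[n]$ and degree sequence $(\mbf{s},\mbf{t})$. By definition,
\[
\mb{P}_{G(\ell,n,m)}((\mbf{s},\mbf{t}))=\mc{B}(\mbf{s},\mbf{t})\Big/\binom{\ell n}{m}.
\]
For the binomial side, any fixed success probability gives the same conditional law, so we may take success probability $\mu$. Then
\[
\mb{P}_{B(\ell,n,m)}((\mbf{s},\mbf{t}))=\prod_i\binom{n}{s_i}\prod_j\binom{\ell}{t_j}\Big/Z,
\]
where $Z$ is the sum of these products over $\mc{D}^{\text{bigraph}}_{\ell,n,m}$.

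\emph{Step 1: invoke the count.} The Liebenau--Wormald theorem gives, uniformly over $(\mbf{s},\mbf{t})$ in the window $|s_i-s|\le s^\alpha$, $|t_j-t|\le t^\alpha$, and under the stated density and growth hypotheses,
\[
\mc{B}(\mbf{s},\mbf{t})=\binom{\ell n}{m}^{-1}\prod_i\binom{n}{s_i}\prod_j\binom{\ell}{t_j}\,H(\mbf{s},\mbf{t})\,(1+\text{err}).
\]
This is the form ``probability of the degree sequence in the binomial bigraph model, times $H$''. I would check that our hypotheses are exactly theirs: $\mu<\mu_0$, the condition $(\ell+n)^{5-5\alpha}=o(\ell n m^{3-5\alpha})$, and the polylogarithmic lower bounds on $m$. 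I would also check that their error term matches
\[
\frac{\log^2\ell}{\sqrt\ell}+\frac{\log^2 n}{\sqrt n}+\min\{s,t\}^{5\alpha-5}\frac{m^2}{\ell n}.
\]

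\emph{Step 2: match the normalization.} Dividing the display in Step 1 by $\binom{\ell n}{m}$ gives $\mb{P}_{G(\ell,n,m)}$ as the unnormalized product of binomial coefficients times $\binom{\ell n}{m}^{-2}H(1+\text{err})$. Equivalently, this is $\mb{P}_{B(\ell,n,m)}\cdot H\cdot(1+\text{err})$ once we verify the identity
\[
Z=\binom{\ell n}{m}^{2}\,(1+O(\text{err})).
\]
In \cite{LW20} this appears as the statement that the conditioned binomial law is the correct reference measure, i.e.\ that the total-sum probabilities $\mb{P}[\sum s_i=m]$ and $\mb{P}[\sum t_j=m]$ are built into their formula. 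I would verify it by a local limit computation: under $\mr{Bin}(n,\mu)^{\ell}\otimes\mr{Bin}(\ell,\mu)^n$, the product-measure weight of the event $\{\sum_i s_i=\sum_j t_j=m\}$ equals $\mb{P}[\sum_i s_i=m]\,\mb{P}[\sum_j t_j=m]$. Each factor equals $\binom{\ell n}{m}\mu^m(1-\mu)^{\ell n-m}$ exactly, since $\sum_i s_i\sim\mr{Bin}(\ell n,\mu)$ and similarly for $\sum_j t_j$. Rescaling by the powers of $\mu$ then gives $Z=\binom{\ell n}{m}^2$ exactly.

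\emph{Main obstacle.} With the exact identity for $Z$, the only substantive content is the asymptotic formula itself. I would therefore take it verbatim from \cite[Theorem~1.1]{LW20}, restricted to the bipartite case. The one point requiring care is confirming that their $H$, written in terms of $\sigma^2(\mbf{s})/(s(1-\mu))$ and $\sigma^2(\mbf{t})/(t(1-\mu))$, agrees with the version displayed here.
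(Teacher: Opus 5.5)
Your proposal is correct and takes the same route as the paper, which simply imports this statement from \cite{LW20} without proof. Your exact identity $Z=\binom{\ell n}{m}^2$ (from the independence of $\sum_i s_i$ and $\sum_j t_j$, each distributed as $\mathrm{Bin}(\ell n,\mu)$) correctly converts the Liebenau--Wormald enumeration formula into the conditioned-binomial form stated here, in line with the paper's remark that the success probability is immaterial once the totals are conditioned on.
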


\subsection{Binomial point probabilities and Gaussian approximation}
\label{app:binomial-estimates}

We have the following two-tiered estimate for the PDF of a binomial random variable.
\begin{lemma}
\label{lem:binomial_estimate}
    Fix a real parameter $\theta\in(1/2,1)$ and a real constant $T>1$. For all sufficiently large $n\in\mb{Z}$ and $p\in(T^{-1}n^{-\theta},Tn^{-\theta})$, the following is true.

    Select $n'\in\mb{Z}$ and $p'\in(0,1)$ such that $|n'-n|< T\frac{n}{\sqrt{pn}}$ and $|p'-p|< T\frac{p}{\sqrt{pn}}$. There exist real numbers $\mc{N}_1$ and $\mc{N}_2$ such that uniformly for $x\in[-\sqrt{pn}\log n,\sqrt{pn}\log n]\cap(\mb{Z}-pn)$, we have
    \[\mb{P}[\mr{Bin}(n',p')=pn+x] = (1+O(p\log^2 n))\mc{N}_1\left(\frac{p'(n'-pn)}{(1-p')pn}\right)^x\frac{(pn)!(pn)^x}{(pn+x)!}\]
    and
    \[\mb{P}[\mr{Bin}(n',p')=pn+x] = \left(1+O(\log^3 n/\sqrt{pn})\right)\mc{N}_2e^{-(x-\alpha\sqrt{pn})^2/2pn}\]
    where $\alpha = \sqrt{pn}\log\left(\frac{p'(n'-pn)}{(1-p')pn}\right)$. Factorials with noninteger arguments are interpreted through the Gamma function.
\end{lemma}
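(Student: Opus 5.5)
We will compute $\log\mb{P}[\mr{Bin}(n',p')=pn+x]$ directly from the closed form $\binom{n'}{pn+x}p'^{pn+x}(1-p')^{n'-pn-x}$ and separate out the part depending on $x$. Put $j:=pn+x$. Then
\[
\frac{\mb P[\mr{Bin}(n',p')=j]}{\mb P[\mr{Bin}(n',p')=pn]}
=\left(\frac{p'}{1-p'}\right)^x\frac{(n'-pn)!}{(n'-pn-x)!}\cdot\frac{(pn)!}{(pn+x)!}.
\]
All factorials are read through the Gamma function, since $pn$ need not be an integer. We take $\mc N_1:=\mb P[\mr{Bin}(n',p')=pn]$, again interpreted via Gamma functions. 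The first estimate then reduces to showing
\[
\frac{\Gamma(n'-pn+1)}{\Gamma(n'-pn-x+1)}=(n'-pn)^x(1+O(p\log^2 n))
\]
uniformly for $|x|\le\sqrt{pn}\log n$. We do this with Stirling's formula or the product $\prod_{i<x}(1-i/(n'-pn))$. The log of the correction is $-\sum_{i<x} i/(n'-pn)+O(x^3/n^2)=O(x^2/n)$. Since $n'=n(1+o(1))$, we have $n'-pn\asymp n$, so $x^2/n\le p\log^2 n$, which is exactly the stated error. Regrouping gives $\left(\frac{p'(n'-pn)}{(1-p')pn}\right)^x\frac{(pn)!(pn)^x}{(pn+x)!}$.

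For the second estimate we start from the first and expand $\log\frac{\Gamma(pn+1)(pn)^x}{\Gamma(pn+x+1)}$ by Stirling with $\lambda:=pn$. This gives
\[
-(\lambda+x+\tfrac12)\log(1+x/\lambda)+x+O(1/\lambda)=-\frac{x^2}{2\lambda}+O\!\left(\frac{|x|^3}{\lambda^2}+\frac{|x|}{\lambda}\right).
\]
On our range the error is $O(\log^3 n/\sqrt{pn})$. Writing the base as $e^{\alpha/\sqrt{pn}}$ with $\alpha$ as in the statement turns the factor into $e^{\alpha x/\sqrt{pn}}$. Completing the square,
\[
-\frac{x^2}{2pn}+\frac{\alpha x}{\sqrt{pn}}=-\frac{(x-\alpha\sqrt{pn})^2}{2pn}+\frac{\alpha^2}{2},
\]
so we set $\mc N_2:=\mc N_1e^{\alpha^2/2}$. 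The accumulated multiplicative error is $(1+O(p\log^2n))(1+O(\log^3n/\sqrt{pn}))$. Since $\theta>1/2$ gives $p\ll 1/\sqrt{pn}$, the first factor is absorbed into the second.

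The main obstacle is keeping the estimates uniform in $(n',p')$. We use $|n'-n|<Tn/\sqrt{pn}$ and $|p'-p|<Tp/\sqrt{pn}$ only to ensure $n'-pn\ge n/2$ and $p'\in(0,1/2)$, so that the falling-factorial bound holds with constants depending only on $T$ and $\theta$. The value of $\alpha$ itself is never bounded, which is exactly why it is absorbed into $\mc N_1$ and $\mc N_2$.

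A smaller point is the non-integer $pn$. The Gamma-function identities $\Gamma(z+1)=z\Gamma(z)$ and Stirling's expansion are used on the shifted lattice $x\in\mb Z-pn$. On that lattice, $pn+x$ and $n'-pn-x$ are integers, as needed for the binomial probability.
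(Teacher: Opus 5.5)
Your proposal is correct and is essentially the paper's proof. It uses the same factorization with $\mc N_1$ the Gamma-interpolated point probability at $pn$, the same $(1+O(p\log^2 n))$ replacement of $\Gamma(n'-pn+1)/\Gamma(n'-pn-x+1)$ by $(n'-pn)^x$, and the same Stirling expansion of $(pn+x)!/((pn)!(pn)^x)$; your completion of the square giving $\mc N_2=\mc N_1e^{\alpha^2/2}$ makes explicit what the paper leaves implicit. One small caution: since $x\in\mb Z-pn$ is generally not an integer, the product $\prod_{i<x}(1-i/(n'-pn))$ is not available, so the Stirling (Gamma-ratio) route you also mention is the one to use.
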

\begin{proof}
    Uniformly for $x\in[-\sqrt{pn}\log n,\sqrt{pn}\log n]\cap(\mb{Z}-pn)$, we have
    \begin{align*}
        \mb{P}[\mr{Bin}(n',p') = pn + x] &= \frac{n'!}{(pn+x)!(n'-pn-x)!}p'^{pn+x}(1-p')^{n'-pn-x} \\
        &= \frac{n'!}{(pn)!(n'-pn)!}p'^{pn}(1-p')^{n'-pn}\cdot\frac{(n'-pn)!}{(n'-pn-x)!}\frac{(pn)!}{(pn+x)!}\left(\frac{p'}{1-p'}\right)^x \\
        &= (1+O(p\log^2 n))\frac{n'!}{(pn)!(n'-pn)!}p'^{pn}(1-p')^{n'-pn}\\
        &\qquad\cdot(n'-pn)^x\frac{(pn)!}{(pn+x)!}\left(\frac{p'}{1-p'}\right)^x \\
        &= (1+O(p\log^2 n))\mc{N}_1\left(\frac{p'(n'-pn)}{(1-p')pn}\right)^x\frac{(pn)!(pn)^x}{(pn+x)!},
    \end{align*}
    for some normalization $\mc{N}_1$ independent of $x$. This proves the first equality.

    For the second equality, we use the fact that
    \[n! = \sqrt{2\pi n}(n/e)^n(1+O(1/n))\]
    as $n\to\infty$. Indeed, uniformly for $x\in[-\sqrt{pn}\log n,\sqrt{pn}\log n]\cap(\mb{Z}-pn)$, we have
    \begin{align*}
        \frac{(pn+x)!}{(pn)!(pn)^x} &= (1 + O(1/pn))e^{-x}\left(1 + \frac{x}{pn}\right)^{pn+x+\frac{1}{2}} \\
        &= (1 + O(1/pn))e^{-x}\exp\left(\left(pn+x+\frac{1}{2}\right)\left[\frac{x}{pn}-\frac{x^2}{2(pn)^2}+ O\left(\frac{\log^3 n}{(pn)^{3/2}}\right)\right]\right) \\
        &= \left(1 + O\left(\frac{\log^3 n}{\sqrt{pn}}\right)\right)\exp\left(\frac{x^2}{2pn}\right),
    \end{align*} 
    which combined with the previous equality implies the second equality.
\end{proof}

\begin{theorem}
\label{thm:general_poly_calc}
    Fix a real parameter $\theta\in(1/2,1)$, a positive integer $d\ge 1$, an integer matrix $M\in\mb{Z}^{r\times d}$ with nonzero, pairwise orthogonal rows, an $r$-dimensional vector inequality relation $\ge'\in\{\ge,>\}^r$, a monomial $\rho\in\mb{R}[x_1,\ldots,x_d]$, and a real constant $T>1$. For all sufficiently large $n\in\mb{Z}$ and $p\in(T^{-1}n^{-\theta},Tn^{-\theta})$, the following is true.

    Select $\eta\in((T^{-1}n,Tn)\cap\mb{Z})^d$ such that $\norm{M\eta}_\infty < T\frac{n}{\sqrt{pn}}$, and select $\alpha\in(-T,T)^d$. Then,
    \[\mb{E}[\rho(A-p\eta)\mbf{1}_{MA\ge'0}] = \mb{E}[\rho(X)\mbf{1}_{MX\ge -pM\eta}] + O(\sqrt{pn}^{\deg\rho-1}\log^{3+\deg\rho+d}n)\]
    where $A_t\sim\mr{Bin}\left(\eta_t,\frac{\frac{p}{1-p}e^{\alpha_t/\sqrt{p\eta_t}}}{1+\frac{p}{1-p}e^{\alpha_t/\sqrt{p\eta_t}}}\right)$ are all independently sampled, and $X_t\sim\mc{N}(\sqrt{p\eta_t}\alpha_t,p\eta_t)$ are all independently sampled.
\end{theorem}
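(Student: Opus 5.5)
We will prove \Cref{thm:general_poly_calc} by a coordinatewise local central limit comparison, using the second estimate of \Cref{lem:binomial_estimate}, followed by a truncation argument and a boundary-layer estimate for the discontinuous indicator. Write $\sigma_t:=\sqrt{p\eta_t}$, which is of order $\sqrt{pn}$ uniformly in $t$, and let $\lambda_t$ be the displayed success probability. The Taylor expansion of \Cref{def:logit-tilt} gives $\lambda_t=p+\alpha_t p(1-p)/\sigma_t+O(p/pn)$. Hence the mean of $A_t-p\eta_t$ is $\alpha_t\sigma_t+O(1)$, and its variance is $p\eta_t(1+O(p+1/\sqrt{pn}))$. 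This matches the parameters of $X_t$ up to the stated errors.

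\emph{Step 1 (truncation).} Let $Q:=\prod_t[-\sigma_t\log n,\sigma_t\log n]$. By Chernoff's inequality for $A$ and the Gaussian tail bound for $X$, both $A-p\eta$ and $X$ lie outside $Q$ with probability $n^{-\omega(1)}$. Since $|\rho|\le n^{O(1)}$ on the range of $A-p\eta$, and $\rho(X)$ has polynomially bounded moments, the contributions from outside $Q$ are negligible.

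\emph{Step 2 (pointwise comparison on $Q$).} Apply \Cref{lem:binomial_estimate} with $n'=\eta_t$ and $p'=\lambda_t$. Because $\eta_t\in(T^{-1}n,Tn)$, the lemma is applied at base size $\eta_t$ (with $T$ enlarged to $T^{1+\theta}$), where the deviation hypotheses hold trivially. This gives
\[
\mb{P}[A_t=p\eta_t+x]=(1+O(\log^3n/\sqrt{pn}))\,\mc{N}_2\,e^{-(x-\alpha'_t\sigma_t)^2/2p\eta_t},
\]
with $\alpha'_t=\alpha_t+O(1/\sqrt{pn})$. Comparing Riemann sums with integrals, the Gaussian density is smooth on unit scale with relative error $O(\log n/\sqrt{pn})$. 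Hence the law of $A-p\eta$ restricted to $Q$ agrees with the discretization of $\mc{N}(\sigma\alpha,\operatorname{diag}(p\eta))$ up to a factor $1+O(d\log^3n/\sqrt{pn})$ on each lattice point. The normalizations $\mc{N}_2$ are then forced to match the continuous normalization to the same accuracy by summing over $Q$. Replacing $\alpha'$ by $\alpha$ shifts the means by $O(1)$, which costs the same relative error.

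\emph{Step 3 (the indicator, the main obstacle).} For a smooth bounded integrand, summing a lattice density against it and integrating the continuous density against it differ by $O(\sqrt{pn}^{\deg\rho}\cdot\log^{\deg\rho}n/\sqrt{pn})$, using $|\rho|\le(\sqrt{pn}\log n)^{\deg\rho}$ on $Q$. The indicator $\mbf{1}_{MA\ge'0}$ is discontinuous, however. Rewriting it as $\mbf 1_{M(A-p\eta)\ge' -pM\eta}$, its jump set is a union of $r$ hyperplanes. Two discrepancies concentrate near these hyperplanes: the lattice-versus-continuum mismatch, and the strict-versus-nonstrict choice in $\ge'$. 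Both are confined to slabs of width $O(1)$ around the hyperplanes, since $M$ is an integer matrix and the lattice points therefore lie on parallel integer-spaced hyperplanes.

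To handle this, I would partition $Q$ into unit cells and compare each lattice point with its cell. Cells not meeting any hyperplane contribute only the smooth error from the previous paragraph. For the cells meeting a hyperplane, I would bound the total mass of the width-$O(1)$ slab $\{|\langle M_j,y\rangle+c|\le C\}$. Under either law this mass is $O(\log^{d}n/\sqrt{pn})$, because the projection $\langle M_j,A\rangle$ has standard deviation $\asymp\sqrt{pn}$ and a bounded density, by the local estimate of Step 2. Multiplying by $\sup_Q|\rho|\le(\sqrt{pn}\log n)^{\deg\rho}$ gives $O(\sqrt{pn}^{\deg\rho-1}\log^{\deg\rho+d}n)$. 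Adding the $\log^3 n$ factor from Step 2 yields the stated bound $O(\sqrt{pn}^{\deg\rho-1}\log^{3+\deg\rho+d}n)$. The orthogonality of the rows of $M$ is what keeps the slab intersections controlled: the projections are nondegenerate, so each hyperplane's slab can be bounded separately and the bounds combined by a union bound over the $r$ rows.
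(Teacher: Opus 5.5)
Your proposal follows essentially the paper's proof: Chernoff truncation to a window of width $\sqrt{p\eta_t}\log n$, the Gaussian formula of \Cref{lem:binomial_estimate} applied at base size $\eta_t$, and replacement of lattice sums by integrals over unit cells. It then controls an $O(1)$-width boundary layer around the hyperplanes (the paper bounds the volume $O(\sqrt{pn}^{d-1}\log^{d-1}n)$ of a neighborhood of the boundary against $\sup|\rho f|$ rather than slab masses) and finally removes the truncation on the Gaussian side. Two harmless remarks: with the logit parametrization the lemma's Gaussian center is exactly $\alpha_t\sqrt{p\eta_t}$, so no corrected $\alpha'_t$ is needed, and your union bound over the rows of $M$ uses only that each row is nonzero, not their orthogonality.
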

\begin{proof}
    Call a convex region $\mc{C}$ in $\mb{R}^d$ (each boundary can be either open or closed) \textit{good} if it is bounded by pairwise orthogonal planes with integer normal vectors whose entries are bounded by $\norm{M}_\infty$, and distance to the origin bounded by $\sqrt{pn}\log n$. For such a region, define
    \[\mc{P}_1(\mc{C}) := \bigcup_{\substack{x\in \prod_{t\in[d]}(\mb{Z}-p\eta_t)\cap[-\sqrt{p\eta_t}\log\eta_t,\sqrt{p\eta_t}\log\eta_t] \\ x\in\mc{C}}}\prod_{t\in[d]}[x_t,x_t+1),\]
    and
    \[\mc{P}_2(\mc{C}) := \left\{x\in\prod_{t\in[d]}[-\sqrt{p\eta_t}\log\eta_t,\sqrt{p\eta_t}\log\eta_t] : x\in\mc{C}\right\},\]
    and
    \[\mc{P}_3(\mc{C}) := \left\{x\in\mb{R}^d : x\in\mc{C}\right\}.\]

    Note that $\mb{P}[|A_t-p\eta_t| > \sqrt{p\eta_t}\log\eta_t] = O(n^{-\omega(1)})$ by Chernoff's inequality, so \Cref{lem:binomial_estimate} gives
    \[\mb{E}[\rho(A-p\eta)\mbf{1}_{MA\ge'0}] = \frac{\displaystyle\sum_{\substack{x\in\prod_{t\in[d]}(\mb{Z}-p\eta_t)\cap[-\sqrt{p\eta_t}\log\eta_t,\sqrt{p\eta_t}\log\eta_t] \\ Mx\ge' -pM\eta}}\rho(x)f(x)}{\displaystyle\sum_{x\in\prod_{t\in[d]}(\mb{Z}-p\eta_t)\cap[-\sqrt{p\eta_t}\log\eta_t,\sqrt{p\eta_t}\log\eta_t]}f(x)} + O(\sqrt{pn}^{\deg\rho-1}\log^{3+\deg\rho}n),\]
    where
    \[f(x):=\exp\left(-\sum_{t\in[d]}\frac{(x_t-\alpha_t\sqrt{p\eta_t})^2}{2p\eta_t}\right).\]
    The first step is to turn the summations into integrals over $\mc{P}_1$.
    
    Note that for $x\in\prod_{t\in[d]}[-\sqrt{p\eta_t}\log\eta_t,\sqrt{p\eta_t}\log\eta_t]$, we have
    \[f(x+\ve) = f(x) + O(\log n/\sqrt{pn})\]
    for any $\ve\in[0,1)^d$. In particular, this implies that
    \[f(x) = \int_{[0,1)^d}d\ve\, f(x+\ve) + O(\log n/\sqrt{pn})\]
    and
    \[\rho(x)f(x) = \int_{[0,1)^d}d\ve\,\rho(x+\ve)f(x+\ve) + O(\sqrt{pn}^{\deg\rho-1}\log^{1+\deg\rho} n)\]
    for all $x\in\prod_{t\in[d]}[-\sqrt{p\eta_t}\log\eta_t,\sqrt{p\eta_t}\log\eta_t]$. Thus, letting $\mc{C}_M = \{x\in\mb{R}^d : Mx\ge' -pM\eta\}$, we have
    \begin{align*}\mb{E}[\rho(A-p\eta)\mbf{1}_{MA\ge'0}] &= \frac{\int_{\mc{P}_1(\mc{C}_M)}dx\,\rho(x)f(x) + O(\sqrt{pn}^{\deg\rho + d-1}\log^{1+\deg\rho + d} n)}{\int_{\mc{P}_1(\mb{R}^d)}dx\,f(x) + O(\sqrt{pn}^{d-1}\log^{1+d}n)}\\
    &\quad+ O(\sqrt{pn}^{\deg\rho-1}\log^{3+\deg\rho}n).\end{align*}
    There is a constant $C$ such that for any good $\mc{C}$, $\mc{P}_1\setminus\mc{P}_2$ and $\mc{P}_2\setminus\mc{P}_1$ are both contained in $B(\partial\mc{P}_2,C)$, and $\mr{vol}\,B(\partial\mc{P}_2,C)=O(\sqrt{pn}^{d-1}\log^{d-1}n)$. Therefore,
    \begin{align*}\mb{E}[\rho(A-p\eta)\mbf{1}_{MA\ge'0}] &= \frac{\int_{\mc{P}_2(\mc{C}_M)}dx\,\rho(x)f(x) + O(\sqrt{pn}^{\deg\rho + d-1}\log^{1+\deg\rho + d} n)}{\int_{\mc{P}_2(\mb{R}^d)}dx\,f(x) + O(\sqrt{pn}^{d-1}\log^{1+d}n)}\\
    &\quad+ O(\sqrt{pn}^{\deg\rho-1}\log^{3+\deg\rho}n).\end{align*}
    Note that $\mc{P}_2\subset\mc{P}_3$ and $\mc{P}_3\setminus\mc{P}_2\subset\{x\in\mb{R}^d : \norm{x}_\infty>C\sqrt{pn}\log n\}$ for some constant $C$, and
    \begin{align*}
        &\int_{\norm{x}_\infty>C\sqrt{pn}\log n}dx\,f(x) = O(n^{-\omega(1)}) \\
        &\int_{\norm{x}_\infty>C\sqrt{pn}\log n}dx\,\norm{\rho(x)}_\infty f(x) = O(n^{-\omega(1)}).
    \end{align*}
    Therefore,
    \begin{align*}
        \mb{E}[\rho(A-p\eta)\mbf{1}_{MA\ge'0}] &= \frac{\int_{\mc{P}_3(\mc{C}_M)}dx\,\rho(x)f(x) + O(\sqrt{pn}^{\deg\rho + d-1}\log^{1+\deg\rho + d} n)}{\int_{\mc{P}_3(\mb{R}^d)}dx\,f(x) + O(\sqrt{pn}^{d-1}\log^{1+d}n)}\\
        &\quad+ O(\sqrt{pn}^{\deg\rho-1}\log^{3+\deg\rho}n) \\
        &= \frac{\int_{x\in\mc{C}_M}dx\,\rho(x)f(x)}{\int_{x\in\mb{R}^d}dx\, f(x)} + O(\sqrt{pn}^{\deg\rho-1}\log^{3+\deg\rho+d}n),
    \end{align*}
    as desired.
\end{proof}

The next result is the first-order expansion, in a small tilt of the success probabilities, of the conditional expectations appearing in \Cref{thm:general_poly_calc}. It is the tool with which the idealized process of \Cref{sec:idealized} is perturbed (\Cref{app:idealized}). Unlike \Cref{thm:general_poly_calc}, it compares two binomial models with each other rather than with a Gaussian one; the two models need only share the centering $p\wt{\eta}$ and the conditioning event (their trial counts may differ from $\wt\eta$ and from each other), and no structure of $M$ is used.

\begin{theorem}[Tilted expansion]
\label{thm:tilted_expansion}
    Fix a real parameter $\theta\in(1/2,1)$, a positive integer $d\ge 1$, a matrix $M\in\mb{R}^{r\times d}$, an $r$-dimensional vector inequality relation $\ge'\in\{\ge,>\}^r$, a monomial $\rho\in\mb{R}[x_1,\ldots,x_d]$, and a real constant $T>1$. For all sufficiently large $n\in\mb{Z}$ and $p\in(T^{-1}n^{-\theta},Tn^{-\theta})$, the following is true.

    Select a reference size vector $\wt{\eta}\in((T^{-1}n,Tn)\cap\mb{Z})^d$ and trial counts $\wt{\eta}',\eta\in\mb{Z}_{\ge1}^d$ such that $\norm{\wt{\eta}'-\wt{\eta}}_\infty,\norm{\eta-\wt{\eta}}_\infty < T\frac{n}{\sqrt{pn}}$, and select $\wt{q},q\in(0,1)^d$ such that $\norm{\wt{q}-p}_\infty, \norm{q-p}_\infty< T\frac{p}{\sqrt{pn}}$ and $\norm{\beta}_\infty < \frac{T}{\sqrt{pn}\log^2n}$, where $\beta\in\mb{R}^d$ is defined coordinatewise by
    \[\beta_t:=\log\left(\frac{\frac{q_t}{1-q_t}(\eta_t-p\wt{\eta}_t)}{\frac{\wt{q}_t}{1-\wt{q}_t}(\wt{\eta}'_t-p\wt{\eta}_t)}\right).\]
    Let $A_t\sim\mr{Bin}(\eta_t,q_t)$ and $\wt{A}_t\sim\mr{Bin}(\wt{\eta}'_t,\wt{q}_t)$ all be sampled independently. Then
    \begin{align*}
        \mb{E}[\rho(A-p\wt{\eta})\mbf{1}_{MA\ge'0}] &= \mb{E}\left[\rho(\wt{A}-p\wt{\eta})\mbf{1}_{M\wt{A}\ge'0}\right] \\
        &\quad+ \sum_{t\in[d]}\beta_t\,\mb{E}\left[(\wt{A}_t-p\wt{\eta}_t)\rho(\wt{A}-p\wt{\eta})\mbf{1}_{M\wt{A}\ge'0}\right] \\
        &\quad- \sum_{t\in[d]}\beta_t\,\mb{E}[\wt{A}_t-p\wt{\eta}_t]\,\mb{E}\left[\rho(\wt{A}-p\wt{\eta})\mbf{1}_{M\wt{A}\ge' 0}\right] \\
        &\quad+ O\left(\max\left\{p,\ \norm{\beta}_\infty^2\,pn\log^2 n\right\}\sqrt{pn}^{\deg\rho}\log^{2+\deg\rho}n\right).
    \end{align*}
\end{theorem}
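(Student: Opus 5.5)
The plan is to compare the two binomial laws pointwise through an exponential change of measure, and then to expand that change of measure to first order in $\beta$.

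\textbf{Step 1: likelihood ratio.} Work coordinatewise on the window $|x_t|\le\sqrt{p\wt\eta_t}\log n$ with $x_t\in\mb Z-p\wt\eta_t$. Outside this window both laws carry mass $n^{-\omega(1)}$ by Chernoff, since both centers lie within $O(\sqrt{pn})$ of $p\wt\eta_t$ by the hypotheses on $\wt\eta',\eta,q,\wt q$. The polynomial weight $\rho$ is at most $n^{O(1)}$, so the contribution from outside the window is negligible. Inside the window, apply the first formula of \Cref{lem:binomial_estimate} to both $\mr{Bin}(\eta_t,q_t)$ and $\mr{Bin}(\wt\eta'_t,\wt q_t)$ with the common centering $p\wt\eta_t$. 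This requires a version of the lemma centered at $p\wt\eta_t$ rather than at $p\eta_t$; the proof of the lemma only uses $|n'-n|\lesssim n/\sqrt{pn}$, so it applies verbatim. The factor $\frac{(pn)!(pn)^x}{(pn+x)!}$ is common to both laws and cancels, giving
\[
\frac{\mb P[A_t=p\wt\eta_t+x_t]}{\mb P[\wt A_t=p\wt\eta_t+x_t]}=(1+O(p\log^2n))\,c_t\,e^{\beta_tx_t},
\]
with $c_t$ independent of $x_t$. Multiplying over $t$, the expectation under $A$ of any window-supported function $g$ equals
\[
(1+O(p\log^2n))\,\frac{\mb E[g(\wt A-p\wt\eta)\,e^{\langle\beta,\wt A-p\wt\eta\rangle}]}{\mb E[e^{\langle\beta,\wt A-p\wt\eta\rangle}]}.
\]
Here the constant $\prod_t c_t$ is fixed by normalization, up to the $n^{-\omega(1)}$ tails.

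\textbf{Step 2: expand the tilt.} On the window, $|\langle\beta,x\rangle|\le d\|\beta\|_\infty\sqrt{Tpn}\log n=O(1/\log n)$ by the hypothesis on $\beta$. Hence
\[
e^{\langle\beta,x\rangle}=1+\langle\beta,x\rangle+O\bigl(\|\beta\|_\infty^2\,pn\log^2n\bigr).
\]
Apply this expansion in both the numerator, with $g=\rho\,\mbf 1_{M\cdot\ge'0}$, and the denominator. Expanding $1/(1+\sum_t\beta_t\mb E[\wt A_t-p\wt\eta_t]+\dots)$ then produces exactly the three displayed main terms. The product of the two first-order pieces is a cross term of size $\|\beta\|^2pn\log^2n$ times $\sqrt{pn}^{\deg\rho}\log^{\deg\rho}n$, so it is absorbed into the error.

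\textbf{Step 3: error bookkeeping.} The main obstacle is multiplicative errors against the scale of $\rho$. Every error above is relative, either $O(p\log^2n)$ or $O(\|\beta\|^2pn\log^2n)$. It multiplies quantities bounded by $\mb E|\rho(\wt A-p\wt\eta)|\cdot\max(1,\sqrt{pn}\log n)$, which is $O(\sqrt{pn}^{\deg\rho}\log^{\deg\rho}n)$ on the window, since every coordinate there is at most $\sqrt{pn}\log n$. This yields the stated error $\max\{p,\|\beta\|^2pn\log^2n\}\sqrt{pn}^{\deg\rho}\log^{2+\deg\rho}n$.

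One care point is that $\mb E[\wt A_t-p\wt\eta_t]$ can be of order $\sqrt{pn}$, not $0$. This affects neither the expansion nor the bounds, since the terms are kept exactly as they appear. Finally, the truncated expectations are replaced by full ones at cost $n^{-\omega(1)}$.
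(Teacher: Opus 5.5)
Your proposal is correct and is essentially the paper's argument. The paper likewise applies the first formula of \Cref{lem:binomial_estimate} to both binomials with the common center $p\wt\eta_t$; the lemma as stated already permits this by taking its $n$ to be $\wt\eta_t$ and its $n'$ to be $\eta_t$ or $\wt\eta'_t$, so no modified version is needed. It then uses that the ratio of the two geometric factors is exactly $e^{\beta_t x_t}$ and expands $e^{\beta\cdot x}$ to first order in numerator and denominator. The only difference is cosmetic: the paper passes through a common unnormalized weight $w$ and the average $\langle\cdot\rangle$, converting each $\wt A$-expectation with an additive error $O(p\log^2 n\cdot\sup_\Lambda|h|)$, which is also how your ``relative'' factor $1+O(p\log^2 n)$ must be read when the monomial $\rho$ changes sign.
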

\begin{proof}
        Write $\wt{c}:=p\wt{\eta}$ for the common center, $D:=\deg\rho$, and
    \[\Lambda:=\prod_{t\in[d]}\left(\mb{Z}-\wt{c}_t\right)\cap\left[-\sqrt{p\wt{\eta}_t}\log\wt{\eta}_t,\ \sqrt{p\wt{\eta}_t}\log\wt{\eta}_t\right]\]
    for the lattice window around the center, and $\mc{R}:=\{x\in\Lambda:Mx\ge'-M\wt{c}\}$. Since $\wt{\eta}_t\in(T^{-1}n,Tn)$ we have $\sqrt{p\wt\eta_t}\log\wt\eta_t\asymp\sqrt{pn}\log n$, so on $\Lambda$ every coordinate satisfies $|x_t|=O(\sqrt{pn}\log n)$ and $|\rho(x)|=O(\sqrt{pn}^{D}\log^Dn)$. All implied constants below depend only on $\theta,d,M,\rho,T$.

    \emph{Step 1: truncation.} The means satisfy $|\mb{E}\wt{A}_t-\wt{c}_t|\le|\wt\eta'_t-\wt\eta_t|\wt q_t+\wt\eta_t|\wt q_t-p|\le2T\sqrt{pn}+T^2\sqrt{pn}$ and likewise $|\mb{E}A_t-\wt{c}_t|\le2T\sqrt{pn}+T^2\sqrt{pn}$, and the variances are $O(pn)$, so Chernoff's inequality gives $\mb{P}[A-\wt{c}\notin\Lambda]=O(n^{-\omega(1)})$ and $\mb{P}[\wt{A}-\wt{c}\notin\Lambda]=O(n^{-\omega(1)})$. For large $n$, the trial counts $\eta_t$ and $\wt\eta'_t$ are at most $2Tn$, so $|\rho(A-\wt c)|$ and $|\rho(\wt A-\wt c)|$ are deterministically $O(n^D)$, while multiplying by one centered coordinate gives $O(n^{D+1})$; the coefficients of $\rho$ are absorbed into the implied constants. Consequently the contributions of the complement of $\Lambda$ to all expectations in the statement are $O(n^{-\omega(1)})$, and the events $\{MA\ge'0\}$ and $\{M\wt{A}\ge'0\}$ coincide with $\{A-\wt{c}\in\mc{R}\}$ and $\{\wt{A}-\wt{c}\in\mc{R}\}$ on $\Lambda$, because $A=\wt{c}+x$ satisfies $MA\ge'0$ if and only if $Mx\ge'-M\wt{c}$, and likewise for $\wt{A}$.

    \emph{Step 2: the two point probabilities.} Apply the first formula of \Cref{lem:binomial_estimate}, for each $t$, with its $n$ equal to $\wt{\eta}_t$ (so that its center is $p\wt\eta_t=\wt c_t$ and its window is the $t$th factor of $\Lambda$), its constant $T$ replaced by $T^{3/2+\theta}$, and $(n',p')=(\wt\eta'_t,\wt q_t)$, respectively $(n',p')=(\eta_t,q_t)$; the hypotheses hold because $p\in(T^{-1-\theta}\wt\eta_t^{-\theta},T^{1+\theta}\wt\eta_t^{-\theta})$, $|q_t-p|,|\wt q_t-p|<Tp/\sqrt{pn}\le T^{3/2}p/\sqrt{p\wt\eta_t}$, and $|\eta_t-\wt\eta_t|,|\wt\eta'_t-\wt\eta_t|<Tn/\sqrt{pn}\le T^{3/2}\wt\eta_t/\sqrt{p\wt\eta_t}$. Writing
    \[\wt\kappa_t:=\frac{\wt q_t(\wt\eta'_t-\wt c_t)}{(1-\wt q_t)\wt c_t},\qquad\kappa_t:=\frac{q_t(\eta_t-\wt c_t)}{(1-q_t)\wt c_t},\qquad g_t(x):=\frac{\wt c_t!\,\wt c_t^{\,x}}{(\wt c_t+x)!},\qquad w(x):=\prod_{t\in[d]}\wt\kappa_t^{\,x_t}g_t(x_t),\]
    we obtain, uniformly for $x\in\Lambda$,
    \[\mb{P}\left[\wt{A}=\wt{c}+x\right]=\left(1+O(p\log^2n)\right)\wt{\mc{N}}\,w(x),\qquad\mb{P}\left[A=\wt{c}+x\right]=\left(1+O(p\log^2n)\right)\mc{N}\,e^{\beta\cdot x}\,w(x),\]
    for normalizations $\wt{\mc N},\mc N$ not depending on $x$, where we used that $\kappa_t/\wt\kappa_t=e^{\beta_t}$ by the definition of $\beta$, and that a product of $d$ factors $1+O(p\log^2n)$ is again $1+O(p\log^2n)$.

    \emph{Step 3: reduction to averages against $w$.} For a function $h$ on $\Lambda$ write
    \[\langle h\rangle:=\frac{\sum_{x\in\Lambda}h(x)w(x)}{\sum_{x\in\Lambda}w(x)}.\]
    By Step 1, $\wt{\mc N}\sum_{x\in\Lambda}(1+O(p\log^2n))w(x)=\mb P[\wt A-\wt c\in\Lambda]=1-O(n^{-\omega(1)})$, so for every function $h$ on $\Lambda$ with $|h|\le H$,
    \begin{equation}\label{eq:tilt-average}
    \begin{aligned}
    \mb{E}\left[h(\wt{A}-\wt{c})\mathbf{1}_{\wt A-\wt c\in\Lambda}\right]
    &=\mb P[\wt A-\wt c\in\Lambda]\,
    \frac{\sum_{x\in\Lambda}h(x)\left(1+O(p\log^2n)\right)w(x)}{\sum_{x\in\Lambda}\left(1+O(p\log^2n)\right)w(x)}\\
    &=\langle h\rangle+O\left(p\log^2n\cdot H\right).
    \end{aligned}
    \end{equation}
    The window-mass factor is $1-O(n^{-\omega(1)})$ by Step 1; its contribution $O(Hn^{-\omega(1)})$ is absorbed in the displayed error. The same holds for $A$ with $w$ replaced by $e^{\beta\cdot x}w$ in the definition of the average. Applying \eqref{eq:tilt-average} with $h=\rho\mathbf 1_{\mc R}$, $h=x_t\rho\mathbf 1_{\mc R}$, and $h=x_t$, and using Step 1,
    \begin{equation}\label{eq:tilt-conversion}
    \begin{aligned}
    \mb{E}\left[\rho(\wt{A}-\wt{c})\mathbf{1}_{M\wt{A}\ge'0}\right]&=\langle\rho\mathbf 1_{\mc R}\rangle+O\left(p\sqrt{pn}^{D}\log^{2+D}n\right),\\
    \mb{E}\left[(\wt{A}_t-\wt{c}_t)\rho(\wt{A}-\wt{c})\mathbf{1}_{M\wt{A}\ge'0}\right]&=\langle x_t\rho\mathbf 1_{\mc R}\rangle+O\left(p\sqrt{pn}^{D+1}\log^{3+D}n\right),\\
    \mb{E}\left[\wt{A}_t-\wt{c}_t\right]&=\langle x_t\rangle+O\left(p\sqrt{pn}\log^3n\right),
    \end{aligned}
    \end{equation}
    and
    \begin{equation}\label{eq:tilt-A-side}
    \mb{E}\left[\rho(A-\wt{c})\mathbf{1}_{MA\ge'0}\right]=\frac{\sum_{x\in\mc R}\rho(x)e^{\beta\cdot x}w(x)}{\sum_{x\in\Lambda}e^{\beta\cdot x}w(x)}+O\left(p\sqrt{pn}^{D}\log^{2+D}n\right).
    \end{equation}

    \emph{Step 4: expansion of the tilt.} On $\Lambda$ we have $|\beta\cdot x|\le d\norm\beta_\infty\max_t|x_t|=O(1/\log n)$, so $e^{\beta\cdot x}=1+\beta\cdot x+O((\beta\cdot x)^2)=1+\beta\cdot x+O(\delta)$ with $\delta:=\norm\beta_\infty^2pn\log^2n=O(1/\log^2n)$. Inserting this into \eqref{eq:tilt-A-side}, the numerator becomes $\langle\rho\mathbf 1_{\mc R}\rangle+\sum_t\beta_t\langle x_t\rho\mathbf 1_{\mc R}\rangle+O(\delta\sqrt{pn}^{D}\log^Dn)$ times $\sum_\Lambda w$, and the denominator becomes $1+u+O(\delta)$ times $\sum_\Lambda w$, where $u:=\sum_t\beta_t\langle x_t\rangle$. By \eqref{eq:tilt-conversion} and Step 1, $\langle x_t\rangle=\mb E[\wt A_t-\wt c_t]+O(p\sqrt{pn}\log^3n)=O(\sqrt{pn})$, so $|u|=O(\norm\beta_\infty\sqrt{pn})=O(1/\log^2n)$ and $u^2\le d^2\norm\beta_\infty^2pn\cdot O(1)\le O(\delta)$; hence $(1+u+O(\delta))^{-1}=1-u+O(\delta)$. Multiplying out, and using $|\langle x_t\rho\mathbf 1_{\mc R}\rangle|=O(\sqrt{pn}^{D+1}\log^{D+1}n)$ so that $\sum_t\beta_t\langle x_t\rho\mathbf 1_{\mc R}\rangle\cdot u=O(\norm\beta_\infty^2pn\log^{D+1}n\sqrt{pn}^{D})=O(\delta\sqrt{pn}^D\log^{D}n)$,
    \[\frac{\sum_{x\in\mc R}\rho(x)e^{\beta\cdot x}w(x)}{\sum_{x\in\Lambda}e^{\beta\cdot x}w(x)}=\langle\rho\mathbf 1_{\mc R}\rangle+\sum_{t\in[d]}\beta_t\left(\langle x_t\rho\mathbf 1_{\mc R}\rangle-\langle x_t\rangle\langle\rho\mathbf 1_{\mc R}\rangle\right)+O\left(\delta\sqrt{pn}^{D}\log^{D}n\right).\]

    \emph{Step 5: conclusion.} Substitute \eqref{eq:tilt-conversion} into the right-hand side. The zeroth-order term contributes an error $O(p\sqrt{pn}^D\log^{2+D}n)$. In the first-order terms, each $\beta_t=O(1/(\sqrt{pn}\log^2n))$ multiplies an error of size $O(p\sqrt{pn}^{D+1}\log^{3+D}n)$, giving $O(p\sqrt{pn}^{D}\log^{1+D}n)$. Together with \eqref{eq:tilt-A-side} and the bound
    \[\delta\sqrt{pn}^D\log^Dn\le\norm\beta_\infty^2\,pn\log^2n\,\sqrt{pn}^D\log^{2+D}n,\]
    all errors are $O(\max\{p,\norm\beta_\infty^2pn\log^2n\}\sqrt{pn}^{D}\log^{2+D}n)$, which is the statement.
\end{proof}

\subsection{Sufficient conditions for graphicality}
\label{app:tools-graphicality}

The following sufficient conditions, consequences of the Erdős--Gallai and Gale--Ryser theorems, are used in \Cref{app:local-transference} to show that the degree arrays arising there are graphical.

\begin{lemma}\label{lem:graphical-sufficient}
Let $d_1,\ldots,d_n$ be nonnegative integers with even sum $S$ and maximum degree $\Delta$. If
\[
S\ge \Delta(\Delta+1),
\]
then $(d_1,\ldots,d_n)$ is graphical.
\end{lemma}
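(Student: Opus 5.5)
We verify the Erdős--Gallai inequalities directly. Order the sequence as $d_1\ge d_2\ge\cdots\ge d_n$, so $d_1=\Delta$. The sum $S$ is even by hypothesis, so graphicality is equivalent to
\[
\sum_{i=1}^k d_i\le k(k-1)+\sum_{i=k+1}^n\min\{d_i,k\}\qquad(1\le k\le n).
\]
We split into the ranges $k\ge\Delta$ and $k<\Delta$.

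\emph{Case $k\ge \Delta$.} The left-hand side is at most $k\Delta\le k\cdot k$. For $k\ge\Delta+1$ we have $k\Delta\le k(k-1)$, which suffices. For $k=\Delta$ the left-hand side is at most $\Delta^2$ and the right-hand side is at least $\Delta(\Delta-1)+\sum_{i>\Delta}\min\{d_i,\Delta\}=\Delta(\Delta-1)+\sum_{i>\Delta}d_i$. It remains to show $\sum_{i>\Delta}d_i\ge\Delta$. This holds because $\sum_{i\le\Delta}d_i\le\Delta^2$, so $\sum_{i>\Delta}d_i\ge S-\Delta^2\ge\Delta$ by the hypothesis $S\ge\Delta(\Delta+1)$.

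\emph{Case $k<\Delta$.} The left-hand side is at most $k\Delta$. Since $d_i\le\Delta$ for all $i$, we have $\min\{d_i,k\}\ge (k/\Delta)\,d_i$, and hence
\[
\sum_{i>k}\min\{d_i,k\}\ge\frac{k}{\Delta}\sum_{i>k}d_i\ge\frac{k}{\Delta}\left(S-k\Delta\right)\ge\frac{k}{\Delta}\left(\Delta(\Delta+1)-k\Delta\right)=k(\Delta+1-k).
\]
The right-hand side is therefore at least $k(k-1)+k(\Delta+1-k)=k\Delta$, which is exactly what is needed.

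The only delicate point is the boundary case $k=\Delta$, which is where the precise constant in $S\ge\Delta(\Delta+1)$ is used. The degenerate cases $n\le\Delta$ do not arise, since $S\le n\Delta$ together with $S\ge\Delta(\Delta+1)$ forces $n\ge\Delta+1$ when $\Delta>0$. When $\Delta=0$ the sequence is trivially graphical.
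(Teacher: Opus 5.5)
Your proof is correct and follows the paper's argument: you apply Erd\H{o}s--Gallai, split at $k=\Delta$, and handle $k<\Delta$ with the same bound $\min\{d_i,k\}\ge k d_i/\Delta$. The only difference is cosmetic. For $k\ge\Delta$ the paper uses the single identity $k(k-1)+\Delta(\Delta+1)-k\Delta=k\Delta+(k-\Delta)(k-\Delta-1)$ rather than treating $k=\Delta$ and $k\ge\Delta+1$ separately. Also, the exact constant $\Delta(\Delta+1)$ is used in your case $k<\Delta$ as well, not only at $k=\Delta$.
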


\begin{proof}
Arrange the sequence in nonincreasing order and write $L_r:=\sum_{i=1}^r d_i$, so that $L_r\le r\Delta$. By the Erdős--Gallai theorem, it suffices to prove $L_r\le r(r-1)+\sum_{i=r+1}^n\min(r,d_i)$ for every $1\le r\le n$. If $r\ge\Delta$, then $\min(r,d_i)=d_i$ for $i>r$, and
\begin{align*}
r(r-1)+\sum_{i=r+1}^n\min(r,d_i)&=r(r-1)+S-L_r\ge r(r-1)+\Delta(\Delta+1)-r\Delta\\
&=r\Delta+(r-\Delta)(r-\Delta-1)\ge r\Delta\ge L_r,
\end{align*}
where the second inequality holds because $(r-\Delta)(r-\Delta-1)\ge0$ for every integer $r$. If $r<\Delta$, then $\min(r,d_i)\ge rd_i/\Delta$ for every $i$, and
\[
\sum_{i=r+1}^n\min(r,d_i)\ge\frac r\Delta(S-L_r)\ge\frac r\Delta(S-r\Delta)\ge\frac{r}{\Delta}\left(\Delta(\Delta+1)-r\Delta\right)=r\Delta-r(r-1)\ge L_r-r(r-1).\qedhere
\]
\end{proof}

\begin{lemma}\label{lem:bipartite-graphical-sufficient}
Let $a_1,\ldots,a_\ell$ and $b_1,\ldots,b_n$ be nonnegative integer sequences with equal sum $M$. Let $\Delta_L:=\max_i a_i$ and $\Delta_R:=\max_j b_j$. If
\[
M\ge \Delta_L\Delta_R,
\]
then the pair of sequences is bipartite-graphical.
\end{lemma}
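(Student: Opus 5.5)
We verify the Gale--Ryser criterion directly, in close analogy with the proof of \Cref{lem:graphical-sufficient}. Arrange $a_1\ge\cdots\ge a_\ell$ and write $L_r:=\sum_{i=1}^r a_i$, so that $L_r\le r\Delta_L$ and $L_r\le M$. By the Gale--Ryser theorem, the pair is bipartite-graphical if and only if
\[
L_r\le\sum_{j=1}^n\min(r,b_j)\qquad\text{for every }1\le r\le\ell.
\]
The equality case $r=\ell$ is automatic from the equal sums, so the task reduces to checking this inequality for each $r$. We split into two cases according to whether $r$ is above or below $\Delta_R$.

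\emph{Case $r\ge\Delta_R$.} Then $\min(r,b_j)=b_j$ for every $j$. Hence the right-hand side equals $M$, which is at least $L_r$.

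\emph{Case $r<\Delta_R$.} Since $b_j\le\Delta_R$, we have $\min(r,b_j)\ge r b_j/\Delta_R$ for each $j$. Summing,
\[
\sum_{j=1}^n\min(r,b_j)\ge\frac{r}{\Delta_R}M\ge\frac{r}{\Delta_R}\Delta_L\Delta_R=r\Delta_L\ge L_r,
\]
using the hypothesis $M\ge\Delta_L\Delta_R$.

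Degenerate cases need a separate remark. If $\Delta_R=0$, then $M=0$, all sequences vanish, and the empty graph realizes them. Otherwise the division by $\Delta_R$ above is legitimate. I expect no genuine obstacle; the only point requiring care is quoting the Gale--Ryser inequality with the correct orientation. The sorting is imposed on the side whose partial sums are bounded, namely the $a$-side, and the test is against the $b$-side via $\sum_j\min(r,b_j)$, which is symmetric in the ordering of $b$.
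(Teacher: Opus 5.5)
Your proposal is correct and follows the paper's proof essentially verbatim: sort the $a$-side, apply Gale--Ryser, and split into $r\ge\Delta_R$ (right side equals $M\ge L_r$) and $r<\Delta_R$ (using $\min(r,b_j)\ge rb_j/\Delta_R$ to get $rM/\Delta_R\ge r\Delta_L\ge L_r$). One small correction: the $r=\ell$ inequality is not automatic from equal sums alone, since it encodes $\max_j b_j\le\ell$; this costs nothing, because your two-case analysis already covers $r=\ell$.
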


\begin{proof}
Arrange $a_1,\ldots,a_\ell$ in nonincreasing order. By Gale--Ryser, it suffices to show
\[
\sum_{i=1}^r a_i\le\sum_{j=1}^n\min(r,b_j)
\]
for every $1\le r\le\ell$. If $r\ge\Delta_R$, the right side is $M$, and the inequality is immediate. If $r<\Delta_R$, then $\min(r,b_j)\ge r b_j/\Delta_R$ for every $j$, so the right side is at least $rM/\Delta_R\ge r\Delta_L\ge\sum_{i=1}^r a_i$.
\end{proof}

\subsection{Concentration and counting}
\label{app:tools-concentration}

We have the following second moment bound for the degree sequences of $\mb{G}(n,p)$ and $\mb{G}(\ell,n,p)$; it is used in the proof of \Cref{lem:local-gamma-concentration}. Recall that $\deg^G_Sv$ denotes the number of neighbors of $v$ in $S$ in the graph $G$.
\begin{lemma}
\label{lem:gamma_bound}
    Fix a real parameter $\theta\in(1/2,1)$ and constants $T>1$ and $K>0$. There exists a constant $\Cst{lem:gamma_bound}(K)=\Cst{lem:gamma_bound}(\theta,T,K)>0$ such that, writing $C:=\Cst{lem:gamma_bound}(K)$, for all sufficiently large $n\in\mb{Z}$ and $p\in(T^{-1}n^{-\theta},Tn^{-\theta})$ the following are true.
    \begin{enumerate}[label=\textup{(\roman*)},ref=\textup{(\roman*)}]
        \item\label{lem:gamma_bound:graph} Sample $G\sim \mb{G}(n,p)$ and let $(d_1,\ldots,d_n)$ be the resulting degree sequence, with mean $d$. Then,
        \[\mb{P}\left[\sum_{i=1}^n(d_i-d)^2\ge Cpn^2\text{ and }\max_{i\in[n]}|d_i-pn|\le pn\right]\le\exp(-Kn).\]
        \item\label{lem:gamma_bound:bipartite} Consider $\ell\in\mb{Z}$ with $\ell\in[T^{-1}n,Tn]$. Sample $G\sim\mb{G}(\ell,n,p)$ and let $(s_1,\ldots,s_\ell),(t_1,\ldots,t_n)$ be the resulting degree sequences, with means $s$ and $t$ respectively. Then,
        \[\mb{P}\left[\left(\sum_{i=1}^\ell(s_i-s)^2\ge Cp\ell n\text{ or }\sum_{j=1}^n(t_j-t)^2\ge Cp\ell n\right)\text{ and }\mc{K}_{\ell,n}\right]\le\exp(-Kn),\]
        where $\mc{K}_{\ell,n}$ denotes the event that $|s_i-pn|\le pn$ for all $i\in[\ell]$ and $|t_j-p\ell|\le pn$ for all $j\in[n]$.
    \end{enumerate}
\end{lemma}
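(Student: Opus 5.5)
The plan is to prove part \ref{lem:gamma_bound:graph} by a union bound over a net of test vectors combined with the exponential moment method; part \ref{lem:gamma_bound:bipartite} follows the same route with two sides. The starting point is the variational formula
\[
\sum_i(d_i-d)^2=\Big(\sup_{\|u\|_2=1,\ u\perp\one}\ \sum_i u_i d_i\Big)^2 .
\]
We will discretize the unit sphere of $\one^\perp$ by a $1/2$-net $\mc N$ of size at most $5^n$. If $\sum_i(d_i-d)^2\ge Cpn^2$, some $u\in\mc N$ has $\sum_iu_i(d_i-d)=\sum_iu_id_i\ge\tfrac12\sqrt{C}\sqrt p\,n$. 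The standard net argument gives a factor of $2$ loss, since the supremum over the sphere is at most twice the supremum over the net.

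It then suffices to show that for each fixed unit $u\perp\one$,
\[
\mb P\Big[\sum_iu_id_i\ge \tfrac12\sqrt C\sqrt p\,n\Big]\le e^{-(K+2)n}.
\]
We write $\sum_iu_id_i=\sum_{i<j}(u_i+u_j)\xi_{ij}$, with $\xi_{ij}\sim\mr{Bernoulli}(p)$ independent. Its mean is $p\sum_{i<j}(u_i+u_j)=p(n-1)\sum_iu_i=0$. Its variance proxy is $p\sum_{i<j}(u_i+u_j)^2\le 2pn$. Bernstein's inequality gives a tail of
\[
\exp\!\Big(-\frac{Cpn^2/4}{2(2pn)+\tfrac23 M\sqrt{C p}\,n/2}\Big),
\]
where $M=\max_{i<j}|u_i+u_j|\le 2$.

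The main obstacle is this denominator. The term $M\sqrt{Cp}\,n$ can dominate $pn$ because $\sqrt p\gg p$, which would leave only the weak bound $\exp(-c\sqrt{Cp}\,n)$. This is exactly why the event $\max_i|d_i-pn|\le pn$ is included in the statement. To exploit it, we would truncate: split $u$ into a ``flat'' part, with coordinates satisfying $|u_i|\le (pn)^{-1/2}$ (after renormalization), and a ``spiky'' part supported on at most $pn$ coordinates. On the flat part, $M\lesssim (pn)^{-1/2}$, and Bernstein gives $\exp(-c\,C n)$ as required. On the spiky part, we bound the contribution deterministically using the degree cap. We have $\sum_{i\in S}u_i(d_i-d)\le \|u_S\|_1\cdot 2pn\le \sqrt{|S|}\cdot 2pn$, and with $|S|\le 1/\max|u_i|^2$ this can be made at most $O(\sqrt{pn}\cdot\sqrt{pn}\,)$-type. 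More carefully, we use the dyadic decomposition of $u$ by magnitude: coordinates of size about $2^{-j}$ number at most $4^j$. Levels with $4^j\le pn$ are controlled by the degree cap, and the remaining levels by Bernstein. The cap bounds each level by $2pn\cdot 2^{-j}\cdot 4^j=2pn\,2^{j}$, summing to $O(pn\sqrt{pn})$. We would then verify that this quantity is at most $\sqrt{Cp}\,n/8$ using $pn\le n^{1-\theta}$ and $\theta>1/2$, since $(pn)^{3/2}\ll \sqrt p\,n$ iff $p^{1}n^{1/2}\ll 1$, which holds. Choosing $C$ large in terms of $K$, and union bounding over the net ($5^n$) and over levels, finishes the proof.

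For the bipartite case, we treat $\sum_i u_is_i=\sum_{i,j}u_i\xi_{ij}$ with $u\perp\one_\ell$. This is easier, since each edge variable carries a single coefficient $u_i$. The same flat/spiky split applies, and the symmetric argument handles $t$.
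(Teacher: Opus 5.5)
\textbf{The gap: the flat/spiky threshold.} The step that fails is ``on the flat part, $M\lesssim(pn)^{-1/2}$, and Bernstein gives $\exp(-c\,Cn)$.''

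\begin{itemize}
\item With $M\asymp(pn)^{-1/2}$ and $t\asymp\sqrt{Cp}\,n$, the term $Mt\asymp\sqrt{Cn}$ in Bernstein's denominator dominates the variance $2pn=\Theta(n^{1-\theta})$. This happens precisely because $\theta>1/2$.
\item The exponent is therefore only of order $t/M\asymp\sqrt C\,p\,n^{3/2}=\Theta(\sqrt C\,n^{3/2-\theta})=o(n)$, which cannot pay for the $5^n$ net.
\item This is not a weakness of Bernstein's inequality. Take a unit $u\perp\one$ with weight about $(pn)^{-1/2}$ on each of $pn$ coordinates $A$ (a flat vector in your sense). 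The required deviation of $\sum_{i\in A}(d_i-p(n-1))$ is about $\sqrt C\,pn^{3/2}$. Its standard deviation is only $\asymp pn$, so this is deep in the Poisson regime, and the true probability is $\exp(-O(\sqrt C\,pn^{3/2}\log n))=e^{-o(n)}$. Only the degree cap rules it out.
\end{itemize}
Your check $(pn)^{3/2}\ll\sqrt p\,n$ is correct but addresses the wrong side. The cap must absorb every coordinate down to $|u_i|\asymp\sqrt p$, not just down to $(pn)^{-1/2}$. Note that $\sqrt p\ll(pn)^{-1/2}$ because $p\sqrt n\to0$.

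\textbf{The repair.} Split at $|u_i|=\sqrt p$, after first recentering. Since $u\perp\one$, you can write $\langle u,d\rangle=\langle u,d-p(n-1)\one\rangle$; then the flat piece $u_F$, which is no longer orthogonal to $\one$, still gives a mean-zero edge sum.
\begin{itemize}
\item Spiky part: for $S:=\{i:|u_i|>\sqrt p\}$ you have $|S|<1/p$, hence $\|u_S\|_1<p^{-1/2}$. On the cap event, $|\langle u_S,d-p(n-1)\one\rangle|\le 2pn\cdot p^{-1/2}=2\sqrt p\,n$, which is at most $\frac14\sqrt{Cp}\,n$ once $C\ge64$. This is of the same order as the target, so it is absorbed by the choice of $C$, not by a power saving.
\item Flat part: it must then exceed $\frac14\sqrt{Cp}\,n$. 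Now $M\le2\sqrt p$ makes $Mt\asymp\sqrt C\,pn$ comparable to the variance, so Bernstein gives $\exp(-c\sqrt C\,n)$. This beats the $5^n$ net once $C\gtrsim(K+2)^2$.
\item Bipartite case: it works the same way, with nets of size $5^\ell\le5^{Tn}$ and $5^n$.
\end{itemize}
With this repair your argument is a valid alternative to the paper's.

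\textbf{Comparison with the paper.} The paper avoids nets altogether.
\begin{itemize}
\item It bounds by $\sum_i(d_i-p(n-1))^2$.
\item It uses a random bisection to find a set $S$ with $\sum_{v\notin S}(\deg_S v-p|S|)^2\ge Cpn^2/8$. The summands are independent binomials, and it takes a union bound over the $2^n$ sets $S$.
\item It applies an exponential-moment bound for $e^{\lambda\bar Y^2}$ with $\lambda=1/(20pn)$, truncated at $|\bar Y|\le 2pn$.
\end{itemize}
In that argument the degree cap enters exactly as this truncation.
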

\begin{proof}
    We first record a moment generating function bound for a single truncated binomial variable. Let $Y\sim\mr{Bin}(r,p)$, let $B>0$ with $B\ge rp$, write $\ol{Y}:=Y-rp$, and put $\lambda:=1/(20B)$. Bernstein's inequality (Theorem 2.8.4 of \cite{Ver18}), applied to the $r$ independent centered Bernoulli variables whose sum is $\ol{Y}$, which are bounded by $1$ and whose variances sum to $rp(1-p)\le B$, gives for $0\le u\le2B$
    \[\mb{P}\left[|\ol{Y}|>u\right]\le2\exp\left(-\frac{u^2/2}{B+u/3}\right)\le2\exp\left(-\frac{3u^2}{10B}\right).\]
    For $X\ge0$, $b>0$, and an increasing differentiable $g$ we have $\mb{E}[g(\min(X,b))]=g(0)+\int_0^bg'(u)\mb{P}[X>u]\,du$; applying this with $X=|\ol{Y}|$, $b=2B$, and $g(u)=e^{\lambda u^2}$, and using $e^{\lambda\ol{Y}^2}\mbf{1}_{|\ol{Y}|\le2B}\le e^{\lambda\min(|\ol{Y}|,2B)^2}$,
    \begin{align*}
    \mb{E}\left[e^{\lambda\ol{Y}^2}\mbf{1}_{|\ol{Y}|\le2B}\right]&\le1+\int_0^{2B}2\lambda u\,e^{\lambda u^2}\,\mb{P}\left[|\ol{Y}|>u\right]du\\
    &\le1+\int_0^\infty4\lambda u\exp\left(-\frac{u^2}{4B}\right)du=1+8\lambda B=\frac75\le e^{1/2}.
    \end{align*}
    Consequently, if $\ol{Y}_1,\ldots,\ol{Y}_q$ are independent, each of the form $Y_i-r_ip$ with $Y_i\sim\mr{Bin}(r_i,p)$ and $r_ip\le B$, then for every $t>0$, by Markov's inequality applied to $\exp(\lambda\sum_i\ol{Y}_i^2)$ on the truncation event,
    \begin{equation}\label{eq:trunc-mgf}
    \mb{P}\left[\sum_{i=1}^q\ol{Y}_i^2\ge t\text{ and }\max_i|\ol{Y}_i|\le2B\right]\le e^{-\lambda t}\prod_{i=1}^q\mb{E}\left[e^{\lambda\ol{Y}_i^2}\mbf{1}_{|\ol{Y}_i|\le2B}\right]\le\exp\left(-\frac{t}{20B}+\frac q2\right).
    \end{equation}

    We prove \cref{lem:gamma_bound:graph}. Since the empirical mean $d$ minimizes $a\mapsto\sum_i(d_i-a)^2$, we have $\sum_i(d_i-d)^2\le\sum_i(d_i-p(n-1))^2$, so it suffices to bound the probability of $\mc{E}\cap\mc{K}$, where $\mc{E}$ is the event that $\sum_{i=1}^n(d_i-p(n-1))^2\ge Cpn^2$ and $\mc{K}$ is the event that $|d_i-pn|\le pn$ for all $i\in[n]$.

    We first show that $\mc{E}$ is contained in a union of events that involve only independent variables. Let $S$ be a uniformly random subset of $[n]$, let $U=[n]\setminus S$, and let $\mb{E}_S$ denote averaging over the randomness of $S$ only. For $v\in[n]$ write
    \[\Delta_S(v):=\deg^G_Sv-p|S\setminus\{v\}|,\qquad\Delta_U(v):=\deg^G_Uv-p|U\setminus\{v\}|,\]
    so that $\Delta_S(v)+\Delta_U(v)=d_v-p(n-1)$. Both $\Delta_S(v)$ and $\Delta_U(v)$ are determined by $G$ and by $S\setminus\{v\}$, and $\mbf{1}_{v\in S}$ is independent of $S\setminus\{v\}$. Hence
    \begin{align*}
        \mb{E}_S\left[\mbf{1}_{v\in U}\Delta_S(v)^2 + \mbf{1}_{v\in S}\Delta_U(v)^2\right]
        &=\frac{1}{2}\mb{E}_S\left[\Delta_S(v)^2 + \Delta_U(v)^2\right]\\
        &\ge\frac{1}{4}\mb{E}_S\left[\left(\Delta_S(v)+\Delta_U(v)\right)^2\right]
        = \frac{1}{4}(d_v-p(n-1))^2.
    \end{align*}
    Summing over $v$, on the event $\mc{E}$ we have
    \[\mb{E}_S\left[\sum_{v\in U}\Delta_S(v)^2 + \sum_{v\in S}\Delta_U(v)^2\right]\ge Cpn^2/4,\]
    so there is some $S\subseteq[n]$ with $\sum_{v\in U}\Delta_S(v)^2\ge Cpn^2/8$ or $\sum_{v\in S}\Delta_U(v)^2\ge Cpn^2/8$. Exchanging the roles of $S$ and $U$ in the second case, and noting that $\Delta_S(v)=\deg^G_Sv-p|S|$ for $v\in U$, we conclude that $\mc{E}\subseteq\bigcup_{S\subseteq[n]}\mc{E}_S$, where
    \[\mc{E}_S:=\left\{\sum_{v\in[n]\setminus S}(\deg^G_Sv-p|S|)^2\ge Cpn^2/8\right\}.\]

    Fix $S$. Over the randomness of $G$, the variables $\deg^G_Sv$ for $v\in[n]\setminus S$ are independent $\mr{Bin}(|S|,p)$ variables, since they are determined by the disjoint edge sets $\{v\}\times S$. On $\mc{K}$ we have $0\le\deg^G_Sv\le d_v\le2pn$ and $0\le p|S|\le pn$, so $|\deg^G_Sv-p|S||\le2pn$ for every $v$. Applying \eqref{eq:trunc-mgf} with $B=pn$, $t=Cpn^2/8$, and $q=n-|S|\le n$,
    \[\mb{P}[\mc{E}_S\cap\mc{K}]\le\exp\left(-\frac{Cn}{160}+\frac{n}{2}\right).\]
    A union bound over the $2^n$ sets $S$ gives $\mb{P}[\mc{E}\cap\mc{K}]\le\exp(-(C/160-1/2-\log2)n)\le\exp(-Kn)$ as soon as $C\ge160(K+2)$.

    For \cref{lem:gamma_bound:bipartite} no bisection is needed: the variables $s_i$, $i\in[\ell]$, are independent $\mr{Bin}(n,p)$ variables, and the variables $t_j$, $j\in[n]$, are independent $\mr{Bin}(\ell,p)$ variables. As before, $\sum_i(s_i-s)^2\le\sum_i(s_i-pn)^2$ and $\sum_j(t_j-t)^2\le\sum_j(t_j-p\ell)^2$. Apply \eqref{eq:trunc-mgf} with $B=Tpn$, which is at least $np$ and at least $\ell p$, with $t=Cp\ell n\ge Cpn^2/T$, and with $q=\ell\le Tn$, respectively $q=n$; the truncation hypotheses $|s_i-pn|\le pn$ and $|t_j-p\ell|\le pn$ are stronger than the required $|\ol{Y}_i|\le2B$. Each of the two events in question therefore has probability at most $\exp(-Cn/(20T^2)+Tn/2)$ on the truncation event, and their union has probability at most $2\exp(-Cn/(20T^2)+Tn/2)\le\exp(-Kn)$ as soon as $C\ge20T^2(K+T)$. Both parts therefore hold with $\Cst{lem:gamma_bound}(K):=\max\{160(K+2),\,20T^2(K+T)\}$.
\end{proof}
We have the following basic counting lemma, also used in the proof of \Cref{lem:local-gamma-concentration}.
\begin{lemma}
\label{lem:basic_counting}
    Fix a real parameter $\theta\in(1/2,1)$ and constants $T>1$ and $C>0$. There exists a constant $\Cst{lem:basic_counting}(C)=\Cst{lem:basic_counting}(\theta,T,C)>0$ such that for all sufficiently large $n\in\mb{Z}$ and $p\in(T^{-1}n^{-\theta},Tn^{-\theta})$, the number of integer sequences
    \[\left|\left\{(y_1,\ldots,y_n)\in\left([-C\sqrt{np},C\sqrt{np}]\cap\mb{Z}\right)^n : y_1+\cdots+y_n = 0 \right\}\right|\]
    is at least $\exp(-\Cst{lem:basic_counting}(C)\,n)(\sqrt{np})^n$.
\end{lemma}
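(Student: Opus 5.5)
Writing $R:=\lfloor C\sqrt{np}\rfloor$, the set to be counted consists of the sequences in $\{-R,\ldots,R\}^n$ with sum zero. The plan is a direct probabilistic count: take $Y_1,\ldots,Y_n$ i.i.d.\ uniform on $\{-R,\ldots,R\}$. Then the number of zero-sum sequences equals
\[
(2R+1)^n\,\mb{P}\left[Y_1+\cdots+Y_n=0\right].
\]
Since $np\ge T^{-1}n^{1-\theta}\to\infty$, we have $R\ge C\sqrt{np}/2$ for large $n$. Hence $(2R+1)^n\ge (C\sqrt{np})^n=C^n(\sqrt{np})^n$. It therefore suffices to show that $\mb{P}[\sum_iY_i=0]\ge n^{-O(1)}$, or even just $\ge e^{-O(n)}$, since any such bound is absorbed into $\exp(-\Cst{lem:basic_counting}(C)n)$ together with the factor $C^n$ when $C<1$.

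For the point probability I would avoid a local limit theorem and use a monotonicity argument. The law of each $Y_i$ is symmetric and unimodal on the integers, centered at $0$. Convolutions of symmetric unimodal distributions on $\mb{Z}$ are again symmetric unimodal, a standard fact provable by induction, since the uniform distribution is log-concave and log-concavity is preserved under convolution. Consequently $0$ is a mode of $S:=\sum_iY_i$, so
\[
\mb{P}[S=0]\ge\max_j\mb{P}[S=j].
\]
By Chebyshev, $\mb{P}[|S|\le 2\sqrt{n}R]\ge 3/4$, because $\mr{Var}\,S=nR(R+1)/3\le nR^2$ for $R\ge1$. This window contains at most $4\sqrt nR+1$ integers, so some $j$ in it carries probability at least $(3/4)/(4\sqrt nR+1)$. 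Therefore
\[
\mb{P}[S=0]\ge \frac{1}{8\sqrt nR}\ge n^{-2},
\]
using $R\le C\sqrt{n}$.

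Combining the two steps, the count is at least $C^n n^{-2}(\sqrt{np})^n$. This exceeds $\exp(-\Cst{lem:basic_counting}(C)n)(\sqrt{np})^n$ for large $n$ if we take $\Cst{lem:basic_counting}(C):=|\log C|+1$. The only point needing care is the unimodality step. If one prefers to avoid it, an alternative is the following: $S$ is symmetric, and the fraction of zero-sum vectors among those with $|S|\le2\sqrt nR$ can be bounded below by a shifting injection that adjusts one coordinate by at most $2\sqrt nR$. That argument is messier, so I would use log-concavity.
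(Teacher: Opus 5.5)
Your proof is correct, and it takes a genuinely different route from the paper. The paper never estimates the point mass at $0$. It lets the first $n_0=n-\lceil n^{2/3}\rceil$ coordinates range freely and uses Chebyshev to see that at least half of these prefixes have partial sum at most $2C\sqrt n\sqrt{np}$ in absolute value. It then observes that the remaining $\lceil n^{2/3}\rceil$ coordinates can always be chosen to cancel such a partial sum. This completion argument costs a factor $(\sqrt{np})^{-n^{2/3}-1}\ge e^{-n^{2/3}\log n}$, which is harmless against $e^{-O(n)}$.

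You instead write the count as $(2R+1)^n\,\mb P[S=0]$ and bound $\mb P[S=0]$ directly. You use log-concavity of the law of $S$, a convolution of the log-concave uniform laws on $\{-R,\dots,R\}$, together with symmetry to see that $0$ is a mode. Chebyshev plus pigeonhole then gives $\mb P[S=0]\ge 1/(8\sqrt nR)$. The arithmetic checks out: $\mr{Var}\,Y_1=R(R+1)/3$, the window count, $R\le C\sqrt n$, and $C^nn^{-2}\ge e^{-(|\log C|+1)n}$.

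Your bound is sharper, losing only a polynomial factor rather than $e^{-O(n^{2/3}\log n)}$. The price is importing the classical fact that convolution preserves log-concavity of nonnegative sequences without internal zeros, whereas the paper's completion argument is entirely self-contained. Neither difference matters for the application, which only needs $e^{-O(n)}$.

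One phrasing point: your sentence asserting that symmetric unimodal laws on $\mb Z$ are closed under convolution is not what your log-concavity justification proves. (That general statement is true for laws symmetric about $0$, since each is a mixture of centered uniforms, but you do not need it.) What you actually use, and what the log-concavity argument delivers, is that the pmf of $S$ is log-concave and symmetric about $0$, hence maximized at $0$.
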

\begin{proof}
    Write $n_0:=n-\lceil n^{2/3}\rceil$ and $J:=[-C\sqrt{np},C\sqrt{np}]\cap\mb{Z}$, so that $|J|\ge C\sqrt{np}$. By Chebyshev's inequality, at least half of all sequences $(y_1,\ldots,y_{n_0})\in J^{n_0}$ have $|y_1+\cdots+y_{n_0}|\le2C\sqrt{n}\sqrt{np}$. Each such sequence has at least one continuation $(y_{n_0+1},\ldots,y_n)\in J^{n-n_0}$ making the overall sum $0$, since the remaining $\lceil n^{2/3}\rceil$ entries, each ranging over the integer interval $J$, can contribute any integer of absolute value at most $\lceil n^{2/3}\rceil\lfloor C\sqrt{np}\rfloor$, and $\lceil n^{2/3}\rceil\lfloor C\sqrt{np}\rfloor\ge 2C\sqrt n\sqrt{np}$ for large $n$. Therefore, the number of sequences in question is at least
    \[\frac{1}{2}|J|^{n_0}\ge\frac12\left(C\sqrt{np}\right)^{n-n^{2/3}-1}\ge\exp(-\Cst{lem:basic_counting}(C)\,n)(\sqrt{np})^n\]
    for a suitable constant $\Cst{lem:basic_counting}(C)>0$ depending only on $\theta$, $T$, and $C$ (the factor $(\sqrt{np})^{-n^{2/3}-1}\ge\exp(-n^{2/3}\log n)$ and the factor $\frac12C^{\,n-n^{2/3}-1}$ are both $\ge\exp(-O(n))$), as desired.
\end{proof}

\subsection{Conditioning a Gaussian vector on an open set}
\label{app:tools-gaussian}

The following elementary facts are used in \Cref{sec:universal} and \Cref{app:inverse}.

\begin{lemma}[Conditioning on an open set]\label{lem:open-conditioning}
Let $X\sim\mc{N}(\gamma,\Sigma)$ with $\Sigma\in\mb{R}^{d\times d}$ positive definite and let $\mc{O}\subseteq\mb{R}^d$ be a nonempty open convex set. Then $\mb{P}[X\in\mc{O}]>0$, $\mb{E}[X\mid X\in\mc{O}]\in\mc{O}$, and $\mr{Cov}(X\mid X\in\mc{O})$ is positive definite.
\end{lemma}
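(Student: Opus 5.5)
We prove the three assertions in turn, all from elementary properties of Gaussian densities and convexity.

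\emph{Positivity.} Since $\Sigma$ is positive definite, $X$ has a density that is strictly positive everywhere on $\mb{R}^d$. A nonempty open set has positive Lebesgue measure, so $\mb{P}[X\in\mc{O}]>0$. In particular the conditional law $\mu:=\mc{L}(X\mid X\in\mc{O})$ is well defined. It has a density that is positive on $\mc{O}$ and vanishes off $\mc{O}$, and it has finite moments of all orders because $X$ does.

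\emph{The mean lies in $\mc{O}$.} Let $m:=\mb{E}[X\mid X\in\mc{O}]$. First, $m$ lies in the closed convex hull of $\mc{O}$, which is $\ol{\mc{O}}$.

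Now suppose, for contradiction, that $m\notin\mc{O}$. Since $\mc{O}$ is open and convex and $m\notin\mc{O}$, the separation theorem gives a nonzero vector $a$ with $a\cdot x<a\cdot m$ for every $x\in\mc{O}$. Indeed, one separates the point $m$ from the open convex set $\mc{O}$ weakly, obtaining $a\cdot x\le a\cdot m$ on $\mc{O}$; the inequality is strict because a linear functional attaining its supremum at an interior point of an open set would have to be constant.

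Taking conditional expectations, we get $\mb{E}[a\cdot X\mid X\in\mc{O}]\le a\cdot m$. But the left side equals $a\cdot m$, so $\mb{E}[a\cdot m-a\cdot X\mid X\in\mc{O}]=0$. This is the expectation of a nonnegative variable that is strictly positive $\mu$-almost surely, which is impossible. Hence $m\in\mc{O}$.

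\emph{Positive definiteness of the covariance.} Fix a nonzero vector $v$. We have $v^\top\mr{Cov}(X\mid X\in\mc{O})v=\mr{Var}_\mu(v\cdot X)$. If this variance were $0$, then $v\cdot X$ would be $\mu$-almost surely constant, so $\mu$ would be supported on a hyperplane. That hyperplane has Lebesgue measure zero, contradicting the fact that $\mu$ has a density positive on a nonempty open set. Hence the covariance is positive definite.

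The only step needing care is the strict separation in the mean assertion, namely the claim that a point outside an open convex set can be separated with strict inequality on the set. This is standard (Hahn--Banach together with openness), so no real obstacle is expected. Convexity is used only in that step; the positivity and covariance assertions need only that $\mc{O}$ is open and nonempty.
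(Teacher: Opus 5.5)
Your proof is correct and follows the paper's argument essentially step for step. Both use the everywhere-positive Gaussian density for $\mb{P}[X\in\mc{O}]>0$, then a separating hyperplane (made strict on $\mc{O}$ by openness) followed by taking conditional expectations to place the mean in $\mc{O}$, and finally the fact that zero variance in some direction would confine the conditioned law to a Lebesgue-null hyperplane.
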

\begin{proof}
The density $\varphi$ of $X$ is positive on all of $\mb{R}^d$, and $\mc{O}$ is a nonempty open set, hence has positive Lebesgue measure; therefore $\mb{P}[X\in\mc{O}]=\int_{\mc{O}}\varphi>0$, and the law of $X$ conditioned on $\{X\in\mc{O}\}$ has the density $\varphi\one_{\mc{O}}/\mb{P}[X\in\mc{O}]$, which is positive on $\mc{O}$ and vanishes off $\mc{O}$.

Let $\mu:=\mb{E}[X\mid X\in\mc{O}]$ and suppose $\mu\notin\mc{O}$. Since $\mc{O}$ is convex and open, the separating hyperplane theorem gives $u\in\mb{R}^d\setminus\{0\}$ with $u\cdot x\le u\cdot\mu$ for all $x\in\mc{O}$, and the inequality is strict for every $x\in\mc{O}$ because $\mc{O}$ is open. Hence $u\cdot X<u\cdot\mu$ almost surely under the conditioned law, and taking expectations, $u\cdot\mu=\mb{E}[u\cdot X\mid X\in\mc{O}]<u\cdot\mu$, a contradiction. Thus $\mu\in\mc{O}$.

Finally let $u\in\mb{R}^d\setminus\{0\}$. Then $u^\top\mr{Cov}(X\mid X\in\mc{O})\,u=\mr{Var}(u\cdot X\mid X\in\mc{O})$. If this vanished, the conditioned law would be supported on the hyperplane $\{x:u\cdot x=u\cdot\mu\}$, which has Lebesgue measure zero, whereas the conditioned law has a density that is positive on the open set $\mc{O}$ and therefore gives positive mass to $\mc{O}$ minus any Lebesgue-null set. Hence the variance is positive, and $\mr{Cov}(X\mid X\in\mc{O})$ is positive definite.
\end{proof}

\section{The fiber law: transference to the row model}
\label{app:local-transference}

This appendix proves the lemmas used in the proof of \Cref{prop:local-degree-transference} and \Cref{prop:fiber-degree-array-estimates}, in the order in which they depend on one another: graphicality of the arrays in the fiber, comparison of the graphical degree law $\msf{G}_{\Pi,m}$ with the conditioned row model, second-moment regularity in both models, and finally the exact-total lower bound and binomial regularity in the row model. Throughout, the notation is that of \Cref{def:local-admissibility,def:aux-degree-laws}, and the tools of \Cref{app:tools} are used freely.

\subsection{Graphicality of the arrays in the fiber}

\begin{lemma}[Automatic graphicality]\label{lem:auto-graphicality}
Fix $\theta\in(1/2,1)$, an integer $k\ge1$, and $T>1$. For all sufficiently large $N$, the following holds. Let $V$ be a finite set with $|V|=N$, let $p\in(T^{-1}N^{-\theta},TN^{-\theta})$, and let $\Pi\in\mr{Part}_k(V)$ and $m\in\mc{M}(\Pi)$ satisfy
\[
|\Pi[s]|\ge T^{-1}N
\]
and
\[
\left|m[s,t]-p|\Pi[s]||\Pi[t]|\right|\le T\frac{N^2p}{\sqrt{pN}}
\]
for every $s,t\in\{0,1\}^k$. If $\mbf{d}\in\mc{A}(\Pi)$ satisfies $m(\Pi,\mbf{d})=m$ and $\kappa_p(\Pi,\mbf{d})$, then $\mbf{d}\in\mc{D}(\Pi)$.
\end{lemma}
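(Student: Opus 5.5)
By \Cref{fact:graphical-blockwise}, it suffices to check graphicality block pair by block pair. For each internal pair $\{s,s\}$ we check the sequence $(\mbf d[x,s])_{x\in\Pi[s]}$, and for each bipartite pair $\{s,t\}$ with $s\ne t$ we check the pair $(\mbf d[x,t])_{x\in\Pi[s]}$, $(\mbf d[y,s])_{y\in\Pi[t]}$. In both cases we invoke the sufficient conditions \Cref{lem:graphical-sufficient} and \Cref{lem:bipartite-graphical-sufficient}. Those lemmas need only three inputs: the degree sum, parity or equality of sums, and the maximum degree. The regularity condition $\kappa_p$ controls the maxima, and the edge-count hypothesis controls the sums.

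\emph{Internal pair $\{s,s\}$.} Write $n:=|\Pi[s]|\ge T^{-1}N$. The sum of the sequence is $\sum_{x\in\Pi[s]}\mbf d[x,s]=m[s,s]$, which is even because $m\in\mc M(\Pi)$. By $\kappa_p$ the maximum degree satisfies $\Delta\le pn+(pN)^{4/7}\le 2pN$ for large $N$. By the edge-count hypothesis,
\[
m[s,s]\ge pn^2-T\frac{N^2p}{\sqrt{pN}}\ge \tfrac12 T^{-2}pN^2
\]
for large $N$, since $\sqrt{pN}\to\infty$. We need $\tfrac12T^{-2}pN^2\ge 2pN(2pN+1)$, that is, $pN^2\gtrsim (pN)^2$, which reduces to $p=o(1)$. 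This holds because $p\le TN^{-\theta}$ with $\theta>1/2$, so \Cref{lem:graphical-sufficient} applies. The entries are nonnegative integers since $\mbf d\in\mc A(\Pi)$.

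\emph{Bipartite pair $\{s,t\}$.} The two sums agree:
\[
\sum_{x\in\Pi[s]}\mbf d[x,t]=m[s,t]=m[t,s]=\sum_{y\in\Pi[t]}\mbf d[y,s],
\]
by symmetry of $m$. By $\kappa_p$ both maxima are at most $pN+(pN)^{4/7}\le2pN$. The common sum satisfies $M=m[s,t]\ge \tfrac12T^{-2}pN^2$, as in the internal case. So $M\ge\Delta_L\Delta_R$ follows from $\tfrac12 T^{-2}pN^2\ge 4p^2N^2$, which again holds since $p\to0$, and \Cref{lem:bipartite-graphical-sufficient} applies.

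There is no real obstacle. The only point needing care is the constant bookkeeping showing $p\ll 1$ gives a margin, together with the observation that $\kappa_p$ measures each degree $\mbf d[x,t]$ against $p|\Pi[t]|\le pN$, so the uniform bound $2pN$ on maxima is valid for every block.
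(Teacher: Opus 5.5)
Your proof is correct and matches the paper's argument: reduce to block pairs via \Cref{fact:graphical-blockwise}, bound the maxima by $O(pN)$ using $\kappa_p$, bound the sums below by a constant times $pN^2$ using the edge-count hypothesis, and conclude with \Cref{lem:graphical-sufficient} and \Cref{lem:bipartite-graphical-sufficient} because $p\to0$. Your explicit comparisons of $\tfrac12T^{-2}pN^2$ with $2pN(2pN+1)$ and with $4p^2N^2$ simply make quantitative the paper's $O(p^2N^2)=o(pN^2)$ step.
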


\begin{proof}
By \Cref{fact:graphical-blockwise}, it suffices to show that for every internal block pair $\{s,s\}$ the sequence $(\mbf{d}[x,s])_{x\in\Pi[s]}$ is the degree sequence of some simple graph on $\Pi[s]$, and that for every bipartite block pair $\{s,t\}$ the pair of sequences $(\mbf{d}[x,t])_{x\in\Pi[s]}$, $(\mbf{d}[y,s])_{y\in\Pi[t]}$ is the pair of degree sequences of some simple bipartite graph between $\Pi[s]$ and $\Pi[t]$.

Fix $s$. The sequence $(\mbf{d}[x,s])_{x\in\Pi[s]}$ has even sum $m[s,s]$, and by the assumptions $m[s,s]\asymp pN^2$. By $\kappa_p$,
\[
\Delta_s:=\max_{x\in\Pi[s]}\mbf{d}[x,s]\le p|\Pi[s]|+(pN)^{4/7}=O(pN).
\]
Since $\Delta_s(\Delta_s+1)=O(p^2N^2)=o(pN^2)$, we have $m[s,s]\ge\Delta_s(\Delta_s+1)$ for all sufficiently large $N$, and \Cref{lem:graphical-sufficient} implies that there is a simple graph on $\Pi[s]$ with degree sequence $(\mbf{d}[x,s])_{x\in\Pi[s]}$. Now fix $s\ne t$. The two sequences $(\mbf{d}[x,t])_{x\in\Pi[s]}$ and $(\mbf{d}[y,s])_{y\in\Pi[t]}$ have the common sum $m[s,t]\asymp pN^2$, and by $\kappa_p$ their maxima $\Delta_{s,t}$ and $\Delta_{t,s}$ are $O(pN)$, so $m[s,t]\ge\Delta_{s,t}\Delta_{t,s}$ for all sufficiently large $N$, and \Cref{lem:bipartite-graphical-sufficient} implies that there is a simple bipartite graph between $\Pi[s]$ and $\Pi[t]$ with these two degree sequences. Thus $\mbf{d}\in\mc{D}(\Pi)$.
\end{proof}

\subsection{Comparison of the graphical and binomial degree laws}

\begin{lemma}[Enumeration comparison]\label{lem:local-enumeration-comparison}
Fix $\theta\in(1/2,1)$, an integer $k\ge1$, and constants $T>1$ and $\phi\in(0,1/2)$. There is a constant $\Cst{lem:local-enumeration-comparison}=\Cst{lem:local-enumeration-comparison}(\theta,k,T)>0$, and for every $C_\Gamma>0$ a constant $\Cstp{lem:local-enumeration-comparison}(C_\Gamma)=\Cstp{lem:local-enumeration-comparison}(\theta,k,T,C_\Gamma)>1$, such that for all sufficiently large $N$ the following holds. Let $V$ be a finite set with $|V|=N$, let $p\in(T^{-1}N^{-\theta},TN^{-\theta})$, suppose that $(y,\lambda)$ is $(T,\phi,p)$-locally admissible with $y=(\Pi,m,1)$, and let $\mbf{d}\in\mc{A}(\Pi)$ satisfy $m(\Pi,\mbf{d})=m$ and $\kappa_p(\Pi,\mbf{d})$.
\begin{enumerate}[label=\textup{(\roman*)},ref=\textup{(\roman*)},leftmargin=*]
\item\label{lem:local-enumeration-comparison:weak} (Weak comparison.)
\begin{align*}
\msf{G}_{\Pi,m}(\mbf{d})&\le\exp\{\Cst{lem:local-enumeration-comparison}(pN)^{2/7}\}\,\msf{R}_{\Pi,\lambda}(\mbf{d}\mid m(\Pi,\mbf{D})=m),\\
\msf{G}_{\Pi,m}(\mbf{d})&\ge\exp\{-\Cst{lem:local-enumeration-comparison}(pN)^{2/7}\}\,\msf{R}_{\Pi,\lambda}(\mbf{d}\mid m(\Pi,\mbf{D})=m).
\end{align*}
\item\label{lem:local-enumeration-comparison:strong} (Strong comparison.) If $\mbf{d}$ also satisfies $\Gamma_{C_\Gamma,p}(\Pi,m,\mbf{d})$, then
\begin{align*}
\msf{G}_{\Pi,m}(\mbf{d})&\le \Cstp{lem:local-enumeration-comparison}(C_\Gamma)\,\msf{R}_{\Pi,\lambda}(\mbf{d}\mid m(\Pi,\mbf{D})=m),\\
\msf{G}_{\Pi,m}(\mbf{d})&\ge \Cstp{lem:local-enumeration-comparison}(C_\Gamma)^{-1}\,\msf{R}_{\Pi,\lambda}(\mbf{d}\mid m(\Pi,\mbf{D})=m).
\end{align*}
\end{enumerate}
\end{lemma}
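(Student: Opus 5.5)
Both $\msf{G}_{\Pi,m}$ and $\msf{R}_{\Pi,\lambda}(\cdot\mid m(\Pi,\mbf{D})=m)$ factor over the block pairs of $\Pi$, so we compare them one block pair at a time. For $\msf{G}_{\Pi,m}$ this is \Cref{def:aux-degree-laws}. For the row model, conditioning on $m(\Pi,\mbf D)=m$ fixes, for each ordered pair $(s,t)$, the sum $\sum_{x\in\Pi[s]}\mbf D[x,t]=m[s,t]$. The constraints for different ordered pairs involve disjoint coordinates, so the conditioned law is a product over ordered pairs $(s,t)$. For fixed $(s,t)$ the coordinates are i.i.d.\ $\mr{Bin}(n[t]-\one_{s=t},\lambda[s,t])$ conditioned on a fixed sum, and as remarked after \Cref{def:bigraph_deg_seq} this law does not depend on the success probability. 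In particular the tilt $\lambda$ drops out entirely.

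For an internal pair $\{s,s\}$ the factor is exactly $\mb P_{B(n[s],m[s,s]/2)}$. For a bipartite pair $\{s,t\}$ the two ordered factors $(s,t)$ and $(t,s)$ combine into $\mb P_{B(n[s],n[t],m[s,t])}$. Hence
\[
\frac{\msf G_{\Pi,m}(\mbf d)}{\msf R_{\Pi,\lambda}(\mbf d\mid m(\Pi,\mbf D)=m)}=\prod_{s}\frac{\mb P_{G(n[s],m[s,s]/2)}}{\mb P_{B(n[s],m[s,s]/2)}}\prod_{\{s,t\},s\ne t}\frac{\mb P_{G(n[s],n[t],m[s,t])}}{\mb P_{B(n[s],n[t],m[s,t])}},
\]
with each ratio evaluated at the restriction of $\mbf d$ to that block pair. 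There are at most $4^k$ factors, so it suffices to bound each factor by $\exp\{O((pN)^{2/7})\}$ in general, and by a constant under $\Gamma_{C_\Gamma,p}$.

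To control each factor we apply \Cref{thm:graph_counting} and \Cref{thm:bigraph_counting}. We fix $\alpha:=4/7+\epsilon_0\in(1/2,3/5)$ with $\epsilon_0>0$ small. By \cref{LA:sizes} and \cref{LA:edge-scale}, the average degree $d=m[s,s]/n[s]$ (respectively $s=m[s,t]/n[s]$, $t=m[s,t]/n[t]$) differs from $pn[t]$ by $O(\sqrt{pN})$. Combined with $\kappa_p$, this gives $|d_i-d|\le(pN)^{4/7}+O(\sqrt{pN})\le d^{\alpha}$. The density hypotheses are routine: $\mu\asymp p\to0$, $\log^Kn/n=o(p)$, and $m\gg n\log^Kn$. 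The remaining bipartite condition $(\ell+n)^{5-5\alpha}=o(\ell nm^{3-5\alpha})$ reduces to $N^{3-5\alpha}=o((pN^2)^{3-5\alpha})$, which holds, and the error terms there and in \Cref{thm:graph_counting} are $o(1)$ because $d^{5\alpha-3}\to0$ (respectively $\min\{s,t\}^{5\alpha-5}m^2/(\ell n)\asymp(pN)^{5\alpha-3}\to0$). The theorems then give each ratio as $(1+o(1))$ times either $\exp(1/4-\gamma_2^2/(4\mu^2(1-\mu)^2))$ or $H(\mbf s,\mbf t)$, so everything hinges on bounding these correction factors.

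For the internal correction, $\gamma_2^2/\mu^2\asymp\bigl(\sum_i(d_i-d)^2/(n^2p)\bigr)^2$. Under $\kappa_p$ alone, $\sum_i(d_i-d)^2\le n(pN)^{8/7}$, so this quantity is $O((pN)^{2/7})$, which gives the weak bound. Under $\Gamma_{C_\Gamma,p}$ it is $O(C_\Gamma^2)$, since $m[s,t]/|\Pi[s]|=d$ is exactly the centering used in $\Gamma$; this gives a constant. For the bipartite correction, $\sigma^2(\mbf s)/s\asymp\ell^{-1}\sum_i(s_i-s)^2/(pN)$, which is $O((pN)^{1/7})$ under $\kappa_p$ and $O(C_\Gamma)$ under $\Gamma$. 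The factor $|\log H|$ is at most $\frac12(1+O(\cdot))^2$, so it is again $O((pN)^{2/7})$ in the weak case and $O(C_\Gamma^2)$ in the strong case. Since the exponents are bounded in both directions, the upper and lower bounds in each case come together. The main obstacle I expect is the bookkeeping in verifying the hypotheses of \Cref{thm:bigraph_counting} uniformly, especially the condition $d^\alpha\ge(pN)^{4/7}+O(\sqrt{pN})$, which forces the strict choice $\alpha>4/7$. Once that is settled, the weak and strong bounds follow as above.
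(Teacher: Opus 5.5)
Your proposal is correct and follows the paper's proof essentially step for step. You factor both laws over block pairs, so that the tilt $\lambda$ drops out of the conditioned row model, and apply \Cref{thm:graph_counting,thm:bigraph_counting} with an exponent slightly above $4/7$; the paper takes $\alpha=7/12$, i.e.\ your $\epsilon_0=1/84$. You then bound the $\gamma_2$ and $H$ correction factors by $O((pN)^{2/7})$ under $\kappa_p$ and by a constant under $\Gamma_{C_\Gamma,p}$, as the paper does.

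There are two cosmetic slips, neither of which affects the argument. The side average $m[s,t]/n[t]$ should be compared with $p\,n[s]$, not $p\,n[t]$. In the strong case the bound on $|\log H_{s,t}|$ is $O((1+C_\Gamma)^2)$ rather than $O(C_\Gamma^2)$.
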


\begin{proof}
\emph{Factorization over block pairs.} Write $\{m\}:=\{\mbf{d}'\in\mc{A}(\Pi):m(\Pi,\mbf{d}')=m\}$. For $\mbf{d}\in\{m\}$ and a block pair $\{s,t\}$ of $\Pi$, let $\mbf{d}_{s,s}:=(\mbf{d}[x,s])_{x\in\Pi[s]}$ be the degree sequence of $\mbf{d}$ on the internal block pair $\{s,s\}$, which has sum $m[s,s]$, and let $\mbf{d}_{s,t}:=\big((\mbf{d}[x,t])_{x\in\Pi[s]},(\mbf{d}[y,s])_{y\in\Pi[t]}\big)$ be its pair of degree sequences on the bipartite block pair $\{s,t\}$, which have common sum $m[s,t]$. The array $\mbf{d}$ is the same as the collection of these sequences, and both laws in the statement factor over block pairs, in the notation of \Cref{def:graph_deg_seq,def:bigraph_deg_seq}:
\begin{equation}\label{eq:enum-factor}
\begin{aligned}
\msf{G}_{\Pi,m}(\mbf{d})&=\prod_{s\in\{0,1\}^k}\mb{P}_{G(n[s],m[s,s]/2)}(\mbf{d}_{s,s})\prod_{\{s,t\}:\,s\ne t}\mb{P}_{G(n[s],n[t],m[s,t])}(\mbf{d}_{s,t}),\\
\msf{R}_{\Pi,\lambda}(\mbf{d}\mid\{m\})&=\prod_{s\in\{0,1\}^k}\mb{P}_{B(n[s],m[s,s]/2)}(\mbf{d}_{s,s})\prod_{\{s,t\}:\,s\ne t}\mb{P}_{B(n[s],n[t],m[s,t])}(\mbf{d}_{s,t}).
\end{aligned}
\end{equation}
The first identity is \Cref{def:aux-degree-laws}: under $\msf{G}_{\Pi,m}$, the restrictions of $G$ to the block pairs are independent uniformly random simple graphs on $\Pi[s]$ with $m[s,s]/2$ edges, respectively bipartite graphs between $\Pi[s]$ and $\Pi[t]$ with $m[s,t]$ edges. For the second identity, $\msf{R}_{\Pi,\lambda}$ is a product law and $\{m\}$ is an intersection of one constraint per block pair, so the block-pair sequences remain independent after conditioning; and for independent binomial variables with a common success probability $\lambda$, every point probability on the event that their sum equals $M$ carries the same factor $\lambda^{M}(1-\lambda)^{\text{trials}-M}$, so the conditional law given the sum does not depend on $\lambda$ and coincides with the laws $\mb{P}_{B(\cdot)}$, which were defined with success probability $1/2$. In particular the numerical values of $\lambda[s,t]$ play no role in this proof.

\emph{Hypotheses of the enumeration theorems.} Write $\bar d_{s,t}:=m[s,t]/n[s]$ for the average of the sequence $(\mbf{d}[x,t])_{x\in\Pi[s]}$. By \cref{LA:sizes}, $n[s]\asymp N$; by \cref{LA:sizes} and \cref{LA:edge-scale}, $\bar d_{s,t}\asymp pN$ and
\begin{equation}\label{eq:enum-avg}
\left|\bar d_{s,t}-p\,n[t]\right|\le\frac{TN^2p}{\sqrt{pN}\,n[s]}\le T^2\sqrt{pN}.
\end{equation}
Hence $\kappa_p(\Pi,\mbf{d})$ and \eqref{eq:enum-avg} give, for all $s,t\in\{0,1\}^k$ and $x\in\Pi[s]$,
\begin{equation}\label{eq:enum-entry}
\left|\mbf{d}[x,t]-\bar d_{s,t}\right|\le(pN)^{4/7}+T^2\sqrt{pN}\le2(pN)^{4/7}\le\bar d_{s,t}^{\,7/12}
\end{equation}
for large $N$, which is the maximum-degree hypothesis of \Cref{thm:graph_counting,thm:bigraph_counting} with $\alpha=7/12$, applied to each block pair after identifying $\Pi[s]$ with $[n[s]]$. Their remaining hypotheses hold as well: the densities $\mu_s$ and $\mu_{s,t}$ defined below are $\asymp p\to0$; the average degrees are $\asymp pN\ge T^{-1}N^{1-\theta}$, so $\log^KN/N=o(\mu_s)$ and $N\log^KN=o(m[s,t])$ for every fixed $K$; and for $\alpha=7/12$ the condition $(\ell+n)^{5-5\alpha}=o(\ell n\,m^{3-5\alpha})$ of \Cref{thm:bigraph_counting}, with $\ell=n[s]$, $n=n[t]$, $m=m[s,t]$, reads $N^{25/12}=o(N^{2}(pN^2)^{1/12})$ up to constants, that is, $(pN)^{1/12}\to\infty$. Every $\mbf{d}$ in the statement is graphical by \Cref{lem:auto-graphicality}, whose hypotheses are supplied by \cref{LA:sizes} and \cref{LA:edge-scale}; thus each block-pair degree sequence of $\mbf{d}$ is realized by a simple graph, respectively bipartite graph, so the left-hand sides in \eqref{eq:enum-ratios} below are positive.

\emph{The block-pair ratios.} Put $\mu_s:=\bar d_{s,s}/(n[s]-1)$, $\mu_{s,t}:=m[s,t]/(n[s]n[t])$, both $\asymp p$, and
\[
\gamma_2(\mbf{d}_{s,s}):=\frac{1}{(n[s]-1)^2}\sum_{x\in\Pi[s]}\left(\mbf{d}[x,s]-\bar d_{s,s}\right)^2,\qquad
\sigma^2_{s,t}:=\frac{1}{n[s]}\sum_{x\in\Pi[s]}\left(\mbf{d}[x,t]-\bar d_{s,t}\right)^2 .
\]
The error terms of \Cref{thm:graph_counting,thm:bigraph_counting} are $o(1)$ in the present ranges: with $\alpha=7/12$ they are $O(\log^2N/\sqrt N+(pN)^{-1/12})$, respectively $O(\log^2N/\sqrt N+(pN)^{-25/12}\cdot p^2N^2)=O(\log^2N/\sqrt N+(pN)^{-1/12})$. The theorems therefore read
\begin{equation}\label{eq:enum-ratios}
\begin{aligned}
\frac{\mb{P}_{G(n[s],m[s,s]/2)}(\mbf{d}_{s,s})}{\mb{P}_{B(n[s],m[s,s]/2)}(\mbf{d}_{s,s})}&=(1+o(1))\exp\left(\frac14-\frac{\gamma_2(\mbf{d}_{s,s})^2}{4\mu_s^2(1-\mu_s)^2}\right),\\
\frac{\mb{P}_{G(n[s],n[t],m[s,t])}(\mbf{d}_{s,t})}{\mb{P}_{B(n[s],n[t],m[s,t])}(\mbf{d}_{s,t})}&=(1+o(1))H_{s,t},
\end{aligned}
\end{equation}
where
\[
H_{s,t}:=\exp\left(-\frac12\left(1-\frac{\sigma^2_{s,t}}{\bar d_{s,t}(1-\mu_{s,t})}\right)\left(1-\frac{\sigma^2_{t,s}}{\bar d_{t,s}(1-\mu_{s,t})}\right)\right).
\]
Taking logarithms in \eqref{eq:enum-factor} and substituting \eqref{eq:enum-ratios},
\begin{equation}\label{eq:enum-log}
\log\frac{\msf{G}_{\Pi,m}(\mbf{d})}{\msf{R}_{\Pi,\lambda}(\mbf{d}\mid\{m\})}
=\sum_{s\in\{0,1\}^k}\left(\frac14-\frac{\gamma_2(\mbf{d}_{s,s})^2}{4\mu_s^2(1-\mu_s)^2}\right)+\sum_{\{s,t\}:\,s\ne t}\log H_{s,t}+o(1).
\end{equation}

\emph{Proof of \ref{lem:local-enumeration-comparison:weak}.} By \eqref{eq:enum-entry},
\[
\gamma_2(\mbf{d}_{s,s})\le\frac{4n[s](pN)^{8/7}}{(n[s]-1)^2}=O\left(p(pN)^{1/7}\right),\qquad
\frac{\gamma_2(\mbf{d}_{s,s})^2}{4\mu_s^2(1-\mu_s)^2}=O\left(\frac{p^2(pN)^{2/7}}{p^2}\right)=O\left((pN)^{2/7}\right),
\]
and
\[
\sigma^2_{s,t}\le4(pN)^{8/7},\qquad
\frac{\sigma^2_{s,t}}{\bar d_{s,t}(1-\mu_{s,t})}=O\left((pN)^{1/7}\right),
\]
so that $|\log H_{s,t}|\le\frac12\left(1+O\left((pN)^{1/7}\right)\right)^2=O\left((pN)^{2/7}\right)$.
Each of the $2^k+\binom{2^k}{2}$ terms in \eqref{eq:enum-log} is therefore $O((pN)^{2/7})$ in absolute value, with implied constants depending only on $\theta,k,T$; this is \ref{lem:local-enumeration-comparison:weak}.

\emph{Proof of \ref{lem:local-enumeration-comparison:strong}.} If $\Gamma_{C_\Gamma,p}(\Pi,m,\mbf{d})$ holds, then $\sum_{x\in\Pi[s]}(\mbf{d}[x,t]-\bar d_{s,t})^2\le C_\Gamma pN^2$ for all $s,t$, so in place of the bounds above,
\begin{align*}
\gamma_2(\mbf{d}_{s,s})&\le\frac{C_\Gamma pN^2}{(n[s]-1)^2}=O(C_\Gamma p),&
\frac{\gamma_2(\mbf{d}_{s,s})^2}{4\mu_s^2(1-\mu_s)^2}&=O\left(C_\Gamma^2\right),\\
\sigma^2_{s,t}&\le\frac{C_\Gamma pN^2}{n[s]}=O(C_\Gamma pN),&
|\log H_{s,t}|&=O\left((1+C_\Gamma)^2\right),
\end{align*}
and every term in \eqref{eq:enum-log} is bounded in absolute value by a constant depending only on $\theta,k,T,C_\Gamma$; this is \ref{lem:local-enumeration-comparison:strong}.
\end{proof}

\subsection{Second-moment regularity}

\begin{lemma}[Graphical fiber second-moment regularity]\label{lem:local-gamma-concentration}
Fix $\theta\in(1/2,1)$, an integer $k\ge1$, and constants $T>1$ and $\phi\in(0,1/2)$. There exists a constant $\Cst{lem:local-gamma-concentration}=\Cst{lem:local-gamma-concentration}(\theta,k,T,\phi)\ge1$ such that for all sufficiently large $N$, all $p\in(T^{-1}N^{-\theta},TN^{-\theta})$, and every finite set $V$ with $|V|=N$, if $(y,\lambda)$ is $(T,\phi,p)$-locally admissible with $y=(\Pi,m,1)\in\mc{Y}_k(V)$, then
\[
\msf{G}_{\Pi,m}\left(\Gamma_{\Cst{lem:local-gamma-concentration},p}(\Pi,m,\mbf{D})\mid\mc{I}_\Pi,\kappa_p(\Pi,\mbf{D})\right)\ge1-e^{-N}.
\]
\end{lemma}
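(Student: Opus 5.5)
Since $\msf{G}_{\Pi,m}$ is exactly the law of $\delta(\Pi,G)$ for $G\sim\mb{G}(V,p)$ conditioned on $\{m(\Pi,\delta(\Pi,G))=m\}$, I will estimate
\[
\msf{G}_{\Pi,m}\bigl(\Gamma^c\mid\mc I_\Pi,\kappa_p\bigr)\le\frac{\mb P\bigl[\Gamma^c\cap\kappa_p\cap\{m\}\bigr]}{\mb P\bigl[\mc I_\Pi\cap\kappa_p\cap\{m\}\bigr]},
\]
with probabilities taken under $\mb{G}(V,p)$. Here $\Gamma:=\Gamma_{C,p}(\Pi,m,\cdot)$, $\{m\}$ is the exact-total event, and I drop $\mc I_\Pi$ from the numerator. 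The numerator will be bounded by $e^{-KN}$ using \Cref{lem:gamma_bound}, and the denominator will be bounded below by $e^{-C_0N}$ for some $C_0=C_0(\theta,k,T,\phi)$. I will then choose $K:=C_0+2$ and $C$ large enough in terms of $K$.

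\emph{Numerator.} Work block pair by block pair. On $\{m\}$ the block-pair average of $(\mbf D[x,t])_{x\in\Pi[s]}$ is exactly $m[s,t]/|\Pi[s]|$, which is the centering in $\Gamma$. The restriction of $G$ to the pair $\{s,t\}$ is $\mb G(n[s],p)$ or $\mb G(n[s],n[t],p)$. On $\kappa_p$ every entry is within $(pN)^{4/7}\le pn[t]$ of $pn[t]$, so the truncation events of \Cref{lem:gamma_bound} hold. For $s\ne t$ the truncation asks for $|s_i-pn|\le pn$ with $n=n[t]$ and $|t_j-p\ell|\le pn$, which is implied because $n[s],n[t]\asymp N$. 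I will apply \Cref{lem:gamma_bound} with $T^{1+\theta}$ in place of $T$, using $n[s]\in[T^{-1}N,N]$, to get probability at most $e^{-K'T^{-1}N}$ for each of the $O(4^k)$ pairs. Since $n[s]^2,n[s]n[t]\le N^2$, the lemma's threshold $Cpn^2$ matches $\Gamma$'s threshold $CpN^2$. A union bound and the choice $K'$ large then give numerator at most $e^{-KN}$. This uses no information about $\{m\}$ beyond centering, so intersecting with $\{m\}$ only helps.

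\emph{Denominator (main obstacle).} I need $\mb P[\mc I_\Pi\cap\kappa_p\cap\{m\}]\ge e^{-C_0N}$, and the history constraints are the hard part. First, $\mb P[\{m\}]\ge(N^2p)^{-O(4^k)}$, since each edge total is binomial and $m$ lies within $O(\sqrt{pN^2}\cdot\sqrt{N/ (pN)}\cdot\ldots)$, that is within a polynomial number of standard deviations of its mean by \cref{LA:edge-scale}. I plan to lower-bound by counting degree arrays directly. I will count arrays $\mbf d\in\{m\}\cap\kappa_p\cap\mc I_\Pi$ with $\Gamma_{C_1,p}$ holding for a fixed $C_1$; each is graphical by \Cref{lem:auto-graphicality}. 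For these, the strong comparison \Cref{lem:local-enumeration-comparison}\ref{lem:local-enumeration-comparison:strong} gives $\msf G_{\Pi,m}(\mbf d)\gtrsim\msf R_{\Pi,\lambda}(\mbf d\mid\{m\})$. It therefore suffices to show
\[
\msf R_{\Pi,\lambda}\bigl(\mc I_\Pi\cap\kappa_p\cap\Gamma_{C_1}\cap\{m\}\bigr)\ge e^{-O(N)},
\]
after which dividing by $\msf R_{\Pi,\lambda}(\{m\})\le1$ finishes.

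\emph{The row-model estimate.} I will choose, for each $s$, a fixed bounded box of rows near the conditional mean, of width $\asymp\sqrt{pN}$, that satisfies $\mc I_s$. Such a box exists with $\msf R$-mass $\ge c(\phi)>0$ per row, using \cref{LA:conditioning} and \cref{LA:separation}, which provide room away from the tie thresholds. I will then realize the exact totals by the argument of \Cref{lem:basic_counting}: fix most rows freely in the box and use the last $n^{2/3}$ rows of each block to correct the sums. This costs a factor $e^{-O(N)}$, and rows inside the box automatically satisfy $\kappa_p$ and $\Gamma_{C_1}$. The correction step must keep the adjusted rows inside the $\mc I_s$ region. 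Because the correcting rows each shift by at most $O(\sqrt{pN})$ in a single coordinate, I expect to need a box slightly inside the cone, which \cref{LA:separation} and the $\sqrt{pN}$ margin should allow. Verifying this margin uniformly is the step I expect to be most delicate.
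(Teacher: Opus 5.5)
Your outline follows the paper's proof. It uses the same ratio, bounds the numerator with \Cref{lem:gamma_bound} (with $T^{1+\theta}$, truncation from $\kappa_p$, and $K$ chosen after the denominator constant), and treats the denominator as $\mb P[\{m\}]$ times $\msf G_{\Pi,m}$ of a set of arrays near the block averages. That last quantity is bounded via \Cref{lem:basic_counting}, a binomial point bound, and the strong comparison.

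One estimate is wrong, though harmlessly. The claim $\mb P[\{m\}]\ge(N^2p)^{-O(4^k)}$ is false in general. Each edge total has standard deviation $\asymp N\sqrt p$, while \cref{LA:edge-scale} lets $m[s,t]$ sit $TN^2p/\sqrt{pN}$ from its mean. That is about $T\sqrt N$ standard deviations, so $\mb P[\{m\}]$ can be $e^{-\Theta(N)}$. The correct bound is $\mb P[\{m\}]\ge e^{-O(N)}$, for example from $\mb P[\mr{Bin}(R,p)=j]\ge(R+1)^{-1}\exp(-(j-Rp)^2/(Rp(1-p)))$. This is all you need, because $K$ is chosen after $C_0$.

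The step you flag as delicate is actually automatic. Center the box exactly at $m[s,\bullet]/n[s]$, which is your conditional mean by \cref{LA:solvability}, with half-width $\frac{c}{T2^k}\sqrt{pN}$ for some $c<1$. Dividing \cref{LA:separation} by $n[s]\le N$ gives a margin of $T^{-1}\sqrt{pN}$ at the center, so the whole box lies strictly inside $\mc I_s$. In \Cref{lem:basic_counting} the correcting entries range over the same box, so no further margin is needed, and neither is \cref{LA:conditioning}.

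What does need care is converting the count into $\msf R_{\Pi,\lambda}$-probability. Do this with a pointwise bound $\msf R_{\Pi,\lambda}(\mbf d)\ge(c'/\sqrt{pN})^{2^kN}$ for arrays in the box, from \Cref{lem:binomial_estimate}. Do not rely on a per-row mass bound while sampling the free rows from $\msf R_{\Pi,\lambda}$. A row's $\msf R_{\Pi,\lambda}$-mean on the box is not, in general, equal to $m[s,\bullet]/n[s]$ to the precision $O(n^{-1/3}\sqrt{pN})$ that $n^{2/3}$ correcting rows can absorb. The reason is that the unconditioned binomial mean is generically order $\sqrt{pN}$ away from the $\mc I_s$-conditioned one.
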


\begin{proof}
Let $P_{\Pi,p}$ denote the law of $\delta(\Pi,G)$ for $G\sim\mb{G}(V,p)$, with the partition $\Pi$ fixed, and abbreviate $\{m\}:=\{\mbf{D}\in\mc{A}(\Pi):m(\Pi,\mbf{D})=m\}$, $\kappa_p:=\kappa_p(\Pi,\cdot)$, and $\Gamma_C:=\Gamma_{C,p}(\Pi,m,\cdot)$ for $C>0$. By \Cref{def:aux-degree-laws}, $\msf{G}_{\Pi,m}=P_{\Pi,p}[\,\cdot\mid\{m\}]$, so for every event $E\subseteq\mc{A}(\Pi)$,
\begin{equation}\label{eq:G-as-conditional}
\msf{G}_{\Pi,m}(E)=\frac{P_{\Pi,p}\left[E\cap\{m\}\right]}{P_{\Pi,p}\left[\{m\}\right]}.
\end{equation}
Applying \eqref{eq:G-as-conditional} to numerator and denominator, the quantity to be bounded is
\begin{equation}\label{eq:gamma-goal}
\msf{G}_{\Pi,m}\left(\Gamma_{\Cst{lem:local-gamma-concentration}}^c\mid\mc{I}_\Pi\cap\kappa_p\right)
=\frac{\msf{G}_{\Pi,m}\left(\Gamma_{\Cst{lem:local-gamma-concentration}}^c\cap\mc{I}_\Pi\cap\kappa_p\right)}{\msf{G}_{\Pi,m}\left(\mc{I}_\Pi\cap\kappa_p\right)}
=\frac{P_{\Pi,p}\left[\Gamma_{\Cst{lem:local-gamma-concentration}}^c\cap\mc{I}_\Pi\cap\kappa_p\cap\{m\}\right]}{P_{\Pi,p}\left[\mc{I}_\Pi\cap\kappa_p\cap\{m\}\right]}.
\end{equation}
We will exhibit a constant $C_5$, depending only on $\theta,k,T,\phi$ and not on $\Cst{lem:local-gamma-concentration}$, such that the denominator is at least $e^{-C_5N}$ (Steps 1--3), and then choose $\Cst{lem:local-gamma-concentration}$ so that the numerator is at most $e^{-(C_5+1)N}$ (Step 4). The constants $C_0,\ldots,C_5$ and $C_\ast$ below are internal to this proof and depend only on $\theta,k,T,\phi$.

\emph{Step 1: lower bound for $P_{\Pi,p}[\{m\}]$.}
Under $P_{\Pi,p}$, the edge counts of the block pairs of $\Pi$ are independent, so $P_{\Pi,p}[\{m\}]$ is the product, over the block pairs of $\Pi$, of the probabilities that the edge count of the internal block pair $\{s,s\}$ equals $m[s,s]/2$, respectively that the edge count of the bipartite block pair $\{s,t\}$ equals $m[s,t]$. Each such edge count is a binomial variable $\mr{Bin}(R,p)$ with $R=\binom{n[s]}{2}$, respectively $R=n[s]n[t]$, so that $Rp\asymp N^2p$ by \cref{LA:sizes}, and by \cref{LA:edge-scale} the target value $j$ ($=m[s,s]/2$, respectively $m[s,t]$) satisfies $|j-Rp|=O(N^2p/\sqrt{pN})$. We use the following elementary lower bound for binomial point probabilities. For an integer $0<j<R$, put $q:=j/R$. The term $\binom Rj q^j(1-q)^{R-j}$ is the largest of the $R+1$ terms of the binomial distribution $\mr{Bin}(R,q)$, whose mode is at $\lfloor(R+1)q\rfloor=j$, and these terms sum to $1$, so $\binom Rj q^j(1-q)^{R-j}\ge(R+1)^{-1}$. Therefore
\begin{align*}
\mb P[\mr{Bin}(R,p)=j]=\binom Rj p^j(1-p)^{R-j}
&\ge\frac{1}{R+1}\left(\frac pq\right)^{j}\left(\frac{1-p}{1-q}\right)^{R-j}
=\frac{1}{R+1}\exp\left(-R\,D(q\,\|\,p)\right)\\
&\ge\frac{1}{R+1}\exp\left(-\frac{(j-Rp)^2}{Rp(1-p)}\right),
\end{align*}
where $D(q\|p)=q\log\frac qp+(1-q)\log\frac{1-q}{1-p}$, and the last step uses $\log u\le u-1$ twice: $q\log\frac qp\le\frac{q(q-p)}{p}$ and $(1-q)\log\frac{1-q}{1-p}\le\frac{(1-q)(p-q)}{1-p}$, whose sum is $\frac{(q-p)^2}{p(1-p)}$. In our situation $(j-Rp)^2/(Rp)=O\left(N^4p^2/(pN\cdot N^2p)\right)=O(N)$ and $\log(R+1)=O(\log N)$, so each of the $2^k+\binom{2^k}{2}$ probabilities is at least $\exp(-C_0N)$ for a constant $C_0=C_0(\theta,k,T)$, and hence, with $C_1:=(2^k+\binom{2^k}{2})C_0$,
\begin{equation}\label{eq:m-lower}
P_{\Pi,p}\left[\{m\}\right]\ge \exp(-C_1N).
\end{equation}

\emph{Step 2: a large set $\mc{E}_0$ of good arrays.} Let $\mc E_0$ be the set of arrays $\mbf d\in\mc{A}(\Pi)$ with $m(\Pi,\mbf d)=m$ and
\begin{equation}\label{eq:E0-window}
\left|\mbf d[x,t]-\frac{m[s,t]}{n[s]}\right|\le \frac{0.01}{T2^k}\sqrt{pN}
\qquad\text{for every }s,t\in\{0,1\}^k\text{ and }x\in\Pi[s].
\end{equation}
Every array in $\mc E_0$ satisfies $\mc{I}_\Pi$. Indeed, fix $s\in\{0,1\}^k$, $x\in\Pi[s]$, and $1\le r\le k-1$. Summing \eqref{eq:E0-window} over the $2^k$ values of $t$,
\[
\left|\sum_{t\in\{0,1\}^k}(-1)^{t[r-1]}\mbf d[x,t]-\frac{1}{n[s]}\sum_{t\in\{0,1\}^k}(-1)^{t[r-1]}m[s,t]\right|\le\frac{0.01}{T}\sqrt{pN},
\]
while \cref{LA:separation} together with $n[s]\le N$ gives
\[
\left|\frac{1}{n[s]}\sum_{t\in\{0,1\}^k}(-1)^{t[r-1]}m[s,t]\right|\ge\frac{1}{n[s]}\cdot T^{-1}\frac{N^2p}{\sqrt{pN}}\ge T^{-1}\sqrt{pN},
\]
with the sign of the sum equal to $(-1)^{s[r]}$. Hence $\sum_{t}(-1)^{t[r-1]}\mbf d[x,t]$ is nonzero with sign $(-1)^{s[r]}$, which is exactly the $r$th inequality in $\mc{I}_s(\mbf d[x,\bullet])$ (both the weak and the strict form). Every array in $\mc E_0$ also satisfies $\kappa_p$: by \cref{LA:sizes} and \cref{LA:edge-scale}, $|m[s,t]/n[s]-p\,n[t]|\le TN^2p/(\sqrt{pN}\,n[s])\le T^2\sqrt{pN}$, so \eqref{eq:E0-window} gives $|\mbf d[x,t]-p\,n[t]|\le(T^2+0.01)\sqrt{pN}\le(pN)^{4/7}$ for all sufficiently large $N$. Finally, squaring and summing \eqref{eq:E0-window} over $x\in\Pi[s]$ gives $\sum_{x\in\Pi[s]}(\mbf d[x,t]-m[s,t]/n[s])^2\le 10^{-4}pN^2$, so every array in $\mc E_0$ satisfies $\Gamma_1$, and hence $\Gamma_{C}$ for every $C\ge1$. In summary,
\begin{equation}\label{eq:E0-inclusion}
\mc{E}_0\subseteq\{m\}\cap\mc{I}_\Pi\cap\kappa_p\cap\Gamma_1\subseteq\{m\}\cap\mc{I}_\Pi\cap\kappa_p\cap\Gamma_{C}\qquad(C\ge1),
\end{equation}
and by \Cref{lem:auto-graphicality} every array in $\mc{E}_0$ is graphical.

\emph{Step 3: lower bound for the denominator of \eqref{eq:gamma-goal}.} We first lower bound $\msf{G}_{\Pi,m}(\mc{E}_0)$ by summing pointwise bounds over $\mc{E}_0$. We claim that, under the law $\msf{R}_{\Pi,\lambda}(\cdot\mid\{m\})$, each array in $\mc E_0$ has probability at least
\begin{equation}\label{eq:E0-point}
\msf{R}_{\Pi,\lambda}\left(\mbf{d}\mid\{m\}\right)\ge(pN)^{-2^kN/2}\exp(-C_2N)\qquad\text{for every }\mbf{d}\in\mc{E}_0,
\end{equation}
with $C_2$ depending only on $\theta,k,T$. To see this, fix $\mbf{d}\in\mc{E}_0$. Since $\mbf{d}\in\{m\}$, conditioning can only increase its probability, $\msf{R}_{\Pi,\lambda}(\mbf{d}\mid\{m\})\ge\msf{R}_{\Pi,\lambda}(\mbf{d})$, and by the product structure of $\msf{R}_{\Pi,\lambda}$,
\[
\msf{R}_{\Pi,\lambda}(\mbf{d})=\prod_{s\in\{0,1\}^k}\prod_{x\in\Pi[s]}\prod_{t\in\{0,1\}^k}\mb{P}\left[\mr{Bin}\left(n[t]-\one_{s=t},\lambda[s,t]\right)=\mbf{d}[x,t]\right],
\]
a product of $2^kN$ binomial point probabilities. Each of them is a point probability at a bounded number of standard deviations from the mean: by \eqref{eq:E0-window}, \cref{LA:sizes}, \cref{LA:edge-scale}, and \cref{LA:tilt},
\[
\left|\mbf{d}[x,t]-p\,n[t]\right|\le\left|\mbf{d}[x,t]-\frac{m[s,t]}{n[s]}\right|+\left|\frac{m[s,t]}{n[s]}-p\,n[t]\right|\le(0.01+T^2)\sqrt{pN},
\]
while the parameters $n':=n[t]-\one_{s=t}$ and $p':=\lambda[s,t]$ of the binomial satisfy $|n'-n[t]|\le1$ and $|p'-p|\le Tp/\sqrt{pN}$. We may therefore apply \Cref{lem:binomial_estimate} with its $n$ equal to $n[t]$ (note $p\in(T'^{-1}n[t]^{-\theta},T'n[t]^{-\theta})$ for $T':=T^{1+\theta}$, by \cref{LA:sizes}), its $(n',p')$ as above, and $\xi:=\mbf{d}[x,t]-p\,n[t]$ in the role of the variable $x$ of the lemma; by the previous display, $\xi$ lies in the window $|\xi|\le\sqrt{p\,n[t]}\log n[t]$ of the lemma. In its second formula the parameter $\alpha=\sqrt{pn}\log\left(p'(n'-pn)/((1-p')pn)\right)$ is bounded by a constant depending only on $T$, because $p'/p=1+O(1/\sqrt{pN})$ and $(n'-pn)/((1-p')n)=1+O(p+1/n)$; and the normalization satisfies $\mc{N}_2=(1+o(1))(2\pi p\,n[t])^{-1/2}$, as one sees by summing the formula over the window (which carries all but $N^{-\omega(1)}$ of the mass, by Chernoff's inequality). Hence
\begin{align*}
\mb{P}\left[\mr{Bin}\left(n[t]-\one_{s=t},\lambda[s,t]\right)=\mbf{d}[x,t]\right]
&\ge\frac{1+o(1)}{\sqrt{2\pi p\,n[t]}}\exp\left(-\frac{\left((0.01+T^2)\sqrt{pN}+|\alpha|\sqrt{p\,n[t]}\right)^2}{2p\,n[t]}\right)\\
&\ge\frac{c}{\sqrt{pN}}
\end{align*}
for a constant $c=c(\theta,k,T)>0$, where we used $T^{-1}N\le n[t]\le N$ in the last step. Taking the product over the $2^kN$ entries gives \eqref{eq:E0-point} with $C_2:=2^k\log(1/c)$.

Next we count $\mc{E}_0$. An array $\mbf{d}$ with $m(\Pi,\mbf{d})=m$ is specified by choosing, for each of the $2^{2k}$ ordered pairs $(s,t)$, the integer sequence $(\mbf{d}[x,t])_{x\in\Pi[s]}$ with sum $m[s,t]$. Fix $(s,t)$, write $w:=0.01\sqrt{pN}/(T2^k)$, and write $m[s,t]=q\,n[s]+r$ with $q\in\mb{Z}$ and $0\le r<n[s]$. Let $\mbf{b}$ be the sequence taking the value $q+1$ on $r$ fixed elements of $\Pi[s]$ and $q$ on the remaining $n[s]-r$; then $\sum_x\mbf{b}[x]=m[s,t]$ and $|\mbf{b}[x]-m[s,t]/n[s]|\le1$ for every $x$. For every integer sequence $(y_x)_{x\in\Pi[s]}$ with $\sum_xy_x=0$ and $|y_x|\le w-1$, the sequence $\mbf{b}+y$ has sum $m[s,t]$ and satisfies \eqref{eq:E0-window}; its entries lie in $[0,n[t]-\one_{s=t}]$ for all sufficiently large $N$, because they are within $w$ of $m[s,t]/n[s]=p\,n[t]+O(\sqrt{pN})$, so the resulting arrays lie in $\mc{A}(\Pi)$ and hence in $\mc{E}_0$. By \Cref{lem:basic_counting}, applied with $n=n[s]$, with its constant $T$ equal to $T^{1+\theta}$ (so that its hypothesis on $p$ holds, because $n[s]\in[T^{-1}N,N]$ by \cref{LA:sizes}), and with its parameter $C$ equal to $c_0:=0.005/(T2^k)$ (so that $c_0\sqrt{n[s]p}\le w-1$ for large $N$), the number of such sequences $y$ is at least
\[
\exp\left(-\Cof{lem:basic_counting}{\theta,T^{1+\theta},c_0}\,n[s]\right)\left(\sqrt{n[s]p}\right)^{n[s]}\ge\exp\left(-\left(\Cof{lem:basic_counting}{\theta,T^{1+\theta},c_0}+\tfrac12\log T\right)N\right)(pN)^{n[s]/2},
\]
using $T^{-1}N\le n[s]\le N$. Multiplying over the $2^{2k}$ ordered pairs $(s,t)$, and using $\sum_{s,t}n[s]=2^kN$, we obtain, with $C_3:=2^{2k}(\Cof{lem:basic_counting}{\theta,T^{1+\theta},c_0}+\tfrac12\log T)$,
\begin{equation}\label{eq:E0-count}
|\mc{E}_0|\ge(pN)^{2^kN/2}\exp(-C_3N).
\end{equation}
Now, by \eqref{eq:E0-inclusion}, every $\mbf{d}\in\mc{E}_0$ is a graphical array satisfying $m(\Pi,\mbf{d})=m$, $\kappa_p$, and $\Gamma_1$, so \Cref{lem:local-enumeration-comparison}\ref{lem:local-enumeration-comparison:strong}, applied with its parameter $C_\Gamma$ equal to $1$, gives $\msf{G}_{\Pi,m}(\mbf{d})\ge C_\ast^{-1}\msf{R}_{\Pi,\lambda}(\mbf{d}\mid\{m\})$ for every $\mbf{d}\in\mc{E}_0$, where $C_\ast:=\Cofp{lem:local-enumeration-comparison}{\theta,k,T,1}$. Summing this over $\mbf{d}\in\mc{E}_0$ and using \eqref{eq:E0-point} and \eqref{eq:E0-count},
\begin{equation}\label{eq:G-E0}
\begin{aligned}
\msf{G}_{\Pi,m}(\mc{E}_0)=\sum_{\mbf{d}\in\mc{E}_0}\msf{G}_{\Pi,m}(\mbf{d})
&\ge C_\ast^{-1}\,|\mc{E}_0|\,(pN)^{-2^kN/2}e^{-C_2N}\\
&\ge C_\ast^{-1}e^{-(C_2+C_3)N}
\ge e^{-C_4N},\qquad C_4:=C_2+C_3+\log C_\ast.
\end{aligned}
\end{equation}
Unwinding the definition of $\msf{G}_{\Pi,m}$ through \eqref{eq:G-as-conditional} and using \eqref{eq:m-lower},
\begin{equation}\label{eq:P-E0}
P_{\Pi,p}\left[\mc{E}_0\right]=P_{\Pi,p}\left[\mc{E}_0\cap\{m\}\right]=\msf{G}_{\Pi,m}(\mc{E}_0)\,P_{\Pi,p}\left[\{m\}\right]\ge e^{-C_4N}e^{-C_1N}=:e^{-C_5N}.
\end{equation}
Finally, by \eqref{eq:E0-inclusion} the denominator of \eqref{eq:gamma-goal} satisfies
\begin{equation}\label{eq:gamma-denominator}
P_{\Pi,p}\left[\mc{I}_\Pi\cap\kappa_p\cap\{m\}\right]\ge P_{\Pi,p}\left[\mc{E}_0\right]\ge e^{-C_5N}.
\end{equation}
The constant $C_5=C_1+C_4$ depends only on $\theta,k,T,\phi$; in particular it does not depend on $\Cst{lem:local-gamma-concentration}$.

\emph{Step 4: upper bound for the numerator of \eqref{eq:gamma-goal}, and choice of $\Cst{lem:local-gamma-concentration}$.} For $C>0$, on the event $\{m\}$ the assertions $\Gamma_{C}(\mbf{D})=\Gamma_{C,p}(\Pi,m,\mbf{D})$ and $\Gamma_{C,p}(\Pi,m(\Pi,\mbf{D}),\mbf{D})$ coincide, and the latter is the assertion that, for every internal block pair $\{s,s\}$ and every bipartite block pair $\{s,t\}$, the corresponding block-pair degree sequences of $G$ have sum of squared deviations from their means at most $CpN^2$. We retain the event $\kappa_p$ in the numerator because it supplies the truncation hypothesis of \Cref{lem:gamma_bound}: $\kappa_p$ asserts that $|\mbf{D}[x,t]-p\,n[t]|\le(pN)^{4/7}$ for all $x\in V$ and $t\in\{0,1\}^k$, and $(pN)^{4/7}\le p\,n[t]$ for all sufficiently large $N$ because $n[t]\ge T^{-1}N$ by \cref{LA:sizes}.
Under $P_{\Pi,p}$, the graph induced on $\Pi[s]$ is distributed as $\mb{G}(n[s],p)$ and the bipartite graph between $\Pi[s]$ and $\Pi[t]$ as $\mb{G}(n[s],n[t],p)$. Apply \Cref{lem:gamma_bound}\cref{lem:gamma_bound:graph} to each internal block pair, with $n=n[s]$, and \Cref{lem:gamma_bound}\cref{lem:gamma_bound:bipartite} to each bipartite block pair, with $n=n[t]$ and $\ell=n[s]$, in both cases with the constant $T$ of the lemma replaced by $T^{1+\theta}$ (so that its hypotheses on $p$ and on $\ell$ hold) and with $K:=T(2C_5+2)$, and put $\Cst{lem:local-gamma-concentration}:=\max\{1,\Cof{lem:gamma_bound}{\theta,T^{1+\theta},K}\}$, which depends only on $\theta,k,T,\phi$. On $\kappa_p$ the truncation hypotheses of the lemma hold, by the previous paragraph. Since $n[s],n[t]\ge T^{-1}N$, each application has failure probability at most $e^{-KN/T}=e^{-(2C_5+2)N}$, and $\Cst{lem:local-gamma-concentration}pn[s]^2\le \Cst{lem:local-gamma-concentration}pN^2$ (respectively $\Cst{lem:local-gamma-concentration}pn[s]n[t]\le \Cst{lem:local-gamma-concentration}pN^2$), so a union bound over the $2^k+\binom{2^k}{2}\le2^{2k}$ blocks and block pairs gives
\begin{equation}\label{eq:gamma-numerator}
P_{\Pi,p}\left[\Gamma_{\Cst{lem:local-gamma-concentration}}^c\cap\mc{I}_\Pi\cap\kappa_p\cap\{m\}\right]
\le P_{\Pi,p}\left[\Gamma_{\Cst{lem:local-gamma-concentration},p}(\Pi,m(\Pi,\mbf{D}),\mbf{D})^c\cap\kappa_p\right]
\le2^{2k}e^{-(2C_5+2)N}\le e^{-(C_5+1)N}
\end{equation}
for all sufficiently large $N$. Inserting \eqref{eq:gamma-denominator} and \eqref{eq:gamma-numerator} into \eqref{eq:gamma-goal},
\[
\msf{G}_{\Pi,m}\left(\Gamma_{\Cst{lem:local-gamma-concentration}}^c\mid\mc{I}_\Pi\cap\kappa_p\right)
=\frac{P_{\Pi,p}\left[\Gamma_{\Cst{lem:local-gamma-concentration}}^c\cap\mc{I}_\Pi\cap\kappa_p\cap\{m\}\right]}{P_{\Pi,p}\left[\mc{I}_\Pi\cap\kappa_p\cap\{m\}\right]}
\le\frac{e^{-(C_5+1)N}}{e^{-C_5N}}=e^{-N}.
\]
\end{proof}

\begin{lemma}[Row-model second-moment regularity after exact totals]\label{lem:local-row-gamma}
Fix $\theta\in(1/2,1)$, an integer $k\ge1$, and constants $T>1$ and $\phi\in(0,1/2)$. For all sufficiently large $N$, all $p\in(T^{-1}N^{-\theta},TN^{-\theta})$, and every finite set $V$ with $|V|=N$, if $(y,\lambda)$ is $(T,\phi,p)$-locally admissible with $y=(\Pi,m,1)\in\mc{Y}_k(V)$, then
\[
\msf{R}_{\Pi,\lambda}\left(\Gamma_{\Cof{lem:local-gamma-concentration}{\theta,k,T,\phi},p}(\Pi,m,\mbf{D})\mid \mc{I}_\Pi,\kappa_p(\Pi,\mbf{D}),m(\Pi,\mbf{D})=m\right)=1-o(1).
\]
\end{lemma}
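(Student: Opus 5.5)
The plan is to prove the much stronger unconditioned-on-totals bound first, and then pay for the two extra conditionings by polynomial or constant factors. Under $\msf{R}_{\Pi,\lambda}(\cdot\mid\mc{I}_\Pi)$, \Cref{fact:conditioned-rows} makes the rows independent. For fixed $s,t$ the variables $\mbf D[x,t]$, $x\in\Pi[s]$, are therefore i.i.d.\ copies of $A_{s,t}\mid\mc I_s(A_{s,\bullet})$. Their common mean is exactly $m[s,t]/n[s]$ by \cref{LA:solvability}, and their variance is at most $\mb E[(A_{s,t}-\mb EA_{s,t})^2]/\phi=O(pN)$ by \cref{LA:conditioning}.

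\emph{Step 1 (concentration of the sum of squares).} Write
\[
Q_{s,t}:=\sum_{x\in\Pi[s]}\bigl(\mbf D[x,t]-m[s,t]/n[s]\bigr)^2 .
\]
This is a sum of $n[s]\le N$ independent nonnegative terms with mean $O(pN)$ each, so $\mb E Q_{s,t}\le C_0pN^2$ with $C_0=C_0(\theta,k,T,\phi)$. The constant $\Cof{lem:local-gamma-concentration}{\theta,k,T,\phi}$ is fixed in advance, and I will not try to make it exceed $C_0$. Instead I will use a large-deviation bound at the level $\Gamma$ requires. The plan is to show that $\msf R_{\Pi,\lambda}(Q_{s,t}>CpN^2\mid\mc I_\Pi)\le e^{-cN}$ for every $C$ larger than some $C_0'$, using the truncated exponential-moment estimate \eqref{eq:trunc-mgf} from the proof of \Cref{lem:gamma_bound}. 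Its input is the tail bound $\mb P[|A_{s,t}-m[s,t]/n[s]|>u\mid\mc I_s]\le\phi^{-1}\cdot 2e^{-cu^2/pN}$, which follows from Bernstein after shifting the center by $O(\sqrt{pN})$ using \cref{LA:tilt} and \cref{LA:edge-scale}. I will also keep the truncation event $\kappa_p$, as in \Cref{lem:gamma_bound}. Each term then has $\mb E e^{\lambda Y^2}\le e^{O(1)}$ for $\lambda\asymp 1/pN$, so $\msf R(Q_{s,t}\ge CpN^2,\kappa_p\mid\mc I_\Pi)\le\exp(-c'CN+O(N))$.

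The main obstacle is that this only gives exponential smallness when $C$ is large, whereas the statement fixes $C=\Cof{lem:local-gamma-concentration}{\theta,k,T,\phi}$. I would handle this in one of two ways. The first option is to argue that the proof of \Cref{lem:local-gamma-concentration} allows that constant to be enlarged freely: it is defined as a max, and increasing it only strengthens the conclusion. The constant of \Cref{lem:local-gamma-concentration} is then taken at least $C_0'$, and the same constant works here. The second option applies if the constant really cannot be changed. In that case I compare with the mean directly. Chebyshev plus independence gives $\mr{Var}(Q_{s,t})=O(N(pN)^2)$, so $Q_{s,t}=\mb EQ_{s,t}+O(\sqrt N pN\log N)$ with probability $1-N^{-\omega(1)}$. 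This is tail-bounded only polynomially, though, and the exact-total conditioning below costs a polynomial factor $(N^2p)^{C}$. Hence the first route is needed, or at least an exponential-in-$N^{c}$ tail, which Bernstein for the bounded truncated variables $Y^2\le (pN)^{8/7}$ provides: deviations of size $pN^2$ have probability $\exp(-c N^{2}p^2N^2/(N(pN)^2+pN^2(pN)^{8/7}))=\exp(-N^{\Omega(1)})$. This still requires $C>C_0$.

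\emph{Step 2 (removing the conditionings).} By Bayes' rule,
\[
\msf R(\Gamma^c\mid\mc I_\Pi,\kappa_p,\{m\})\le\frac{\msf R(\Gamma^c\cap\kappa_p\mid\mc I_\Pi)}{\msf R(\{m\}\mid\mc I_\Pi)\,\msf R(\kappa_p\mid\mc I_\Pi,\{m\})}.
\]
The first factor in the denominator is at least $(N^2p)^{-C'}$ by \Cref{lem:local-exact-total-lower-bound}. The second is $1-o(1)$ by \Cref{lem:local-binomial-kappa}. The numerator is $\exp(-N^{\Omega(1)})$ by Step 1, after a union bound over the $2^{2k}$ pairs $(s,t)$. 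The ratio is therefore $o(1)$, which proves the lemma.
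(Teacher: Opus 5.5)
Your Step 2 is correct. The Bayes inequality holds as written, \Cref{lem:local-exact-total-lower-bound} and \Cref{lem:local-binomial-kappa} handle the denominator, and an $\exp(-N^{\Omega(1)})$ bound on $\msf R_{\Pi,\lambda}(\Gamma^c\cap\kappa_p\mid\mc I_\Pi)$ would finish.

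\emph{The gap.} It is the one you flagged, and neither of your options closes it for the lemma as stated. The threshold in $\Gamma$ is the fixed number $C:=\Cof{lem:local-gamma-concentration}{\theta,k,T,\phi}$. An argument of your type can only work if $C$ exceeds $\max_{s,t}\mb E[Q_{s,t}\mid\mc I_\Pi]/(pN^2)$ by a margin; otherwise $Q_{s,t}$ concentrates above $CpN^2$ and $\Gamma$ fails with high probability. Your only bound on that mean comes from $\mr{Var}(A_{s,t}\mid\mc I_s(A_{s,\bullet}))\le O(pN)/\phi$, together with the tail $\phi^{-1}e^{-cu^2/pN}$, and this degenerates as $\phi\to0$. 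By contrast, $C=\max\{1,\Cof{lem:gamma_bound}{\theta,T^{1+\theta},K}\}$ with $K=T(2C_5+2)$, and the ingredients $C_1,\dots,C_5$ in the proof of \Cref{lem:local-gamma-concentration} do not grow as $\phi\to0$. So nothing makes $C$ dominate your $\phi$-dependent threshold.

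\emph{Your first option.} Enlarging $C$ is a valid repair of the paper as a whole. The conclusion of \Cref{lem:local-gamma-concentration} is monotone in $C$, and \Cref{prop:local-degree-transference} only needs one constant serving both lemmas, since the strong comparison exists for every $C_\Gamma$. But it proves a different statement.

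\emph{What would close it.} The missing ingredient for the stated constant is a $\phi$-free variance bound. By \Cref{thm:general_poly_calc}, $\mr{Var}(A_{s,t}\mid\mc I_s(A_{s,\bullet}))$ equals, up to $O(\sqrt{pN}\log^{O(1)}N)$, the variance of a Gaussian coordinate with variance $p(n[t]-\one_{s=t})$ conditioned on a convex polyhedron. Conditioning a Gaussian on a convex set does not increase variances (Brascamp--Lieb). Hence $\mb E Q_{s,t}\le(1+o(1))p\,n[s]n[t]\le(1+o(1))pN^2$, far below $C\ge160(K+2)$. Then apply Hoeffding to the independent variables $\min\{(\mbf D[x,t]-m[s,t]/n[s])^2,4(pN)^{8/7}\}$, which coincide with the summands of $Q_{s,t}$ on $\kappa_p$. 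This gives a tail $\exp(-\Omega(N(pN)^{-2/7}))$, which beats $(N^2p)^{\Cof{lem:local-exact-total-lower-bound}{\theta,k,T,\phi}}$.

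\emph{The paper's route.} The paper never computes in the row model. The numerator and denominator of $\msf R_{\Pi,\lambda}(\Gamma^c\mid\mc I_\Pi,\kappa_p,\{m\})$ are sums of $\msf R_{\Pi,\lambda}(\mbf d\mid\{m\})$ over arrays in $\{m\}\cap\kappa_p$. On these arrays the weak comparison \Cref{lem:local-enumeration-comparison}\ref{lem:local-enumeration-comparison:weak} holds pointwise with factors $\exp\{\pm\Cof{lem:local-enumeration-comparison}{\theta,k,T}(pN)^{2/7}\}$. Hence the conditional probability is at most $\exp\{2\Cof{lem:local-enumeration-comparison}{\theta,k,T}(pN)^{2/7}\}\,\msf G_{\Pi,m}(\Gamma^c\mid\mc I_\Pi\cap\kappa_p)$, which is at most $\exp\{2\Cof{lem:local-enumeration-comparison}{\theta,k,T}(pN)^{2/7}-N\}$ by \Cref{lem:local-gamma-concentration}. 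The row-model statement is thus inherited from the graphical one with exactly the same constant; the $e^{-N}$ there is what absorbs the subexponential comparison loss. Neither the local limit theorem behind \Cref{lem:local-exact-total-lower-bound} nor any conditional-variance estimate is needed. Your route avoids the enumeration comparison, but to reach the stated constant it needs the sharp variance bound above.
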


\begin{proof}
Abbreviate $\Gamma:=\Gamma_{\Cof{lem:local-gamma-concentration}{\theta,k,T,\phi},p}(\Pi,m,\cdot)$, $\kappa_p:=\kappa_p(\Pi,\cdot)$, and $\{m\}:=\{\mbf{D}\in\mc{A}(\Pi):m(\Pi,\mbf{D})=m\}$, and write $C_0:=\Cof{lem:local-enumeration-comparison}{\theta,k,T}$ for the constant of the weak comparison \Cref{lem:local-enumeration-comparison}\ref{lem:local-enumeration-comparison:weak}. By Bayes' rule,
\[
\msf{R}_{\Pi,\lambda}\left(\Gamma^c\mid \mc{I}_\Pi,\kappa_p,\{m\}\right)
=\frac{\msf{R}_{\Pi,\lambda}\left(\Gamma^c\cap\mc{I}_\Pi\cap\kappa_p\mid\{m\}\right)}{\msf{R}_{\Pi,\lambda}\left(\mc{I}_\Pi\cap\kappa_p\mid\{m\}\right)}.
\]
Both the numerator and the denominator are sums of $\msf{R}_{\Pi,\lambda}(\mbf{d}\mid\{m\})$ over arrays $\mbf{d}\in\{m\}\cap\kappa_p$, and for every such array the weak comparison gives
\[
\exp\{-C_0(pN)^{2/7}\}\,\msf{G}_{\Pi,m}(\mbf{d})\le\msf{R}_{\Pi,\lambda}(\mbf{d}\mid\{m\})\le\exp\{C_0(pN)^{2/7}\}\,\msf{G}_{\Pi,m}(\mbf{d}).
\]
Summing the upper bound over the arrays in the numerator and the lower bound over the arrays in the denominator, and then applying Bayes' rule for $\msf{G}_{\Pi,m}$ and \Cref{lem:local-gamma-concentration},
\begin{align*}
\msf{R}_{\Pi,\lambda}\left(\Gamma^c\mid \mc{I}_\Pi,\kappa_p,\{m\}\right)
&\le\exp\{2C_0(pN)^{2/7}\}\,\frac{\msf{G}_{\Pi,m}\left(\Gamma^c\cap\mc{I}_\Pi\cap\kappa_p\right)}{\msf{G}_{\Pi,m}\left(\mc{I}_\Pi\cap\kappa_p\right)}\\
&=\exp\{2C_0(pN)^{2/7}\}\,\msf{G}_{\Pi,m}\left(\Gamma^c\mid \mc{I}_\Pi\cap\kappa_p\right)
\le\exp\{2C_0(pN)^{2/7}-N\},
\end{align*}
which is $o(1)$ because $(pN)^{2/7}=o(N)$.
\end{proof}

\subsection{Exact totals and binomial regularity in the row model}

The exact-total lower bound rests on the following local central limit theorem for sums of independent copies of a conditioned binomial vector.
\begin{theorem}
\label{thm:fourier_BE}
    Fix a real parameter $\theta\in(1/2,1)$, a positive integer $d\ge 1$, a nonnegative integer $r$, an integer matrix $M\in\mb{Z}^{r\times d}$ with nonzero, pairwise orthogonal rows, an $r$-dimensional vector inequality relation $\ge'\in\{\ge,>\}^r$, and a real constant $T>1$. For all sufficiently large $n\in\mb{Z}$ and $p\in(T^{-1}n^{-\theta},Tn^{-\theta})$, the following is true. When $r=0$, we use the convention $\norm{M\eta}_\infty=0$ and interpret $MA\ge'0$ as the whole sample space.

    Select $\eta\in((T^{-1}n,Tn)\cap\mb{Z})^d$ such that $\norm{M\eta}_\infty < T\frac{n}{\sqrt{pn}}$ and select an integer $m\in(T^{-1}n,Tn)$. Select $q\in\left(p-T\frac{p}{\sqrt{pn}}, p+T\frac{p}{\sqrt{pn}}\right)^d$ such that $m\mb{E}[A|MA\ge'0]\in\mb{Z}^d$ where we sample $A_t\sim\mr{Bin}(\eta_t,q_t)$ all independently.

    Let $\rho$ be the probability distribution on $\mb{Z}^d$ given by sampling $A_t\sim\mr{Bin}(\eta_t,q_t)$ all independently, and then conditioning on $MA\ge'0$.
    
    Sample $m$ independent copies $A^{(1)},\ldots,A^{(m)}\sim\rho$, and let $S=A^{(1)}+\cdots+A^{(m)}$. Then,
    \[\mb{P}[S = m\mb{E}_{A\sim\rho}[A]] = \Omega((n^2p)^{-d/2}).\]
\end{theorem}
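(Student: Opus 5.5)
We prove \Cref{thm:fourier_BE} by Fourier inversion on the lattice, with the multidimensional Berry--Esseen theorem of Bentkus controlling the bulk. Write $\mu:=\mb{E}_{A\sim\rho}[A]$, $\Sigma:=\mr{Cov}_{A\sim\rho}(A)$, and $\psi(\xi):=\mb{E}_{A\sim\rho}[e^{i\xi\cdot(A-\mu)}]$ for $\xi\in[-\pi,\pi]^d$. Then
\[
\mb{P}[S=m\mu]=(2\pi)^{-d}\int_{[-\pi,\pi]^d}\psi(\xi)^m\,d\xi .
\]
The target $(n^2p)^{-d/2}=(m\cdot pn)^{-d/2}$ up to constants is exactly the Gaussian local limit density at the mean, since $\Sigma\asymp pn\cdot I$. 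So the task is to show that the integral is dominated by a neighbourhood of $\xi=0$ of radius about $(m\,pn)^{-1/2}$.

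\emph{Step 1: moments of $\rho$.} By \cref{LA:conditioning}-type reasoning, $\mb{P}[MA\ge'0]$ is bounded below. Concretely, \Cref{thm:general_poly_calc} with $\rho\equiv1$ compares this probability with a Gaussian orthant probability whose offset $pM\eta=O(\sqrt{pn})$ in standard-deviation units is $O(1)$, so it is at least a constant. Applying \Cref{thm:general_poly_calc} to quadratic and cubic monomials then gives $\Sigma=pn\,(\Sigma_0+o(1))$, where $\Sigma_0$ is the rescaled covariance of a Gaussian conditioned on a polyhedral cone with bounded offset. By \Cref{lem:open-conditioning}, $\Sigma_0$ is positive definite, with eigenvalues bounded above and below uniformly over the compact parameter range. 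The same theorem gives third absolute moments $O((pn)^{3/2})$.

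\emph{Step 2: the central region.} For $|\xi|\le c(pn)^{-1/2}$ we expand $\psi(\xi)=1-\tfrac12\xi^\top\Sigma\xi+O(|\xi|^3(pn)^{3/2})$, so that
\[
|\psi(\xi)|^m\le\exp\left(-c'm\,pn|\xi|^2\right).
\]
On the smaller ball $|\xi|\le K(m\,pn)^{-1/2}$, the integrand is $e^{-m\xi^\top\Sigma\xi/2}\bigl(1+O(K^3(m)^{-1/2})\bigr)$, which is real and positive up to that error. This ball therefore contributes $\Omega((m\,pn)^{-d/2})$, and the part of the central region outside it contributes only a small fraction of that once $K$ is large. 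Equivalently, Bentkus's theorem bounds the CDF discrepancy of $S$ and handles the smoothing in this region.

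\emph{Step 3: the outer region $c(pn)^{-1/2}\le|\xi|_\infty\le\pi$.} Here we need $|\psi(\xi)|\le1-\kappa$ for a constant $\kappa$, after which the contribution is at most $(1-\kappa)^m=e^{-\Omega(n)}$, which is negligible.

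The main obstacle is Step 3, because conditioning destroys the product structure. We handle it by writing $\rho$ as a mixture. Let $\nu$ be the unconditioned product law and $\pi_0:=\mb{P}_\nu[MA\ge'0]\ge c_0$. Decompose $\rho$ as follows: with $\nu$-probability at least $c_0$, split the law of $A$ given $MA\ge'0$ into conditioning on all coordinates except $t$, and resample coordinate $t$ within a window of width $\gg1$ in which the constraint $MA\ge'0$ is unaffected. Such windows exist with positive probability, because the constraint boundary is at distance $\Omega(\sqrt{pn})$ from a typical point with positive probability, and each row of $M$ has bounded integer entries. On that event, coordinate $t$ is a binomial restricted to an interval of length about $\sqrt{pn}$ around its mode. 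Its one-dimensional characteristic function then satisfies $|\phi_t(\xi_t)|\le1-c\min(1,pn\,\xi_t^2)$, using the explicit binomial point-mass formulas of \Cref{lem:binomial_estimate} for aperiodicity.

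By convexity of $|\cdot|$ under mixtures, $|\psi(\xi)|\le1-c_0c$ whenever some $|\xi_t|\ge c(pn)^{-1/2}$, which is exactly the outer region. The integrality hypothesis $m\mu\in\mb{Z}^d$ is used only so that the target is a lattice point, making the inversion formula valid.
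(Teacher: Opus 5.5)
Your proposal is correct, but it reaches the estimate by a different route from the paper, most notably in the low-frequency region.

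\textbf{Low frequencies.} The paper never expands the one-copy characteristic function. It applies Bentkus's Berry--Esseen bound for convex sets to $S'$, and converts that bound into $\mathbb{E}[e^{it\cdot S'}]=\mathbb{E}[e^{it\cdot Z}]+O(n^{-1/6})$ by slicing $\mathbb{R}^d$ into thin slabs $\{2\pi\ell/K\le t\cdot x<2\pi(\ell+1)/K\}$. This only works on the tiny box $\|t\|_\infty\le n^{0.01/d}/\sqrt{n^2p}$. As a result the paper needs two high-frequency regimes: an intermediate one, where it gets only the one-copy bound $1-cn^{0.01/d}/n$ (enough because $m\asymp n$), and a far one with a constant gap.

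Your third-order Taylor expansion is the classical local-limit argument. It uses only $\Sigma\asymp pn\cdot I$ and $\mathbb{E}|A-\mu|^3=O((pn)^{3/2})$, both from your Step 1. It is valid all the way up to $|\xi|\lesssim(pn)^{-1/2}$, makes Bentkus's theorem unnecessary (your remark invoking it can simply be dropped), and leaves a single outer regime with a constant gap.

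\textbf{High frequencies.} The paper writes $\rho$ as a mixture that puts weight $\Omega(1)$ on the uniform law over a $d$-dimensional box of lattice points deep inside the cone. This uses the pointwise bound $\rho(p\eta+y)=\Omega((pn)^{-d/2})$ there, and the paper then reads off an explicit product of Dirichlet kernels. You instead condition on $A_{-t}$ and use that the fiber of $A_t$ is an integer interval, because the constraints are linear. You then extract a one-dimensional near-uniform component on a window of length $\asymp\sqrt{pn}$ within $O(\sqrt{pn})$ of the mean, where the point masses are comparable by \Cref{lem:binomial_estimate}.

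\textbf{What each buys.} The paper's decomposition handles all coordinates at once with a closed-form bound. Yours is more elementary and self-contained, needing only a one-dimensional aperiodicity estimate.

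\textbf{One bookkeeping fix.} Your central region is a Euclidean ball, while your outer region is defined through $\|\cdot\|_\infty$, so as written they do not cover $[-\pi,\pi]^d$. Take the outer threshold to be $c(pn)^{-1/2}/\sqrt d$. Your bound $1-c\min(1,pn\xi_t^2)$ still yields a constant gap there.
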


\begin{remark}
The proof below in fact gives the uniform two-sided estimate
\[
\mb{P}[S=m\mb{E}_{A\sim\rho}[A]]
=\Theta\left((n^2p)^{-d/2}\right).
\]
Indeed, the low-frequency integral is the density at zero of a Gaussian whose covariance is uniformly comparable to $n^2p$ times the identity, while the high-frequency integral is $o((n^2p)^{-d/2})$ in absolute value. We state only the lower bound because this is all that is used in \Cref{lem:local-exact-total-lower-bound}; the hypotheses do not force the normalized covariance, and hence the leading Gaussian constant, to converge.
\end{remark}

\begin{proof}
    If $r=0$, the conditioning event $MA\ge'0$ is the whole sample space. The coordinates of $S$ are then independent, with $S_t\sim\mr{Bin}(m\eta_t,q_t)$ and integer means $m\eta_tq_t$. Since $m\eta_tq_t(1-q_t)\asymp n^2p\to\infty$, Stirling's formula gives
    \begin{align*}
    \mb{P}[S=m\mb{E}_{A\sim\rho}[A]]
    &=\prod_{t=1}^d\mb{P}[\mr{Bin}(m\eta_t,q_t)=m\eta_tq_t]\\
    &=\prod_{t=1}^d\frac{1+o(1)}{\sqrt{2\pi m\eta_tq_t(1-q_t)}}
    =\Theta((n^2p)^{-d/2}).
    \end{align*}
    This proves the claim when $r=0$. Henceforth assume $r\ge1$.

    Let $B$ be the random variable distributed as $A-\mb{E}_{A\sim\rho}[A]$ where $A$ is sampled from $\rho$. Sample $m$ independent copies $B^{(1)},\ldots,B^{(m)}$ and let
    \[S' = B^{(1)} + \cdots + B^{(m)}.\]
    It suffices to lower bound the probability $\mb{P}[S'=0]$. Note that $S'$ is supported on a subset of $\mb{Z}^d$, so we have
    \[\mb{P}[S'=0] = \frac{1}{(2\pi)^d}\int_{[-\pi,\pi]^d}dt\,\mb{E}[e^{it\cdot S'}].\]
    We now estimate the characteristic function $\mb{E}[e^{it\cdot S'}]$ for various regimes of $t$.

    To do this, we first need to prove regularity conditions on the covariance matrix $\mr{Cov}B$. Indeed, let $\alpha_t = \sqrt{p\eta_t}\log\left(\frac{q_t}{1-q_t}\frac{1-p}{p}\right)$, which is bounded by a constant depending only on $T$ because of the hypothesis on $q$, and sample independent $X_t\sim\mc{N}(\sqrt{p\eta_t}\alpha_t,p\eta_t)$ for all $t\in[d]$. Applying \Cref{thm:general_poly_calc} with $\rho\in\{1,x_i,x_ix_j\}$ and dividing, we see that
    \[\mr{Cov}B = \mr{Cov}[X|MX\ge -pM\eta] + O(\sqrt{pn}\log^{5+d}n);\]
    here we used that $\mb{P}[MX\ge-pM\eta]$ is bounded below by a positive constant depending only on $T$, $M$, and $d$, because each bounding hyperplane $\{x:(Mx)_r=-(pM\eta)_r\}$ lies within $|(pM\eta)_r|/\norm{\mu_r}_2\le T\sqrt{pn}$ of the origin (where $\mu_r$ denotes the $r$th row of $M$), while the mean of $X$ has norm $O(T\sqrt{pn})$ and each coordinate of $X$ has standard deviation $\sqrt{p\eta_t}\asymp\sqrt{pn}$; so the constraints sit at $O(T)$ standard deviations.
    The matrix $\mr{Cov}[X|MX\ge -pM\eta]$ is positive definite: otherwise some nontrivial linear combination of the $X_t$ would be almost surely constant after conditioning on $MX\ge -pM\eta$, which is impossible because the conditioned law has a density on the region $\{Mx\ge'-pM\eta\}$, whose interior is nonempty since the rows of $M$ are orthogonal. Note that $\frac{1}{pn}\mr{Cov}[X|MX\ge -pM\eta]=\mr{Cov}[X/\sqrt{pn}\mid M(X/\sqrt{pn})\ge-pM\eta/\sqrt{pn}]$ is a continuous function of the parameters $(\eta_t/n)_{t\in[d]}$, $(\alpha_t)_{t\in[d]}$, and $pM\eta/\sqrt{pn}$, and does not otherwise depend on $n$ or $p$. These parameters range over the compact sets $[T^{-1},T]^d$, $[-T',T']^d$, and $[-T,T]^r$ (for a constant $T'$ depending only on $T$), on which the conditioned Gaussian has full-dimensional support, so there is some constant $C$ such that the entries of $\frac{1}{pn}\mr{Cov}[X|MX\ge -pM\eta]$ and its inverse are bounded by $C$ in absolute value. This implies that the entries of $\frac{1}{pn}\mr{Cov}B$ and its inverse are bounded by $2C$ in absolute value.

    We now apply the multidimensional Berry--Esseen theorem for convex sets \cite{Ben05}: for independent centered random vectors $B^{(1)},\ldots,B^{(m)}$ in $\mb{R}^d$ with $\Sigma:=\mr{Cov}(\sum_iB^{(i)})=m\,\mr{Cov}B$ positive definite, and $Z\sim\mc{N}(0,\Sigma)$,
    \begin{align*}
    \sup_{U\subset\mb{R}^d\text{ convex}}\left|\mb{P}[S'\in U] - \mb{P}[Z\in U]\right| &\le C_{\mr{BE}}\,d^{1/4}\sum_{i=1}^m\mb{E}\left\|\Sigma^{-1/2}B^{(i)}\right\|_2^3\\
    &=C_{\mr{BE}}\,d^{1/4}m^{-1/2}\,\mb{E}\left\|(\mr{Cov}B)^{-1/2}B\right\|_2^3
    \end{align*}
    for a universal constant $C_{\mr{BE}}$. The third moment on the right is $O(1)$: conditioning on the event $MA\ge'0$, whose probability is $\Omega(1)$ by the previous paragraph and \Cref{thm:general_poly_calc} with $\rho=1$, changes the third central moments of the coordinates $A_t$ by at most a constant factor and moves their means by $O(\sqrt{pn})$, so $\mb{E}\|B\|_2^3=O((pn)^{3/2})$, while the entries of $(\mr{Cov}B)^{-1}$ are $O(1/(pn))$. Hence, for all convex sets $U\subset\mb{R}^d$,
    \[|\mb{P}[S'\in U] - \mb{P}[Z\in U]| \le \frac{C_1}{n^{1/2-0.01/d}}\]
    for a constant $C_1$ depending only on $\theta,d,M,T$, where $Z\sim\mc{N}(0,m\mr{Cov}B)$ (we use the weaker exponent $1/2-0.01/d$ for convenience below). Fix any $t\in[-\pi,\pi]^d$ with $\norm{t}_\infty\le\frac{n^{0.01/d}}{\sqrt{n^2p}}$, and write $K:=\lfloor n^{1/6}\rfloor$. For all $\ell\in\mb{Z}$, let
    \[U_\ell = \{x\in\mb{R}^d: 2\pi\ell/K\le t\cdot x < 2\pi(\ell+1)/K\},\]
    which is a convex set, and note that $e^{it\cdot x} = e^{2\pi i\ell/K} + O(1/K)$ for $x\in U_\ell$. Furthermore, we have $|t\cdot S'| < n^{0.02/d}$ and $|t\cdot Z|<n^{0.02/d}$ with probability $1-O(n^{-\omega(1)})$: each coordinate of $B$ is at most $2\sqrt{pn}\log n$ in absolute value except with probability $n^{-\omega(1)}$ (a Chernoff bound for $A_t$, the conditioning on an event of probability $\Omega(1)$, and $|\mb{E}_\rho A_t-\mb{E}A_t|=O(\sqrt{pn})$), so after truncating at this level Hoeffding's inequality shows that each coordinate of $S'$ is $O(\sqrt{n^2p}\log^2n)$ with probability $1-n^{-\omega(1)}$, whence $|t\cdot S'|\le d\norm{t}_\infty\norm{S'}_\infty=O(n^{0.01/d}\log^2n)$; the claim for $Z$ follows from Gaussian tails. Hence only the $O(n^{1/6+0.02/d})$ strips with $|\ell|\le n^{0.02/d}K$ carry non-negligible mass. Thus, we have
    \begin{align*}
        \mb{E}[e^{it\cdot S'}] &= O(n^{-\omega(1)}) + \sum_{\substack{\ell\in\mb{Z} \\ |\ell|\le n^{0.02/d}K}}(e^{2\pi i\ell/K} + O(1/K))\mb{P}[S'\in U_\ell] \\ 
        &= O(n^{-1/6}) + \sum_{\substack{\ell\in\mb{Z} \\ |\ell|\le n^{0.02/d}K}}e^{2\pi i\ell/K}\left(\mb{P}[Z\in U_\ell] + O(n^{-1/2+0.01/d})\right) \\
        &= O(n^{-1/6}) + O(n^{-1/3+0.03/d}) + \mb{E}[e^{it\cdot Z}] = O(n^{-1/6}) + \mb{E}[e^{it\cdot Z}],
    \end{align*}
    so
    \begin{align*}\mb{P}[S'=0] &= O(n^{-0.1}(n^2p)^{-d/2}) + \frac{1}{(2\pi)^d}\displaystyle\int_{\substack{t\in[-\pi,\pi]^d \\ \norm{t}_\infty\le n^{0.01/d}/\sqrt{n^2p}}}dt\,\mb{E}[e^{it\cdot Z}]\\
    &\quad+ \frac{1}{(2\pi)^d}\displaystyle\int_{\substack{t\in[-\pi,\pi]^d \\ \norm{t}_\infty> n^{0.01/d}/\sqrt{n^2p}}}dt\,\mb{E}[e^{it\cdot S'}].\end{align*}
    There is a constant $C_2$ such that
    \[|\mb{E}[e^{it\cdot Z}]|\le \exp(-C_2 n^2p\norm{t}_\infty^2),\]
    which implies that
    \[\displaystyle\int_{\substack{t\in\mb{R}^d \\ \norm{t}_\infty> n^{0.01/d}/\sqrt{n^2p}}}dt\,\mb{E}[e^{it\cdot Z}] = O(n^{-\omega(1)}).\]
    If $f_Z$ denotes the density of $Z$, then
    \begin{align*}
        \mb{P}[S'=0] &= \frac{1}{(2\pi)^d}\int_{t\in\mb{R}^d}dt\mb{E}[e^{it\cdot Z}] + \frac{1}{(2\pi)^d}\displaystyle\int_{\substack{t\in[-\pi,\pi]^d \\ \norm{t}_\infty> n^{0.01/d}/\sqrt{n^2p}}}dt\,\mb{E}[e^{it\cdot S'}] + o((n^2p)^{-d/2}) \\
        &= f_Z(0) + \frac{1}{(2\pi)^d}\displaystyle\int_{\substack{t\in[-\pi,\pi]^d \\ \norm{t}_\infty> n^{0.01/d}/\sqrt{n^2p}}}dt\,\mb{E}[e^{it\cdot S'}] + o((n^2p)^{-d/2}) \\
        &= \Omega((n^2p)^{-d/2}) + \frac{1}{(2\pi)^d}\displaystyle\int_{\substack{t\in[-\pi,\pi]^d \\ \norm{t}_\infty> n^{0.01/d}/\sqrt{n^2p}}}dt\,\mb{E}[e^{it\cdot S'}].
    \end{align*}
    In order to estimate this error term, we use the following decomposition of $\rho$.

    Let $\mu_1,\ldots,\mu_r\in\mb{Z}^d$ be the rows of $M$. Denote $y\in\prod_{t\in[d]}(\mb{Z}-p\eta_t)$ as \textit{good} if
    \[\frac{y}{\sqrt{pn}}\in \lceil T+2\rceil\mu_1+\cdots+\lceil T+2\rceil\mu_r+\left[0,\frac{1}{d\norm{M}_\infty}\right]^d.\]
    Every good $y$ satisfies $M(p\eta+y)\ge'0$, with room to spare. Indeed, write $y/\sqrt{pn}=\lceil T+2\rceil\sum_{r'}\mu_{r'}+z$ with $z\in[0,1/(d\norm{M}_\infty)]^d$; by the orthogonality of the rows, and since $\norm{\mu_r}_2^2\ge1$ for a nonzero integer vector while $|\mu_r\cdot z|\le\norm{\mu_r}_1/(d\norm{M}_\infty)\le1$,
    \[(My)_r=\sqrt{pn}\left(\lceil T+2\rceil\norm{\mu_r}_2^2+\mu_r\cdot z\right)\ge(T+1)\sqrt{pn}>T\sqrt{pn}>p\norm{M\eta}_\infty\ge|(pM\eta)_r|,\]
    so $(M(p\eta+y))_r>0$ for every $r$.

    Let $f(x)$ be defined as in the proof of \Cref{thm:general_poly_calc}. Following the proof of \Cref{thm:general_poly_calc}, uniformly for good $y$, we have
    \[\mb{P}_{A\sim\rho}[A = p\eta+y] = (1+O(\log^{3+d} n/\sqrt{pn}))\frac{f(y)}{\displaystyle\int_{\substack{x\in\mb{R}^d \\ Mx\ge' -pM\eta}}dx\, f(x)} = \Omega((pn)^{-d/2}).\]
    Armed with this result, we are now ready to provide the relevant estimate of
    \[|\mb{E}[e^{it\cdot S'}]| = |\mb{E}[e^{it\cdot(S-mp\eta)}]| = |\mb{E}_{A\sim\rho}[e^{it\cdot(A-p\eta)}]|^m.\]
    The above analysis implies that we can decompose $A-p\eta$ into
    \[A-p\eta = \mbf{1}_{\mc{E}}T_1 + \mbf{1}_{\overline{\mc{E}}}T_2\]
    where $\mc{E}$ is an event of probability $C_3$, where $T_1$ is uniformly distributed on the set of good elements, and where $T_1,\mbf{1}_{\mc{E}}$ and $T_2,\mbf{1}_{\mc{E}}$ are pairs of independent random variables; indeed, one may take $C_3:=\#\{\text{good }y\}\cdot\min_{y\text{ good}}\mb{P}_{A\sim\rho}[A=p\eta+y]$, which lies in $[c,1)$ for a constant $c=c(\theta,d,M,T)>0$ because there are $\Theta((pn)^{d/2})$ good elements, each of $\rho$-probability $\Omega((pn)^{-d/2})$ by the previous display, and because $\rho$ also charges points that are not good. Thus, we have
    \[|\mb{E}_{A\sim\rho}[e^{it\cdot(A-p\eta)}]| = |C_3\mb{E}[e^{it\cdot T_1}] + (1-C_3)\mb{E}[e^{it\cdot T_2}]|\le 1-C_3 + C_3|\mb{E}[e^{it\cdot T_1}]|.\]
    Note that the set of good elements forms a discrete rectangular prism. Let the number of lattice points in its $j$th coordinate interval be $W_j\sqrt{pn}\in\mb{Z}$ (note that $W_j = \Theta(1)$). Then, we see that
    \[|\mb{E}[e^{it\cdot T_1}]| = \prod_{j=1}^d\frac{1}{W_j\sqrt{pn}}\left|\frac{\sin\left(\frac{t_jW_j\sqrt{pn}}{2}\right)}{\sin\left(\frac{t_j}{2}\right)}\right|.\]
    At $t_j=0$ the corresponding factor is interpreted by continuity and equals $1$.
    If $\frac{5}{W_j\sqrt{pn}}\le|t_j|\le\pi$ for some $j$, this quantity is at most $4/5$ (as $|\sin(t_j/2)|\ge|t_j|/\pi$), so
    \[|\mb{E}[e^{it\cdot S'}]|\le(1-C_3/5)^m\le(1-c/5)^m = O(n^{-\omega(1)}).\]
    If $n^{0.01/d}/\sqrt{n^2p}<|t_j| < \frac{5}{W_j\sqrt{pn}}$ for some $j$, then this quantity is at most
    \[\frac{1}{W_j\sqrt{pn}}\frac{\frac{t_jW_j\sqrt{pn}}{2}\left(1-0.01t_j^2W_j^2pn\right)}{t_j/2 + O(t_j^3)}\le 1 - n^{0.01/d}/n,\]
    so
    \[|\mb{E}[e^{it\cdot S'}]|\le\left(1-C_3n^{0.01/d}/n\right)^m\le\left(1-c\,n^{0.01/d}/n\right)^m = O(n^{-\omega(1)}).\]
    Thus, we have
    \[\mb{P}[S'=0] = \Omega((n^2p)^{-d/2}) + \frac{1}{(2\pi)^d}\displaystyle\int_{\substack{t\in[-\pi,\pi]^d \\ \norm{t}_\infty> n^{0.01/d}/\sqrt{n^2p}}}dt\,\mb{E}[e^{it\cdot S'}] = \Omega((n^2p)^{-d/2}),\]
    as desired.
\end{proof}

\begin{lemma}[Exact-total lower bound]\label{lem:local-exact-total-lower-bound}
Fix $\theta\in(1/2,1)$, an integer $k\ge1$, and constants $T>1$ and $\phi\in(0,1/2)$. There exists a constant $\Cst{lem:local-exact-total-lower-bound}=\Cst{lem:local-exact-total-lower-bound}(\theta,k,T,\phi)>0$ such that, for all sufficiently large $N$, all $p\in(T^{-1}N^{-\theta},TN^{-\theta})$, and every finite set $V$ with $|V|=N$, if $(y,\lambda)$ is $(T,\phi,p)$-locally admissible with $y=(\Pi,m,1)\in\mc{Y}_k(V)$, then
\[
\msf{R}_{\Pi,\lambda}\left(m(\Pi,\mbf{D})=m\mid\mc{I}_\Pi\right)
\ge (N^2p)^{-\Cst{lem:local-exact-total-lower-bound}}.
\]
\end{lemma}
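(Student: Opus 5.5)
The plan is to reduce to \Cref{thm:fourier_BE}, applied block by block. By \Cref{fact:conditioned-rows}, under $\msf{R}_{\Pi,\lambda}(\cdot\mid\mc{I}_\Pi)$ the rows are independent. For each $s$, the rows $(\mbf{D}[x,\bullet])_{x\in\Pi[s]}$ are $n[s]$ i.i.d.\ copies of $A_{s,\bullet}\mid\mc{I}_s(A_{s,\bullet})$. The event $m(\Pi,\mbf{D})=m$ is the intersection over $s$ of the events $\sum_{x\in\Pi[s]}\mbf{D}[x,\bullet]=m[s,\bullet]$, and each of these depends only on the rows in block $s$. Hence
\[
\msf{R}_{\Pi,\lambda}\left(m(\Pi,\mbf{D})=m\mid\mc{I}_\Pi\right)=\prod_{s\in\{0,1\}^k}\mb{P}\left[S_s=m[s,\bullet]\right],
\]
where $S_s$ is the sum of $n[s]$ independent copies of $A_{s,\bullet}\mid\mc{I}_s$. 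By \cref{LA:solvability}, $m[s,\bullet]=n[s]\,\mb{E}[A_{s,\bullet}\mid\mc{I}_s(A_{s,\bullet})]$, which is exactly the point $m\mb{E}_{A\sim\rho}[A]$ appearing in \Cref{thm:fourier_BE}. It is an integer vector because $m[s,t]\in\mb{Z}$.

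For each $s$ we then verify the hypotheses of \Cref{thm:fourier_BE}. Take $d=2^k$ and let $n$ be $N$. The trial counts are $\eta_t=n[t]-\one_{s=t}$, which lie in $(T'^{-1}N,T'N)$ by \cref{LA:sizes}. The theorem's $m$ is $n[s]$. The success probabilities are $q=\lambda[s,\bullet]$, which are within $Tp/\sqrt{pN}$ of $p$ by \cref{LA:tilt}. For $M$ we take the $(k-1)\times 2^k$ matrix whose rows are $(-1)^{t[r-1]}$, with $\ge'$ chosen according to the relations $\ge_{s[r-1]s[r]}$. A strict or weak reversed inequality is handled by negating the row, and strict versus weak is encoded in $\ge'$.

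The rows of $M$ are nonzero integer vectors and pairwise orthogonal, as in the proof of \Cref{prop:cond-mean-bijection}. The condition $\|M\eta\|_\infty<T'N/\sqrt{pN}$ follows from the imbalance bound in \cref{LA:sizes}, since the $\one_{s=t}$ shift contributes only $O(1)$. For $k=1$ we use the case $r=0$. Each factor is then $\Omega((N^2p)^{-2^k/2})$, with constants uniform because the theorem's parameters depend only on $\theta,k,T$.

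Multiplying the $2^k$ factors gives a lower bound $c\,(N^2p)^{-4^k/2}$. Absorbing the constant $c$ for large $N$, this is at least $(N^2p)^{-C}$ with $C:=4^k/2+1$.

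The only delicate point is matching the parameters of \Cref{thm:fourier_BE}. That theorem's $T$ must be enlarged to something like $T^{2}$ so that $p\in(T'^{-1}n^{-\theta},T'n^{-\theta})$ and the tilt window hold with $n=N$. One must also confirm that the conditioning events there coincide exactly with $\mc{I}_s$, including how ties are encoded. Both checks are bookkeeping, since the hard analytic work is already contained in \Cref{thm:fourier_BE}.
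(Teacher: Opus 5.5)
Your proposal is correct and follows the paper's proof essentially line by line: factorize over blocks via \Cref{fact:conditioned-rows}, identify the target with the integral mean through \cref{LA:solvability}, and apply \Cref{thm:fourier_BE} with $d=2^k$, $\eta_t=n[t]-\one_{s=t}$, $q_t=\lambda[s,t]$, and rows $(-1)^{s[r]}(-1)^{t[r-1]}$. The differences are cosmetic: the paper enlarges the constant to $2T$ rather than $T^2$ and absorbs the implied constant block by block, which gives the exponent $2^k(2^{k-1}+1)$ instead of your $4^k/2+1$.
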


\begin{proof}
By \cref{LA:conditioning} and \Cref{fact:conditioned-rows}, under $\msf{R}_{\Pi,\lambda}(\cdot\mid\mc{I}_\Pi)$ the rows in a fixed part $\Pi[s]$ are independent copies of $A_{s,\bullet}\mid\mc{I}_s(A_{s,\bullet})$, and rows in different parts are independent. By \cref{LA:solvability},
\[
n[s]\,\mb E[A_{s,t}\mid\mc I_s(A_{s,\bullet})]=m[s,t]
\]
for every $s,t\in\{0,1\}^k$. Hence the event $m(\Pi,\mbf D)=m$ is the intersection, over $s\in\{0,1\}^k$, of the independent events
\begin{equation}\label{eq:exact-total-s}
\sum_{x\in\Pi[s]}\mbf D[x,\bullet]=m[s,\bullet],
\end{equation}
each of which asks that the sum of $n[s]$ independent copies of the conditioned row vector $A_{s,\bullet}\mid\mc I_s(A_{s,\bullet})$ equal its mean $n[s]\,\mb E[A_{s,\bullet}\mid\mc I_s(A_{s,\bullet})]=m[s,\bullet]$, an integer vector. It therefore suffices to show that each event \eqref{eq:exact-total-s} has probability at least $(N^2p)^{-C_s}$ under $\msf R_{\Pi,\lambda}(\cdot\mid\mc I_\Pi)$ with $C_s:=2^{k-1}+1$, and then to multiply over the $2^k$ values of $s$.

Fix $s\in\{0,1\}^k$ and put $d:=2^k$. A nondegenerate $d$-dimensional lattice sum with fluctuation scale $\sqrt{N^2p}$ suggests mass of order $(N^2p)^{-d/2}$ at its integral mean. \Cref{thm:fourier_BE} gives this bound for the conditioned binomial rows; we verify its hypotheses below. We apply \Cref{thm:fourier_BE} with $d=2^k$, with its $n$ equal to $N$, its $m$ equal to $n[s]$, and with $\eta_t:=n[t]-\one_{s=t}$ and $q_t:=\lambda[s,t]$ for $t\in\{0,1\}^k$, so that its $A_t\sim\mr{Bin}(\eta_t,q_t)$ is exactly $A_{s,t}$. The $k-1$ inequalities defining $\mc I_s(A_{s,\bullet})$ are of the form $\mu_r\cdot A_{s,\bullet}\ge'_r0$ for $1\le r\le k-1$, where $\mu_r\in\{\pm1\}^{\{0,1\}^k}$ has entries $\mu_r[t]=(-1)^{s[r]}(-1)^{t[r-1]}$ and $\ge'_r\in\{\ge,>\}$ is determined by the pair $s[r-1]s[r]$; let $M$ be the matrix with rows $\mu_1,\ldots,\mu_{k-1}$. Distinct rows of $M$ are orthogonal: for $r\ne r'$, flipping bit $r-1$ of $t$ is a bijection of $\{0,1\}^k$ that negates $\mu_r[t]\mu_{r'}[t]$, so $\sum_t\mu_r[t]\mu_{r'}[t]=0$. We now verify the remaining hypotheses of \Cref{thm:fourier_BE}, with its constant $T$ replaced by $2T$. By \cref{LA:sizes}, we have $\eta_t\in((2T)^{-1}N,2TN)$ and $n[s]\in((2T)^{-1}N,2TN)$, and, for each $1\le r\le k-1$,
\[
\left|(M\eta)_r\right|=\left|\sum_{t\in\{0,1\}^k}(-1)^{t[r-1]}n[t]-(-1)^{s[r-1]}\right|\le T\frac{N}{\sqrt{pN}}+1<2T\frac{N}{\sqrt{pN}},
\]
where the term $(-1)^{s[r-1]}$ comes from the indicator $\one_{s=t}$ in $\eta_t$. By \cref{LA:tilt}, $q_t=\lambda[s,t]\in(p-Tp/\sqrt{pN},p+Tp/\sqrt{pN})$. Finally, the integrality hypothesis $n[s]\,\mb E[A_{s,\bullet}\mid MA_{s,\bullet}\ge'0]\in\mb Z^{d}$ of \Cref{thm:fourier_BE} is exactly \cref{LA:solvability}: the event $MA_{s,\bullet}\ge'0$ is $\mc I_s(A_{s,\bullet})$, and $n[s]\,\mb E[A_{s,t}\mid\mc I_s(A_{s,\bullet})]=m[s,t]\in\mb Z$. Therefore \Cref{thm:fourier_BE} gives, with $d=2^k$,
\begin{align*}
\mb P\left[\sum_{x\in\Pi[s]}\mbf D[x,\bullet]=m[s,\bullet]\ \middle|\ \mc I_s(\mbf D[x,\bullet])\text{ for all }x\in\Pi[s]\right]
&=\Omega\left((N^2p)^{-2^{k-1}}\right)\\
&\ge (N^2p)^{-2^{k-1}-1}=(N^2p)^{-C_s}
\end{align*}
for all sufficiently large $N$, where the implied constant in $\Omega(\cdot)$ depends only on $\theta,k,T,\phi$ and is absorbed by the extra factor $(N^2p)^{-1}$. Multiplying over the $2^k$ values of $s$ gives the result with $\Cst{lem:local-exact-total-lower-bound}:=\sum_sC_s=2^k(2^{k-1}+1)$.
\end{proof}

\begin{lemma}[Binomial regularity under the row model]\label{lem:local-binomial-kappa}
Fix $\theta\in(1/2,1)$, an integer $k\ge1$, and constants $T>1$ and $\phi\in(0,1/2)$. For all sufficiently large $N$, all $p\in(T^{-1}N^{-\theta},TN^{-\theta})$, and every finite set $V$ with $|V|=N$, if $(y,\lambda)$ is $(T,\phi,p)$-locally admissible with $y=(\Pi,m,1)\in\mc{Y}_k(V)$, then
\[
\msf{R}_{\Pi,\lambda}\left(\kappa_p(\Pi,\mbf{D})\mid\mc{I}_\Pi\right)=1-O(N^{-\omega(1)})
\]
and
\[
\msf{R}_{\Pi,\lambda}\left(\kappa_p(\Pi,\mbf{D})\mid\mc{I}_\Pi,m(\Pi,\mbf{D})=m\right)=1-O(N^{-\omega(1)}).
\]
\end{lemma}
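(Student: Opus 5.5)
The first estimate reduces to a Chernoff bound for a single row entry. By \cref{LA:conditioning} and \Cref{fact:conditioned-rows}, under $\msf{R}_{\Pi,\lambda}(\cdot\mid\mc{I}_\Pi)$ the rows are independent. For $x\in\Pi[s]$, the row $\mbf{D}[x,\bullet]$ has the law of $A_{s,\bullet}\mid\mc{I}_s(A_{s,\bullet})$. So for a fixed pair $(x,t)$,
\[
\msf{R}_{\Pi,\lambda}\bigl(|\mbf{D}[x,t]-p|\Pi[t]||>(pN)^{4/7}\mid\mc{I}_\Pi\bigr)\le\phi^{-1}\,\mb{P}\bigl[|A_{s,t}-p\,n[t]|>(pN)^{4/7}\bigr].
\]
By \cref{LA:tilt} and \cref{LA:sizes}, the mean of $A_{s,t}$ differs from $p\,n[t]$ by at most $T\sqrt{pN}+1$, which is at most $\frac12(pN)^{4/7}$ for large $N$. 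Chernoff's inequality, with variance $O(pN)$, then bounds the right side by $2\exp(-c(pN)^{1/7})=N^{-\omega(1)}$, since $pN\ge T^{-1}N^{1-\theta}$. A union bound over the $2^kN$ pairs $(x,t)$ gives the first estimate.

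For the second estimate I would use Bayes' rule together with the exact-total lower bound, exactly as in the proof of \Cref{prop:fiber-degree-array-estimates}. Namely,
\[
\msf{R}_{\Pi,\lambda}\bigl(\kappa_p^c\mid\mc{I}_\Pi,\{m\}\bigr)\le\frac{\msf{R}_{\Pi,\lambda}(\kappa_p^c\mid\mc{I}_\Pi)}{\msf{R}_{\Pi,\lambda}(\{m\}\mid\mc{I}_\Pi)}.
\]
\Cref{lem:local-exact-total-lower-bound} bounds the denominator below by $(N^2p)^{-\Cof{lem:local-exact-total-lower-bound}{\theta,k,T,\phi}}$, which is polynomial in $N$. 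The numerator is $N^{-\omega(1)}$ by the first estimate, so the ratio is still $N^{-\omega(1)}$.

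I do not expect a real obstacle here. The only point needing care is that the first estimate comes out genuinely superpolynomial, so that it survives division by the polynomial lower bound; this holds because $(pN)^{1/7}$ is a positive power of $N$. One should also check that \Cref{lem:local-exact-total-lower-bound} does not itself rely on this lemma. It uses only \Cref{thm:fourier_BE}, so there is no circularity.
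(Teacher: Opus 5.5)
Your proposal is correct and matches the paper's proof: conditioned-row structure from \cref{LA:conditioning} and \Cref{fact:conditioned-rows}, a Chernoff bound divided by $\phi$ and union-bounded over the $2^kN$ pairs, then Bayes' rule with the polynomial lower bound of \Cref{lem:local-exact-total-lower-bound} for the second estimate. Your remark about non-circularity is also right.
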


\begin{proof}
For fixed $x\in\Pi[s]$ and $t\in\{0,1\}^k$, the tilt bound \cref{LA:tilt} gives $|\mb{E}A_{s,t}-p|\Pi[t]||\le|\Pi[t]|\cdot Tp/\sqrt{pN}+\lambda[s,t]\le T\sqrt{pN}+2p$, so Chernoff's inequality gives
\[
\mb P\left[\left|A_{s,t}-p|\Pi[t]|\right|>(pN)^{4/7}\right]=O(N^{-\omega(1)}).
\]
By \cref{LA:conditioning} and \Cref{fact:conditioned-rows}, under $\msf{R}_{\Pi,\lambda}(\cdot\mid\mc{I}_\Pi)$ the row $\mbf{D}[x,\bullet]$ has the law of $A_{s,\bullet}$ conditioned on $\mc{I}_s(A_{s,\bullet})$. Hence, by Bayes' rule and \cref{LA:conditioning},
\begin{align*}
\msf{R}_{\Pi,\lambda}\left(\left|\mbf{D}[x,t]-p|\Pi[t]|\right|>(pN)^{4/7}\ \middle|\ \mc{I}_\Pi\right)
&=\frac{\mb P\left[\left|A_{s,t}-p|\Pi[t]|\right|>(pN)^{4/7},\ \mc I_s(A_{s,\bullet})\right]}{\mb P\left[\mc I_s(A_{s,\bullet})\right]}\\
&\le\frac{\mb P\left[\left|A_{s,t}-p|\Pi[t]|\right|>(pN)^{4/7}\right]}{\phi}
=O(N^{-\omega(1)}).
\end{align*}
A union bound over the $2^kN$ pairs $(x,t)\in V\times\{0,1\}^k$ gives
\[
\msf{R}_{\Pi,\lambda}\left(\kappa_p(\Pi,\mbf{D})^c\mid\mc{I}_\Pi\right)\le2^kN\cdot O(N^{-\omega(1)})=O(N^{-\omega(1)}),
\]
which is the first estimate. For the second, Bayes' rule and \Cref{lem:local-exact-total-lower-bound} give
\begin{align*}
\msf{R}_{\Pi,\lambda}\left(\kappa_p(\Pi,\mbf{D})^c\mid\mc{I}_\Pi,m(\Pi,\mbf{D})=m\right)
&=\frac{\msf{R}_{\Pi,\lambda}\left(\kappa_p(\Pi,\mbf{D})^c\cap\{m(\Pi,\mbf{D})=m\}\mid\mc{I}_\Pi\right)}{\msf{R}_{\Pi,\lambda}\left(m(\Pi,\mbf{D})=m\mid\mc{I}_\Pi\right)}\\
&\le\frac{\msf{R}_{\Pi,\lambda}\left(\kappa_p(\Pi,\mbf{D})^c\mid\mc{I}_\Pi\right)}{(N^2p)^{-\Cof{lem:local-exact-total-lower-bound}{\theta,k,T,\phi}}}\\
&=O\left((N^2p)^{\Cof{lem:local-exact-total-lower-bound}{\theta,k,T,\phi}}N^{-\omega(1)}\right)=O(N^{-\omega(1)}).
\end{align*}
\end{proof}

\section{The kernel: edge splitting and degree regularity}
\label{app:graph-enumeration-local}

This appendix proves the two results about uniformly random graphs with prescribed degree sequences that are used in the proof of \Cref{prop:kernel-splitting-estimates}: the edge-count concentration of \Cref{lem:edge_enum}, and the degree-into-a-subset tail bound of \Cref{thm:nice_deg}. Both rest on the imported enumeration theorems of \Cref{app:tools-enumeration}.

\subsection{Edge enumeration}

We have the following result, which follows from the graph-case single-edge expansion in Theorem 1.6 of Liebenau and Wormald \cite{LW17}, whose printed error term may require correction on its full stated degree domain, and from Theorem 1.5 of Liebenau and Wormald \cite{LW20}. We discuss this point immediately after the statement, before using the resulting estimate.

\begin{lemma}
\label{lem:edge_enum}
    Fix a real parameter $\theta\in(1/2,1)$ and a constant $T>1$. For all sufficiently large $n\in\mb{Z}$ and $p\in(T^{-1}n^{-\theta},Tn^{-\theta})$, the following are true.
    \begin{enumerate}[label=\textup{(\roman*)},ref=\textup{(\roman*)}]
        \item\label{lem:edge_enum:graph} Select $m\in\mb{Z}$ satisfying $|m-pn(n-1)/2| \le T n^2p/\sqrt{pn}$, and select $\mbf{d}\in\mc{D}_{n,m}^{\text{graph}}$ such that $|d_i-pn|\le(pn)^{4/7}$ for all $i\in[n]$. Pick a subset $U\subset[n]$ satisfying $|U|,n-|U|\ge T^{-1}n$. Sample $G$ uniformly at random from all graphs with degree sequence $\mbf{d}$, and let $X = |E(G[U])|$ (the number of edges in the induced subgraph on vertex set $U$). Then,
        \[\mb{P}\left[\left|X - \frac{\left(\sum_{x\in U}d_x\right)^2}{4m}\right| \ge n^2p\sqrt{\frac{(pn)^{1/7}}{n}}\log n\right] < \frac{1}{\log n}.\]
        \item\label{lem:edge_enum:cut} Select $m\in\mb{Z}$ satisfying $|m-pn(n-1)/2| \le T n^2p/\sqrt{pn}$, and select $\mbf{d}\in\mc{D}_{n,m}^{\text{graph}}$ such that $|d_i-pn|\le(pn)^{4/7}$ for all $i\in[n]$. Pick a subset $U\subset[n]$ satisfying $|U|,n-|U|\ge T^{-1}n$. Sample $G$ uniformly at random from all graphs with degree sequence $\mbf{d}$, and let $X = |E(G[U,[n]\setminus U])|$ (the number of edges in the induced bipartite subgraph with vertex sets $U$ and $[n]\setminus U$). Then,
        \[\mb{P}\left[\left|X - \frac{\left(\sum_{x\in U}d_x\right)\left(\sum_{y\in[n]\setminus U}d_y\right)}{2m}\right| \ge n^2p\sqrt{\frac{(pn)^{1/7}}{n}}\log n\right] < \frac{1}{\log n}.\]
        \item\label{lem:edge_enum:bipartite} Select $\ell\in\mb{Z}\cap[T^{-1}n,Tn]$. Select $m\in\mb{Z}$ satisfying $|m-p\ell n| \le T n^2p/\sqrt{pn}$, and select $(\mbf{s},\mbf{t})\in\mc{D}_{\ell,n,m}^{\text{bigraph}}$ such that $|s_i-pn|\le(pn)^{4/7}$ for all $i\in[\ell]$ and $|t_j-p\ell|\le (pn)^{4/7}$ for all $j\in[n]$. Pick subsets $U\subset[\ell]$ and $U'\subset[n]$ satisfying $|U|,\ell-|U|,|U'|,n-|U'|\ge T^{-1}n$. Sample $G$ uniformly at random from all bigraphs with degree sequence $(\mbf{s},\mbf{t})$, and let $X = |E(G[U,U'])|$ (the number of edges in the induced bipartite subgraph with vertex sets $U$ and $U'$). Then,
        \[\mb{P}\left[\left|X - \frac{\left(\sum_{x\in U}s_x\right)\left(\sum_{y\in U'}t_y\right)}{m}\right| \ge n^2p\sqrt{\frac{(pn)^{1/7}}{n}}\log n\right] < \frac{1}{\log n}.\]
    \end{enumerate}
\end{lemma}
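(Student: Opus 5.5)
The plan is a second-moment (Chebyshev) argument: write $X$ as a sum of edge indicators and compute its mean and variance from single-edge and two-edge probabilities in the uniform degree-constrained model. I treat \ref{lem:edge_enum:graph} in detail; \ref{lem:edge_enum:cut} and \ref{lem:edge_enum:bipartite} are identical in structure with the appropriate index sets. So write $X=\sum_{\{x,y\}\subset U}\one_{xy\in E(G)}$. Then $\mb E X=\sum_{\{x,y\}\subset U}\mb P[xy\in E(G)]$ and
\[
\mr{Var}X=\sum_{e,e'}\bigl(\mb P[e,e'\in E(G)]-\mb P[e\in E(G)]\mb P[e'\in E(G)]\bigr).
\]

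\emph{Step 1 (mean).} I invoke the single-edge formula of Liebenau and Wormald, \cite[Theorem~1.6]{LW17} in the graph case and \cite[Theorem~1.5]{LW20} in the bipartite case. In our range it gives
\[
\mb P[xy\in E(G)]=\frac{d_xd_y}{2m}\bigl(1+O(\text{err})\bigr),
\]
and I need the relative error to be $O\bigl((pn)^{4/7}/(pn)+\cdots\bigr)$, i.e.\ of order $(pn)^{-3/7}$ up to the structure of the expansion. Summing over the $\Theta(n^2)$ pairs in $U$ gives
\[
\mb E X=\frac{\bigl(\sum_{x\in U}d_x\bigr)^2}{4m}+O\bigl(n^2p\cdot(\text{err})\bigr).
\]
Since the target scale is $n^2p\sqrt{(pn)^{1/7}/n}\log n$, it suffices to have $\text{err}\ll\sqrt{(pn)^{1/7}/n}$. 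This is where the caveat about the printed error term matters. If the published expansion is unreliable on the full domain, I would instead derive the edge probability from the degree-sequence comparison \Cref{thm:graph_counting}, as the ratio
\[
\frac{\#\{\text{graphs with }\mbf d-e_x-e_y\}}{\#\{\text{graphs with }\mbf d\}}.
\]
Each count is expressed as $\mb P_{G(n,m')}$ times the number of graphs with $m'$ edges. The binomial factor gives $d_xd_y/(2m)$ to leading order, and the exponential correction changes by $O(\Delta\gamma_2/\mu^2)$. That change is small because removing one edge shifts $\gamma_2$ by $O((pn)^{4/7}/n^2)$.

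\emph{Step 2 (variance).} Apply the same removal trick twice. For disjoint edges $e,e'$, the joint probability is $\mb P[e]\mb P[e']\bigl(1+O(\epsilon)\bigr)$ with $\epsilon$ at the same relative precision, which I expect to be $O(1/(pn)^{\Omega(1)}\cdot\ldots)$. Pairs sharing a vertex number $O(n^3)$ and each contributes $O(p^2)$, for a total of $O(n^3p^2)$. The diagonal terms contribute $O(n^2p)$. The disjoint pairs contribute $n^4p^2\cdot\epsilon$. So I aim for
\[
\mr{Var}X\le n^4p^2\cdot\frac{(pn)^{1/7}}{n}
\]
up to constants; this requires $\epsilon\lesssim(pn)^{1/7}/n$, where the $(pn)^{1/7}$ comes from degree deviations $(pn)^{4/7}$. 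Chebyshev then gives failure probability $O(1/\log^2 n)<1/\log n$.

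\emph{Main obstacle.} The hard step is the covariance estimate between disjoint edges. The relative error in the two-edge joint probability must be far smaller than the $O((pn)^{-3/7})$ error of a single edge, because the $n^4$ disjoint pairs would otherwise swamp the variance. So first-order errors must cancel in the covariance. I would get this by writing both $\mb P[e,e']$ and $\mb P[e]\mb P[e']$ via the enumeration ratio and comparing the corrections term by term. The leading $\gamma_2$-dependent corrections then agree to second order, since shifting the degrees of four distinct vertices changes $\gamma_2$ additively. The residual is $O(1/m)$ from the binomial normalizations plus $O((pn)^{8/7}/(n^2p)^2\cdot\ldots)$, which matches the $(pn)^{1/7}/n$ scale.
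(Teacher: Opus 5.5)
Your skeleton matches the paper's: Chebyshev, with the second moment split into diagonal terms, $O(n^3)$ vertex-sharing pairs of size $O(p^2)$ each, and disjoint pairs controlled to relative precision $\ve:=(pn)^{1/7}/n$. However, the step that carries all the weight does not go through as you propose it.

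First, you mis-size the single-edge error. In the Liebenau--Wormald expansion
\[
\mb{P}[ab\in E(G)]=\frac{d_ad_b}{D(n-1)}\Bigl(1-\frac{(d_a-D)(d_b-D)}{D(n-1-D)}+\cdots\Bigr),\qquad D=\frac{2m}{n},
\]
the degrees already sit in the leading term. The quadratic correction is $O((pn)^{8/7}/(pn\cdot n))=O(\ve)$, and there is no $(pn)^{-3/7}$ term. Had the error really been $(pn)^{-3/7}$, your Step~1 would already fail, since $(pn)^{-3/7}\gg\sqrt{(pn)^{1/7}/n}$ whenever $pn<n$. With the correct size $O(\ve)$ per edge, no first-order cancellation is needed in the covariance at all.

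The genuine gap is your fallback, which is also your plan for the disjoint pairs: computing $\mb{P}[e]$ and $\mb{P}[e,e']$ as ratios of counts via \Cref{thm:graph_counting} and comparing corrections term by term. That theorem pins each count down only up to an unspecified factor $1+O(\log^2n/\sqrt n+d^{5\alpha-3})$. The degree window $|d_i-pn|\le(pn)^{4/7}$ forces $\alpha\ge4/7$, so $d^{5\alpha-3}\ge d^{-1/7}$. The factors for $\mbf{d}$, $\mbf{d}-e_a-e_b$, $\mbf{d}-e_c-e_d$ and $\mbf{d}-e_a-e_b-e_c-e_d$ need not cancel, because the theorem says nothing about how its error varies with $\mbf{d}$. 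Moreover $d^{-1/7}$ exceeds even $\sqrt{\ve}$, so this route fails already for the mean. Only the explicit $\gamma_2$ factor can be compared term by term.

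What is needed instead has two parts.
\begin{itemize}
\item[(a)] A genuine single-edge estimate $\mb{P}[ab\in E(G)]=(1+O(\ve))\,d_ad_b/(2m)$. The paper takes it from the fuller expansion in Lemma~7.1(b) and (7.4) of \cite{LW17}; the terms missing from the printed Theorem~1.6 are $O(D^{-2/7}/n)$ and $O(D^{-3/7}/n)$, both $o(\ve)$. In the bipartite case it uses Theorem~1.5 of \cite{LW20}.
\item[(b)] A conditioning step for disjoint $ab,cd$. Given $N_G(a)$ and $N_G(b)$, the graph $G-\{a,b\}$ is uniform with degrees $d'_w=d_w-\one_{w\in N_G(a)}-\one_{w\in N_G(b)}$, still in the expansion's range, so $\mb{P}[cd\in E(G)\mid N_G(a),N_G(b)]=(1+O(\ve))\,d'_cd'_d/(2m')$. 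Since $d'_c\ne d_c$ only when $c$ is adjacent to $a$ or $b$, an event of probability $O(p)$ even given $ab\in E(G)$, averaging costs a relative $O(1/n)\le\ve$. Also $1/(2m')=(1+O(1/n))/(2m)$.
\end{itemize}
Together these give $\mb{P}[ab,cd\in E(G)]=(1+O(\ve))\,d_ad_bd_cd_d/(2m)^2$ directly, and hence $\mr{Var}X=O(n^3p^2(pn)^{1/7})$.
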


\begin{remark}[The graph-case single-edge input]
The error term in the printed statement of Theorem 1.6 of Liebenau and Wormald \cite{LW17} appears to require correction on its full stated degree domain. We record the fuller expansion needed here. In the graph case write
\[
D:=\frac{2m}{n},\qquad \Delta_{ab}:=d_a+d_b-2D,\qquad
\sigma^2:=\frac1n\sum_i(d_i-D)^2,\qquad
\ve:=\frac{(pn)^{1/7}}{n}.
\]
The hypotheses give $D\asymp pn$, $|D-pn|=O(\sqrt{pn})$, and $|d_i-D|\le D^{7/12}$ for large $n$. The hypotheses of the expansion hold because $D/(n-1)\asymp p\to0$ and $\log^K n/n=o(p)$ for every fixed $K$. The full expansion in Lemma 7.1(b) and equation (7.4) of \cite{LW17}, with $\alpha=7/12$, is
\begin{align}
\mb{P}[ab\in E(G)]={}&\frac{d_ad_b}{D(n-1)}
\biggl(1-\frac{(d_a-D)(d_b-D)}{D(n-1-D)}\notag\\
&\quad+\frac{\Delta_{ab}(n-1)\sigma^2}{D^2n(n-1-D)}
+\frac{\Delta_{ab}}{D(n-1)}
+O\left(\frac{D^{-2/3}}{n}\right)\biggr).
\label{eq:c1-corrected-edge-expansion}
\end{align}
The last two displayed terms do not appear in the printed statement of Theorem 1.6. On our degree window they are respectively $O(D^{-2/7}/n)$ and $O(D^{-3/7}/n)$, while the remainder in the fuller expansion is $O(D^{-2/3}/n)$. All three are $o(\ve)$, while the quadratic term in \eqref{eq:c1-corrected-edge-expansion} is $O(D^{1/7}/n)=O(\ve)$. Since $D(n-1)=2m(1-1/n)$ and $1/n\le\ve$, the only consequence used below is the unchanged estimate
\begin{equation}\label{eq:c1-single-edge}
\mb{P}[ab\in E(G)]=(1+O(\ve))\frac{d_ad_b}{2m}.
\end{equation}
The analogous estimate in the bipartite case follows directly from Theorem 1.5 of Liebenau and Wormald \cite{LW20}.
\end{remark}

\begin{proof}
We prove \cref{lem:edge_enum:graph}; the proof of \cref{lem:edge_enum:cut} is identical, and the proof of \cref{lem:edge_enum:bipartite} is identical using the bipartite estimate from Theorem 1.5 of Liebenau and Wormald \cite{LW20}. Write $X:=|E(G[U])|$ and, as in the preceding remark, $\ve=(pn)^{1/7}/n$, so that
\[
X=\sum_{\{a,b\}\subset U}\mbf{1}_{ab\in E(G)}.
\]
By \eqref{eq:c1-single-edge},
    \[\mb{E}[X] = (1+O(\ve))\sum_{\{a,b\}\subset U}\frac{d_ad_b}{2m} = \frac{1}{4m}\left(\sum_{a\in U}d_a\right)^2 + O\left((np)^{8/7}\right),\]
    where we used
    \[\sum_{\{a,b\}\subset U}d_ad_b=\frac12\Bigl(\sum_{a\in U}d_a\Bigr)^2-\frac12\sum_{a\in U}d_a^2,\qquad\frac{1}{4m}\sum_{a\in U}d_a^2=O(pn),\]
    and $\frac{\ve}{2m}\sum_{\{a,b\}\subset U}d_ad_b=O(\ve n^2p)=O((pn)^{8/7})$.

    In order to bound the variance, we estimate $\mb{P}[ab\in E(G)\text{ and }cd\in E(G)]$ for two distinct pairs $ab,cd\subset U$. Suppose first that $ab$ and $cd$ share no vertex. Condition on the neighborhoods $N_G(a)$ and $N_G(b)$. Given these, the graph $G-\{a,b\}$ is uniformly distributed among the graphs on $[n]\setminus\{a,b\}$ with degree sequence $\mbf{d}'$, where $d'_w=d_w-\mbf{1}_{w\in N_G(a)}-\mbf{1}_{w\in N_G(b)}$, and it has $m'=m-d_a-d_b+\mbf{1}_{ab\in E(G)}$ edges. The same corrected expansion applies to this degree sequence, since its entries are within $(pn)^{4/7}+O(\sqrt{pn})$ of its average $2m'/(n-2)=D+O(p)$ and the density $2m'/((n-2)(n-3))\asymp p$, so
    \[\mb{P}\left[cd\in E(G)\mid N_G(a),N_G(b)\right]=(1+O(\ve))\frac{d'_cd'_d}{2m'}.\]
    The point is that $d'_c$ and $d'_d$ differ from $d_c$ and $d_d$ only when $c$ or $d$ is adjacent to $a$ or $b$. A deterministic decrease of $d_c$ by $1$ would change $d'_cd'_d/(2m')$ by a relative $1/(pn)$, which is much larger than $\ve$; but each of the events $\{ac\in E(G)\}$, $\{bc\in E(G)\}$, $\{ad\in E(G)\}$, $\{bd\in E(G)\}$ has probability $O(p)$ conditionally on $ab\in E(G)$. (For instance, conditionally on $N_G(b)\ni a$, the graph $G-b$ is uniform with a degree sequence that again satisfies the hypotheses of the corrected expansion above, which gives $\mb{P}[ac\in E(G)\mid N_G(b)]=O(p)$.) Hence $\mb{E}[d'_cd'_d\mid ab\in E(G)]=d_cd_d\left(1-O(p/(pn))\right)=d_cd_d(1-O(1/n))$, while $1/(2m')=(1+O(pn/m))/(2m)=(1+O(1/n))/(2m)$ deterministically on $\{ab\in E(G)\}$. Since $1/n\le\ve$, averaging the previous display over $N_G(a),N_G(b)$ on the event $\{ab\in E(G)\}$ gives
    \[\mb{P}[ab\in E(G)\text{ and }cd\in E(G)]=\mb{P}[ab\in E(G)]\,\mb{P}[cd\in E(G)\mid ab\in E(G)] = \left(1+O(\ve)\right)\frac{d_ad_bd_cd_d}{(2m)^2}.\]
    Suppose next that the two pairs share a vertex, say they are $ab$ and $bc$. Conditioning on $N_G(a)$ as above, the vertex $b$ has degree exactly $d_b-1$ in $G-a$ on the event $\{ab\in E(G)\}$, so the same computation gives only
    \[\mb{P}[ab\in E(G)\text{ and }bc\in E(G)]=\left(1+O\left(\frac{1}{pn}\right)\right)\frac{d_ad_b^2d_c}{(2m)^2}=O(p^2);\]
    the relative error $1/(pn)$ here is genuinely larger than $\ve$, but this case involves only $O(n^3)$ ordered pairs and will be negligible.

    Now write $\mb{E}[X^2]=\sum_{(e,f)}\mb{P}[e,f\in E(G)]$, the sum running over ordered pairs of pairs $e,f\subset U$, and split it according to whether $e=f$, $e\ne f$ share a vertex, or $e,f$ are disjoint. The first part is $\mb{E}[X]=O(n^2p)$, the second is $O(n^3)\cdot O(p^2)=O(n^3p^2)$, and by the disjoint-pair estimate the third is
    \[(1+O(\ve))\sum_{e,f\text{ disjoint}}\mb{P}[e\in E(G)]\,\mb{P}[f\in E(G)]=(1+O(\ve))\left(\mb{E}[X]^2+O(n^3p^2)\right),\]
    since $\mb{E}[X]^2=\sum_{(e,f)}\mb{P}[e\in E(G)]\mb{P}[f\in E(G)]$ differs from the sum over disjoint pairs by the $O(n^3)$ terms with $e=f$ or $e,f$ sharing a vertex, each of which is $O(p^2)$. As $\mb{E}[X]^2=O(n^4p^2)$ and $n^2p+n^3p^2\le\ve n^4p^2=n^3p^2(pn)^{1/7}$, we conclude that
    \[\mr{Var}\,X=\mb{E}[X^2]-\mb{E}[X]^2=O\left(n^3p^2(pn)^{1/7}\right),\qquad\text{so}\qquad\sqrt{\mr{Var}\,X} = O\left(n^2p\sqrt{\frac{(pn)^{1/7}}{n}}\right).\]
    Finally, $\left|X-\frac{1}{4m}\left(\sum_{a\in U}d_a\right)^2\right|\le|X-\mb{E}X|+O((pn)^{8/7})$ and $(pn)^{8/7}=o\left(n^2p\sqrt{(pn)^{1/7}/n}\right)$, so the result follows from Chebyshev's inequality.
\end{proof}

\subsection{Degree regularity}

We prove results about the degrees to vertex subsets in random graphs using the enumeration theorems of \Cref{app:tools-enumeration}. The calculations mostly follow a simplified version of those in Appendix A of \cite{CKLT21}.

Our key result is as follows.
\begin{theorem}
\label{thm:nice_deg}
    Fix a real parameter $\theta\in(1/2,1)$ and a constant $T>1$. There exists a constant $\Cst{thm:nice_deg}=\Cst{thm:nice_deg}(\theta,T)>0$ such that for all sufficiently large $n\in\mb{Z}$ and $p\in(T^{-1}n^{-\theta},Tn^{-\theta})$, the following are true.
    \begin{enumerate}[label=\textup{(\roman*)},ref=\textup{(\roman*)}]
        \item\label{thm:nice_deg:graph} Select $m\in\mb{Z}$ satisfying $|m-pn(n-1)/2|\le Tn^2p/\sqrt{pn}$, and select $\mbf{d}\in\mc{D}_{n,m}^{\text{graph}}$ such that $d_i = pn + \beta_i\sqrt{pn}$ where $|\beta_i|\le\log n$. Pick a vertex $v\in[n]$ and a subset $S\subset[n]$ with size $h=|S|$ satisfying $h,n-h\ge T^{-1}n$. Pick an integer $t\in[0,d_v]$ and write $t=ph+\tau\sqrt{ph}$. Sample $G$ uniformly at random from all graphs with degree sequence $\mbf{d}$. If $|\tau|\ge \log^{100} n$, then
        \[\mb{P}[\deg^G_Sv = t] \le \exp(-\Cst{thm:nice_deg}\tau^2).\]
        \item\label{thm:nice_deg:bipartite} Select $\ell\in\mb{Z}\cap[T^{-1}n,Tn]$. Select $m\in\mb{Z}$ satisfying $|m-p\ell n| \le T n^2p/\sqrt{pn}$, and select $(\mbf{s},\mbf{t})\in\mc{D}_{\ell,n,m}^{\text{bigraph}}$ such that $s_i = pn + \alpha_i\sqrt{pn}$ and $t_j = p\ell + \beta_j\sqrt{p\ell}$, where $|\alpha_i|,|\beta_j|\le\log n$ for all $i\in[\ell]$ and $j\in[n]$. Pick a vertex $v\in[\ell]$ and a subset $S\subset[n]$ with size $h=|S|$ satisfying $h,n-h\ge T^{-1}n$. Pick an integer $t\in[0,s_v]$ and write $t=ph+\tau\sqrt{ph}$. Sample $G$ uniformly at random from all bigraphs with degree sequence $(\mbf{s},\mbf{t})$. If $|\tau|\ge\log^{100} n$, then
        \[\mb{P}[\deg^G_Sv = t] \le \exp(-\Cst{thm:nice_deg}\tau^2).\]
    \end{enumerate}
\end{theorem}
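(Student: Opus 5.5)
The plan is a switching-free ratio argument built on the enumeration comparisons of \Cref{thm:graph_counting,thm:bigraph_counting}. I will treat the bipartite case \cref{thm:nice_deg:bipartite} in detail; the graph case is the same argument with \Cref{thm:graph_counting} in place of \Cref{thm:bigraph_counting}. Fix $v$ and $S$, and let $N_t$ be the number of bigraphs with degree sequence $(\mbf s,\mbf t)$ in which $\deg^G_Sv=t$. Then $\mb P[\deg^G_Sv=t]=N_t/\sum_{t'}N_{t'}$.

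\textbf{Decomposition of $N_t$.} I will decompose $N_t$ by the neighbourhood of $v$. Choose sets $A\subseteq S$ with $|A|=t$ and $B\subseteq[n]\setminus S$ with $|B|=s_v-t$. Deleting $v$ leaves a bigraph on $([\ell]\setminus\{v\})\times[n]$ with degree sequence $(\mbf s_{-v},\mbf t-\one_{A\cup B})$. Hence
\[
N_t=\sum_{A,B}\#\mathrm{Big}\bigl(\mbf s_{-v},\mbf t-\one_{A\cup B}\bigr).
\]
Every residual degree sequence lies in the window of \Cref{thm:bigraph_counting} with $\alpha=7/12$, since $|\beta_j|\le\log n$. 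So each count equals the corresponding binomial-model mass times $H\cdot(1+o(1))$.

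\textbf{Controlling $H$.} The factor $H$ depends on the sequence only through $\sigma^2(\mbf t-\one_{A\cup B})$. Removing a $0/1$ vector supported on $s_v\asymp pn$ coordinates changes $\sigma^2$ by $O(pn\log n\sqrt{pn}/n)=o(1)$ relative to $t(1-\mu)$. Thus $H$ agrees up to a factor $1+o(1)$ across all $(A,B)$, and also across all values $t$.

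\textbf{The binomial mass.} The binomial mass of $\mbf t-\one_{A\cup B}$, conditioned on the row sums, factorises as $\prod_j\binom{\ell-1}{t_j-\one_{j\in A\cup B}}$ divided by a normalisation that is independent of $(A,B)$. Each factor is $\binom{\ell-1}{t_j}\cdot t_j/(\ell-t_j)$ whenever $j\in A\cup B$. The ratio $t_j/(\ell-t_j)$ equals $\frac{p}{1-p}(1+O(\log n/\sqrt{pn}))$, uniformly in $j$.

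\textbf{Reduction to a hypergeometric law.} The up-to-$(1+O(\log n/\sqrt{pn}))^{s_v}$ factors do not cancel, so I will not use them crudely. Instead, the exact weight is $w(A\cup B)=\prod_{j\in A\cup B}\frac{t_j}{\ell-t_j}$. Summing over $(A,B)$ gives
\[
\frac{N_t}{\sum_{t'}N_{t'}}=(1+o(1))\frac{e_t(x_S)\,e_{s_v-t}(x_{S^c})}{e_{s_v}(x_{[n]})},\qquad x_j:=\frac{t_j}{\ell-t_j},
\]
where $e_k$ denotes the elementary symmetric polynomial. This is the law of the number of $S$-elements in a conditional Poisson sample of size $s_v$ with weights $x_j$. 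The weights satisfy $x_j=\frac p{1-p}e^{O(\log n/\sqrt{pn})}$.

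\textbf{Tail bound.} To bound the tail I will use a tilting/Chernoff argument. For $\lambda\in\mb R$, the ratio $e_t(x_S)e_{s_v-t}(x_{S^c})e^{\lambda t}/e_{s_v}(x)$ is at most the coefficient extraction $[z^{s_v}]\prod_{j\in S}(1+e^{\lambda}x_jz)\prod_{j\notin S}(1+x_jz)$ divided by $[z^{s_v}]\prod_j(1+x_jz)$. Estimate both coefficients by the saddle point, that is, by a Gaussian local limit for sums of independent Bernoullis. This yields
\[
\mb P[\deg_S=t]\le \mathrm{poly}(n)\exp(-c\tau^2),
\]
because the mean is $s_v\frac{\sum_S x_j}{\sum x_j}=ph+O(\sqrt{ph}\log^2 n)$ and the variance is $\asymp ph$. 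Since $|\tau|\ge\log^{100}n$, the $\mathrm{poly}(n)$ factor, the mean shift $O(\log^2 n)$ in units of $\sqrt{ph}$, and the $(1+o(1))$ factors are all absorbed into $\exp(-\Cst{thm:nice_deg}\tau^2)$.

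\textbf{Main obstacle.} The hardest step is the uniformity of the enumeration comparison when $|\tau|$ is large. For $t$ far from $ph$ the residual sequences are still inside the degree window, because only $v$'s row changes. What needs care is that the error term $(1+O(\cdot))$ of \Cref{thm:bigraph_counting} is uniform. The plan is to handle this by working with ratios, in which the common normalisations cancel exactly.
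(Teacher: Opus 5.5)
Your argument is correct. It shares the paper's skeleton: condition on the neighbourhood of $v$, apply the Liebenau--Wormald comparison to the residual degree sequence, and read off product weights over the neighbourhood. Your weight $\prod_{j\in R}x_j$ is, up to an $R$-independent factor, exactly the product that the paper approximates by $e^{\Lambda_R}$ in \Cref{lem:nice_deg_bulk}.

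The route differs from the paper's in two places.

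\emph{Normalization.} You divide by $\sum_{t'}N_{t'}$, so everything lives in a single residual model. The binomial normalizations then cancel exactly, and the enumeration correction factor is $R$-independent up to $1+o(1)$. The paper instead computes $\mb P[N_G(v)=R]$ absolutely, dividing by the full-sequence probability. This needs the binomial-coefficient asymptotics of \Cref{lem:nice_deg_technical}, and it controls the two correction factors only crudely, as $\exp(O(\log^4 n))$; that loss is affordable only because $|\tau|\ge\log^{100}n$. In your $H$ check, note that $1-\sigma^2(\mbf s)/(s(1-\mu))$ can be of order $\log^2 n$. You therefore need the $R$-dependent change in the other factor to be $o(\log^{-2}n)$. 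It is $O(\sqrt{p/n}\,\log n)$, so this is fine.

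\emph{Tail bound.} You recognize the law as a conditional Poisson sample and use one exponential tilt, with a local-limit lower bound for the denominator coefficient; this handles every $t$ at once. The paper instead factors the sum into a hypergeometric probability times $\mb E_R[e^{\Lambda_R}]$. It bounds the latter via \Cref{lem:nice_deg_cute} together with $\mb E_R[\Lambda_R]=O(|\tau|\log n+\log^2 n)$, and the former by Hoeffding. It must also treat $t\le\epsilon pn$ or $d_v-t\le\epsilon pn$ separately, because \Cref{lem:nice_deg_cute} needs subsets of size of order $pn$.

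Your route is more self-contained. It also yields a sharper intermediate statement: the law of $\deg^G_Sv$ is within a factor $1+o(1)$ of a conditional Poisson law, uniformly in $t$. The paper's route instead produces the explicit neighbourhood formula of \Cref{lem:nice_deg_bulk}.
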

We prove this through a series of lemmas. We begin with a pair of technical estimates.
\begin{lemma}
\label{lem:nice_deg_technical}
    Fix a real parameter $\theta\in(1/2,1)$ and a constant $T>1$. For all sufficiently large $n\in\mb{Z}$ and $p\in(T^{-1}n^{-\theta},Tn^{-\theta})$, the following are true.
    \begin{enumerate}[label=\textup{(\roman*)},ref=\textup{(\roman*)}]
        \item\label{lem:nice_deg_technical:graph} Select $m\in\mb{Z}$ satisfying $|m-pn(n-1)/2|\le Tn^2p/\sqrt{pn}$, and select $d_v\in\mb{Z}$ satisfying $|d_v-pn|\le\sqrt{pn}\log n$. Then,
        \[\frac{\binom{(n-1)(n-2)/2}{m-d_v}\binom{(n-1)(n-2)}{2m-2d_v}^{-1}}{\binom{n(n-1)/2}{m}\binom{n(n-1)}{2m}^{-1}} = \exp\left(O(\log n)\right)p^{-d_v}e^{pn}.\]
        \item\label{lem:nice_deg_technical:bipartite} Select $\ell\in\mb{Z}\cap[T^{-1}n,Tn]$. Select $m\in\mb{Z}$ satisfying $|m-p\ell n| \le T n^2p/\sqrt{pn}$, and select $s_v\in\mb{Z}$ satisfying $|s_v-pn|\le\sqrt{pn}\log n$. Then,
        \[\frac{\binom{\ell n}{m}}{\binom{(\ell-1)n}{m-s_v}} = \exp\left(O(\log n)\right)p^{-s_v}e^{pn}.\]
    \end{enumerate}
\end{lemma}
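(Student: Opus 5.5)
The statement is a pure ratio of binomial coefficients, so the plan is to expand each factor with Stirling's formula, or more conveniently with the ratio form $\binom{A}{B}=\frac{A^B}{B!}\exp\bigl(-\tfrac{B^2}{2A}+O(B^3/A^2)+O(B/A)\bigr)$, valid for $B=o(A^{2/3})$. Here $B\asymp pn^2$ and $A\asymp n^2$, so $B^3/A^2\asymp p^3n^2=o(1)$ because $\theta>1/2$ gives $p^3n^2\le T^3n^{2-3\theta}$ and $2-3\theta<1/2$. Unfortunately this is not $o(1)$ in general; $B^2/A\asymp p^2n^2$ is large. The cleaner route is to keep the terms $-B^2/(2A)$ exactly, and to track the cubic terms as well. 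These are $O(p^3n^2)$ for each factor, and they must cancel between numerator and denominator to accuracy $O(\log n)$. I will therefore work with $\log\binom{A}{B}=B\log(A/B)+B-\tfrac12\log B+A\,h$-type expansions, that is, Stirling with the entropy form $\log\binom AB=-A[q\log q+(1-q)\log(1-q)]+O(\log A)$ where $q=B/A$. This is exact up to $O(\log n)$ and involves no series truncation.

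For part (ii) I write $F(A,B):=-A[q\log q+(1-q)\log(1-q)]$ and must compute $F(\ell n,m)-F((\ell-1)n,m-s_v)$. The two densities are $q_0=m/(\ell n)$ and $q_1=(m-s_v)/((\ell-1)n)$, and they differ by $q_1-q_0=(q_0n-s_v)/((\ell-1)n)=O(\sqrt{pn}\log n/n^2)$. Because $F$ is smooth with $\partial_AF=-\log(1-q)$ and $\partial_BF=\log\frac{1-q}{q}$, a first-order expansion along the segment gives
\[
F(\ell n,m)-F((\ell-1)n,m-s_v)=-n\log(1-q_0)+s_v\log\tfrac{1-q_0}{q_0}+E.
\]
The error $E$ is controlled by the Hessian, whose entries are of size $1/(Aq)$ times the squared increments. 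With $\Delta A=n$ and $\Delta B=s_v$, this gives $E=O(n^2/(n^2p)\cdot p^2+s_v^2/(pn^2)+\dots)=O(1)$, since the relevant quadratic form in $(\Delta A,\Delta B)$ is $(\Delta B-q\Delta A)^2/(Aq(1-q))$ and $\Delta B-q\Delta A=s_v-q_0n=O(\sqrt{pn}\log n)$.

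It remains to simplify the main term. We have $-n\log(1-q_0)=nq_0+O(nq_0^2)=pn+O(\sqrt{pn})+O(p^2n)$. Next, $s_v\log(1/q_0)=-s_v\log p+s_v\cdot O(1/\sqrt{pn})$, because $q_0=p(1+O(1/\sqrt{pn}))$. The correction $s_v\cdot O(1/\sqrt{pn})$ is $O(\sqrt{pn})$, which is too big, so more care is needed here. Writing $q_0=p(1+\epsilon)$, the combination $-n\log(1-q_0)-s_v\log q_0$ equals $n q_0-s_v\log p-s_v\epsilon+O(\cdots)=pn-s_v\log p+(np-s_v)\epsilon+O(\log n)$. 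Since $np-s_v=O(\sqrt{pn}\log n)$ and $\epsilon=O(1/\sqrt{pn})$, the product is $O(\log n)$, as required. Finally, $s_v\log(1-q_0)=O(p^2n)=O(1)$, so the result is $\exp(O(\log n))p^{-s_v}e^{pn}$.

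Part (i) is the same computation applied to the four binomials. The two factors with top argument $n(n-1)$ or $(n-1)(n-2)$ and bottom $2m$ or $2m-2d_v$ have the same density $q$ as the corresponding $\binom{\cdot/2}{\cdot}$ factor, and the quotient $\binom{N/2}{M}/\binom{N}{2M}$ has entropy part $-\tfrac12 F(N,2M)$. The ratio therefore equals $\exp\{\tfrac12[-F((n-1)(n-2),2m-2d_v)+F(n(n-1),2m)]\}\cdot\exp(O(\log n))$. Applying the same linearisation with $\Delta A=2n-2$ and $\Delta B=2d_v$ gives $\tfrac12[-2n\log(1-q)+2d_v\log\frac{1-q}q]$, which simplifies as above to $pn-d_v\log p+O(\log n)$.

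The main obstacle is the sign-sensitive bookkeeping: every error must be pinned at $O(\log n)$, and the $O(\sqrt{pn})$ pieces have to be shown to cancel through the factor $(np-s_v)\epsilon$ rather than merely bounded. I would verify this cancellation by redoing the expansion with the exact $q_0$ and the exact centring.
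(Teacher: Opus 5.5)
Your argument is correct, but it reaches the estimate by a different route than the paper.

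The paper never uses Stirling's formula. It writes each binomial quotient as a product of three falling-factorial quotients; in part (ii) this is $\frac{(\ell n)!}{(\ell n-n)!}\cdot\frac{(m-s_v)!}{m!}\cdot\frac{(\ell n-m-(n-s_v))!}{(\ell n-m)!}$. It then sums $\log(1-i/X)$ over each range. In part (i) this gives the intermediate identity that the ratio equals $(1+O(p))\bigl[\frac{n(n-1)}{2m}\bigr]^{d_v}\bigl[\frac{n(n-1)/2}{n(n-1)/2-m}\bigr]^{n-2-d_v}$, accurate to a factor $1+O(p)$. The $O(\log n)$ loss enters only at the end. There $d_v\bigl(\frac{2m}{pn^2}-1\bigr)$ is split into two pieces:
\begin{itemize}
\item $pn\bigl(\frac{2m}{pn^2}-1\bigr)=\frac{2m}{n}-pn$, whose $\frac{2m}{n}$ cancels exactly against the $\frac{2m}{n}$ produced by the second bracket;
\item $\beta_v\sqrt{pn}\bigl(\frac{2m}{pn^2}-1\bigr)=O(\log n)$, where $\beta_v\sqrt{pn}=d_v-pn$.
\end{itemize}

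You instead give up $O(\log n)$ immediately, through the $\tfrac12\log(2\pi Aq(1-q))$ terms of Stirling. That is harmless, since it is the target accuracy. In exchange you get a uniform mechanism:
\begin{itemize}
\item one gradient computation for $F(A,B)=A\log A-B\log B-(A-B)\log(A-B)$ covers both parts;
\item the identity $F(N/2,M)=\tfrac12F(N,2M)$ collapses the four binomials of (i) into a single difference;
\item the exact Hessian form $-(\Delta B-q\Delta A)^2/(Aq(1-q))$ shows the Taylor remainder is $O(\log^2 n/n)$.
\end{itemize}
The decisive cancellation is the same in both arguments. Your error term $(np-s_v)\epsilon$, or $(np-d_v)\epsilon$ in part (i), plays exactly the role of the paper's $\beta_v\sqrt{pn}\bigl(\frac{2m}{pn^2}-1\bigr)$.

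Two small clean-ups for a write-up. First, delete the truncated ratio-form discussion of your first paragraph: it is not needed, and its claim $p^3n^2=o(1)$ fails for $\theta\le 2/3$. Second, replace the garbled parenthetical bound on $E$. Either note that each of $\Delta A^2\,\partial_{AA}F$, $2\Delta A\,\Delta B\,\partial_{AB}F$ and $\Delta B^2\,\partial_{BB}F$ is $O(p)$, or simply quote the exact quadratic form.
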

\begin{proof}
    We prove \cref{lem:nice_deg_technical:graph}; the proof of \cref{lem:nice_deg_technical:bipartite} is similar and simpler. (For \cref{lem:nice_deg_technical:bipartite}, the ratio equals $\frac{(\ell n)!}{(\ell n-n)!}\cdot\frac{(m-s_v)!}{m!}\cdot\frac{(\ell n-m-(n-s_v))!}{(\ell n-m)!}$, whose three logarithms are $n\log(\ell n)-\frac{n}{2\ell}+O(1/\ell)$, $-s_v\log m+O(p)$, and $-(n-s_v)\log(\ell n-m)+\frac{n}{2\ell}+O(p)$ by the same expansions as below; summing them and expanding $\frac{m}{p\ell n}=1+O(1/\sqrt{pn})$ exactly as in the last step below gives $-s_v\log p+pn+O(\log n)$.)

    Note that
    \begin{align*}
        \frac{\binom{(n-1)(n-2)/2}{m-d_v}}{\binom{n(n-1)/2}{m}} &= \left[\frac{\left[\frac{n(n-1)}{2}\right]!}{\left[\frac{n(n-1)}{2}-(n-1)\right]!}\right]^{-1}\left[\frac{m!}{(m-d_v)!}\right]\left[\frac{\left[\frac{n(n-1)}{2}-m\right]!}{\left[\frac{n(n-1)}{2}-m-(n-1-d_v)\right]!}\right].
    \end{align*}
    We find that
    \begin{align*}
        \log\left[\frac{\left[\frac{n(n-1)}{2}\right]!}{\left[\frac{n(n-1)}{2}-(n-1)\right]!}\right] &= \sum_{i=0}^{n-2}\log\left(\frac{n(n-1)}{2}-i\right) \\
        &= (n-1)\log\left(\frac{n(n-1)}{2}\right) + \sum_{i=0}^{n-2}\log\left(1-\frac{i}{\frac{n(n-1)}{2}}\right) \\
        &= (n-1)\log\left(\frac{n(n-1)}{2}\right) - \sum_{i=0}^{n-2}\frac{i}{\frac{n(n-1)}{2}} + O(1/n) \\
        &= (n-1)\log\left(\frac{n(n-1)}{2}\right) - 1 + O(1/n)
    \end{align*}
    and
    \begin{align*}
        \log\left[\frac{m!}{(m-d_v)!}\right]  &= d_v\log m + \sum_{i=0}^{d_v-1}\log\left(1-\frac{i}{m}\right) \\
        &= d_v\log m + O(d_v^2/m) \\
        &= d_v\log m + O(p)
    \end{align*}
    and
    \begin{align*}
        &\log\left[\frac{\left[\frac{n(n-1)}{2}-m\right]!}{\left[\frac{n(n-1)}{2}-m-(n-1-d_v)\right]!}\right] \\
        &\quad= \sum_{i=0}^{n-2-d_v}\log\left(\frac{n(n-1)}{2}-m-i\right) \\
        &\quad= (n-1-d_v)\log\left(\frac{n(n-1)}{2}-m\right) + \sum_{i=0}^{n-2-d_v}\log\left(1-\frac{i}{\frac{n(n-1)}{2}-m}\right) \\
        &\quad= (n-1-d_v)\log\left(\frac{n(n-1)}{2}-m\right) - \sum_{i=0}^{n-2-d_v}\frac{i}{\frac{n(n-1)}{2}-m} + O(1/n) \\
        &\quad= (n-1-d_v)\log\left(\frac{n(n-1)}{2}-m\right) - 1 + O(p),
    \end{align*}
    so
    \[\frac{\binom{(n-1)(n-2)/2}{m-d_v}}{\binom{n(n-1)/2}{m}} = (1+O(p))\left(\frac{n(n-1)}{2}\right)^{1-n}m^{d_v}\left(\frac{n(n-1)}{2}-m\right)^{n-1-d_v}.\]
    We similarly derive
    \[\frac{\binom{(n-1)(n-2)}{2m-2d_v}}{\binom{n(n-1)}{2m}} = (1+O(p))\left(n(n-1)\right)^{3-2n}(2m)^{2d_v}\left(n(n-1)-2m\right)^{2n-3-2d_v}.\]
    Thus,
    \[\frac{\binom{(n-1)(n-2)/2}{m-d_v}\binom{(n-1)(n-2)}{2m-2d_v}^{-1}}{\binom{n(n-1)/2}{m}\binom{n(n-1)}{2m}^{-1}} = (1+O(p))\left[\frac{n(n-1)}{2m}\right]^{d_v}\left[\frac{\frac{n(n-1)}{2}}{\frac{n(n-1)}{2}-m}\right]^{n-2-d_v}.\]
    We see that
    \begin{align*}
        \log\left(\left[\frac{n(n-1)}{2m}\right]^{d_v}\right) &= d_v\log\left(\frac{n^2}{2m}\right) + O(p) \\
        &= -d_v\log p - d_v\log\left(\frac{2m}{pn^2}\right) + O(p) \\
        &= -d_v\log p - d_v\left[\frac{2m}{pn^2}-1\right] + O(1) \\
        &= -d_v\log p - \frac{2m}{n} + pn + O(\log n),
    \end{align*}
    where in the third line we used $\frac{2m}{pn^2}-1=O(1/\sqrt{pn})$, so that $d_v\left(\frac{2m}{pn^2}-1\right)^2=O(1)$, and in the last line we wrote $d_v=pn+\beta_v\sqrt{pn}$ with $\beta_v:=(d_v-pn)/\sqrt{pn}$, so that $|\beta_v|\le\log n$ by hypothesis and $\beta_v\sqrt{pn}\left(\frac{2m}{pn^2}-1\right)=O(\log n)$; and
    \begin{align*}
        \log\left(\left[\frac{\frac{n(n-1)}{2}}{\frac{n(n-1)}{2}-m}\right]^{n-2-d_v}\right) &= (n-2-d_v)\log\left[\frac{1}{1-\frac{2m}{n(n-1)}}\right] \\
        &= \frac{2m}{n} + O(np^2).
    \end{align*}
    Adding these two directly implies the desired result.
\end{proof}

\begin{lemma}
\label{lem:nice_deg_bulk}
    Fix a real parameter $\theta\in(1/2,1)$ and a constant $T>1$. For all sufficiently large $n\in\mb{Z}$ and $p\in(T^{-1}n^{-\theta},Tn^{-\theta})$, the following are true.
    \begin{enumerate}[label=\textup{(\roman*)},ref=\textup{(\roman*)}]
        \item\label{lem:nice_deg_bulk:graph} Select $m\in\mb{Z}$ satisfying $|m-pn(n-1)/2|\le Tn^2p/\sqrt{pn}$, and select $\mbf{d}\in\mc{D}_{n,m}^{\text{graph}}$ such that $d_i = pn + \beta_i\sqrt{pn}$ where $|\beta_i|\le\log n$. Pick a vertex $v\in[n]$ and a subset $S\subset[n]$ with size $h=|S|$ satisfying $h,n-h\ge T^{-1}n$. Pick an integer $t\in[0,d_v]$. Sample $G$ uniformly at random from all graphs with degree sequence $\mbf{d}$. We have
        \[\mb{P}[\deg^G_Sv = t] = \exp\left(O(\log^4 n)\right)\frac{\binom{h-\mbf{1}_S(v)}{t}\binom{n-h-\mbf{1}_{[n]\setminus S}(v)}{d_v-t}}{\binom{n-1}{d_v}}\mb{E}_R[\exp(\Lambda_R)]\]
        where $R=R_1\cup R_2$ is a random set chosen by picking $R_1$ uniformly from $\binom{S\setminus\{v\}}{t}$ and $R_2$ uniformly from $\binom{[n]\setminus(S\cup\{v\})}{d_v-t}$ (if one of these two families is empty, then the corresponding binomial coefficient vanishes and both sides are $0$), and where
        \[\Lambda_R := \sum_{i\in R}\frac{\beta_i}{\sqrt{pn}} - \sum_{i\in [n]\setminus(\{v\}\cup R)}\frac{\sqrt{pn}}{n}\beta_i.\]
        \item\label{lem:nice_deg_bulk:bipartite} Select $\ell\in\mb{Z}\cap[T^{-1}n,Tn]$. Select $m\in\mb{Z}$ satisfying $|m-p\ell n| \le T n^2p/\sqrt{pn}$, and select $(\mbf{s},\mbf{t})\in\mc{D}_{\ell,n,m}^{\text{bigraph}}$ such that $s_i = pn + \alpha_i\sqrt{pn}$ and $t_j = p\ell + \beta_j\sqrt{p\ell}$, where $|\alpha_i|,|\beta_j|\le\log n$ for all $i\in[\ell]$ and $j\in[n]$. Pick a vertex $v\in[\ell]$ and a subset $S\subset[n]$ with size $h=|S|$ satisfying $h,n-h\ge T^{-1}n$. Pick an integer $t\in[0,s_v]$. Sample $G$ uniformly at random from all bigraphs with degree sequence $(\mbf{s},\mbf{t})$. We have
        \[\mb{P}[\deg^G_Sv = t] = \exp\left(O(\log^4 n)\right)\frac{\binom{h}{t}\binom{n-h}{s_v-t}}{\binom{n}{s_v}}\mb{E}_R[\exp(\Lambda_R)]\]
        where $R=R_1\cup R_2$ is a random set chosen by picking $R_1$ uniformly from $\binom{S}{t}$ and $R_2$ uniformly from $\binom{[n]\setminus S}{s_v-t}$, and where
        \[\Lambda_R:=\sum_{j\in R}\frac{\beta_j}{\sqrt{p\ell}} - \sum_{j\in [n]\setminus R}\frac{\sqrt{p\ell}}{\ell}\beta_j.\]
    \end{enumerate}
\end{lemma}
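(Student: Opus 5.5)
The plan is to compute $\mb{P}[N_G(v)=R]$ exactly for each fixed candidate neighborhood $R$, using the enumeration comparison \Cref{thm:graph_counting}, and then to sum over the $R$ with $|R\cap S|=t$. I describe the graph case \cref{lem:nice_deg_bulk:graph}; the bipartite case \cref{lem:nice_deg_bulk:bipartite} follows the same route with \Cref{thm:bigraph_counting} and \Cref{lem:nice_deg_technical}\cref{lem:nice_deg_technical:bipartite}, and is simpler.

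\emph{Reduction to a ratio of counts.} Fix $R\subseteq[n]\setminus\{v\}$ with $|R|=d_v$. Deleting $v$ gives a bijection between graphs with degree sequence $\mbf d$ and $N_G(v)=R$, and graphs on $[n]\setminus\{v\}$ with degree sequence $\mbf d'$, where $d'_i=d_i-\mbf 1_{i\in R}$. These graphs have $m-d_v$ edges. Hence $\mb{P}[N_G(v)=R]=\#\mc G(\mbf d')/\#\mc G(\mbf d)$. The number of graphs with a given degree sequence is the number of graphs with that many edges times the probability under $G(\cdot,\cdot)$. Combining this with \Cref{def:graph_deg_seq} and $\mb{P}_{B(n,m)}(\mbf d)=\prod_i\binom{n-1}{d_i}\big/\binom{n(n-1)}{2m}$ (success probability $1/2$) gives
\[
\#\mc G(\mbf d)=\binom{n(n-1)/2}{m}\binom{n(n-1)}{2m}^{-1}\prod_i\binom{n-1}{d_i}\cdot\exp\Bigl(\tfrac14-\tfrac{\gamma_2(\mbf d)^2}{4\mu^2(1-\mu)^2}\Bigr)(1+o(1)),
\]
and the analogous formula holds for $\mbf d'$ on $n-1$ vertices.

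\emph{Checking the hypotheses of \Cref{thm:graph_counting}.} We have $|d_i-pn|\le\sqrt{pn}\log n$, and $|d'_i-d|$ is at most that plus $O(\sqrt{pn})$. Both are below $d^{\alpha}$ for any fixed $\alpha\in(1/2,3/5)$. The density is $\asymp p$.

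\emph{The correction factors.} In the exponential factors, $\gamma_2=O(p\log^2n)$ and $\mu\asymp p$, so each factor is $\exp(O(\log^4 n))$. This is the source of the $\exp(O(\log^4n))$ error in the statement.

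\emph{The binomial prefactors.} The ratio of the binomial prefactors is exactly the quantity computed in \Cref{lem:nice_deg_technical}\cref{lem:nice_deg_technical:graph}, namely $\exp(O(\log n))\,p^{-d_v}e^{pn}$.

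\emph{The product ratio.} It remains to handle
\[
\frac{\prod_{i\ne v}\binom{n-2}{d_i'}}{\prod_i\binom{n-1}{d_i}}
=\binom{n-1}{d_v}^{-1}\prod_{i\in R}\frac{d_i}{n-1}\prod_{i\notin R\cup\{v\}}\Bigl(1-\frac{d_i}{n-1}\Bigr).
\]
Substituting $d_i=pn+\beta_i\sqrt{pn}$, the first product becomes
\[
\Bigl(\tfrac{pn}{n-1}\Bigr)^{d_v}\exp\Bigl(\sum_{i\in R}\tfrac{\beta_i}{\sqrt{pn}}+O(d_v\log^2n/(pn))\Bigr).
\]
The error term here is $O(\log^2n)$. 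The second product is
\[
\exp\Bigl(-(n-1-d_v)p-\sum_{i\notin R\cup\{v\}}\tfrac{\sqrt{pn}}{n}\beta_i+O(np^2\log^2 n)\Bigr).
\]
Combining these with the prefactor $p^{-d_v}e^{pn}$, the powers of $p$ cancel. The terms $e^{pn}$ and $e^{-(n-1-d_v)p}$ leave $e^{O(p\,d_v)}=e^{O(1)}$, and $(n/(n-1))^{d_v}=e^{O(p)}$. Hence
\[
\mb{P}[N_G(v)=R]=\exp(O(\log^4n))\binom{n-1}{d_v}^{-1}e^{\Lambda_R},
\]
uniformly in $R$.

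\emph{Summing over $R$.} Summing over the $\binom{h-\mbf 1_S(v)}{t}\binom{n-h-\mbf 1_{[n]\setminus S}(v)}{d_v-t}$ sets $R$ with $|R\cap S|=t$ turns the sum of $e^{\Lambda_R}$ into that count times $\mb{E}_R[e^{\Lambda_R}]$. This gives the stated formula. If either binomial coefficient is $0$ there is no such $R$, and both sides vanish.

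The main obstacle is the bookkeeping of uniformity. All error terms must be $O(\log^4 n)$ in the exponent uniformly over every $R$, including atypical $R$ that concentrate on large $\beta_i$. The second-order Taylor terms $\beta_i^2/(pn)$, summed over $d_v\approx pn$ indices, are where this has to be checked carefully, together with the $\gamma_2$ correction computed for $\mbf d'$ rather than $\mbf d$. I would bound each of these crudely using $|\beta_i|\le\log n$, which suffices because only an $\exp(O(\log^4n))$ accuracy is claimed.
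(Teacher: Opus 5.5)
Your proposal is correct and follows the paper's proof essentially step for step. The paper writes $\mb{P}[N_G(v)=R]$ as a ratio of degree-sequence probabilities under $G(n,m)$ and $G(n-1,m-d_v)$ via Bayes' rule, which is the same identity as your ratio of counts. It then applies \Cref{thm:graph_counting} with $\gamma_2=O(p\log^2n)$ to get the $\exp(O(\log^4n))$ factor, uses \Cref{lem:nice_deg_technical} for the binomial prefactors, expands the product ratio into $e^{\Lambda_R}$ uniformly in $R$, and sums over the $R$ with $|R\cap S|=t$, exactly as you do.
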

\begin{proof}
    We prove only \cref{lem:nice_deg_bulk:graph}; the proof of \cref{lem:nice_deg_bulk:bipartite} is similar but uses \Cref{thm:bigraph_counting} instead of \Cref{thm:graph_counting}, and \Cref{lem:nice_deg_technical}\cref{lem:nice_deg_technical:bipartite} instead of \Cref{lem:nice_deg_technical}\cref{lem:nice_deg_technical:graph}.

    Let $\mb{P}_{\mbf{d}}$ denote the uniform distribution on graphs with degree sequence $\mbf{d}$. Given $R\subset[n]\setminus\{v\}$ of size $d_v$, we begin by computing $\mb{P}_{G\sim\mb{P}_{\mbf{d}}}[N_G(v) = R]$. To this end, define
    \[\mbf{d}_R = (d_w - \mbf{1}_{w\in R})_{w\in[n]\setminus\{v\}}.\]
    Note that $2m = \sum_{i\in[n]}d_i$ and let $m_R = m-d_v$, and let $d=2m/n$ and $d_R = 2m_R/(n-1)$. Note that by Bayes' theorem,
    \[\mb{P}_{G\sim\mb{P}_\mbf{d}}[N_G(v) = R] = \mb{P}_{G\sim G(n,m)}[N_G(v)=R]\frac{\mb{P}_{G(n-1,m_R)}[\mbf{d}_R]}{\mb{P}_{G(n,m)}[\mbf{d}]} = \frac{\binom{(n-1)(n-2)/2}{m-d_v}}{\binom{n(n-1)/2}{m}}\frac{\mb{P}_{G(n-1,m_R)}[\mbf{d}_R]}{\mb{P}_{G(n,m)}[\mbf{d}]},\]
    where $\mb{P}_{G(n,m)}$ and $\mb{P}_{G(n-1,m_R)}$ are the degree-sequence laws of \Cref{def:graph_deg_seq}: the graphs with degree sequence $\mbf{d}$ and $N_G(v)=R$ correspond bijectively to the graphs on $[n]\setminus\{v\}$ with degree sequence $\mbf{d}_R$, and the latter number $\binom{(n-1)(n-2)/2}{m-d_v}\mb{P}_{G(n-1,m_R)}[\mbf{d}_R]$, while the former are $\binom{n(n-1)/2}{m}\mb{P}_{G(n,m)}[\mbf{d}]$ in total.
    We compute the remaining two probabilities by appealing to \Cref{thm:graph_counting}. Indeed, note that $\mu = \frac{d}{n-1} = p + O(p/\sqrt{pn})$, and $\gamma_2(\mbf{d}) = O(p\log^2 n)$, and similarly $\mu_R = \frac{d_R}{n-2} = p + O(p/\sqrt{pn})$ and $\gamma_2(\mbf{d}_R) = O(p\log^2 n)$. Thus, by \Cref{thm:graph_counting}, we have
    \begin{align*}
        \mb{P}_{G\sim\mb{P}_\mbf{d}}[N_G(v) = R] &= \exp\left(O(\log^4 n)\right)\frac{\binom{(n-1)(n-2)/2}{m-d_v}}{\binom{n(n-1)/2}{m}}\frac{\frac{\prod_{w\in [n]\setminus\{v\}}\binom{n-2}{d_w-\mbf{1}_{w\in R}}}{\binom{(n-1)(n-2)}{2m-2d_v}}}{\frac{\prod_{w\in[n]}\binom{n-1}{d_w}}{\binom{n(n-1)}{2m}}} \\
        &= \exp\left(O(\log^4 n)\right)\frac{\binom{(n-1)(n-2)/2}{m-d_v}\binom{(n-1)(n-2)}{2m-2d_v}^{-1}}{\binom{n(n-1)/2}{m}\binom{n(n-1)}{2m}^{-1}}\frac{\prod_{w\in [n]\setminus\{v\}}\binom{n-2}{d_w-\mbf{1}_{w\in R}}}{\prod_{w\in[n]}\binom{n-1}{d_w}}.
    \end{align*}
    Note that
    \begin{align*}
        \frac{\prod_{w\in [n]\setminus\{v\}}\binom{n-2}{d_w-\mbf{1}_{w\in R}}}{\prod_{w\in[n]}\binom{n-1}{d_w}} &= \frac{1}{\binom{n-1}{d_v}}\prod_{i\in R}\frac{d_i}{n-1}\prod_{i\in[n]\setminus(\{v\}\cup R)}\frac{n-1-d_i}{n-1} \\ 
        &= \frac{\exp(O(1))}{\binom{n-1}{d_v}}p^{d_v}(1-p)^{n-1-d_v}\prod_{i\in R}\left[1 + \frac{\beta_i}{\sqrt{pn}}\right]\prod_{i\in [n]\setminus(\{v\}\cup R)}\left[1 - \frac{\sqrt{pn}}{n}\beta_i\right] \\
        &= \frac{\exp(O(\log^2 n))}{\binom{n-1}{d_v}}p^{d_v}e^{-np}\exp\left[\sum_{i\in R}\frac{\beta_i}{\sqrt{pn}} - \sum_{i\in [n]\setminus(\{v\}\cup R)}\frac{\sqrt{pn}}{n}\beta_i\right],
    \end{align*}
    so by \Cref{lem:nice_deg_technical}\cref{lem:nice_deg_technical:graph}, we have
    \[\mb{P}_{G\sim\mb{P}_\mbf{d}}[N_G(v) = R] = \exp\left(O(\log^4 n)\right)\frac{\exp(\Lambda_R)}{\binom{n-1}{d_v}}.\]
    Now pick $R_1$ uniformly from $\binom{S\setminus\{v\}}{t}$ and $R_2$ uniformly from $\binom{[n]\setminus(\{v\}\cup S)}{d_v-t}$, and let $R=R_1\cup R_2$. Under this setup, we have
    \[\mb{P}_{G\sim\mb{P}_{\mbf{d}}}[\deg^G_Sv=t] = \binom{h-\mbf{1}_S(v)}{t}\binom{n-h-\mbf{1}_{[n]\setminus S}(v)}{d_v-t}\mb{E}_R[\mb{P}_{G\sim\mb{P}_\mbf{d}}[N_G(v) = R]],\]
    so
    \[\mb{P}[\deg^G_Sv=t] = \exp\left(O(\log^4 n)\right)\frac{\binom{h-\mbf{1}_S(v)}{t}\binom{n-h-\mbf{1}_{[n]\setminus S}(v)}{d_v-t}}{\binom{n-1}{d_v}}\mb{E}_R[\exp(\Lambda_R)],\]
    as desired.
\end{proof}

The following lemma compares $\mb{E}[e^X]$ with $e^{\mb{E}X}$ when $X$ is a sum of small weights over a uniformly random subset of prescribed size; it is what lets us replace $\mb{E}_R[\exp(\Lambda_R)]$ by $\exp(\mb{E}_R[\Lambda_R])$ up to a factor $\exp(O(\log^2n))$.
\begin{lemma}
\label{lem:nice_deg_cute}
    Fix a real parameter $\theta\in(1/2,1)$ and a constant $T>1$. For all sufficiently large $n\in\mb{Z}$ and $p\in(T^{-1}n^{-\theta},Tn^{-\theta})$, the following is true.

    Fix real numbers $a_1,\ldots,a_n\in\mb{R}$ with $|a_i|\le T\log n/\sqrt{pn}$ for all $i$, and consider an integer $s\in[T^{-1}pn,Tpn]$. Pick $\xi\in\{0,1\}^n$ uniformly at random, and then condition on $\sum_i\xi_i = s$. Let $X = \sum_i \xi_i a_i$. Then,
    \[\mb{E}\left[e^X\right] \le O(\sqrt{pn})\exp\left(\mb{E}[X]+O(\log^2 n)\right).\]
\end{lemma}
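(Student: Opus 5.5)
We compare the conditioned (hypergeometric-type) law with the unconditioned product law. Without conditioning, the coordinates $\xi_i$ are independent Bernoulli$(q)$. We choose $q:=s/n$ rather than $1/2$; since the law conditioned on the sum does not depend on $q$, this is legitimate, and it is exactly $\xi$ conditioned on $\sum_i\xi_i=s$ that appears.

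Write $Y:=\sum_i\xi_ia_i$ under the product Bernoulli$(q)$ law. The first step is the crude bound
\[
\mb E[e^X]=\frac{\mb E[e^Y\mathbf 1_{\sum\xi=s}]}{\mb P[\sum\xi=s]}\le\frac{\mb E[e^Y]}{\mb P[\mr{Bin}(n,q)=s]}.
\]
Since $s=nq$ is an integer mean and the variance is $nq(1-q)\asymp pn$, Stirling (or the mode argument used in Step 1 of the proof of \Cref{lem:local-gamma-concentration}) gives $\mb P[\mr{Bin}(n,q)=s]=\Omega(1/\sqrt{pn})$. This produces the $O(\sqrt{pn})$ prefactor.

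The second step computes $\mb E[e^Y]$ exactly:
\[
\mb E[e^Y]=\prod_i\bigl(1+q(e^{a_i}-1)\bigr)\le\exp\Bigl(q\sum_i(e^{a_i}-1)\Bigr)\le\exp\Bigl(q\sum_ia_i+q\sum_ia_i^2\Bigr)
\]
for large $n$, using $|a_i|\le T\log n/\sqrt{pn}\le1$ and $e^a-1\le a+a^2$. The term $q\sum_ia_i$ is exactly $\mb E[X]$, since under the conditioned law each $\xi_i$ has mean $s/n=q$ by exchangeability. The remaining term satisfies
\[
q\sum_ia_i^2\le\frac{Tpn}{n}\cdot n\cdot\frac{T^2\log^2n}{pn}=O(\log^2n),
\]
which is the required error.

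Combining the two steps gives the statement. The only point that needs any care is the local lower bound $\mb P[\mr{Bin}(n,s/n)=s]\gtrsim(pn)^{-1/2}$, using $s\in[T^{-1}pn,Tpn]$ so that $q\asymp p$. A clean way to get it is the Stirling computation of $\binom ns q^s(1-q)^{n-s}\sim(2\pi nq(1-q))^{-1/2}$. So there is no real obstacle; the choice $q=s/n$ is what makes the linear term match $\mb E[X]$ exactly rather than up to an uncontrolled shift.
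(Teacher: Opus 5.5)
Your proof is correct and is essentially the paper's argument: you compare with independent $\mathrm{Ber}(s/n)$ coordinates, pay $\mb{P}[\mr{Bin}(n,s/n)=s]^{-1}=O(\sqrt{pn})$ for the conditioning, and expand $\log\prod_i(1+q(e^{a_i}-1))$ to get $q\sum_i a_i+O(q\sum_i a_i^2)=\mb{E}[X]+O(\log^2 n)$. One small correction: the mode argument from Step 1 of \Cref{lem:local-gamma-concentration} only gives $\mb{P}[\mr{Bin}(n,q)=s]\ge(n+1)^{-1}$, which would yield an $O(n)$ prefactor rather than the stated $O(\sqrt{pn})$, so the Stirling computation you also give is the one actually needed.
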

\begin{proof}
    Let $q=s/n$, and let $Y = \sum_i \zeta_i a_i$, where the $\zeta_i$ are independent $\mr{Ber}(q)$ random variables. Since $q\asymp p$,
    \[\mb{E}\left[e^Y\right] \ge \mb{P}[\mr{Bin}(n,q)=s]\mb{E}\left[e^X\right] \ge \Omega(1/\sqrt{pn})\mb{E}\left[e^X\right].\]
    Also,
    \[
    \log\mb{E}\left[e^Y\right]
    =\sum_{i=1}^n\log\left(1+q(e^{a_i}-1)\right)
    =q\sum_{i=1}^n a_i+O\left(q\sum_{i=1}^n a_i^2\right)
    =\mb{E}[X]+O(\log^2 n),
    \]
    using $|a_i|\le T\log n/\sqrt{pn}$. Combining the two displays proves the claim.
\end{proof}
We are now ready to prove our main result.
\begin{proof}[Proof of \Cref{thm:nice_deg}]
    We only prove \cref{thm:nice_deg:graph}, as \cref{thm:nice_deg:bipartite} has a similar proof but uses \Cref{lem:nice_deg_bulk}\cref{lem:nice_deg_bulk:bipartite} instead of \Cref{lem:nice_deg_bulk}\cref{lem:nice_deg_bulk:graph}.

    Define $\Lambda_R$ as in \Cref{lem:nice_deg_bulk}. We begin by computing its expectation over the randomness of $R$. We have
    \begin{align*}
        \mb{E}_R[\Lambda_R] &= \sum_{x\in S\setminus\{v\}}\frac{t}{|S\setminus\{v\}|}\frac{\beta_x}{\sqrt{pn}} + \sum_{x\in[n]\setminus(S\cup\{v\})}\frac{d_v-t}{|[n]\setminus(S\cup\{v\})|}\frac{\beta_x}{\sqrt{pn}} \\
        &\quad-\sum_{x\in S\setminus\{v\}}\left(1-\frac{t}{|S\setminus\{v\}|}\right)\frac{\sqrt{pn}}{n}\beta_x - \sum_{x\in[n]\setminus(S\cup\{v\})}\left(1 - \frac{d_v-t}{|[n]\setminus(S\cup\{v\})|}\right)\frac{\sqrt{pn}}{n}\beta_x \\
        &= \sum_{x\in S}\left[\frac{t}{h} - p\right]\frac{\beta_x}{\sqrt{pn}} + \sum_{x\in[n]\setminus S}\left[\frac{d_v-t}{n-h}-p\right]\frac{\beta_x}{\sqrt{pn}} + O(\log n) \\
        &= O(|\tau|\log n + \log^2 n).
    \end{align*}
    We first dispose of the case in which $t$ or $d_v-t$ is small. Let $\epsilon=\epsilon(T)>0$ be a sufficiently small constant, and suppose that $t\le\epsilon pn$ or $d_v-t\le\epsilon pn$. The hypergeometric factor in \Cref{lem:nice_deg_bulk} is the probability that a uniformly random $d_v$-subset of $[n]\setminus\{v\}$ meets $S\setminus\{v\}$ in exactly $t$ elements. This hypergeometric variable has mean $d_v(h-\mbf{1}_S(v))/(n-1)$, which lies in $[d_v/(2T),d_v(1-1/(2T))]$ for large $n$, so for $\epsilon$ small enough our $t$ is at distance $\Omega(pn)$ from the mean, and Hoeffding's inequality (which applies to sampling without replacement) bounds the factor by $\exp(-cpn)$ for a constant $c=c(T)>0$. Since $|\Lambda_R|\le d_v\log n/\sqrt{pn}+\sqrt{pn}\log n=O(\sqrt{pn}\log n)$ deterministically, \Cref{lem:nice_deg_bulk} gives $\mb{P}[\deg^G_Sv=t]\le\exp\left(O(\log^4 n)+O(\sqrt{pn}\log n)-cpn\right)\le\exp(-cpn/2)$, while $|t-ph|\le\max(ph,d_v)=O(pn)$ gives $\tau^2=(t-ph)^2/(ph)=O(pn)$. Hence $\mb{P}[\deg^G_Sv=t]\le\exp(-c_1\tau^2)$ in this case, for a suitable constant $c_1=c_1(T)>0$.

    We may therefore assume that $\epsilon pn\le t\le d_v-\epsilon pn$. Writing $\xi_i:=\mbf{1}_{i\in R}$, we have $\Lambda_R=\sum_{i\ne v}\xi_ia_i-\sum_{i\ne v}\frac{\sqrt{pn}}{n}\beta_i$ with $a_i:=\beta_i\left(\frac{1}{\sqrt{pn}}+\frac{\sqrt{pn}}{n}\right)=O(\log n/\sqrt{pn})$, and $R_1$ and $R_2$ are independent uniformly random subsets of $S\setminus\{v\}$ and of $[n]\setminus(S\cup\{v\})$ of sizes $t$ and $d_v-t$ respectively; both sizes are of order $pn$, hence of order $p$ times the size of the ambient set, which is the hypothesis of \Cref{lem:nice_deg_cute}. Applying that lemma separately to the two pieces, and using $\log(pn)=O(\log n)$, gives $\mb{E}_R[\exp(\Lambda_R)]\le\exp(\mb{E}_R[\Lambda_R]+O(\log^2 n))=\exp(O(|\tau|\log n+\log^2n))$. Jensen's inequality supplies the matching lower bound $\mb E_R[e^{\Lambda_R}]\ge e^{\mb E_R[\Lambda_R]}$, so in fact $\mb E_R[e^{\Lambda_R}]=\exp(O(|\tau|\log n+\log^2n))$. Together with \Cref{lem:nice_deg_bulk}, whose factor $\exp(O(\log^4 n))$ is absorbed into $\exp(O(|\tau|\log n))$ because $|\tau|\ge\log^{100}n$, we obtain
    \begin{align*}
        \mb{P}[\deg^G_Sv=t] &= \exp(O(|\tau|\log n + \log^2 n))\frac{\binom{h-\mbf{1}_S(v)}{t}\binom{n-h-\mbf{1}_{[n]\setminus S}(v)}{d_v-t}}{\binom{n-1}{d_v}} \\
        &= \exp(O(|\tau|\log n + \log^2 n))\frac{\binom{h}{t}\binom{n-h}{d_v-t}}{\binom{n}{d_v}},
    \end{align*}
    where the second line uses $\binom{h-1}{t}/\binom{h}{t}=1-t/h=1-O(p)$ and its two analogues.
    We now bound the hypergeometric term. Indeed, we have that
    \begin{align*}
        \frac{\binom{h}{t}\binom{n-h}{d_v-t}}{\binom{n}{d_v}} &= \frac{\binom{d_v}{t}\binom{n-d_v}{h-t}}{\binom{n}{h}} \\
        &= (1+o(1))\binom{d_v}{t}\frac{h^t(n-h)^{d_v-t}}{n^{d_v}} \\
        &\le (1+o(1))\mb{P}\left[\left|\mr{Bin}(d_v,\tfrac{h}{n})-d_v\tfrac{h}{n}\right|\ge\left(\frac{|\tau|}{\sqrt T}-\log n\right)\sqrt{pn}\right] \\
        &\le 2(1+o(1))\exp\left(-\frac{2(|\tau|/\sqrt T-\log n)^2}{1+\beta_v/\sqrt{pn}}\right),
    \end{align*}
    where the second line holds because the ratio of the two sides is $\exp(O(t^2/h+d_v^2/n))=\exp(O(p^2n))$, which tends to $1$ since $\theta>1/2$; the first inequality uses $t-d_vh/n=\tau\sqrt{ph}-\beta_v\sqrt{pn}\,h/n$ together with $h\ge T^{-1}n$ and $h\le n$; and the last inequality follows from Hoeffding's inequality. Thus, since $|\tau|\ge\log^{100} n$, there is some constant $c_2>0$ (depending on $T$) such that
    \[\mb{P}[\deg^G_Sv=t]\le\exp(-c_2\tau^2)\]
    in this case. Taking $\Cst{thm:nice_deg}:=\min\{c_1,c_2\}$ proves the theorem.
\end{proof}

\section{Inverse-function theorems for conditional-mean maps}
\label{app:inverse}

This appendix proves that the map sending the mean of a Gaussian vector to its conditional mean given a fixed open convex set is a diffeomorphism onto that set, and that a map which is uniformly close to such a diffeomorphism on compact sets still hits every target in a compact subset of the set, with a preimage close to the exact one. The first statement is what makes the universal recursion of \Cref{sec:universal} well defined; the second is what produces the tilts of the idealized process in \Cref{app:idealized}. The first statement is the classical fact that the mean-value map of a regular exponential family is a diffeomorphism onto the interior of the convex support; we give the short self-contained proof. The second is an application of Brouwer's fixed point theorem. The appendix uses only \Cref{app:tools-gaussian}.

\subsection{Strong bijections}

\begin{definition}[Strong bijection]\label{def:strong-bijection}
Let $d\ge1$ and let $B\subseteq\mb{R}^d$ be open. A map $f\colon\mb{R}^d\to B$ is a \emph{strong bijection} if it is a bijection and both $f$ and $f^{-1}\colon B\to\mb{R}^d$ are smooth.
\end{definition}

\begin{lemma}[Lipschitz bounds on compact sets]\label{lem:strong-bijection-lipschitz}
Let $f\colon\mb{R}^d\to B$ be a strong bijection and let $K\subseteq B$ be compact. Then there is a constant $L=L(f,K)\ge1$ such that
\[
L^{-1}|y-z|\le\left|f^{-1}(y)-f^{-1}(z)\right|\le L|y-z|\qquad\text{for all }y,z\in K .
\]
\end{lemma}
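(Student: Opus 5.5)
The plan is to deduce the two-sided Lipschitz bound from smoothness of $f$ and $f^{-1}$ on suitable compact convex neighborhoods, plus a compactness argument. The upper bound $|f^{-1}(y)-f^{-1}(z)|\le L|y-z|$ uses smoothness of $f^{-1}$ on $B$. The lower bound $L^{-1}|y-z|\le|f^{-1}(y)-f^{-1}(z)|$ is the same as $|f(u)-f(w)|\le L|u-w|$ for $u,w\in f^{-1}(K)$. That set is compact because $f^{-1}$ is continuous, and this bound uses smoothness of $f$ on $\mb{R}^d$.

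The lower bound is easy because $\mb{R}^d$ is convex. Put $K':=f^{-1}(K)$ and let $H$ be its closed convex hull, which is compact. Since $f$ is $C^1$, the mean value inequality along segments gives
\[
|f(u)-f(w)|\le \sup_{H}\norm{Df}\,|u-w|
\]
for $u,w\in H$. Setting $u=f^{-1}(y)$ and $w=f^{-1}(z)$ yields the lower bound with constant $L_1:=\max\{1,\sup_H\norm{Df}\}$.

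The upper bound is harder because $B$ need not be convex, so the segment $[y,z]$ may leave $B$. I will split by distance.

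\emph{Local part.} Let $r:=\tfrac12\operatorname{dist}(K,B^c)$, which is positive since $K$ is compact and $B$ is open (take $r=1$ if $B=\mb{R}^d$). The closed $r$-neighborhood $K_r$ of $K$ is a compact subset of $B$. For $y,z\in K$ with $|y-z|\le r$, the segment $[y,z]$ lies in $K_r$. The mean value inequality then gives
\[
|f^{-1}(y)-f^{-1}(z)|\le\sup_{K_r}\norm{Df^{-1}}\,|y-z|.
\]

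\emph{Far part.} For $|y-z|>r$, continuity of $f^{-1}$ on compact $K$ bounds $|f^{-1}(y)-f^{-1}(z)|\le\operatorname{diam} f^{-1}(K)$. This is at most $(\operatorname{diam}f^{-1}(K)/r)\,|y-z|$.

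The constant $L$ is the maximum of $1$, $L_1$, $\sup_{K_r}\norm{Df^{-1}}$ and $\operatorname{diam}f^{-1}(K)/r$. The only real obstacle is the possible nonconvexity of $B$, and the near/far split handles it. None of this depends on the Gaussian structure; it uses only the definition of a strong bijection.
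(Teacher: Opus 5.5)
Your proof is correct and follows essentially the same route as the paper: for the upper bound, a near/far split using a closed neighborhood of $K$ inside $B$ (mean value inequality when $|y-z|$ is small, a crude sup/diameter bound otherwise), and for the lower bound, Lipschitz control of $f$ on the compact set $f^{-1}(K)$. The only cosmetic difference is that for the lower bound you work directly on the convex hull of $f^{-1}(K)$, whereas the paper simply reapplies its general near/far Lipschitz fact to $h=f$ on $W=\mathbb{R}^d$.
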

\begin{proof}
We first record that a smooth map $h\colon W\to\mb{R}^d$ on an open set $W\subseteq\mb{R}^d$ is Lipschitz on every compact $K\subseteq W$. For $K=\emptyset$ this is immediate, so assume $K\ne\emptyset$. Since $K$ is compact and $W$ is open there is $\rho>0$ such that the closed $\rho$-neighborhood $K_\rho:=\{x:\operatorname{dist}(x,K)\le\rho\}$ is contained in $W$; the derivative $Dh$ is continuous, hence bounded by some $L_1$ on the compact set $K_\rho$. If $y,z\in K$ and $|y-z|\le\rho$, the segment from $y$ to $z$ lies in $K_\rho$, and the mean value inequality gives $|h(y)-h(z)|\le L_1|y-z|$. If $|y-z|>\rho$, then $|h(y)-h(z)|\le2\sup_K|h|\le(2\sup_K|h|/\rho)\,|y-z|$. So $h$ is Lipschitz on $K$ with constant $\max\{L_1,2\sup_K|h|/\rho\}$.

Applying this to $h=f^{-1}$ on $W=B$ gives the upper bound with some constant $L'$. For the lower bound, the set $K':=f^{-1}(K)$ is compact, being the image of a compact set under the continuous map $f^{-1}$; applying the fact to $h=f$ on $W=\mb{R}^d$ gives $L''$ with $|f(a)-f(b)|\le L''|a-b|$ for $a,b\in K'$, and taking $a=f^{-1}(y)$, $b=f^{-1}(z)$ yields $|y-z|\le L''|f^{-1}(y)-f^{-1}(z)|$. Take $L:=\max\{L',L'',1\}$.
\end{proof}

\subsection{The conditional-mean map}

\begin{theorem}[The conditional-mean map is a strong bijection]\label{thm:orthant-bijection}
Let $d\ge1$, let $\Sigma\in\mb{R}^{d\times d}$ be positive definite, and let $\mc{O}\subseteq\mb{R}^d$ be a nonempty open convex set. For $\gamma\in\mb{R}^d$ let $X^\gamma\sim\mc{N}(\gamma,\Sigma)$. The map
\[
F_{\mc{O}}\colon\mb{R}^d\to\mc{O},\qquad F_{\mc{O}}(\gamma):=\mb{E}\left[X^\gamma\mid X^\gamma\in\mc{O}\right],
\]
is well defined, that is, its values lie in $\mc{O}$, and it is a strong bijection. Its Jacobian at $\gamma$ is $\mr{Cov}(X^\gamma\mid X^\gamma\in\mc{O})\,\Sigma^{-1}$, which is invertible.
\end{theorem}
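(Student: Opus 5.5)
The plan is to recast $F_{\mc O}$ as the mean-value map of an exponential family. Write the density of $X^\gamma$ as proportional to $\exp(\gamma^\top\Sigma^{-1}x-\tfrac12x^\top\Sigma^{-1}x)$. Then the conditioned law on $\mc O$ has density proportional to $\one_{\mc O}(x)\,e^{\vartheta\cdot x}\,g(x)$, where $\vartheta:=\Sigma^{-1}\gamma$ and $g(x):=e^{-x^\top\Sigma^{-1}x/2}$. Put $\psi(\vartheta):=\log\int_{\mc O}e^{\vartheta\cdot x}g(x)\,dx$. This integral is finite for every $\vartheta$ because of the Gaussian factor, and it is smooth in $\vartheta$ by differentiation under the integral, which is dominated since all moments of the Gaussian are finite locally uniformly in $\vartheta$. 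We then have $F_{\mc O}(\gamma)=\nabla\psi(\Sigma^{-1}\gamma)$ and $\nabla^2\psi(\vartheta)=\mr{Cov}(X^\gamma\mid X^\gamma\in\mc O)$. The chain rule gives the claimed Jacobian $\mr{Cov}(X^\gamma\mid\mc O)\Sigma^{-1}$, and \Cref{lem:open-conditioning} shows that this covariance is positive definite, so the Jacobian is invertible. The same lemma shows that the values of $F_{\mc O}$ lie in $\mc O$.

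Injectivity follows from strict convexity. Since $\nabla^2\psi$ is positive definite everywhere, $\psi$ is strictly convex. Hence $(\nabla\psi(a)-\nabla\psi(b))\cdot(a-b)>0$ for $a\ne b$, so $\nabla\psi$ is injective, and therefore so is $F_{\mc O}$. Once surjectivity onto $\mc O$ is established, the inverse function theorem applies: a smooth bijection with everywhere invertible Jacobian has a smooth inverse. This gives a strong bijection in the sense of \Cref{def:strong-bijection}.

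The main obstacle is surjectivity. Fix $y\in\mc O$ and consider the strictly convex smooth function $\Phi_y(\vartheta):=\psi(\vartheta)-\vartheta\cdot y$. A critical point of $\Phi_y$ is exactly a solution of $\nabla\psi(\vartheta)=y$, so it suffices to show that $\Phi_y$ is coercive; a minimizer then exists and is a critical point.

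To prove coercivity, pick $r>0$ with the ball $B(y,2r)\subseteq\mc O$ and set $c_0:=\min_{B(y,2r)}g>0$. For a unit vector $u$ and $\lambda>0$, restrict the integral to the set $\{x\in B(y,2r):u\cdot(x-y)\ge r\}$, which has positive volume $v_0$ independent of $u$. This gives
\[
\psi(\lambda u)\ge\log(c_0v_0)+\lambda(u\cdot y+r),
\]
and therefore $\Phi_y(\lambda u)\ge\lambda r+\log(c_0v_0)\to\infty$ uniformly in $u$. So $\Phi_y$ has bounded sublevel sets and attains its minimum at some $\vartheta^*$. Then $\nabla\psi(\vartheta^*)=y$, and $\gamma:=\Sigma\vartheta^*$ satisfies $F_{\mc O}(\gamma)=y$.

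The only remaining care is to justify the interchange of derivative and integral for all orders of derivatives. This follows from the domination $|x|^je^{\vartheta\cdot x}g(x)\le C_{K,j}e^{-x^\top\Sigma^{-1}x/4}$, valid uniformly for $\vartheta$ in a compact set $K$.
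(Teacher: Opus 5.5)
Your proposal is correct and follows essentially the same route as the paper: you take the log-partition function $\psi(\vartheta)=\log\int_{\mc O}e^{\vartheta\cdot x-x^\top\Sigma^{-1}x/2}\,dx$, whose gradient and Hessian give the conditional mean and covariance at $\vartheta=\Sigma^{-1}\gamma$; you get injectivity from strict convexity, surjectivity from coercivity of $\psi(\vartheta)-\vartheta\cdot y$ using part of a ball around $y$ lying in a shifted half-space, and a smooth inverse from the inverse function theorem. The differences are cosmetic: you integrate over a spherical cap where the paper uses the small ball $B(y+\tfrac32\eta e,\tfrac12\eta)$, and you use monotonicity of $\nabla\psi$ where the paper compares derivatives along a segment.
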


\begin{proof}
That $F_{\mc{O}}$ is well defined with values in $\mc{O}$ is \Cref{lem:open-conditioning}. Define
\[
\Phi\colon\mb{R}^d\to\mb{R},\qquad\Phi(\vartheta):=\log\int_{\mc{O}}\exp\left(\vartheta\cdot x-\tfrac12x^\top\Sigma^{-1}x\right)dx .
\]
The integrand is bounded by $\exp(|\vartheta||x|-\frac12\lambda_{\min}|x|^2)$, where $\lambda_{\min}>0$ is the smallest eigenvalue of $\Sigma^{-1}$, so the integral converges for every $\vartheta$, and differentiation under the integral sign (justified by the same Gaussian domination, locally uniformly in $\vartheta$) shows that $\Phi$ is smooth with
\[
\nabla\Phi(\vartheta)=\mb{E}_\vartheta[X],\qquad\nabla^2\Phi(\vartheta)=\mr{Cov}_\vartheta(X),
\]
where $\mb{P}_\vartheta$ denotes the probability measure on $\mc{O}$ with density proportional to $\exp(\vartheta\cdot x-\frac12x^\top\Sigma^{-1}x)$. Since
\[
-\tfrac12(x-\gamma)^\top\Sigma^{-1}(x-\gamma)=x\cdot\Sigma^{-1}\gamma-\tfrac12x^\top\Sigma^{-1}x-\tfrac12\gamma^\top\Sigma^{-1}\gamma,
\]
the law of $X^\gamma$ conditioned on $\{X^\gamma\in\mc{O}\}$ is $\mb{P}_{\Sigma^{-1}\gamma}$. Hence
\begin{equation}\label{eq:F-gradient}
F_{\mc{O}}(\gamma)=\nabla\Phi(\Sigma^{-1}\gamma)\qquad\text{and}\qquad DF_{\mc{O}}(\gamma)=\nabla^2\Phi(\Sigma^{-1}\gamma)\,\Sigma^{-1}=\mr{Cov}(X^\gamma\mid X^\gamma\in\mc{O})\,\Sigma^{-1},
\end{equation}
so $F_{\mc{O}}$ is smooth, and its Jacobian is invertible because $\mr{Cov}(X^\gamma\mid X^\gamma\in\mc{O})$ is positive definite by \Cref{lem:open-conditioning}.

\emph{Injectivity.} By \eqref{eq:F-gradient} and \Cref{lem:open-conditioning}, $\nabla^2\Phi$ is positive definite everywhere, so $\Phi$ is strictly convex. If $\nabla\Phi(\vartheta)=\nabla\Phi(\vartheta')$ with $\vartheta\ne\vartheta'$, then the strictly convex function $t\mapsto\Phi(\vartheta+t(\vartheta'-\vartheta))$ would have equal derivatives at $t=0$ and $t=1$, which is impossible. Hence $\nabla\Phi$ is injective, and so is $F_{\mc{O}}=\nabla\Phi\circ\Sigma^{-1}$.

\emph{Surjectivity.} Fix $y\in\mc{O}$ and consider the smooth function $\Psi(\vartheta):=\Phi(\vartheta)-\vartheta\cdot y$. We show that $\Psi(\vartheta)\to\infty$ as $|\vartheta|\to\infty$. Since $\mc{O}$ is open there is $\eta>0$ with $B(y,2\eta)\subseteq\mc{O}$. For a unit vector $e$ and $t>0$, the ball $B(y+\frac32\eta e,\frac12\eta)$ is contained in $B(y,2\eta)\subseteq\mc{O}$ and in the half-space $\{x:(x-y)\cdot e>\eta\}$, so
\[
\Psi(te)=\log\int_{\mc{O}}\exp\left(t(x-y)\cdot e-\tfrac12x^\top\Sigma^{-1}x\right)dx
\ge\log\left(e^{t\eta}\,m_0\right)=t\eta+\log m_0,
\]
where
\[
m_0:=\mr{vol}\left(B(0,\tfrac12\eta)\right)\inf_{|x-y|\le2\eta}e^{-\frac12x^\top\Sigma^{-1}x}>0
\]
does not depend on $e$. Hence $\Psi\ge t\eta+\log m_0$ on the sphere of radius $t$, which exceeds $\Psi(0)$ for $t$ large. A continuous function on a closed ball attains its minimum; as $\Psi$ is larger on the boundary of a large ball than at its center, the minimum over the ball is attained at an interior point $\vartheta^\ast$, where $\nabla\Psi(\vartheta^\ast)=0$, that is, $\nabla\Phi(\vartheta^\ast)=y$. Thus $F_{\mc{O}}(\Sigma\vartheta^\ast)=y$.

\emph{Smoothness of the inverse.} $F_{\mc{O}}$ is a smooth bijection $\mb{R}^d\to\mc{O}$ whose Jacobian is everywhere invertible, so by the inverse function theorem it is a local diffeomorphism around every point; hence its inverse is smooth on $\mc{O}$.
\end{proof}

\begin{corollary}[Conditional means on a cone]\label{cor:cone-bijection}
Let $d\ge1$, let $0\le r\le d$, let $M\in\mb{R}^{r\times d}$ have rank $r$, and let $\Sigma\in\mb{R}^{d\times d}$ be positive definite. Let $\mc{C}_M:=\{x\in\mb{R}^d:Mx>0\}$, where $Mx>0$ means that every coordinate is positive, so that $\mc{C}_M=\mb{R}^d$ when $r=0$. Then $\mc{C}_M$ is a nonempty open convex set, and the map $F_{\mc{C}_M}$ of \Cref{thm:orthant-bijection} is a strong bijection $\mb{R}^d\to\mc{C}_M$.
\end{corollary}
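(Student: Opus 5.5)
The plan is to reduce \Cref{cor:cone-bijection} directly to \Cref{thm:orthant-bijection}. That theorem already gives the strong-bijection conclusion for any nonempty open convex set $\mc{O}$ and any positive definite $\Sigma$. So the only work is to check that $\mc{C}_M$ is a nonempty open convex set; once that is done we apply \Cref{thm:orthant-bijection} with $\mc{O}=\mc{C}_M$.

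Openness and convexity are immediate. We can write $\mc{C}_M$ as the intersection of the $r$ sets $\{x:\mu_i\cdot x>0\}$, where $\mu_i$ are the rows of $M$. Each such set is an open half-space when $\mu_i\ne0$. A row is never zero, because $M$ has rank $r$. A finite intersection of open convex sets is open and convex. When $r=0$ the set is all of $\mb{R}^d$, which is trivially open, convex and nonempty.

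Nonemptiness is the only step that uses the rank hypothesis. Because $M$ has rank $r$, the linear map $x\mapsto Mx$ from $\mb{R}^d$ to $\mb{R}^r$ is surjective. We therefore choose $x_0$ with $Mx_0=(1,\ldots,1)$; for instance $x_0=M^\top(MM^\top)^{-1}\mathbf{1}$ works, since $MM^\top$ is invertible by the rank assumption. Then $x_0\in\mc{C}_M$.

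Having verified the three properties, we invoke \Cref{thm:orthant-bijection}. It shows that $F_{\mc{C}_M}$ takes values in $\mc{C}_M$ and is a strong bijection onto it. No step here is a real obstacle; the only care needed is the nonemptiness argument via surjectivity of $M$.
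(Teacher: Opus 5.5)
Your proposal is correct and follows the paper's proof essentially verbatim. The paper argues that $\mc{C}_M$ is an intersection of $r$ open half-spaces, hence open and convex; that it is nonempty because full row rank makes $x\mapsto Mx$ onto $\mb{R}^r$, so some $x$ has $Mx=(1,\ldots,1)$; and then it applies \Cref{thm:orthant-bijection} with $\mc{O}=\mc{C}_M$, and your explicit choice $x_0=M^\top(MM^\top)^{-1}\mathbf{1}$ is just a concrete instance of that surjectivity step.
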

\begin{proof}
$\mc{C}_M$ is the intersection of $r$ open half-spaces, hence open and convex. It is nonempty: since $M$ has rank $r$, the map $x\mapsto Mx$ is onto $\mb{R}^r$, so some $x$ has $Mx=(1,\ldots,1)$. Apply \Cref{thm:orthant-bijection} with $\mc{O}=\mc{C}_M$.
\end{proof}

\subsection{Solving perturbed equations}

\begin{theorem}[Perturbation of a strong bijection]\label{thm:perturbed-bijection}
Let $d\ge1$, let $B\subseteq\mb{R}^d$ be open, let $f\colon\mb{R}^d\to B$ be a strong bijection, let $\mc{K}\subseteq B$ be compact, and let $C_0>0$. Put $R_0:=\max_{y\in\mc{K}}|f^{-1}(y)|+1$ when $\mc{K}\ne\emptyset$, and $R_0:=1$ when $\mc{K}=\emptyset$. There exist $\varepsilon_0>0$ and $C>0$, depending only on $f$, $\mc{K}$, and $C_0$, such that the following holds. Let $0<\varepsilon<\varepsilon_0$ and let $g\colon\mb{R}^d\to\mb{R}^d$ be a continuous map with
\[
\sup_{|x|\le R_0}\left|g(x)-f(x)\right|\le C_0\,\varepsilon .
\]
Then for every $y\in\mc{K}$ there is $x\in\mb{R}^d$ with
\[
g(x)=y\qquad\text{and}\qquad\left|x-f^{-1}(y)\right|\le C\varepsilon .
\]
In particular this applies to $B=\mc{C}_M$ as in \Cref{cor:cone-bijection} and, for $T>1$, to the compact set
\[
\mc{K}_T:=\left\{y\in\mc{C}_M:\norm{y}_\infty\le T\text{ and }\min_{1\le i\le r}(My)_i\ge T^{-1}\right\}
\]
(the closed cube $[-T,T]^d$ when $r=0$), with $R_0$, $\varepsilon_0$, and $C$ depending only on $d$, $M$, $f$, $C_0$, and $T$.
\end{theorem}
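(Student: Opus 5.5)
The plan is a Brouwer fixed-point argument in a small ball around $x_0:=f^{-1}(y)$, using that $f$ is a local diffeomorphism with uniformly controlled inverse derivative near the compact set $f^{-1}(\mc K)$. First I fix the geometry. The set $K':=f^{-1}(\mc K)$ is compact and lies in the open ball $\{|x|<R_0\}$ by the choice of $R_0$. I choose $r_0\in(0,1)$ such that $f(\ol B(K',r_0))$ is a compact subset of $B$; by continuity of $f$ this is automatic, since $f$ maps all of $\mb R^d$ into $B$. The closed $r_0$-neighbourhood $K'_{r_0}$ is compact and contained in $\{|x|\le R_0\}$, because $|x_0|\le R_0-1$ and $r_0<1$. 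On $K'_{r_0}$ the derivative $Df$ is continuous and invertible, since $f^{-1}$ is smooth. Hence there are constants $\Lambda$ with $\|Df(x)^{-1}\|\le\Lambda$ on $K'_{r_0}$, and $\omega(\cdot)$, a modulus of continuity of $Df$ on $K'_{r_0}$.

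Next, fix $y\in\mc K$ and write $A:=Df(x_0)$. I look for $x=x_0+h$ with $|h|\le\rho$ and define
\[
\Psi(h):=h-A^{-1}\bigl(g(x_0+h)-y\bigr).
\]
A fixed point of $\Psi$ gives $g(x)=y$. Split $g=f+(g-f)$. Then
\[
\Psi(h)=-A^{-1}\bigl(f(x_0+h)-f(x_0)-Ah\bigr)-A^{-1}\bigl(g-f\bigr)(x_0+h).
\]
Since $x_0+h\in K'_{r_0}\subseteq\{|x|\le R_0\}$, the second term has norm at most $\Lambda C_0\varepsilon$. By the mean value inequality, the first term has norm at most $\Lambda\,\omega(\rho)\,\rho$. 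I take $\rho:=2\Lambda C_0\varepsilon$ and choose $\varepsilon_0$ small enough that $\rho\le r_0$ and $\Lambda\omega(\rho)\le1/2$. Then $|\Psi(h)|\le\rho/2+\rho/2=\rho$, so $\Psi$ maps the closed ball $\ol B(0,\rho)$ continuously into itself. Continuity of $\Psi$ follows from that of $g$. Brouwer's theorem then gives a fixed point, which yields the claim with $C:=2\Lambda C_0$. All these constants depend only on $f$, $\mc K$ and $C_0$.

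For the final sentence, I apply the general statement with $B=\mc C_M$ and $f=F_{\mc C_M}$ from \Cref{cor:cone-bijection}. I only need $\mc K_T$ to be a compact subset of $\mc C_M$. It is closed and bounded, and its points satisfy $(My)_i\ge T^{-1}>0$, so it sits inside the open cone. The constants then depend on $d,M,f,C_0,T$, as claimed.

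The main obstacle is the uniformity step: the choice of $r_0$, $\Lambda$ and $\omega$ must be uniform over all $y\in\mc K$. This is exactly what compactness of $K'$ and of its neighbourhood provides. One must also note that the hypothesis on $g$ is needed only on $\{|x|\le R_0\}$, which is why $r_0<1$ is imposed.
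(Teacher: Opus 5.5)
Your proof is correct, but it takes a different route from the paper's.

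\textbf{The paper's argument.} The paper never differentiates $f$. It sets $r_0:=\tfrac12\operatorname{dist}(\mc K,\mb R^d\setminus B)$ and lets $\mc K'$ be the closed $r_0$-neighbourhood of $\mc K$, a compact subset of $B$. It then applies Brouwer on the fixed unit ball $\ol B(x_0,1)$ to the map $x\mapsto f^{-1}\bigl(y+f(x)-g(x)\bigr)$. The perturbed target stays in $\mc K'$ because $C_0\varepsilon<r_0$. The Lipschitz bound for $f^{-1}$ on $\mc K'$ (\Cref{lem:strong-bijection-lipschitz}) then gives, in one line, both that the map sends the ball into itself and the estimate $|x^\ast-x_0|\le LC_0\varepsilon$.

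\textbf{Your argument.} You instead linearize at $x_0$ and run the Newton-type map $h\mapsto h-Df(x_0)^{-1}\bigl(g(x_0+h)-y\bigr)$ on a ball of radius $\rho=2\Lambda C_0\varepsilon$. This is a quantitative inverse function theorem.
\begin{itemize}
\item Since you never apply $f^{-1}$ to perturbed points, you need no buffer between $\mc K$ and $\mb R^d\setminus B$. Your preliminary choice of $r_0$ making $f(\ol B(K',r_0))\subseteq B$ is therefore superfluous.
\item The cost is that you genuinely use $C^1$ regularity of $f$. You need a uniform bound $\Lambda$ on $\|Df^{-1}\|$ and a uniform modulus of continuity $\omega$ of $Df$ on a compact neighbourhood of $f^{-1}(\mc K)$.
\item You also need the extra smallness condition $\Lambda\omega(\rho)\le\tfrac12$. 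Take $\omega$ to be the nondecreasing sup-modulus, so that this condition persists for every $\varepsilon<\varepsilon_0$.
\end{itemize}

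\textbf{Comparison.} The paper's route is shorter and uses less: only continuity of $f$ and local Lipschitz continuity of $f^{-1}$. It would therefore apply to any homeomorphism $\mb R^d\to B$ with locally Lipschitz inverse. Your route needs differentiability, which is of course available here because $f$ is a strong bijection. Your treatment of the final assertion about $\mc K_T$ matches the paper's.
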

\begin{proof}
If $\mc{K}=\emptyset$, the conclusion is vacuous, so take $\varepsilon_0=C=1$. Assume henceforth that $\mc{K}\ne\emptyset$. Since $\mc{K}$ is compact and $B$ is open, $r_0:=\frac12\operatorname{dist}(\mc{K},\mb{R}^d\setminus B)$ is positive (with $r_0:=1$ if $B=\mb{R}^d$), and $\mc{K}':=\{z:\operatorname{dist}(z,\mc{K})\le r_0\}$ is a compact subset of $B$. Let $L:=L(f,\mc{K}')$ be the constant of \Cref{lem:strong-bijection-lipschitz}, and put
\[
\varepsilon_0:=\min\left\{\frac{r_0}{C_0},\ \frac{1}{L\,C_0}\right\},\qquad C:=L\,C_0.
\]
Fix $\varepsilon\in(0,\varepsilon_0)$, a map $g$ as in the statement, and $y\in\mc{K}$, and let $x_0:=f^{-1}(y)$, so that $|x_0|\le R_0-1$. For $x$ in the closed ball $\ol{B}(x_0,1)$ we have $|x|\le R_0$, hence $|f(x)-g(x)|\le C_0\varepsilon<r_0$, so the point
\[
z(x):=y+f(x)-g(x)
\]
lies in $\mc{K}'\subseteq B$, and $h(x):=f^{-1}(z(x))$ is defined. The map $h\colon\ol{B}(x_0,1)\to\mb{R}^d$ is continuous, being a composition of continuous maps, and by \Cref{lem:strong-bijection-lipschitz} on $\mc{K}'$,
\[
|h(x)-x_0|=\left|f^{-1}(z(x))-f^{-1}(y)\right|\le L\,|z(x)-y|\le L\,C_0\,\varepsilon\le1 .
\]
Thus $h$ maps $\ol{B}(x_0,1)$ into itself, and by Brouwer's fixed point theorem \cite[Corollary~2.15]{Hat02} it has a fixed point $x^\ast$. Then $f(x^\ast)=z(x^\ast)=y+f(x^\ast)-g(x^\ast)$, that is, $g(x^\ast)=y$, and the displayed inequality with $x=x^\ast$ gives $|x^\ast-f^{-1}(y)|\le L\,C_0\,\varepsilon=C\varepsilon$.

For the last assertion, $\mc{K}_T$ is closed and bounded, hence compact, and it is contained in $\mc{C}_M$; the constants then depend on $\mc{K}_T$ only through $d$, $M$, and $T$.
\end{proof}

\section{The idealized process: existence, perturbation, and the edge day}
\label{app:idealized}

This appendix proves the theorems of \Cref{sec:idealized}. Its inputs are the Gaussian approximations of \Cref{app:binomial-estimates} (\Cref{thm:general_poly_calc,thm:tilted_expansion}), the inverse-function theorems of \Cref{app:inverse}, and the universal recursion of \Cref{sec:universal}. Throughout, $\theta\in(1/2,1)$ and $T>1$ are fixed, $N$ is large, $p\in(T^{-1}N^{-\theta},TN^{-\theta})$, and for a size vector $n\in\mb{Z}_{\ge1}^{\{0,1\}^k}$ and a tilt $\lambda$ we write $A_{s,\bullet}(n,\lambda)$ for the row vector of \Cref{def:local-template}. For $s\in\{0,1\}^k$ we write $M_s\in\{0,\pm1\}^{(k-1)\times2^k}$ for the matrix whose $r$th row is the coefficient vector of the form $Z_{s,r}$ of \Cref{def:gaussian-row}, and $\ge'_s\in\{\ge,>\}^{k-1}$ for the relation such that $\mc{I}_s(x)$ is the assertion $M_sx\ge'_s0$ (\Cref{def:local-admissibility} and the proof of \Cref{lem:local-exact-total-lower-bound}); the rows of $M_s$ are nonzero and pairwise orthogonal.

\subsection{Uniqueness of tilts}

\begin{lemma}[The conditioned mean map is injective]\label{lem:tilt-uniqueness}
Let $d\ge1$, let $\eta\in\mb{Z}_{\ge1}^d$, let $M\in\mb{R}^{r\times d}$, and let $\ge'\in\{\ge,>\}^r$. Suppose that the set $S:=\{a\in\prod_{t\in[d]}\{0,\ldots,\eta_t\}:Ma\ge'0\}$ is not contained in an affine hyperplane of $\mb{R}^d$. For $q\in(0,1)^d$ let $A^q$ have independent coordinates $A^q_t\sim\mr{Bin}(\eta_t,q_t)$. Then $\mb{P}[MA^q\ge'0]>0$ for every $q$, and the map
\[
(0,1)^d\to\mb{R}^d,\qquad q\mapsto\mb{E}\left[A^q\mid MA^q\ge'0\right],
\]
is injective.
\end{lemma}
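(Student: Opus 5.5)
The plan is to reparametrize by log-odds and recognize the conditioned law as an exponential family, exactly as in the proof of \Cref{thm:orthant-bijection}, but now on a finite lattice support.

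Positivity comes first. $S$ is nonempty, since a set contained in no affine hyperplane has at least $d+1$ points. Every $a\in S$ has positive probability under $A^q$ because $q\in(0,1)^d$. Hence $\mb{P}[MA^q\ge'0]>0$.

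For injectivity, write $\vartheta_t:=\log\frac{q_t}{1-q_t}$. Then
\[
\mb{P}[A^q=a]=\prod_t\binom{\eta_t}{a_t}(1-q_t)^{\eta_t}e^{\vartheta\cdot a}.
\]
So the conditioned law on $S$ has mass proportional to $h(a)e^{\vartheta\cdot a}$, with $h(a):=\prod_t\binom{\eta_t}{a_t}>0$. The factor $\prod_t(1-q_t)^{\eta_t}$ does not depend on $a$ and cancels on normalization. Define
\[
\Phi(\vartheta):=\log\sum_{a\in S}h(a)e^{\vartheta\cdot a},
\]
a finite sum and therefore smooth on $\mb{R}^d$. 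It satisfies
\[
\nabla\Phi(\vartheta)=\mb{E}[A^q\mid MA^q\ge'0],\qquad \nabla^2\Phi(\vartheta)=\mr{Cov}(A^q\mid MA^q\ge'0).
\]

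The key point is that this covariance is positive definite. Suppose $u\ne0$ and $\mr{Var}(u\cdot A^q\mid MA^q\ge'0)=0$. The conditioned law charges every point of $S$, so $u\cdot a$ would be constant on $S$, placing $S$ in an affine hyperplane and contradicting the hypothesis. Hence $\Phi$ is strictly convex on $\mb{R}^d$.

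Now suppose $\nabla\Phi(\vartheta)=\nabla\Phi(\vartheta')$ with $\vartheta\ne\vartheta'$. Let $g(t):=\Phi(\vartheta+t(\vartheta'-\vartheta))$. Then $g'(1)-g'(0)=\int_0^1 g''>0$, a contradiction. So $\nabla\Phi$ is injective. Since $q\mapsto\vartheta$ is a bijection $(0,1)^d\to\mb{R}^d$, the map $q\mapsto\mb{E}[A^q\mid MA^q\ge'0]$ is injective.

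The only step needing care is the positive-definiteness, and the non-hyperplane hypothesis on $S$ is exactly what supplies it; every other step is routine.
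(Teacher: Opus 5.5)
Your proof is correct and follows essentially the same route as the paper's. You reparametrize by log-odds, view the conditioned law as a finite exponential family on $S$, and identify the gradient and Hessian of its log-partition function with the conditional mean and conditional covariance. Positive definiteness of that covariance comes from $S$ lying in no affine hyperplane, and strict convexity then makes the gradient injective, exactly as in the paper.
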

\begin{proof}
Every point $a$ of the box $\prod_t\{0,\ldots,\eta_t\}$ has $\mb{P}[A^q=a]=\prod_t\binom{\eta_t}{a_t}q_t^{a_t}(1-q_t)^{\eta_t-a_t}>0$, and $S$ is nonempty (a set contained in no affine hyperplane contains $d+1$ affinely independent points), so $\mb{P}[MA^q\ge'0]=\mb{P}[A^q\in S]>0$. Write $\vartheta_t:=\log\frac{q_t}{1-q_t}$, which is a bijection $(0,1)^d\to\mb{R}^d$, and $\binom{\eta}{a}:=\prod_t\binom{\eta_t}{a_t}$. Since $q_t^{a_t}(1-q_t)^{\eta_t-a_t}=e^{\vartheta_ta_t}(1-q_t)^{\eta_t}$,
\[
\mb{P}\left[A^q=a\mid A^q\in S\right]=\binom{\eta}{a}\exp\left(\vartheta\cdot a-\psi(\vartheta)\right)\quad(a\in S),\qquad\psi(\vartheta):=\log\sum_{a\in S}\binom{\eta}{a}e^{\vartheta\cdot a}.
\]
The function $\psi$ is smooth on $\mb{R}^d$ (a logarithm of a finite positive sum of exponentials), and differentiating gives $\nabla\psi(\vartheta)=\mb{E}[A^q\mid A^q\in S]$ and $\nabla^2\psi(\vartheta)=\mr{Cov}(A^q\mid A^q\in S)$. For $u\in\mb{R}^d\setminus\{0\}$, $u^\top\nabla^2\psi(\vartheta)u=\mr{Var}(u\cdot A^q\mid A^q\in S)$, and this vanishes only if $u\cdot a$ is constant over $S$ (all points of $S$ have positive conditional probability), that is, only if $S$ lies in an affine hyperplane, which is excluded. Hence $\psi$ is strictly convex, so $\nabla\psi$ is injective (if $\nabla\psi(\vartheta)=\nabla\psi(\vartheta')$ with $\vartheta\ne\vartheta'$, the strictly convex function $t\mapsto\psi(\vartheta+t(\vartheta'-\vartheta))$ would have equal derivatives at $0$ and $1$), and so is $q\mapsto\nabla\psi(\vartheta(q))$.
\end{proof}

In the applications below, $M=M_s$ or $M=M_{sb}$ (defined in \Cref{lem:gaussian-limits} below), and $S$ contains every lattice point of a box of side $\sqrt{pN}$ around a point $p\,\eta+\sqrt{pN}\,x_0$ with $x_0$ interior to the cone, so that the hypothesis of \Cref{lem:tilt-uniqueness} holds for large $N$.

\subsection{Gaussian limits of the binomial row model}

We first record the regularity of the Gaussian functionals that appear as limits.

\begin{lemma}[Regularity of Gaussian functionals]\label{lem:gaussian-regularity}
Let $d\ge1$, $0\le r\le d$, let $M\in\mb{R}^{r\times d}$ have rank $r$, and let $\mc{P}\subseteq\mb{R}^d\times\mb{R}_{>0}^d\times\mb{R}^r$ be compact. For $(m,v,u)\in\mc{P}$ let $X\sim\mc{N}(m,\operatorname{diag}(v))$ and let $\mc{E}_u:=\{x\in\mb{R}^d:Mx\ge-u\}$, the inequality holding coordinatewise. Then there are constants $c_{\mc{P}}>0$ and $L_{\mc{P}}$ such that $\mb{P}[X\in\mc{E}_u]\ge c_{\mc{P}}$ on $\mc{P}$, and each of the functions
\[
(m,v,u)\mapsto\mb{P}\left[X\in\mc{E}_u\right],\qquad\mb{E}\left[X_t\mathbf{1}_{X\in\mc{E}_u}\right],\qquad\mb{E}\left[X_tX_{t'}\mathbf{1}_{X\in\mc{E}_u}\right]\qquad(t,t'\in[d])
\]
is Lipschitz on $\mc{P}$ with constant $L_{\mc{P}}$. Consequently the conditional expectations $\mb{E}[X_t\mid X\in\mc{E}_u]$ and $\mb{E}[X_tX_{t'}\mid X\in\mc{E}_u]$, and the conditional covariances, are Lipschitz on $\mc{P}$ as well.
\end{lemma}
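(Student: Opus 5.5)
The plan is to reduce everything to standard Gaussian variables and a fixed polyhedral set, and then differentiate under the integral. Write $X=m+\operatorname{diag}(v)^{1/2}Z$ with $Z\sim\mc N(0,I_d)$. Then $X\in\mc E_u$ becomes $Z\in\{z:M(m+\operatorname{diag}(v)^{1/2}z)\ge-u\}$, and this region moves with the parameters. It is more convenient to keep the region fixed and move the density instead. Each functional has the form
\[
\int_{\mc E_u}h(x)\,\varphi_{m,v}(x)\,dx,\qquad h\in\{1,x_t,x_tx_{t'}\},
\]
where $\varphi_{m,v}(x)=\prod_t(2\pi v_t)^{-1/2}e^{-(x_t-m_t)^2/2v_t}$. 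Since $\mc P$ is compact, $v$ stays in a box $[v_-,v_+]^d$ with $v_->0$ and $m$ stays bounded. On this set $\partial_{m_t}\varphi$ and $\partial_{v_t}\varphi$ are bounded by a polynomial in $|x|$ times $e^{-|x|^2/(4v_+)}$, uniformly in the parameters. This gives locally uniform Gaussian domination, so differentiation in $(m,v)$ under the integral is justified, and the $(m,v)$-gradients are bounded uniformly over $\mc P$ by a constant depending on $\mc P$.

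The dependence on $u$ comes only through the region, so it needs separate handling. Changing one coordinate $u_i$ by $\delta$ changes $\mc E_u$ by a slab $\{x:-u_i-|\delta|\le(Mx)_i\le -u_i+|\delta|\}$ intersected with other half-spaces. This slab has width $O(|\delta|/\|\mu_i\|_2)$ in the direction of the nonzero row $\mu_i$ of $M$; the rows are nonzero because $M$ has rank $r$. The integral of $|h|\varphi$ over such a slab is at most $|\delta|$ times a bound on the one-dimensional marginal density of $\mu_i\cdot x$ under $|h|\varphi$, and that marginal is bounded uniformly on $\mc P$ by the same domination. Varying $u$ one coordinate at a time and combining with the $(m,v)$-derivative bound via the triangle inequality gives a Lipschitz constant $L_{\mc P}$ for all three families. (Strictly, this gives Lipschitz continuity with respect to a convex enclosure of $\mc P$, or along path segments. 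If $\mc P$ is not convex, replace it by a compact convex superset inside $\mb R^d\times\mb R^d_{>0}\times\mb R^r$, such as a product of boxes, and restrict.)

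For the lower bound $c_{\mc P}$, I would show that $(m,v,u)\mapsto\mb P[X\in\mc E_u]$ is continuous, which follows from the Lipschitz bound, and strictly positive at every point. Since $M$ has rank $r$, some $x_0$ satisfies $Mx_0=(1,\ldots,1)$. Then $\mc E_u$ contains a nonempty open set: the translate $\lambda x_0$ for large $\lambda$ lies in the interior. The Gaussian density is positive everywhere, so the probability is positive. A continuous positive function on a compact set has a positive minimum.

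Finally, the conditional expectations are quotients of Lipschitz functions by a Lipschitz function bounded below by $c_{\mc P}$, and all numerators are bounded on $\mc P$, so they are Lipschitz. The conditional covariances are differences of such quantities and products of bounded Lipschitz functions, so they are also Lipschitz. The main technical point is the $u$-dependence, where the region moves. I expect the slab estimate to handle it cleanly; the only care needed is the uniform bound on the marginal density of $\mu_i\cdot X$ weighted by the polynomial $h$.
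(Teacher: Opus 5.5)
Your proposal is correct and follows essentially the same route as the paper: you enlarge $\mc P$ to a compact box and differentiate under the integral in $(m,v)$ using Gaussian domination, you handle the $u$-dependence by a slab estimate via a bounded (weighted) marginal density of $(MX)_i$, and you get $c_{\mc P}$ from positivity plus compactness and treat the conditional quantities by the quotient rule. The only cosmetic difference is that you derive continuity of $\mb P[X\in\mc E_u]$ from the Lipschitz bound, whereas the paper gets it from dominated convergence.
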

\begin{proof}
Since $M$ has rank $r$, the linear map $x\mapsto Mx$ is onto $\mb{R}^r$, so $\mc{E}_u$, the preimage of the closed orthant $\{z\ge-u\}$, has nonempty interior, and $\mb{P}[X\in\mc{E}_u]>0$. The functions in the statement are continuous in $(m,v,u)$: in $(m,v)$ by dominated convergence, and in $u$ because the boundary of $\mc{E}_u$ is a finite union of hyperplanes, which is Lebesgue-null. A positive continuous function on the compact set $\mc{P}$ is bounded below, which gives $c_{\mc{P}}$.

For the Lipschitz bounds, enlarge $\mc P$ to a compact rectangular box in $\mb R^d\times\mb R_{>0}^d\times\mb R^r$. All variances on this box remain bounded away from zero, and the box contains the line segments used in the mean value inequality below. Proving the bounds on this box suffices for the original set. Write $\varphi_{m,v}$ for the density of $X$ and let $h$ be one of the functions $1$, $x_t$, $x_tx_{t'}$. The partial derivatives $\partial_{m_t}\varphi_{m,v}=\frac{x_t-m_t}{v_t}\varphi_{m,v}$ and $\partial_{v_t}\varphi_{m,v}=\big(\frac{(x_t-m_t)^2}{2v_t^2}-\frac{1}{2v_t}\big)\varphi_{m,v}$ are bounded in absolute value by $C(1+|x|^2)\varphi_{m,v}(x)$ for a constant $C$ depending on $\mc{P}$, and $(1+|x|^2)|h(x)|\varphi_{m,v}(x)$ is integrable uniformly over $\mc{P}$; hence $(m,v)\mapsto\int_{\mc{E}_u}h\,\varphi_{m,v}$ is continuously differentiable with derivatives bounded on $\mc{P}$, uniformly in $u$, and therefore Lipschitz in $(m,v)$. For the dependence on $u$, let $u,u'\in\mb{R}^r$ and write $Z:=MX$. The symmetric difference $\mc{E}_u\,\triangle\,\mc{E}_{u'}$ is contained in $\bigcup_{i\le r}\{x:-\max(u_i,u_i')\le(Mx)_i<-\min(u_i,u'_i)\}$, so
\[
\left|\int_{\mc{E}_u}h\varphi_{m,v}-\int_{\mc{E}_{u'}}h\varphi_{m,v}\right|\le\sum_{i\le r}\mb{E}\left[|h(X)|\,\mathbf{1}\{Z_i\in J_i\}\right],\qquad J_i:=\left[-\max(u_i,u'_i),-\min(u_i,u'_i)\right].
\]
Each $Z_i$ is a nondegenerate Gaussian variable whose variance is bounded below on $\mc{P}$, so its density is bounded by a constant $C'$; and given $Z_i=z$, the vector $X$ is Gaussian with mean affine in $z$ and bounded covariance, so $\mb{E}[|h(X)|\mid Z_i=z]\le C''(1+z^2)$. Integrating over $z\in J_i$ gives $\mb{E}[|h(X)|\mathbf 1\{Z_i\in J_i\}]\le C'C''\sup_{z\in J_i}(1+z^2)\,|u_i-u'_i|$, which is at most a constant times $|u-u'|$ on the compact set $\mc{P}$. The conditional quantities are quotients of Lipschitz functions by the function $\mb{P}[X\in\mc{E}_u]$, which is Lipschitz and bounded below by $c_{\mc{P}}$; hence they are Lipschitz.
\end{proof}

The next lemma is the bridge between the binomial row models of \Cref{sec:local} and the Gaussian row models of \Cref{sec:universal}. For $s\in\{0,1\}^k$ and $b\in\{0,1\}$ let $M_{sb}\in\mb{R}^{k\times2^k}$ be the matrix whose rows are the rows of $M_s$ followed by $(-1)^b$ times the coefficient vector of $Z_{s,k}$, and $\ge'_{sb}\in\{\ge,>\}^k$ the relation such that $\mc{J}_{sb}(x)$ is the assertion $M_{sb}x\ge'_{sb}0$; as for $M_s$, the rows of $M_{sb}$ are nonzero and pairwise orthogonal. For $u\in\mb{R}$ let $\mc{J}^{u}_{sb}(x)$ be the assertion ``$\mc{I}_s(x)$ and $(-1)^b(Z_{s,k}(x)+u)>0$'', the majority decision with its threshold shifted by $u$; thus $\mc{J}^0_{sb}(x)$ is $\mc{J}_{sb}(x)$ with a strict last inequality, which is immaterial for Gaussian vectors. The shift appears because the binomial row model centered at $p\eta$ has $Z_{s,k}(p\eta)\ne0$ when the sizes $\eta$ are not balanced between the two opinions.

\begin{lemma}[Gaussian limits of the binomial row model]\label{lem:gaussian-limits}
Fix $\theta\in(1/2,1)$, $k\ge1$, $s\in\{0,1\}^k$, $T>1$, $\ell\ge1$, and $R>0$. There are constants $\ell'=\ell'(k,\ell)$, $c_0=c_0(k,T,R)>0$, and $C=C(\theta,k,T,\ell,R)$ such that for all sufficiently large $N$, all $p\in(T^{-1}N^{-\theta},TN^{-\theta})$, every $\xi\in(0,T]$, and every $\eta\in\mb{Z}^{\{0,1\}^k}$ with
\[
\left|\eta[t]-N\nu[t]\right|\le\frac{N}{\sqrt{pN}}\log^\ell N\ \text{ for all }t,\qquad\norm{M_s\eta}_\infty\le\xi\,\frac{N}{\sqrt{pN}},\qquad\left|Z_{s,k}(\eta)\right|\le T\frac{N}{\sqrt{pN}},
\]
the following holds with $\epsilon:=\frac{\log^{\ell'}N}{\sqrt{pN}}+\xi$ and $u_\eta:=p\,Z_{s,k}(\eta)/\sqrt{pN}\in[-T,T]$. For $\sigma\in\mb{R}^{\{0,1\}^k}$ with $\norm{\sigma}_\infty\le R$, let $A^\sigma$ have independent coordinates $A^\sigma_t\sim\mr{Bin}\big(\eta[t]-\one_{s=t},\Lambda_{N,p}(\sigma[t],\nu[t])\big)$ and let $W^\sigma$ have independent coordinates $W^\sigma_t\sim\mc{N}(\sigma[t],\nu[t])$. Then all of the following quantities on the binomial side are smooth functions of $\sigma$, and for all $t,t'\in\{0,1\}^k$ and $b\in\{0,1\}$:
\begin{enumerate}[label=\textup{(\roman*)},ref=\textup{(\roman*)}]
\item\label{gl:prob} $\mb{P}[\mc{I}_s(W^\sigma)]\ge c_0$ and $\mb{P}[\mc{J}^{u_\eta}_{sb}(W^\sigma)]\ge c_0$; $\ \big|\mb{P}[\mc{I}_s(A^\sigma)]-\mb{P}[\mc{I}_s(W^\sigma)]\big|\le C\epsilon$; and $\big|\mb{P}[\mc{J}_{sb}(A^\sigma)\mid\mc{I}_s(A^\sigma)]-\mb{P}[\mc{J}^{u_\eta}_{sb}(W^\sigma)\mid\mc{I}_s(W^\sigma)]\big|\le C\epsilon$;
\item\label{gl:mean} $\displaystyle\left|\frac{\mb{E}[A^\sigma_t\mid\mc{I}_s(A^\sigma)]-p\,\eta[t]}{\sqrt{pN}}-\mb{E}[W^\sigma_t\mid\mc{I}_s(W^\sigma)]\right|\le C\epsilon$, and the same with $\mc{I}_s(A^\sigma)$ replaced by $\mc{J}_{sb}(A^\sigma)$ and $\mc{I}_s(W^\sigma)$ by $\mc{J}^{u_\eta}_{sb}(W^\sigma)$;
\item\label{gl:cov} $\displaystyle\left|\frac{\mr{Cov}(A^\sigma_t,A^\sigma_{t'}\mid\mc{I}_s(A^\sigma))}{pN}-\mr{Cov}(W^\sigma_t,W^\sigma_{t'}\mid\mc{I}_s(W^\sigma))\right|\le C\epsilon$.
\end{enumerate}
If moreover $|Z_{s,k}(\eta)|\le\xi\frac{N}{\sqrt{pN}}$, then $|u_\eta|\le\xi\le\epsilon$, and \ref{gl:prob} and \ref{gl:mean} hold with $\mc{J}^{u_\eta}_{sb}(W^\sigma)$ replaced by $\mc{J}_{sb}(W^\sigma)$. In particular $F_{s,\nu}(\sigma)=\mb{E}[W^\sigma\mid\mc{I}_s(W^\sigma)]$ is the vector on the Gaussian side of \ref{gl:mean}.
\end{lemma}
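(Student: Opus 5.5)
The plan is to reduce every binomial quantity to the Gaussian approximation of \Cref{thm:general_poly_calc}, and then to compare the resulting Gaussian, which has the actual binomial parameters, with the limiting Gaussian $W^\sigma$ through the Lipschitz bounds of \Cref{lem:gaussian-regularity}. Smoothness in $\sigma$ is immediate: each quantity is a ratio of finite sums of terms polynomial in the success probabilities $\Lambda_{N,p}(\sigma[t],\nu[t])$, which are smooth in $\sigma$, and the denominators are positive.

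\emph{Step 1: binomial to an intermediate Gaussian.} I will apply \Cref{thm:general_poly_calc} with $d=2^k$, the matrix $M_s$ (or $M_{sb}$, whose rows are nonzero and pairwise orthogonal), with its $n$ equal to $N$, with trial counts $\eta[t]-\one_{s=t}$, and with the monomials $\rho\in\{1,x_t,x_tx_{t'}\}$. The tilt parameter $\alpha_t$ of that theorem must be matched with $\Lambda_{N,p}(\sigma[t],\nu[t])$. Since $\Lambda_{N,p}$ has log-odds shift $\sigma[t]/(\nu[t]\sqrt{pN})$, I will take $\alpha_t=\sigma[t]\sqrt{p\eta_t}/(\nu[t]\sqrt{pN})$. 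This is bounded by $O(R)$, and $\sqrt{p\eta_t}\,\alpha_t=\sigma[t]\sqrt{pN}\,(1+O(\log^\ell N/\sqrt{pN}))$ because $\eta_t\approx N\nu[t]$. The conclusion is that each truncated moment equals the moment of $X_t\sim\mc N(\sqrt{p\eta_t}\alpha_t,p\eta_t)$ on $\{MX\ge-pM\eta\}$, up to $O(\sqrt{pN}^{\deg\rho-1}\log^{O(1)}N)$. After dividing by $\sqrt{pN}^{\deg\rho}$, this error is $O(\log^{\ell'}N/\sqrt{pN})$.

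\emph{Step 2: rescale and compare to $W^\sigma$.} I will set $Y=X/\sqrt{pN}$. Then $Y_t$ has mean $\sigma[t]+O(\epsilon)$ and variance $\eta_t/N=\nu[t]+O(\epsilon)$, and the constraint $Y\in\mc E_u$ has shift $u=pM_s\eta/\sqrt{pN}$. The first $k-1$ coordinates of this shift are at most $\xi$. For $M_{sb}$ the last coordinate is $\pm u_\eta$, which accounts for the shifted event $\mc J^{u_\eta}_{sb}$; the weak-versus-strict distinction has Gaussian measure zero. I will then apply \Cref{lem:gaussian-regularity} on the compact set of parameters with means in $[-R-1,R+1]^d$, variances in $[\min\nu/2,2\max\nu]$, and shifts in $[-T-1,T+1]^r$. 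This gives the lower bound $c_0$ and moves the parameters to $(\sigma,\nu,0)$ for $\mc I_s$ and to $(\sigma,\nu,(0,\ldots,0,\pm u_\eta))$ for $\mc J^{u_\eta}_{sb}$, at a cost of $O(\epsilon)$. Conditional probabilities, means and covariances then follow by dividing by probabilities bounded below by $c_0$, since the binomial denominators are within $C\epsilon$ of Gaussian ones that are at least $c_0$. The centering by $p\eta[t]$ in (ii) is exactly the $A-p\eta$ centering of \Cref{thm:general_poly_calc}. The final clause follows because the extra hypothesis makes $|u_\eta|\le\xi\le\epsilon$, so one more Lipschitz step removes the shift.

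\emph{Main obstacle.} The delicate point is bookkeeping the dependence of constants. \Cref{thm:general_poly_calc} is stated with fixed $T$, requires $\norm{M\eta}_\infty<T n/\sqrt{pn}$, and needs $\eta_t\in(T^{-1}N,TN)$; all of these hold with an enlarged $T$ depending on $k,T,R$. Its error must also be uniform over the family of $\alpha$'s. A second delicate point is that the last row of $M_{sb}$ contributes a constraint of size up to $T$, not $\xi$, which is why $\mc J^{u_\eta}$ appears. I expect this matching of parameters, rather than any new estimate, to be the main work.
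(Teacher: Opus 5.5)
Your route matches the paper's. You apply \Cref{thm:general_poly_calc} with trial counts $\eta[t]-\one_{s=t}$, with $\alpha_t=\frac{\sigma[t]}{\nu[t]}\sqrt{(\eta[t]-\one_{s=t})/N}$ so that the logit tilt reproduces $\Lambda_{N,p}$, and with $\rho\in\{1,x_t,x_tx_{t'}\}$ for both $M_s$ and $M_{sb}$. You then rescale by $\sqrt{pN}$ and use the Lipschitz bounds of \Cref{lem:gaussian-regularity} on a compact parameter set to move the intermediate Gaussian parameters to $(\sigma,\nu,0)$, respectively $(\sigma,\nu,(0,\ldots,0,\pm u_\eta))$. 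The bookkeeping you list is correct: the enlarged $T$, the last row of $M_{sb}$ producing $\mc J^{u_\eta}_{sb}$, and the harmless $\one_{s=t}$ corrections of size $p$ in the centering and $p/\sqrt{pN}$ in the shift.

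The step that fails as written is the lower bound on the binomial denominators. You justify the division by saying that $\mb P[\mc I_s(A^\sigma)]$ and $\mb P[\mc J_{sb}(A^\sigma)]$ are within $C\epsilon$ of Gaussian probabilities that are at least $c_0$. The lemma allows $\xi$ anywhere in $(0,T]$, however, so $\epsilon=\log^{\ell'}N/\sqrt{pN}+\xi$ need not be small, and then $c_0-C\epsilon$ gives no lower bound.

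You also cannot fall back on the conclusions being trivial when $\epsilon$ is of order one. That works for the probabilities in (i). But the normalized conditional means and covariances in (ii) and (iii) are only bounded if the conditioning probabilities are bounded below, so without a denominator bound there is nothing to compare.

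The fix uses ingredients you already have. Compare the binomial probabilities with the \emph{intermediate} Gaussian at the actual parameters $(\hat m,\hat v,u)$, where the error from \Cref{thm:general_poly_calc} is $O(\log^{5+d}N/\sqrt{pN})$ and tends to zero uniformly in $\xi$. Then apply the lower bound of \Cref{lem:gaussian-regularity} at those parameters, which lie in your compact set since $\norm{u}_\infty\le T+1$. This gives $\mb P[\mc I_s(A^\sigma)],\mb P[\mc J_{sb}(A^\sigma)]\ge c_0/2$ for large $N$, independently of $\xi$. Only after that should you divide the raw-moment comparisons and pay the $O(\epsilon)$ cost of moving the parameters. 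The paper isolates exactly this as a separate ``uniform denominator bounds'' step, for this reason.
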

\begin{proof}
Write $d:=2^k$, $\eta'_t:=\eta[t]-\one_{s=t}$, and let $N$ be large enough that $\eta'_t\in(\frac12N\nu[t],2N\nu[t])$ for all $t$. Put
\[
\alpha_t:=\frac{\sigma[t]}{\nu[t]}\sqrt{\frac{\eta'_t}{N}},\qquad\text{so that}\qquad\Lambda_{N,p}(\sigma[t],\nu[t])=\frac{\frac{p}{1-p}e^{\alpha_t/\sqrt{p\eta'_t}}}{1+\frac{p}{1-p}e^{\alpha_t/\sqrt{p\eta'_t}}},
\]
and $|\alpha_t|\le R\sqrt{2/\nu[t]}$. Let $T_1$ be a constant, depending on $\theta,k,T,R$, exceeding $2T$, $2/\min_t\nu[t]$, and $\max_t R\sqrt{2/\nu[t]}$. We apply \Cref{thm:general_poly_calc} with $d$, $n=N$, the constant $T_1$, $\eta=\eta'$, $\alpha$ as above, and $\rho\in\{1,x_t,x_tx_{t'}\}$, once with $(M,\ge')=(M_s,\ge'_s)$ and once with $(M,\ge')=(M_{sb},\ge'_{sb})$; its hypotheses hold because $\eta'_t\in(T_1^{-1}N,T_1N)$, $|\alpha_t|<T_1$, $p\in(T_1^{-1}N^{-\theta},T_1N^{-\theta})$, and $\norm{M_s\eta'}_\infty\le\norm{M_s\eta}_\infty+1\le TN/\sqrt{pN}+1<T_1N/\sqrt{pN}$, while $\norm{M_{sb}\eta'}_\infty=\max\{\norm{M_s\eta'}_\infty,|Z_{s,k}(\eta')|\}\le\max\{\norm{M_s\eta}_\infty,|Z_{s,k}(\eta)|\}+1<T_1N/\sqrt{pN}$. Its conclusion is, for each admissible $\rho$ and $M$,
\begin{equation}\label{eq:gl-gpc}
\mb{E}\left[\rho(A^\sigma-p\eta')\mathbf{1}_{MA^\sigma\ge'0}\right]=\mb{E}\left[\rho(X)\mathbf{1}_{MX\ge-pM\eta'}\right]+O\left(\sqrt{pN}^{\deg\rho-1}\log^{5+d}N\right),
\end{equation}
where $X$ has independent coordinates $X_t\sim\mc{N}(\sqrt{p\eta'_t}\,\alpha_t,\,p\eta'_t)$, and the events $\{MA^\sigma\ge'0\}$ are $\mc{I}_s(A^\sigma)$ and $\mc{J}_{sb}(A^\sigma)$ respectively.

\emph{Normalization.} Put $\hat X:=X/\sqrt{pN}$, which has independent coordinates $\hat X_t\sim\mc{N}(\hat m_t,\hat v_t)$ with
\[
\hat m_t=\frac{\sigma[t]\,\eta'_t}{N\nu[t]},\qquad\hat v_t=\frac{\eta'_t}{N},\qquad|\hat m_t-\sigma[t]|\le\frac{R}{\nu[t]}\left(\frac{\log^\ell N}{\sqrt{pN}}+\frac1N\right),\qquad|\hat v_t-\nu[t]|\le\frac{\log^\ell N}{\sqrt{pN}}+\frac1N,
\]
and the event $\{MX\ge-pM\eta'\}$ is $\{M\hat X\ge-u\}$ with $u:=pM\eta'/\sqrt{pN}$. For $M=M_s$, $\norm{u}_\infty\le\xi+p/\sqrt{pN}\le2\epsilon$. For $M=M_{sb}$, the first $k-1$ coordinates of $u$ are those of the previous case and the last is $(-1)^bpZ_{s,k}(\eta')/\sqrt{pN}=(-1)^b(u_\eta+O(p/\sqrt{pN}))$, so that $\{M_{sb}\hat X\ge-u\}$ is, up to a shift of the first $k-1$ thresholds by $O(\epsilon)$ and of the last by $O(p/\sqrt{pN})\le O(\epsilon)$, the event $\mc{J}^{u_\eta}_{sb}(\hat X)$. Dividing \eqref{eq:gl-gpc} by $\sqrt{pN}^{\deg\rho}$,
\begin{equation}\label{eq:gl-normalized}
\mb{E}\left[\rho\left(\frac{A^\sigma-p\eta'}{\sqrt{pN}}\right)\mathbf{1}_{MA^\sigma\ge'0}\right]=\mb{E}\left[\rho(\hat X)\mathbf{1}_{M\hat X\ge-u}\right]+O\left(\frac{\log^{5+d}N}{\sqrt{pN}}\right).
\end{equation}

\emph{Comparison of the Gaussian functionals.} Apply \Cref{lem:gaussian-regularity} with $M\in\{M_s,M_{sb}\}$ and the compact parameter set $\mc{P}$ of all $(m,v,u)$ with $\norm{m}_\infty\le2R/\min_t\nu[t]$, $\frac12\nu[t]\le v_t\le2\nu[t]$, and $\norm{u}_\infty\le2T+1$. For large $N$ the triple $(\hat m,\hat v,u)$ lies in $\mc{P}$, and so does the triple $(\sigma,\nu,0)$ for $M=M_s$, respectively $(\sigma,\nu,(0,\ldots,0,(-1)^bu_\eta))$ for $M=M_{sb}$; the two triples are at distance $O(\log^\ell N/\sqrt{pN}+\xi+p/\sqrt{pN})=O(\epsilon)$ from each other. Hence the right-hand side of \eqref{eq:gl-normalized} differs by $O(\epsilon)$ from $\mb{E}[\rho(W^\sigma)\mathbf{1}_{\mc{I}_s(W^\sigma)}]$ for $M=M_s$, respectively from $\mb{E}[\rho(W^\sigma)\mathbf{1}_{\mc{J}^{u_\eta}_{sb}(W^\sigma)}]$ for $M=M_{sb}$, where we used that $\{M_sW^\sigma\ge0\}$ and $\{M_{sb}W^\sigma\ge-(0,\ldots,0,(-1)^bu_\eta)\}$ coincide with $\mc{I}_s(W^\sigma)$ and $\mc{J}^{u_\eta}_{sb}(W^\sigma)$ up to null sets. Taking $\ell':=5+d+\ell$, we conclude that for $\rho\in\{1,x_t,x_tx_{t'}\}$,
\begin{equation}\label{eq:gl-final}
\mb{E}\left[\rho\left(\frac{A^\sigma-p\eta'}{\sqrt{pN}}\right)\mathbf{1}_{\mc{I}_s(A^\sigma)}\right]=\mb{E}\left[\rho(W^\sigma)\mathbf{1}_{\mc{I}_s(W^\sigma)}\right]+O(\epsilon),
\end{equation}
and the same with $\mc{J}_{sb}(A^\sigma)$ and $\mc{J}^{u_\eta}_{sb}(W^\sigma)$ in place of $\mc{I}_s(A^\sigma)$ and $\mc{I}_s(W^\sigma)$.

\emph{Uniform denominator bounds.} The lower probability bound in \Cref{lem:gaussian-regularity} applies also at the actual parameters $(\hat m,\hat v,u)\in\mc{P}$. Choose $c_0>0$ as the minimum of these compact-family lower bounds over the finitely many matrices $M_s$ and $M_{sb}$ with $s\in\{0,1\}^k$ and $b\in\{0,1\}$. Equation \eqref{eq:gl-normalized} with $\rho=1$ gives, for each of these matrices,
\[
\mb{P}[MA^\sigma\ge'0]\ge c_0-O\left(\frac{\log^{5+d}N}{\sqrt{pN}}\right)\ge\frac{c_0}{2}
\]
uniformly for sufficiently large $N$. The error tends to zero uniformly in the density window, regardless of $\xi\in(0,T]$; the later error $O(\epsilon)$ need not tend to zero. Thus both the binomial history and child-event probabilities are bounded away from zero before we divide the raw-moment comparisons.

\emph{Conclusion.} The lower bounds $\mb{P}[\mc{I}_s(W^\sigma)]\ge c_0$ and $\mb{P}[\mc{J}^{u_\eta}_{sb}(W^\sigma)]\ge c_0$ follow from the choice of $c_0$ and \Cref{lem:gaussian-regularity} at the parameters $(\sigma,\nu,0)$ and $(\sigma,\nu,(0,\ldots,0,(-1)^bu_\eta))$. The two approximation bounds of \ref{gl:prob} are \eqref{eq:gl-final} with $\rho=1$, the second after dividing the $\mc{J}$ version by the $\mc{I}$ version and using the Gaussian lower bound $c_0$ and the binomial lower bound $c_0/2$ just proved. For \ref{gl:mean}, \eqref{eq:gl-final} with $\rho=x_t$ and $\rho=1$ gives
\[
\frac{\mb{E}[A^\sigma_t\mid\mc{I}_s(A^\sigma)]-p\eta'_t}{\sqrt{pN}}=\frac{\mb{E}[W^\sigma_t\mathbf{1}_{\mc{I}_s(W^\sigma)}]+O(\epsilon)}{\mb{P}[\mc{I}_s(W^\sigma)]+O(\epsilon)}=\mb{E}[W^\sigma_t\mid\mc{I}_s(W^\sigma)]+O(\epsilon),
\]
and $p\eta'_t$ differs from $p\eta[t]$ by at most $p\le\sqrt{pN}\,\epsilon$; the $\mc{J}$ version is identical. For \ref{gl:cov}, use $\rho=x_tx_{t'}$, $x_t$, $x_{t'}$, and $1$ in the same way. Finally, if $|Z_{s,k}(\eta)|\le\xi N/\sqrt{pN}$ then $|u_\eta|\le\xi\le\epsilon$, and by \Cref{lem:gaussian-regularity} the Gaussian quantities with the event $\mc{J}^{u_\eta}_{sb}(W^\sigma)$ differ from those with $\mc{J}_{sb}(W^\sigma)$ by $O(\epsilon)$. Smoothness in $\sigma$: each binomial quantity is a rational function, with positive denominator, of the finitely many values $\Lambda_{N,p}(\sigma[t],\nu[t])^{a}(1-\Lambda_{N,p}(\sigma[t],\nu[t]))^{\eta'_t-a}$, which are smooth in $\sigma$.
\end{proof}

\subsection{The linear response of the binomial row model}

\begin{lemma}[Linear response to a small tilt]\label{lem:tilt-vs-linear-map}
Fix $\theta\in(1/2,1)$, an integer $k$ with $1\le k<\frac1{1-\theta}$, $s\in\{0,1\}^k$, $T>1$, and $R>0$. There are constants $\kappa=\kappa(\theta,k)>0$ and $C=C(\theta,k,T,R)$ such that for all sufficiently large $N$, all $p\in(T^{-1}N^{-\theta},TN^{-\theta})$, all $\tau\in[T^{-1},T]$, and every $n\in\mb{Z}^{\{0,1\}^k}$ with $|n[t]-\wt{n}[t]|\le T\sqrt N\sqrt{pN}^{\,k-1}$ for all $t$, the following holds, where $(\wt n,\wt m,\wt\lambda)$ is the idealized process of \Cref{thm:idealized-process} with $D=D_\theta$ and $\wt A_{s,\bullet}=A_{s,\bullet}(\wt n_k,\wt\lambda_k)$. Write $\beta_0:=\sqrt{pN}^{\,k-1}/\sqrt N$. For $\sigma\in\mb{R}^{\{0,1\}^k}$ with $\norm\sigma_\infty\le R$ let $q^\sigma\in(0,1)^{\{0,1\}^k}$ be defined by
\[
\log\frac{q^\sigma[t]\left(n[t]-\one_{s=t}-p\,\wt n[t]\right)}{\left(1-q^\sigma[t]\right)p\,\wt n[t]}
=\wt\vartheta[s,t]+\tau\beta_0\,\sigma[t],\qquad\wt\vartheta[s,t]:=\log\frac{\wt\lambda[s,t]\left(\wt n[t]-\one_{s=t}-p\,\wt n[t]\right)}{\left(1-\wt\lambda[s,t]\right)p\,\wt n[t]},
\]
and let $A^\sigma$ have independent coordinates $A^\sigma_t\sim\mr{Bin}(n[t]-\one_{s=t},q^\sigma[t])$. Then $q^\sigma$ and the quantities below are smooth in $\sigma$, $|q^\sigma[t]-p|\le C\,p/\sqrt{pN}$, and for all $t\in\{0,1\}^k$ and $b\in\{0,1\}$:
\begin{enumerate}[label=\textup{(\roman*)},ref=\textup{(\roman*)}]
\item\label{lr:mean} $\displaystyle\left|\frac{\mb{E}[A^\sigma_t\mid\mc{I}_s(A^\sigma)]-\wt m[s,t]/\wt n[s]}{\tau\beta_0\,pN}-(\Sigma_s\sigma)[t]\right|\le CN^{-\kappa}$, where $\Sigma_s$ is the conditional covariance of \Cref{lem:cond-cov-pd};
\item\label{lr:split} with $W_{s,\bullet}$ the day-$k$ Gaussian row model,
\begin{multline*}
\left|\frac{\mb{P}[\mc{J}_{sb}(A^\sigma)\mid\mc{I}_s(A^\sigma)]-\mb{P}[\mc{J}_{sb}(\wt A_{s,\bullet})\mid\mc{I}_s(\wt A_{s,\bullet})]}{\tau\beta_0\sqrt{pN}}\right.\\
\left.-\frac{\nu[sb]}{\nu[s]}\sum_{t'}\sigma[t']\left(\mb{E}[W_{s,t'}\mid\mc{J}_{sb}(W_{s,\bullet})]-\mb{E}[W_{s,t'}\mid\mc{I}_s(W_{s,\bullet})]\right)\right|\le CN^{-\kappa};
\end{multline*}
\item\label{lr:condmean} $\displaystyle\left|\mb{E}[A^\sigma_t\mid\mc{J}_{sb}(A^\sigma)]-\mb{E}[\wt A_{s,t}\mid\mc{J}_{sb}(\wt A_{s,\bullet})]\right|\le C\tau\beta_0\,pN$;
\item\label{lr:nondeg} $\mb{P}[\mc{I}_s(A^\sigma)]\ge\phi^\ast_k/4$ and $\phi^\ast_k/2\le\mb{P}[\mc{J}_{sb}(A^\sigma)\mid\mc{I}_s(A^\sigma)]\le1-\phi^\ast_k/2$, with $\phi^\ast_k$ the constant of \Cref{lem:universal-nondegeneracy}.
\end{enumerate}
\end{lemma}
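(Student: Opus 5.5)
The plan is to compare the tilted model $A^\sigma$ with the idealized model $\wt A_{s,\bullet}$ through \Cref{thm:tilted_expansion}, and then to replace every idealized binomial moment by its Gaussian limit using \Cref{lem:gaussian-limits}.

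The setup is as follows. Take $d=2^k$, $n=N$, reference sizes $\wt\eta:=\wt n_k$, trial counts $\wt\eta'_t=\wt n[t]-\one_{s=t}$ and $\eta_t=n[t]-\one_{s=t}$, and success probabilities $\wt q=\wt\lambda[s,\bullet]$ and $q=q^\sigma$. By the definition of $q^\sigma$, the coordinatewise log-ratio $\beta_t$ of \Cref{thm:tilted_expansion} equals exactly $\tau\beta_0\sigma[t]$, so $\norm{\beta}_\infty=O(\beta_0)$. Since $k<\frac1{1-\theta}$, the scale $\beta_0\sqrt{pN}=\sqrt{pN}^{\,k}/\sqrt N$ equals $N^{-\kappa_0}$ with $\kappa_0>0$ up to constants. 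This gives the hypothesis $\norm\beta_\infty<T'/(\sqrt{pN}\log^2N)$. The bound $|n[t]-\wt n[t]|\le T\sqrt N\sqrt{pN}^{k-1}\ll N/\sqrt{pN}$ gives the size hypothesis.

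We also need $|q^\sigma[t]-p|\le Cp/\sqrt{pN}$. This follows from \Cref{thm:idealized-process}\ref{ideal:asymptotics}, which gives $\wt\lambda\approx\Lambda_{N,p}(\gamma[s,t],\nu[t])$, hence $\wt\vartheta=O(1/\sqrt{pN})$. The extra term $\log\frac{n[t]-pn[t]}{\wt n[t]-p\wt n[t]}=O(\sqrt{pN}^{k-1}/\sqrt N)$ is also negligible. Smoothness in $\sigma$ is clear, since every quantity is a rational function of $q^\sigma$ with a positive denominator.

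Applying \Cref{thm:tilted_expansion} with $\rho\in\{1,x_t\}$ and with the conditioning matrices $M_s$ and $M_{sb}$ gives the following. Each conditional mean or probability for $A^\sigma$ equals the corresponding idealized quantity plus $\tau\beta_0\sum_{t'}\sigma[t']$ times a conditional covariance of the idealized model. Forming the ratios, the first-order terms combine into covariances conditional on $\mc I_s$ or on $\mc J_{sb}$. The error term $\max\{p,\beta_0^2pN\log^2N\}\sqrt{pN}^{\deg\rho}\log^{2+\deg\rho}N$ is, relative to the main term $\beta_0\sqrt{pN}^{\deg\rho+1}$, of size $\max\{p/(\beta_0\sqrt{pN}),\,\beta_0\sqrt{pN}\}\log^{O(1)}N$. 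The second of these is $N^{-\kappa_0}\log^{O(1)}N$. For the first, $p/(\beta_0\sqrt{pN})$ is polynomially small because $p\le TN^{-\theta}$ and $\beta_0\sqrt{pN}\ge N^{-1/2+(1-\theta)/2}$; here we use $\theta>1/2$, since $\theta>\frac12+\frac{1-\theta}2$ is equivalent to $\theta>2/3-\ldots$. I would check the exponent carefully; if it fails at $k=1$, the exact computation of day one (a plain binomial comparison) can be used instead.

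The individual items then follow.

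For \ref{lr:mean}, the zeroth-order term is $\mb E[\wt A_{s,t}\mid\mc I_s]=\wt m[s,t]/\wt n[s]$ by \Cref{thm:idealized-process}\ref{ideal:solvable}. The first-order term is $\tau\beta_0\sum_{t'}\sigma[t']\,\mr{Cov}(\wt A_{s,t},\wt A_{s,t'}\mid\mc I_s)$. We replace $\mr{Cov}/pN$ by $\Sigma_s$ using \Cref{lem:gaussian-limits}\ref{gl:cov}, applied at $\sigma=\gamma[s,\bullet]$ with $\eta=\wt n$ and $\xi=\log^{\ell}N/\sqrt{pN}$. Its hypotheses hold because $\wt n$ is within $N\log^\ell N/\sqrt{pN}$ of $N\nu$, and because $M_s\wt n$ and $Z_{s,k}(\wt n)$ are small by the symmetry $\wt n[\ol s]=\wt n[s]$. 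Passing from $\wt\lambda$ to $\Lambda_{N,p}(\gamma,\nu)$ costs $\log^\ell N/N$ per coordinate, which is negligible.

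Item \ref{lr:split} follows in the same way. The covariance of $B$ with the indicator of $\mc J_{sb}$ conditional on $\mc I_s$ equals $\mb P[\mc J_{sb}\mid\mc I_s]\bigl(\mb E[W\mid\mc J_{sb}]-\mb E[W\mid\mc I_s]\bigr)$, and $\mb P[\mc J_{sb}\mid\mc I_s]\to\nu[sb]/\nu[s]$. Because $Z_{s,k}(\wt n)=0$ by symmetry, we have $u_\eta\approx0$ in \Cref{lem:gaussian-limits}.

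Item \ref{lr:condmean} is the crude version of the same expansion, keeping only the size of the first-order term.

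Item \ref{lr:nondeg} follows from \ref{lr:split}, the analogous statement for $\mb P[\mc I_s]$, the Gaussian limits, and \Cref{lem:universal-nondegeneracy}.

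I expect the main obstacle to be the error bookkeeping in the expansion. The difficulty is ensuring that both the $\log^{\ell}N/\sqrt{pN}$ discrepancy between $\wt n$ and $N\nu$, and the $p$-error of \Cref{thm:tilted_expansion}, are polynomially small relative to $\beta_0\sqrt{pN}$, uniformly in $k<\frac1{1-\theta}$. The rate $\kappa$ has to be chosen as the minimum of these exponents.
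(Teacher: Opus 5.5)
Your plan is correct and is the paper's own argument: you apply \Cref{thm:tilted_expansion} against the idealized row $\wt A_{s,\bullet}$ with tilt vector exactly $\beta_t=\tau\beta_0\sigma[t]$, use $\rho\in\{1,x_t\}$ under $M_s$ and $M_{sb}$, divide to obtain conditional covariances, and identify those with $\Sigma_s$ and the Gaussian split terms via \Cref{lem:gaussian-limits}. The paper applies that lemma at the exact parameter $\wt\sigma$ defined by $\Lambda_{N,p}(\wt\sigma[t],\nu[t])=\wt\lambda[s,t]$ and then passes to $\gamma[s,\bullet]$ using the Lipschitz bounds of \Cref{lem:gaussian-regularity}; this is what your ``$\log^\ell N/N$ per coordinate'' step amounts to. Your exponent worry is unfounded and the day-one fallback is unnecessary: $p/(\beta_0\sqrt{pN})=\sqrt p/\sqrt{pN}^{\,k-1}\le\sqrt p=O(N^{-\theta/2})$ for every $k\ge1$, since your own bound $\beta_0\sqrt{pN}\gtrsim N^{-\theta/2}$ only requires $\theta>\theta/2$, so this error term is polynomially small throughout.
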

\begin{proof}
Write $d:=2^k$ and $c_t:=p\,\wt n[t]$. Since $k(1-\theta)<1$, the number $c_1:=\frac12(1-k(1-\theta))$ is positive and
\begin{equation}\label{eq:beta0-small}
\beta_0\sqrt{pN}=\frac{\sqrt{pN}^{\,k}}{\sqrt N}\le T^{k/2}N^{-c_1},
\end{equation}
so $\beta_0\sqrt{pN}\log^{O(1)}N\to0$. We use \Cref{thm:idealized-process}\ref{ideal:asymptotics} throughout; in particular $\wt n[t]\asymp N$, $\wt m[s,t]/\wt n[s]-c_t=c_t\mu[s,t]/\sqrt{pN}+O(\log^\ell N)=O(\sqrt{pN})$, and $|\wt\lambda[s,t]-p|=O(p/\sqrt{pN})$ (because $\Lambda_{N,p}(\gamma,\nu)-p=O(p/\sqrt{pN})$ and $\log^\ell N/N\le p/\sqrt{pN}$ for large $N$).

\emph{The tilt.} By definition, $\operatorname{logit}q^\sigma[t]=\operatorname{logit}\wt\lambda[s,t]+\log\frac{\wt n[t]-\one_{s=t}-c_t}{n[t]-\one_{s=t}-c_t}+\tau\beta_0\sigma[t]$, and the middle term is $O(|n[t]-\wt n[t]|/N)=O(\beta_0)$ by the hypothesis on $n$. Hence $|\operatorname{logit}q^\sigma[t]-\operatorname{logit}\wt\lambda[s,t]|=O(\beta_0)$, so $|q^\sigma[t]-\wt\lambda[s,t]|=O(p\beta_0)$ and $|q^\sigma[t]-p|=O(p/\sqrt{pN})$; smoothness in $\sigma$ is clear. Now apply \Cref{thm:tilted_expansion} with $d$, $n=N$, a sufficiently large constant $T_1=T_1(\theta,k,T,R)\ge T$ in place of $T$, reference sizes $\wt\eta_t:=\wt n[t]$, trial counts $\wt\eta'_t:=\wt n[t]-\one_{s=t}$ and $\eta_t:=n[t]-\one_{s=t}$, success probabilities $\wt q:=\wt\lambda[s,\bullet]$ and $q:=q^\sigma$, and $\rho\in\{1,x_t\}$, once with $(M_s,\ge'_s)$ and once with $(M_{sb},\ge'_{sb})$. Its hypotheses hold for large $N$: $p\in(T_1^{-1}N^{-\theta},T_1N^{-\theta})$; $\wt\eta_t\asymp N$; $|\wt\eta'_t-\wt\eta_t|\le1$; $|\eta_t-\wt\eta_t|\le T\sqrt N\sqrt{pN}^{k-1}+1\le T^{1+k/2}N/\sqrt{pN}$ by \eqref{eq:beta0-small}; $|\wt q_t-p|,|q_t-p|=O(p/\sqrt{pN})$; and its tilt vector is exactly $\beta_t=\tau\beta_0\sigma[t]$, by the definition of $q^\sigma$, so $\norm\beta_\infty\le TR\beta_0<T_1/(\sqrt{pN}\log^2N)$ by \eqref{eq:beta0-small}. Writing $E_1:=\max\{p,\norm\beta_\infty^2pN\log^2N\}$, the conclusions are, for the event $\mc{I}_s$ (and identically for $\mc{J}_{sb}$),
\begin{align}
\mb{P}[\mc{I}_s(A^\sigma)]&=\wt P+\sum_{t'}\beta_{t'}J_{t'}+O(E_1\log^2N),\label{eq:lr-prob}\\
\mb{E}\left[(A^\sigma_t-c_t)\mathbf{1}_{\mc{I}_s(A^\sigma)}\right]&=\wt E_t\wt P+\sum_{t'}\beta_{t'}K_{t't}+O(E_1\sqrt{pN}\log^3N),\label{eq:lr-mean}
\end{align}
where $\wt P:=\mb{P}[\mc{I}_s(\wt A)]$, $\wt E_t:=\mb{E}[\wt A_t-c_t\mid\mc{I}_s(\wt A)]$, $J_{t'}:=\mr{Cov}(\wt A_{t'},\mathbf{1}_{\mc{I}_s(\wt A)})$, and $K_{t't}:=\mb{E}[(\wt A_{t'}-c_{t'})(\wt A_t-c_t)\mathbf{1}_{\mc{I}_s(\wt A)}]-\mb{E}[\wt A_{t'}-c_{t'}]\,\wt E_t\wt P$; here $\wt A:=\wt A_{s,\bullet}$. By \Cref{thm:idealized-process}\ref{ideal:solvable}, $\wt E_t=\wt m[s,t]/\wt n[s]-c_t=O(\sqrt{pN})$. By \Cref{lem:gaussian-limits} applied with $\eta=\wt n_k$ (so $\norm{M_s\eta}_\infty=0$ by \Cref{thm:idealized-process}\ref{ideal:symmetry} and \Cref{cor:sign-identities}\ref{sign:nu}, and we may take $\xi=\sqrt{pN}/N$) and with $\sigma$ the vector $\wt\sigma$ defined by $\Lambda_{N,p}(\wt\sigma[t],\nu[t])=\wt\lambda[s,t]$, which satisfies $\norm{\wt\sigma-\gamma[s,\bullet]}_\infty=O(\log^\ell N/\sqrt{pN})$ by \ref{ideal:asymptotics} and is therefore bounded, we have $\wt P\ge\phi^\ast_k/2$, $\mb{P}[\mc{J}_{sb}(\wt A)\mid\mc{I}_s(\wt A)]=\nu[sb]/\nu[s]+O(\epsilon_0)$, and
\begin{equation}\label{eq:lr-cov-limit}
\frac{K_{t't}-J_{t'}\wt E_t}{\wt P}=\mr{Cov}\left(\wt A_{t'},\wt A_t\mid\mc{I}_s(\wt A)\right)=pN\,\Sigma_s[t',t]+O(pN\epsilon_0),\qquad\epsilon_0:=\frac{\log^{\ell'}N}{\sqrt{pN}}+\frac{\sqrt{pN}}{N},
\end{equation}
where the first equality is a direct expansion of the definitions. The second is \Cref{lem:gaussian-limits}\ref{gl:cov} together with the Lipschitz dependence of the Gaussian conditional covariance on the mean (\Cref{lem:gaussian-regularity}), which converts the covariance of $W^{\wt\sigma}$ given $\mc{I}_s$ into $\Sigma_s$ at the cost of $O(\log^\ell N/\sqrt{pN})$. Also $|J_{t'}|\le\sqrt{\mr{Var}(\wt A_{t'})}$, which is $O(\sqrt{pN})$, and $|K_{t't}|=O(pN)$.

\emph{Proof of \ref{lr:mean}.} Dividing \eqref{eq:lr-mean} by \eqref{eq:lr-prob}, with $u:=\sum_{t'}\beta_{t'}J_{t'}/\wt P=O(\norm\beta_\infty\sqrt{pN})=O(\beta_0\sqrt{pN})$,
\begin{align*}
\mb{E}[A^\sigma_t\mid\mc{I}_s(A^\sigma)]-c_t&=\frac{\wt E_t\wt P+\sum_{t'}\beta_{t'}K_{t't}+O(E_1\sqrt{pN}\log^3N)}{\wt P\left(1+u+O(E_1\log^2N)\right)}\\
&=\wt E_t+\sum_{t'}\beta_{t'}\frac{K_{t't}-J_{t'}\wt E_t}{\wt P}+O\left(E_1\sqrt{pN}\log^3N+\beta_0^2\,pN\sqrt{pN}\right),
\end{align*}
where the last error collects the second-order terms $u^2\wt E_t$ and $u\sum\beta K/\wt P$, both $O(\beta_0^2pN\sqrt{pN})$, and $\wt E_t\cdot O(E_1\log^2N)=O(E_1\sqrt{pN}\log^2N)$. Substituting $\wt E_t=\wt m[s,t]/\wt n[s]-c_t$ and \eqref{eq:lr-cov-limit}, and dividing by $\tau\beta_0pN$, the main term becomes $(\Sigma_s\sigma)[t]+O(\epsilon_0)$ and the error terms become
\[
O\left(\frac{p\sqrt{pN}\log^3N}{\beta_0pN}+\frac{\beta_0^2pN\log^2N\sqrt{pN}\log^3N}{\beta_0pN}+\beta_0\sqrt{pN}\right)
=O\left(\frac{\sqrt p\log^3N}{\sqrt{pN}^{\,k-1}}+\beta_0\sqrt{pN}\log^5N\right),
\]
using $\norm\beta_\infty\le TR\beta_0$ and $\beta_0=\sqrt{pN}^{k-1}/\sqrt N$. The first term is at most $\sqrt p\log^3N$ and the second is $O(N^{-c_1}\log^5N)$ by \eqref{eq:beta0-small}; together with $\epsilon_0=O(\log^{\ell'}N/\sqrt{pN})$, all are $O(N^{-\kappa})$ for $\kappa:=\frac12\min\{c_1,\theta/2,(1-\theta)/2\}$ and large $N$.

\emph{Proof of \ref{lr:split}.} Write $\wt P_J:=\mb{P}[\mc{J}_{sb}(\wt A)]$ and $J^{(b)}_{t'}:=\mr{Cov}(\wt A_{t'},\mathbf{1}_{\mc{J}_{sb}(\wt A)})$. Dividing the $\mc{J}_{sb}$ version of \eqref{eq:lr-prob} by the $\mc{I}_s$ version,
\[
\mb{P}[\mc{J}_{sb}(A^\sigma)\mid\mc{I}_s(A^\sigma)]=\frac{\wt P_J}{\wt P}+\sum_{t'}\beta_{t'}\left(\frac{J^{(b)}_{t'}}{\wt P}-\frac{\wt P_J\,J_{t'}}{\wt P^2}\right)+O\left(E_1\log^2N+\beta_0^2pN\right),
\]
and a direct expansion gives $\frac{J^{(b)}_{t'}}{\wt P}-\frac{\wt P_JJ_{t'}}{\wt P^2}=\frac{\wt P_J}{\wt P}\left(\mb{E}[\wt A_{t'}\mid\mc{J}_{sb}(\wt A)]-\mb{E}[\wt A_{t'}\mid\mc{I}_s(\wt A)]\right)$. By \Cref{lem:gaussian-limits}\ref{gl:prob} and \ref{gl:mean} at $\wt\sigma$, together with the Lipschitz dependence on the mean of \Cref{lem:gaussian-regularity}, this equals $\sqrt{pN}\,\frac{\nu[sb]}{\nu[s]}\left(\mb{E}[W_{s,t'}\mid\mc{J}_{sb}(W_{s,\bullet})]-\mb{E}[W_{s,t'}\mid\mc{I}_s(W_{s,\bullet})]\right)+O(\sqrt{pN}\epsilon_0)$. Dividing by $\tau\beta_0\sqrt{pN}$ and using $\beta_{t'}=\tau\beta_0\sigma[t']$ gives the main term of \ref{lr:split}; the errors are $O(\epsilon_0)$ from the main term and
\[
O\left(\frac{p\log^2N}{\beta_0\sqrt{pN}}+\frac{\beta_0^2pN\log^4N}{\beta_0\sqrt{pN}}+\frac{\beta_0^2pN}{\beta_0\sqrt{pN}}\right)=O\left(\frac{\sqrt p\log^2N}{\sqrt{pN}^{\,k-1}}+\beta_0\sqrt{pN}\log^4N\right)=O(N^{-\kappa}).
\]

\emph{Proof of \ref{lr:condmean}.} Dividing the $\mc{J}_{sb}$ versions of \eqref{eq:lr-mean} and \eqref{eq:lr-prob}, exactly as in the proof of \ref{lr:mean} but with $\mc{J}_{sb}$ in place of $\mc{I}_s$ and with the lower bound $\wt P_J\ge\wt P\,\phi^\ast_k/4$, the difference $\mb{E}[A^\sigma_t\mid\mc{J}_{sb}(A^\sigma)]-\mb{E}[\wt A_t\mid\mc{J}_{sb}(\wt A)]$ equals $\sum_{t'}\beta_{t'}\mr{Cov}(\wt A_{t'},\wt A_t\mid\mc{J}_{sb}(\wt A))$ plus the error $O(E_1\sqrt{pN}\log^3N+\beta_0^2pN\sqrt{pN})$; the sum is $O(\norm\beta_\infty pN)=O(\tau\beta_0pN)$ and the error is smaller, by the computation in the proof of \ref{lr:mean}.

\emph{Proof of \ref{lr:nondeg}.} By \eqref{eq:lr-prob}, $\mb{P}[\mc{I}_s(A^\sigma)]=\wt P+O(\beta_0\sqrt{pN}+E_1\log^2N)$, and $\wt P\ge\phi^\ast_k/2$, so the first bound holds for large $N$ by \eqref{eq:beta0-small}. By \ref{lr:split}, $\mb{P}[\mc{J}_{sb}(A^\sigma)\mid\mc{I}_s(A^\sigma)]=\mb{P}[\mc{J}_{sb}(\wt A)\mid\mc{I}_s(\wt A)]+O(\beta_0\sqrt{pN})=\nu[sb]/\nu[s]+O(\epsilon_0+\beta_0\sqrt{pN})$, and $\nu[sb]/\nu[s]=\mb{P}[\mc{J}_{sb}(W_{s,\bullet})\mid\mc{I}_s(W_{s,\bullet})]\in[\phi^\ast_k,1-\phi^\ast_k]$ by \eqref{eq:nu-recursion} and \Cref{lem:universal-nondegeneracy}.
\end{proof}

\subsection{Proof of \Cref{thm:idealized-process}}

\begin{proof}[Proof of \Cref{thm:idealized-process}]
Throughout, constants implicit in $O(\cdot)$ depend only on $\theta$, $D$, and $T$, and ``for large $N$'' means for $N$ exceeding a threshold depending on the same parameters, uniformly in $p\in(T^{-1}N^{-\theta},TN^{-\theta})$. We write $\wt A:=\wt A_{s,\bullet}$ when $s$ is fixed.

\emph{The reflection.} Let $R$ be the reflection $(Rx)[t]:=x[\ol t]$ of the proof of \Cref{lem:universal-symmetry}. For every real vector $x$, every $s\in\{0,1\}^k$, and every $b$,
\begin{equation}\label{eq:exact-reflection}
\mc{I}_s(Rx)\iff\mc{I}_{\ol s}(x)\qquad\text{and}\qquad\mc{J}_{sb}(Rx)\iff\mc{J}_{\ol s\,\ol b}(x),
\end{equation}
including ties. Indeed, with $Y_r:=\sum_u(-1)^{u[r-1]}x[u]$, the $r$th inequality of $\mc{I}_s(Rx)$ reads
\[
-Y_r\ge_{s[r-1]s[r]}0,
\]
and by inspection of the table defining $\ge_{ab}$, the assertion $-Y\ge_{ab}0$ is equivalent to $Y\ge_{\ol a\,\ol b}0$ for each of the four pairs $ab$: for instance, $-Y\ge0$ holds if and only if $Y\le0$, which is $Y\ge_{11}0$. The same applies to the $k$th inequality of $\mc{J}_{sb}$.

\emph{Day one.} Define $\wt n_1$ and $\wt m_1$ by \ref{ideal:base}. Then \ref{ideal:symmetry} holds at $k=1$, and so does \ref{ideal:asymptotics} with any $\ell\ge1$: $|\wt n[s]-N/2|\le1$, and $|\wt m[s,t]-p\wt n[s]\wt n[t]|=p\wt n[s]\one_{s=t}\le p\wt n[s]\wt n[t]\frac{\log N}{pN}$ because $\wt n[t]\ge N/3$; recall $\nu[s]=1/2$ and $\mu\equiv0$ at day $1$.

\emph{Induction.} Suppose that for some $1\le k\le D-1$ the vectors $\wt n_j,\wt m_j$ ($j\le k$) and $\wt\lambda_j$ ($j\le k-1$) have been constructed so that \ref{ideal:base}--\ref{ideal:symmetry} hold as far as they concern these vectors, and \ref{ideal:asymptotics} holds for $j\le k$ with an exponent $\ell_k$ depending only on $k$. We construct $\wt\lambda_k$, $\wt n_{k+1}$, and $\wt m_{k+1}$, and prove \ref{ideal:solvable}--\ref{ideal:asymptotics} for them with an exponent $\ell_{k+1}$ depending only on $k$; the theorem then follows with $\ell:=\ell_D$, since $\log^{\ell_j}N\le\log^{\ell_D}N$.

\emph{Step 1: the sizes are balanced.} By \ref{ideal:symmetry} at day $k$, pairing $t$ with $\ol t$ gives $Z_{s,r}(\wt n_k)=0$ for all $s$ and $1\le r\le k$; that is, $M_s\wt n_k=0$ and $Z_{s,k}(\wt n_k)=0$.

\emph{Step 2: existence of $\wt\lambda_k$.} Fix $s\in\{0,1\}^k$. Let $v_s:=(\nu[t]\mu[s,t])_{t\in\{0,1\}^k}$, which lies in $\mc{C}_s$ with $Z_{s,r}(v_s)\ge\varsigma^\ast_k$ for $r\le k-1$ by \Cref{lem:universal-nondegeneracy}, and let $T'$ be a constant, depending only on $k$, with $T'\ge2\max_{s,t}|\nu[t]\mu[s,t]|+1$ and $T'\ge2/\varsigma^\ast_k$. Consider the compact set $\mc{K}_{T'}\subseteq\mc{C}_s$ of \Cref{thm:perturbed-bijection} for $M=M_s$, the strong bijection $f:=F_{s,\nu}\colon\mb{R}^{\{0,1\}^k}\to\mc{C}_s$ of \Cref{prop:cond-mean-bijection}, and the radius $R_0:=\max_{y\in\mc{K}_{T'}}|F_{s,\nu}^{-1}(y)|+1$, a constant depending only on $k$. Apply \Cref{lem:gaussian-limits} with $\eta:=\wt n_k$, exponent $\ell_k$, radius $R_0$, and $\xi:=\log^{\ell'}N/\sqrt{pN}$, where $\ell'=\ell'(k,\ell_k)$ is the exponent of that lemma; its hypotheses hold by \ref{ideal:asymptotics} at day $k$ and Step 1, and $\epsilon=2\log^{\ell'}N/\sqrt{pN}$. For $\sigma\in\mb{R}^{\{0,1\}^k}$ let $A^\sigma$ be as in that lemma, that is, $A^\sigma=A_{s,\bullet}(\wt n_k,\lambda^\sigma)$ with $\lambda^\sigma[s,t]:=\Lambda_{N,p}(\sigma[t],\nu[t])$, and define
\[
g(\sigma):=\left(\frac{\mb{E}[A^\sigma_t\mid\mc{I}_s(A^\sigma)]-p\,\wt n[t]}{\sqrt{pN}}\right)_{t\in\{0,1\}^k}.
\]
The map $g$ is defined and smooth on all of $\mb{R}^{\{0,1\}^k}$: $\mb{P}[\mc{I}_s(A^\sigma)]>0$ for every $\sigma$ because $A^\sigma$ has full support on its box and, for large $N$, the box contains points satisfying $\mc{I}_s$ (Step 4 below exhibits many), and the conditional expectation is a rational function with positive denominator of the smooth functions $\lambda^\sigma[s,t]$. By \Cref{lem:gaussian-limits}\ref{gl:mean}, $\sup_{|\sigma|\le R_0}|g(\sigma)-F_{s,\nu}(\sigma)|\le C_0\epsilon$ for a constant $C_0$ depending only on $\theta,k,T$. Let
\[
y^{(s)}:=\left(\frac{\wt m[s,t]/\wt n[s]-p\,\wt n[t]}{\sqrt{pN}}\right)_{t\in\{0,1\}^k}.
\]
By \ref{ideal:asymptotics} at day $k$, $\wt m[s,t]/\wt n[s]=p\wt n[t]\big(1+\frac{\mu[s,t]}{\sqrt{pN}}\big)+O(\log^{\ell_k}N)$ and $\wt n[t]/N=\nu[t]+O(\log^{\ell_k}N/\sqrt{pN})$, whence
\begin{equation}\label{eq:target-close}
\left\|y^{(s)}-v_s\right\|_\infty=O\left(\frac{\log^{\ell_k}N}{\sqrt{pN}}\right),
\end{equation}
so $y^{(s)}\in\mc{K}_{T'}$ for large $N$ by the choice of $T'$. \Cref{thm:perturbed-bijection} (with $\varepsilon:=\epsilon$, which is below its $\varepsilon_0$ for large $N$) yields $\sigma_s\in\mb{R}^{\{0,1\}^k}$ with $g(\sigma_s)=y^{(s)}$ and $|\sigma_s-F^{-1}_{s,\nu}(y^{(s)})|\le C\epsilon$. Since $F^{-1}_{s,\nu}(v_s)=\gamma[s,\bullet]$ by \Cref{def:universal-nu-mu}, \Cref{lem:strong-bijection-lipschitz} on $\mc{K}_{T'}$ and \eqref{eq:target-close} give
\begin{equation}\label{eq:sigma-close}
\left\|\sigma_s-\gamma[s,\bullet]\right\|_\infty=O\left(\frac{\log^{\ell''}N}{\sqrt{pN}}\right),\qquad\ell'':=\max\{\ell_k,\ell'\}.
\end{equation}
Set $\wt\lambda[s,t]:=\lambda^{\sigma_s}[s,t]=\Lambda_{N,p}(\sigma_s[t],\nu[t])$. Then $\wt A_{s,\bullet}=A^{\sigma_s}$, and $g(\sigma_s)=y^{(s)}$ says exactly that $\wt n[s]\,\mb{E}[\wt A_{s,t}\mid\mc{I}_s(\wt A)]=\wt m[s,t]$ for all $t$. Together with $\wt n[s]\ge1$, $\wt m[s,t]>0$ (from \ref{ideal:asymptotics}) and $\mb{P}[\mc{I}_s(\wt A)]>0$, this is \ref{ideal:solvable} except for uniqueness.

\emph{Step 3: the tilt estimate.} Since $\partial_g\Lambda_{N,p}(g,v)=\Lambda_{N,p}(g,v)(1-\Lambda_{N,p}(g,v))/(v\sqrt{pN})=O(p/\sqrt{pN})$ for bounded $g/v$, \eqref{eq:sigma-close} gives $|\wt\lambda[s,t]-\Lambda_{N,p}(\gamma[s,t],\nu[t])|=O\big(\frac{p}{\sqrt{pN}}\cdot\frac{\log^{\ell''}N}{\sqrt{pN}}\big)=O(\log^{\ell''}N/N)$, which is the last bound of \ref{ideal:asymptotics} at day $k$ once $\ell_{k+1}\ge\ell''+1$.

\emph{Step 4: uniqueness.} Apply \Cref{lem:tilt-uniqueness} with $d=2^k$, $\eta_t:=\wt n[t]-\one_{s=t}$, $M=M_s$, and $\ge'=\ge'_s$. Let $\rho_0:=\varsigma^\ast_k/2^{k+1}$ and let $B$ be the box of all $a\in\mb{Z}^{\{0,1\}^k}$ with $|a[t]-p\wt n[t]-\sqrt{pN}v_s[t]|\le\sqrt{pN}\rho_0$ for all $t$. For $a\in B$, Step 1 gives $M_sa=\sqrt{pN}\,M_s(v_s+w)$ with $\norm{w}_\infty\le\rho_0$, so $Z_{s,r}(a)\ge\sqrt{pN}(\varsigma^\ast_k-2^k\rho_0)>0$ for all $r\le k-1$; hence every $a\in B$ satisfies $\mc{I}_s(a)$ strictly, and $B\subseteq S$ for large $N$ (when $B$ lies inside $\prod_t\{0,\ldots,\eta_t\}$). A box of integer points of side at least $2$ contains $d+1$ affinely independent points, so $S$ is not contained in an affine hyperplane, and \Cref{lem:tilt-uniqueness} shows that $\wt\lambda[s,\bullet]$ is the unique tilt with the solvability property for the row $s$. This completes \ref{ideal:solvable}.

\emph{Step 5: symmetry of the tilt.} Define $\lambda'[s,t]:=\wt\lambda[\ol s,\ol t]$. The row vector $A'_{s,\bullet}:=A_{s,\bullet}(\wt n_k,\lambda')$ has independent coordinates $A'_{s,t}\sim\mr{Bin}(\wt n[t]-\one_{s=t},\wt\lambda[\ol s,\ol t])=\mr{Bin}(\wt n[\ol t]-\one_{\ol s=\ol t},\wt\lambda[\ol s,\ol t])$, using $\wt n[t]=\wt n[\ol t]$; this is the law of $\wt A_{\ol s,\ol t}$, so $A'_{s,\bullet}\overset{d}{=}R\wt A_{\ol s,\bullet}$. By \eqref{eq:exact-reflection} and \ref{ideal:solvable} for the row $\ol s$,
\[
\mb{E}\left[A'_{s,t}\mid\mc{I}_s(A'_{s,\bullet})\right]=\mb{E}\left[\wt A_{\ol s,\ol t}\mid\mc{I}_{\ol s}(\wt A_{\ol s,\bullet})\right]=\frac{\wt m[\ol s,\ol t]}{\wt n[\ol s]}=\frac{\wt m[s,t]}{\wt n[s]},
\]
so $\lambda'[s,\bullet]$ has the solvability property for the row $s$, and by uniqueness $\lambda'=\wt\lambda$, that is, $\wt\lambda[\ol s,\ol t]=\wt\lambda[s,t]$.

\emph{Step 6: day $k+1$ and its symmetry.} The local template $(\wt n^{\mr{loc}},\wt\ell^{\mr{loc}},\wt m^{\mr{loc}})$ of $(\wt n_k,\wt m_k,\wt\lambda_k)$ is defined, since the hypotheses of \Cref{def:local-template} are part of \ref{ideal:solvable}; define $\wt n_{k+1}$ and $\wt m_{k+1}$ by \ref{ideal:evolution}. By the symmetry of $\wt n_k$ and $\wt\lambda_k$, $\wt A_{\ol s,\bullet}\overset{d}{=}R\wt A_{s,\bullet}$, and by \eqref{eq:exact-reflection} the events $\mc{I}_{\ol s}(\wt A_{\ol s,\bullet})$ and $\mc{J}_{\ol s\,\ol b}(\wt A_{\ol s,\bullet})$ correspond to $\mc{I}_s(\wt A_{s,\bullet})$ and $\mc{J}_{sb}(\wt A_{s,\bullet})$ under this identification. Hence $\wt n^{\mr{loc}}[\ol s\,\ol b]=\wt n^{\mr{loc}}[sb]$, $\wt\ell^{\mr{loc}}[\ol s\,\ol b,\ol t]=\wt\ell^{\mr{loc}}[sb,t]$, and therefore $\wt m^{\mr{loc}}[\ol s\,\ol b,\ol t\,\ol c]=\wt m^{\mr{loc}}[sb,tc]$, which is \ref{ideal:symmetry} at day $k+1$.

\emph{Step 7: asymptotics at day $k+1$.} Fix $s,t$ and $b,c$. Let $W_{s,\bullet}$ be the day-$k$ Gaussian row model, whose means are $\gamma[s,\bullet]$. By \Cref{lem:gaussian-limits}\ref{gl:prob} and \ref{gl:mean} at $\sigma=\sigma_s$ (with $Z_{s,k}(\wt n_k)=0$ by Step 1, so the events $\mc{J}_{sb}$ are unshifted), followed by the Lipschitz dependence on the mean of \Cref{lem:gaussian-regularity} and \eqref{eq:sigma-close},
\begin{align}
\mb{P}\left[\mc{J}_{sb}(\wt A)\mid\mc{I}_s(\wt A)\right]&=\mb{P}\left[\mc{J}_{sb}(W_{s,\bullet})\mid\mc{I}_s(W_{s,\bullet})\right]+O(\epsilon'')=\frac{\nu[sb]}{\nu[s]}+O(\epsilon''),\label{eq:split-asym}\\
\mb{E}\left[\wt A_{s,t}\mid\mc{J}_{sb}(\wt A)\right]&=p\,\wt n[t]+\sqrt{pN}\left(\mb{E}\left[W_{s,t}\mid\mc{J}_{sb}(W_{s,\bullet})\right]+O(\epsilon'')\right),\label{eq:condmean-asym}
\end{align}
with $\epsilon'':=\log^{\ell''}N/\sqrt{pN}$, where \eqref{eq:split-asym} uses \eqref{eq:nu-recursion}. By \Cref{def:local-template} and \eqref{eq:split-asym}, $\wt n^{\mr{loc}}[sb]=\wt n[s]\,\mb{P}[\mc{J}_{sb}(\wt A)\mid\mc{I}_s(\wt A)]=N\nu[sb]+O(N\epsilon'')$, using $\wt n[s]=N\nu[s]+O(N\epsilon'')$; the floor changes this by at most $1$, which proves the size bound of \ref{ideal:asymptotics} at day $k+1$ with any $\ell_{k+1}\ge\ell''+1$. For the edge counts, write $e_{sb,t}:=\nu[t]^{-1}\mb{E}[W_{s,t}\mid\mc{J}_{sb}(W_{s,\bullet})]$, so that $\mu[sb,tc]=e_{sb,t}+e_{tc,s}-\mu[s,t]$ by \eqref{eq:mu-recursion}. Since $\wt\ell^{\mr{loc}}[sb,t]=\wt n^{\mr{loc}}[sb]\,\mb{E}[\wt A_{s,t}\mid\mc{J}_{sb}(\wt A)]$ and $\sqrt{pN}\,\nu[t]/(p\wt n[t])=(1+O(\epsilon''))/\sqrt{pN}$, \eqref{eq:condmean-asym} gives
\[
\wt\ell^{\mr{loc}}[sb,t]=\wt n^{\mr{loc}}[sb]\,p\,\wt n[t]\left(1+\frac{e_{sb,t}}{\sqrt{pN}}+O\left(\frac{\epsilon''}{\sqrt{pN}}\right)\right),
\]
and likewise for $\wt\ell^{\mr{loc}}[tc,s]$. Divide the product by $\wt m[s,t]=p\wt n[s]\wt n[t](1+\mu[s,t]/\sqrt{pN}+O(\log^{\ell_k}N/pN))$ and expand; the second-order terms are $O(1/pN)$, and we obtain
\begin{align*}
\wt m[sb,tc]&=p\,\wt n^{\mr{loc}}[sb]\,\wt n^{\mr{loc}}[tc]\left(1+\frac{e_{sb,t}+e_{tc,s}-\mu[s,t]}{\sqrt{pN}}+O\left(\frac{\log^{\ell''}N}{pN}\right)\right)\\
&=p\,\wt n^{\mr{loc}}[sb]\,\wt n^{\mr{loc}}[tc]\left(1+\frac{\mu[sb,tc]}{\sqrt{pN}}+O\left(\frac{\log^{\ell''}N}{pN}\right)\right).
\end{align*}
Replacing $\wt n^{\mr{loc}}$ by $\wt n_{k+1}$ changes the right-hand side by a relative $O(1/N)$, which is absorbed. This proves the edge-count bound of \ref{ideal:asymptotics} at day $k+1$ with $\ell_{k+1}:=\ell''+1$, and completes the induction. Finally, since \ref{ideal:base}, \ref{ideal:evolution}, and the uniqueness in \ref{ideal:solvable} determine the family recursively, it depends only on $(N,p)$.
\end{proof}

\subsection{Proof of \Cref{thm:perturbed-evolution}}

\begin{proof}[Proof of \Cref{thm:perturbed-evolution}]
Throughout, constants implicit in $O(\cdot)$ and thresholds for ``large $N$'' depend only on $\theta,k,T,\delta$; they are uniform in $p$, $\tau$, and $y$. Let $\ell=\ell(\theta,D_\theta,T)$ be the exponent of \Cref{thm:idealized-process}, put
\[
\beta_0:=\frac{\sqrt{pN}^{\,k-1}}{\sqrt N},\qquad c_1:=\tfrac12\left(1-k(1-\theta)\right)>0,\qquad\epsilon'':=\frac{\log^{\ell}N}{\sqrt{pN}},
\]
and recall from \eqref{eq:beta0-small} that $\beta_0\sqrt{pN}\le T^{k/2}N^{-c_1}$; in particular $\beta_0\le1/\sqrt{pN}$ for large $N$, and $\epsilon''\le N^{-(1-\theta)/4}$ for large $N$. Let $\kappa=\kappa(\theta,k)$ be the exponent of \Cref{lem:tilt-vs-linear-map}, and define
\[
\delta_1:=\tfrac12\min\left\{\kappa,\ \delta,\ c_1,\ \tfrac{1-\theta}{4}\right\},
\]
so that $N^{-\delta}$, $\epsilon''$, and $\beta_0$ are all $O(N^{-2\delta_1})$, and $1/\sqrt{pN}$, $\beta_0\sqrt{pN}$, $\epsilon''$, $N^{-\delta}$ are all at most $N^{-\delta_1}$ for large $N$.
Fix $y=(\Pi,m,1)\in\mc{G}_k^{(p)}(T,\delta,\tau)$ and write $n[s]:=|\Pi[s]|$. Conditions \ref{F:sizes} and \ref{F:edges} read
\begin{align}
n[t]&=\wt n[t]+\beta_0N\left(\tau\varepsilon[t]+O(N^{-\delta})\right),\label{eq:pe-sizes}\\
m[s,t]&=\wt m[s,t]\left(1+\tau\beta_0\left(\frac{\varepsilon[s]}{\nu[s]}+\frac{\varepsilon[t]}{\nu[t]}\right)\right)+O\left(\beta_0N^{-\delta}N^2p\right),\label{eq:pe-edges}
\end{align}
and \Cref{thm:idealized-process}\ref{ideal:asymptotics} gives $\wt n[t]=N\nu[t](1+O(\epsilon''))$ and $\wt m[s,t]=p\wt n[s]\wt n[t](1+\mu[s,t]/\sqrt{pN}+O(\epsilon''/\sqrt{pN}))$. In particular $|n[t]-\wt n[t]|\le T_2\beta_0N$ with $T_2:=T(\max_t|\varepsilon[t]|+1)$, so \Cref{lem:tilt-vs-linear-map} applies to $n$ with $T_2$ in place of $T$; we use it with the radius $R_0$ specified in Step 1, and write $A^\sigma$, $q^\sigma$ for the objects of that lemma for the row $s$, and $\wt A:=\wt A_{s,\bullet}$.

\emph{Step 1: the tilt.} Fix $s$. Let $f(\sigma):=\Sigma_s\sigma$, a linear bijection of $\mb{R}^{\{0,1\}^k}$ by \Cref{lem:cond-cov-pd}, hence a strong bijection onto $B=\mb{R}^{\{0,1\}^k}$; let $\mc{K}:=[-R_\varepsilon,R_\varepsilon]^{\{0,1\}^k}$ with $R_\varepsilon:=\max_t|\varepsilon[t]|+1$, and let $R_0:=\max_{y'\in\mc{K}}|\Sigma_s^{-1}y'|+1$, a constant depending only on $k$. Define
\[
g(\sigma):=\left(\frac{\mb{E}[A^\sigma_t\mid\mc{I}_s(A^\sigma)]-\wt m[s,t]/\wt n[s]}{\tau\beta_0\,pN}\right)_{t\in\{0,1\}^k},\qquad
y^{(s)}:=\left(\frac{m[s,t]/n[s]-\wt m[s,t]/\wt n[s]}{\tau\beta_0\,pN}\right)_{t\in\{0,1\}^k}.
\]
The map $g$ is continuous on $\mb{R}^{\{0,1\}^k}$ (it is smooth, by \Cref{lem:tilt-vs-linear-map}, whose argument for smoothness does not use the bound on $\sigma$), and \Cref{lem:tilt-vs-linear-map}\ref{lr:mean} gives $\sup_{|\sigma|\le R_0}|g(\sigma)-f(\sigma)|\le C_0N^{-\kappa}$. We compute $y^{(s)}$. By \eqref{eq:pe-sizes} and $N/\wt n[s]=\nu[s]^{-1}(1+O(\epsilon''))$, $n[s]=\wt n[s]\big(1+\tau\beta_0\varepsilon[s]/\nu[s]+O(\beta_0N^{-\delta}+\beta_0\epsilon'')\big)$, and dividing \eqref{eq:pe-edges} by this,
\[
\frac{m[s,t]}{n[s]}=\frac{\wt m[s,t]}{\wt n[s]}\left(1+\tau\beta_0\frac{\varepsilon[t]}{\nu[t]}+O\left(\beta_0N^{-\delta}+\beta_0\epsilon''+\beta_0^2\right)\right)+O\left(\beta_0N^{-\delta}Np\right).
\]
Since $\wt m[s,t]/\wt n[s]=pN\nu[t](1+O(\epsilon''))$, this gives
\[
\frac{m[s,t]}{n[s]}-\frac{\wt m[s,t]}{\wt n[s]}=\tau\beta_0pN\varepsilon[t]+O\left(\beta_0pN\left(N^{-\delta}+\epsilon''+\beta_0\right)\right),
\]
that is,
\begin{equation}\label{eq:pe-target}
y^{(s)}=\varepsilon+O\left(N^{-\delta}+\epsilon''+\beta_0\right)=\varepsilon+O\left(N^{-2\delta_1}\right)
\end{equation}
by the choice of $\delta_1$,
so $y^{(s)}\in\mc{K}$ for large $N$. \Cref{thm:perturbed-bijection} with $\varepsilon:=N^{-\kappa}$ yields $\sigma_s$ with $g(\sigma_s)=y^{(s)}$ and $|\sigma_s-\Sigma_s^{-1}y^{(s)}|\le CN^{-\kappa}$; since $\Sigma_s$ is symmetric, \eqref{eq:beta-defining} says $\Sigma_s\beta[s,\bullet]=\varepsilon$, so by \eqref{eq:pe-target}
\begin{equation}\label{eq:pe-sigma}
\left\|\sigma_s-\beta[s,\bullet]\right\|_\infty=O\left(N^{-2\delta_1}\right),
\end{equation}
and in particular $\norm{\sigma_s}_\infty\le R_0$ for large $N$. Set $\lambda_y[s,t]:=q^{\sigma_s}[t]$. Then $A_{s,\bullet}(n,\lambda_y)=A^{\sigma_s}$, and $g(\sigma_s)=y^{(s)}$ says exactly that $n[s]\,\mb{E}[A_{s,t}\mid\mc{I}_s(A_{s,\bullet})]=m[s,t]$ for all $t$; moreover $\mb{P}[\mc{I}_s(A_{s,\bullet})]>0$ by \Cref{lem:tilt-vs-linear-map}\ref{lr:nondeg}. By the definition of $q^\sigma$ in \Cref{lem:tilt-vs-linear-map}, $\vartheta_y[s,t]=\wt\vartheta[s,t]+\tau\beta_0\sigma_s[t]$, so \eqref{eq:pe-sigma} gives
\[
\left|\vartheta_y[s,t]-\wt\vartheta[s,t]-\tau\beta_0\beta[s,t]\right|=\tau\beta_0\left|\sigma_s[t]-\beta[s,t]\right|=O\left(\beta_0N^{-2\delta_1}\right),
\]
which is \ref{pe:tilt} once $T_1$ exceeds the implied constant.

\emph{Step 2: uniqueness.} As in Step 4 of the proof of \Cref{thm:idealized-process}, apply \Cref{lem:tilt-uniqueness} with $\eta_t:=n[t]-\one_{s=t}$: for $a$ in the box of integer points with $|a[t]-pn[t]-\sqrt{pN}v_s[t]|\le\sqrt{pN}\rho_0$, where $v_s:=(\nu[t]\mu[s,t])_t$ and $\rho_0:=\varsigma^\ast_k/2^{k+1}$, we have $M_sa=pM_sn+\sqrt{pN}M_s(v_s+w)$ with $\norm w_\infty\le\rho_0$; by \eqref{eq:pe-sizes}, the pairing symmetry of \Cref{thm:idealized-process}\ref{ideal:symmetry}, and \Cref{cor:sign-identities}\ref{sign:history}, $\norm{M_sn}_\infty=O(\beta_0N^{1-\delta})$, so $p\norm{M_sn}_\infty=O(\sqrt{pN}\cdot\beta_0\sqrt{pN}N^{-\delta})=o(\sqrt{pN})$, and every such $a$ satisfies $\mc{I}_s(a)$ strictly for large $N$. Hence the support is not contained in an affine hyperplane, and $\lambda_y[s,\bullet]$ is the unique tilt of the row $s$ with the solvability property.

\emph{Step 3: admissibility.} We verify \cref{LA:reg}--\cref{LA:split} for $(y,\lambda_y)$ with $\phi_1:=\phi^\ast_k/4$ and a constant $T_1$ depending only on $\theta,k,T,\delta$. \cref{LA:reg} is part of \Cref{def:faithful}. \cref{LA:sizes}: $n[s]=N\nu[s](1+O(\epsilon''+\beta_0))\ge N/T_1$ once $T_1\ge2/\min_s\nu[s]$. For $1\le r\le k-1$, by \eqref{eq:pe-sizes}, the pairing symmetry of $\wt n$, and \Cref{cor:sign-identities}\ref{sign:history},
\[
\left|\sum_t(-1)^{t[r-1]}n[t]\right|=\left|\tau\beta_0N\sum_t(-1)^{t[r-1]}\varepsilon[t]\right|+O\left(\beta_0N^{1-\delta}\right)=O\left(\beta_0N^{1-\delta}\right),
\]
which is at most $T_1N/\sqrt{pN}$ because $\beta_0\sqrt{pN}\le T^{k/2}$. \cref{LA:edge-scale} and \cref{LA:positive}: multiplying the expansions of $n[s]$ and $n[t]$ from \eqref{eq:pe-sizes},
\[
pn[s]n[t]=p\wt n[s]\wt n[t]\left(1+\tau\beta_0\left(\frac{\varepsilon[s]}{\nu[s]}+\frac{\varepsilon[t]}{\nu[t]}\right)+O\left(\beta_0N^{-\delta}+\beta_0\epsilon''+\beta_0^2\right)\right),
\]
and comparing with \eqref{eq:pe-edges} and the expansion of $\wt m[s,t]$ gives
\[
m[s,t]-pn[s]n[t]=p\wt n[s]\wt n[t]\cdot O\left(\frac{1}{\sqrt{pN}}+\frac{\log^{\ell}N}{pN}+\beta_0N^{-\delta}+\beta_0\epsilon''+\beta_0^2\right)=O\left(\frac{N^2p}{\sqrt{pN}}\right),
\]
where the first two terms are the $\mu[s,t]/\sqrt{pN}$ of $\wt m[s,t]$ and its error, the third is the error term of \eqref{eq:pe-edges} divided by $p\wt n[s]\wt n[t]\asymp pN^2$, and the last two come from the expansion of $pn[s]n[t]$; the last three are $O(\beta_0)=O(1/\sqrt{pN})$ because $\beta_0\sqrt{pN}=\sqrt{pN}^{\,k}/\sqrt N\le T^{k/2}N^{(k(1-\theta)-1)/2}\le T^{k/2}$. In particular $m[s,t]=p\wt n[s]\wt n[t](1+o(1))>0$. \cref{LA:separation}: fix $s$ and $1\le r\le k-1$. In all four cases of the relation $\ge_{s[r-1]s[r]}$, the condition is implied by $(-1)^{s[r]}\sum_t(-1)^{t[r-1]}m[s,t]\ge2T_1^{-1}N^2p/\sqrt{pN}$. By \eqref{eq:pe-edges} and the expansion of $\wt m$,
\begin{align*}
\sum_t(-1)^{t[r-1]}m[s,t]&=p\wt n[s]\sum_t(-1)^{t[r-1]}\wt n[t]\left(1+\frac{\mu[s,t]}{\sqrt{pN}}\right)\left(1+\tau\beta_0\left(\frac{\varepsilon[s]}{\nu[s]}+\frac{\varepsilon[t]}{\nu[t]}\right)\right)\\
&\qquad+O\left(\frac{N^2p\,\epsilon''}{\sqrt{pN}}+\beta_0N^{-\delta}N^2p\right)\\
&=p\wt n[s]\left[\frac{N}{\sqrt{pN}}(-1)^{s[r]}Z_{s,r}(v_s)+\tau\beta_0N\sum_t(-1)^{t[r-1]}\varepsilon[t]\right]\\
&\qquad+O\left(\frac{N^2p}{\sqrt{pN}}\left(\epsilon''+\beta_0\log^\ell N+\beta_0N^{-\delta}\sqrt{pN}\right)\right),
\end{align*}
all three error terms being $o(N^2p/\sqrt{pN})$,
where we used $\sum_t(-1)^{t[r-1]}\wt n[t]=0$ (pairing), $\wt n[t]=N\nu[t]+O(N\epsilon'')$, $\wt n[t]/\nu[t]=N+O(N\epsilon'')$, and $\beta_0\le1/\sqrt{pN}$. The second bracketed term vanishes by \Cref{cor:sign-identities}\ref{sign:history}, and $Z_{s,r}(v_s)\ge\varsigma^\ast_k$ by \Cref{lem:universal-nondegeneracy}; hence $(-1)^{s[r]}\sum_t(-1)^{t[r-1]}m[s,t]\ge\frac12\nu[s]\varsigma^\ast_k\,N^2p/\sqrt{pN}$ for large $N$, which is \cref{LA:separation} once $T_1\ge4/(\min_s\nu[s]\,\varsigma^\ast_k)$. \cref{LA:tilt} is the bound $|q^{\sigma_s}[t]-p|\le Cp/\sqrt{pN}$ of \Cref{lem:tilt-vs-linear-map}. \cref{LA:conditioning} and \cref{LA:split} are \Cref{lem:tilt-vs-linear-map}\ref{lr:nondeg} with $\phi_1=\phi^\ast_k/4$. \cref{LA:solvability} was established in Step 1. This proves \ref{pe:admissible}.

\emph{Step 4: the template sizes.} Fix $s,b$, and let $(\wt n^{\mr{loc}},\wt\ell^{\mr{loc}},\wt m^{\mr{loc}})$ be the local template of $(\wt n_k,\wt m_k,\wt\lambda_k)$, so that $\wt n^{\mr{loc}}[sb]=\wt n[s]\,\mb{P}[\mc{J}_{sb}(\wt A)\mid\mc{I}_s(\wt A)]$, $\wt n[sb]=\lfloor\wt n^{\mr{loc}}[sb]\rfloor$, and $\wt m[sb,tc]=\wt m^{\mr{loc}}[sb,tc]$ by \Cref{thm:idealized-process}\ref{ideal:evolution}. By \Cref{lem:tilt-vs-linear-map}\ref{lr:split} at $\sigma_s$, then \eqref{eq:pe-sigma}, then \eqref{eq:epsilon-recursion},
\begin{align*}
\mb{P}[\mc{J}_{sb}(A_{s,\bullet})\mid\mc{I}_s(A_{s,\bullet})]&=\frac{\wt n^{\mr{loc}}[sb]}{\wt n[s]}+\tau\beta_0\sqrt{pN}\,\frac{\nu[sb]}{\nu[s]}\sum_{t'}\beta[s,t']\Big(\mb{E}[W_{s,t'}\mid\mc{J}_{sb}]-\mb{E}[W_{s,t'}\mid\mc{I}_s]\Big)\\
&\qquad+O\left(\beta_0\sqrt{pN}N^{-2\delta_1}\right)\\
&=\frac{\wt n^{\mr{loc}}[sb]}{\wt n[s]}+\tau\beta_0\sqrt{pN}\,\frac{\varepsilon[sb]}{\nu[s]}+O\left(\beta_0\sqrt{pN}N^{-2\delta_1}\right),
\end{align*}
where $\mc{J}_{sb}$ and $\mc{I}_s$ abbreviate $\mc{J}_{sb}(W_{s,\bullet})$ and $\mc{I}_s(W_{s,\bullet})$. Multiplying by $n[s]=\wt n[s]+\beta_0N(\tau\varepsilon[s]+O(N^{-\delta}))$ and using $\wt n^{\mr{loc}}[sb]/\wt n[s]=\nu[sb]/\nu[s]+O(\epsilon'')$, which follows from \Cref{thm:idealized-process}\ref{ideal:asymptotics} at days $k$ and $k+1$ since $\wt n^{\mr{loc}}[sb]\in[\wt n[sb],\wt n[sb]+1)$, together with $n[s]=N\nu[s](1+O(\epsilon''+\beta_0))$,
\begin{multline*}
n^{\mr{loc}}[sb]=\wt n^{\mr{loc}}[sb]+\tau\beta_0N\sqrt{pN}\,\varepsilon[sb]+\tau\beta_0N\varepsilon[s]\frac{\nu[sb]}{\nu[s]}\\
+O\left(\beta_0N\sqrt{pN}\left(N^{-2\delta_1}+\epsilon''+\beta_0\right)+\beta_0N\left(N^{-\delta}+\epsilon''\right)\right).
\end{multline*}
Now $\tau\beta_0N\sqrt{pN}\varepsilon[sb]=\tau\sqrt N\sqrt{pN}^{\,k}\varepsilon[sb]$, $0\le\wt n^{\mr{loc}}[sb]-\wt n[sb]<1$, and every remaining term is $O(\beta_0N\sqrt{pN}\cdot N^{-\delta_1})$: the third term because $1/\sqrt{pN}\le N^{-\delta_1}$, and the error terms because $N^{-2\delta_1}$, $\epsilon''$, $\beta_0$, and $N^{-\delta}$ are all at most $N^{-\delta_1}$ for large $N$, by the choice of $\delta_1$. This is the size bound of \ref{pe:template}.

\emph{Step 5: the template edge counts.} Fix $s,t,b,c$. By \Cref{def:local-template}, $\ell^{\mr{loc}}[sb,t]=n^{\mr{loc}}[sb]\,\mb{E}[A_{s,t}\mid\mc{J}_{sb}(A_{s,\bullet})]$ and $\wt\ell^{\mr{loc}}[sb,t]=\wt n^{\mr{loc}}[sb]\,\mb{E}[\wt A_{s,t}\mid\mc{J}_{sb}(\wt A)]$, where $\mb{E}[\wt A_{s,t}\mid\mc{J}_{sb}(\wt A)]=p\wt n[t]+O(\sqrt{pN})\asymp pN$ by \Cref{lem:gaussian-limits}\ref{gl:mean}. By \Cref{lem:tilt-vs-linear-map}\ref{lr:condmean}, $\mb{E}[A_{s,t}\mid\mc{J}_{sb}(A_{s,\bullet})]=\mb{E}[\wt A_{s,t}\mid\mc{J}_{sb}(\wt A)](1+O(\beta_0))$, and therefore
\[
m^{\mr{loc}}[sb,tc]=\frac{\ell^{\mr{loc}}[sb,t]\,\ell^{\mr{loc}}[tc,s]}{m[s,t]}=\wt m^{\mr{loc}}[sb,tc]\cdot\frac{n^{\mr{loc}}[sb]}{\wt n^{\mr{loc}}[sb]}\cdot\frac{n^{\mr{loc}}[tc]}{\wt n^{\mr{loc}}[tc]}\cdot\frac{\wt m[s,t]}{m[s,t]}\cdot\left(1+O(\beta_0)\right).
\]
By Step 4 and $\wt n^{\mr{loc}}[sb]=N\nu[sb](1+O(\epsilon''))$,
\[
\frac{n^{\mr{loc}}[sb]}{\wt n^{\mr{loc}}[sb]}=1+\tau\beta_0\sqrt{pN}\,\frac{\varepsilon[sb]}{\nu[sb]}+O\left(\beta_0\sqrt{pN}\left(N^{-\delta_1}+\epsilon''\right)\right),
\]
and likewise for $tc$; by \eqref{eq:pe-edges}, $\wt m[s,t]/m[s,t]=1+O(\beta_0)$. Multiplying out, with $\wt m^{\mr{loc}}[sb,tc]=\wt m[sb,tc]\asymp N^2p$ and second-order terms $O(\beta_0^2pN)$,
\begin{multline*}
m^{\mr{loc}}[sb,tc]=\wt m[sb,tc]\left(1+\tau\beta_0\sqrt{pN}\left(\frac{\varepsilon[sb]}{\nu[sb]}+\frac{\varepsilon[tc]}{\nu[tc]}\right)\right)\\
+O\left(N^2p\left(\beta_0\sqrt{pN}\left(N^{-\delta_1}+\epsilon''\right)+\beta_0+\beta_0^2pN\right)\right).
\end{multline*}
Since $\beta_0\sqrt{pN}=\sqrt{pN}^k/\sqrt N$, the main term is the one in \ref{pe:template}. In the error, $\beta_0=\beta_0\sqrt{pN}\cdot(1/\sqrt{pN})$ and $\beta_0^2pN=\beta_0\sqrt{pN}\cdot\beta_0\sqrt{pN}$ are both at most $\beta_0\sqrt{pN}\,N^{-\delta_1}$ for large $N$, and $\epsilon''\le N^{-\delta_1}$, by the choice of $\delta_1$; so the error is $O(\beta_0\sqrt{pN}N^{-\delta_1}N^2p)$. This is the edge-count bound of \ref{pe:template} and completes the proof, with $T_1$ the maximum of $T$ and the finitely many constants required above.
\end{proof}

\subsection{Proof of \Cref{thm:edge-day}}

\begin{proof}[Proof of \Cref{thm:edge-day}]
Throughout, constants implicit in $O(\cdot)$ and thresholds for ``large $N$'' depend only on $\theta,T,\delta$, uniformly in $p$, $\tau$, and $y$. Write $k:=k_0$, let $\ell$ be the exponent of \Cref{thm:idealized-process} with $D=D_\theta$, and put
\[
\beta_0:=\frac{\sqrt{pN}^{\,k-1}}{\sqrt N},\qquad\epsilon'':=\frac{\log^\ell N}{\sqrt{pN}},\qquad\varpi:=pN^{\theta}\in(T^{-1},T).
\]
Since $k(1-\theta)=1$, we have $pN=\varpi N^{1-\theta}$ and hence the exact identity
\begin{equation}\label{eq:edge-beta0}
\beta_0\sqrt{pN}=\frac{(pN)^{k/2}}{\sqrt N}=\varpi^{k/2}\in\left(T^{-k/2},T^{k/2}\right);
\end{equation}
Put $\delta':=\min\{\delta,(1-\theta)/4\}$ and $\delta_1:=\delta'/2$. Fix $y=(\Pi,m,1)\in\mc{G}^{(p)}_{k}(T,\delta,\tau)$, write $n[s]:=|\Pi[s]|$, and note that \eqref{eq:pe-sizes} and \eqref{eq:pe-edges} hold verbatim (they are \ref{F:sizes} and \ref{F:edges}), as do $\wt n[t]=N\nu[t](1+O(\epsilon''))$ and $\wt m[s,t]=p\wt n[s]\wt n[t](1+\mu[s,t]/\sqrt{pN}+O(\epsilon''/\sqrt{pN}))$.

\emph{Step 1: the actual sizes as an input to \Cref{lem:gaussian-limits}.} By \eqref{eq:pe-sizes}, $|n[t]-N\nu[t]|\le\frac{N}{\sqrt{pN}}\log^\ell N+O(\beta_0N)\le\frac{N}{\sqrt{pN}}\log^{\ell+1}N$ for large $N$, by \eqref{eq:edge-beta0}. By \eqref{eq:pe-sizes}, the pairing symmetry of \Cref{thm:idealized-process}\ref{ideal:symmetry}, and \Cref{cor:sign-identities}\ref{sign:history},
\[
M_sn=\tau\beta_0N\,M_s\varepsilon+O\left(\beta_0N^{1-\delta}\right)=O\left(\beta_0N^{1-\delta}\right)=O\left(N^{-\delta}\frac{N}{\sqrt{pN}}\right),
\]
so the hypothesis $\norm{M_sn}_\infty\le\xi N/\sqrt{pN}$ of \Cref{lem:gaussian-limits} holds with $\xi:=C_\xi N^{-\delta}$ for a constant $C_\xi$. On the other hand, by \Cref{cor:sign-identities}\ref{sign:lead},
\begin{equation}\label{eq:edge-shift}
u_n:=\frac{p\,Z_{s,k}(n)}{\sqrt{pN}}=\frac{p}{\sqrt{pN}}\left(\tau\beta_0N\,S_k+O(\beta_0N^{1-\delta})\right)=\tau\,\varpi^{k/2}S_k+O(N^{-\delta}),
\end{equation}
where $S_k:=\sum_{t}(-1)^{t[k-1]}\varepsilon[t]=2\sum_{t'\in\{0,1\}^{k-1}}\varepsilon[t'0]>0$;
so that for large $N$, $u_n\in[u_{\min},u_{\max}]$ with $u_{\min}:=\frac12T^{-1-k/2}S_k>0$ and $u_{\max}:=2T^{1+k/2}S_k$, constants depending only on $\theta,T$. In particular $|Z_{s,k}(n)|\le T_3N/\sqrt{pN}$ for a constant $T_3$. We apply \Cref{lem:gaussian-limits} with $\eta:=n$, the constant $\max\{T,T_3\}$, the exponent $\ell+1$, the radius $R_0$ of Step 2 of the proof of \Cref{thm:idealized-process}, and this $\xi$; its $\epsilon$ satisfies $\epsilon=O(\log^{\ell'}N/\sqrt{pN}+N^{-\delta})=O(N^{-\delta'})$. For $\sigma\in\mb{R}^{\{0,1\}^k}$ let $A^\sigma=A_{s,\bullet}(n,\lambda^\sigma)$ with $\lambda^\sigma[s,t]:=\Lambda_{N,p}(\sigma[t],\nu[t])$ be the row vector of that lemma.

\emph{Step 2: the tilt.} Fix $s$ and define
\[
g(\sigma):=\left(\frac{\mb{E}[A^\sigma_t\mid\mc{I}_s(A^\sigma)]-p\,n[t]}{\sqrt{pN}}\right)_{t},\qquad
y^{(s)}:=\left(\frac{m[s,t]/n[s]-p\,n[t]}{\sqrt{pN}}\right)_{t},\qquad v_s:=\left(\nu[t]\mu[s,t]\right)_t .
\]
As in Step 2 of the proof of \Cref{thm:idealized-process}, $g$ is smooth on $\mb{R}^{\{0,1\}^k}$, and \Cref{lem:gaussian-limits}\ref{gl:mean} gives $\sup_{|\sigma|\le R_0}|g(\sigma)-F_{s,\nu}(\sigma)|=O(N^{-\delta'})$. We compute $y^{(s)}$. Exactly as in Step 1 of the proof of \Cref{thm:perturbed-evolution} (that computation did not use $k<\frac1{1-\theta}$),
\[
\frac{m[s,t]}{n[s]}=\frac{\wt m[s,t]}{\wt n[s]}\left(1+\tau\beta_0\frac{\varepsilon[t]}{\nu[t]}+O\left(\beta_0N^{-\delta}+\beta_0\epsilon''+\beta_0^2\right)\right)+O\left(\beta_0N^{-\delta}Np\right),
\]
while $p\,n[t]=p\wt n[t]+p\beta_0N(\tau\varepsilon[t]+O(N^{-\delta}))$ by \eqref{eq:pe-sizes}. Subtracting, and using $\wt m[s,t]/\wt n[s]=p\wt n[t](1+\mu[s,t]/\sqrt{pN}+O(\epsilon''/\sqrt{pN}))$ and $\wt n[t]/\nu[t]=N(1+O(\epsilon''))$, the two terms involving $\varepsilon[t]$ cancel to first order:
\begin{align*}
\frac{m[s,t]}{n[s]}-p\,n[t]&=p\wt n[t]\frac{\mu[s,t]}{\sqrt{pN}}+p\tau\beta_0\varepsilon[t]\left(\frac{\wt n[t]}{\nu[t]}-N\right)+O\left(pN\beta_0\left(N^{-\delta}+\epsilon''+\beta_0\right)+\frac{pN\epsilon''}{\sqrt{pN}}\right)\\
&=p\wt n[t]\frac{\mu[s,t]}{\sqrt{pN}}+O\left(pN\beta_0\left(N^{-\delta}+\epsilon''+\beta_0\right)+\frac{pN\epsilon''}{\sqrt{pN}}\right).
\end{align*}
Dividing by $\sqrt{pN}$ and using \eqref{eq:edge-beta0}, which gives $\beta_0\sqrt{pN}=O(1)$ and $\beta_0=O(1/\sqrt{pN})$,
\begin{equation}\label{eq:edge-target}
y^{(s)}=v_s+O\left(N^{-\delta}+\epsilon''+\beta_0\right)=v_s+O\left(N^{-\delta'}\right).
\end{equation}
With $T'$ and $\mc{K}_{T'}$ as in Step 2 of the proof of \Cref{thm:idealized-process}, $y^{(s)}\in\mc{K}_{T'}$ for large $N$, and \Cref{thm:perturbed-bijection} with $\varepsilon:=N^{-\delta'}$ yields $\sigma_s$ with $g(\sigma_s)=y^{(s)}$ and, by \Cref{lem:strong-bijection-lipschitz} and \eqref{eq:edge-target},
\begin{equation}\label{eq:edge-sigma}
\left\|\sigma_s-\gamma[s,\bullet]\right\|_\infty=O\left(N^{-\delta'}\right).
\end{equation}
Set $\lambda_y[s,t]:=\Lambda_{N,p}(\sigma_s[t],\nu[t])$, so that $A_{s,\bullet}(n,\lambda_y)=A^{\sigma_s}$, $\mb{P}[\mc{I}_s(A_{s,\bullet})]>0$, and $g(\sigma_s)=y^{(s)}$ is the solvability identity $n[s]\,\mb{E}[A_{s,t}\mid\mc{I}_s(A_{s,\bullet})]=m[s,t]$. Since $\partial_g\Lambda_{N,p}=O(p/\sqrt{pN})$ on bounded sets, \eqref{eq:edge-sigma} gives $|\lambda_y[s,t]-\Lambda_{N,p}(\gamma[s,t],\nu[t])|=O(N^{-\delta'}p/\sqrt{pN})\le N^{-\delta_1}p/\sqrt{pN}$ for large $N$, which is \ref{ed:tilt}. Uniqueness follows from \Cref{lem:tilt-uniqueness} exactly as in Step 2 of the proof of \Cref{thm:perturbed-evolution}: $p\norm{M_sn}_\infty=O(pN^{-\delta}N/\sqrt{pN})=O(N^{-\delta}\sqrt{pN})=o(\sqrt{pN})$, so the box of lattice points around $pn+\sqrt{pN}v_s$ of side $2\sqrt{pN}\varsigma^\ast_k/2^{k+1}$ lies in the support.

\emph{Step 3: admissibility.} Conditions \cref{LA:reg}, \cref{LA:sizes}, \cref{LA:edge-scale}, \cref{LA:positive}, \cref{LA:separation}, and \cref{LA:solvability} are verified exactly as in Step 3 of the proof of \Cref{thm:perturbed-evolution}, whose computations used only \eqref{eq:pe-sizes}, \eqref{eq:pe-edges}, the asymptotics of $\wt n$ and $\wt m$, \Cref{cor:sign-identities}\ref{sign:history} for $r\le k-1$, and the bounds $\beta_0\le T^{k/2}/\sqrt{pN}$ and $\beta_0N^{-\delta}\sqrt{pN}\le T^{k/2}N^{-\delta}$, all of which hold here by \eqref{eq:edge-beta0}. \cref{LA:tilt}: $|\lambda_y[s,t]-p|\le|\Lambda_{N,p}(\gamma[s,t],\nu[t])-p|+O(p/\sqrt{pN})=O(p/\sqrt{pN})$ by \Cref{def:logit-tilt}. \cref{LA:conditioning}: by \Cref{lem:gaussian-limits}\ref{gl:prob} at $\sigma_s$, \Cref{lem:gaussian-regularity}, and \eqref{eq:edge-sigma}, $\mb{P}[\mc{I}_s(A_{s,\bullet})]=\mb{P}[\mc{I}_s(W_{s,\bullet})]+O(N^{-\delta'})\ge\phi^\ast_k/2$ for large $N$, where $W_{s,\bullet}$ is the day-$k$ Gaussian row model. \cref{LA:split}: by \Cref{lem:gaussian-limits}\ref{gl:prob} at $\sigma_s$ with the shifted events, $\mb{P}[\mc{J}_{sb}(A_{s,\bullet})\mid\mc{I}_s(A_{s,\bullet})]=\mb{P}[\mc{J}^{u_n}_{sb}(W^{\sigma_s})\mid\mc{I}_s(W^{\sigma_s})]+O(N^{-\delta'})$, and $\mb{P}[\mc{J}^{u_n}_{sb}(W^{\sigma_s})]\ge c_0$ for both $b$, where $c_0$ depends only on $k,T,R_0$; since the two events partition $\mc{I}_s(W^{\sigma_s})$ up to a null set, the conditional probability lies in $[c_0,1-c_0]$, and \cref{LA:split} holds with $\phi_1:=\min\{\phi^\ast_k,c_0\}/4$ for large $N$. This proves \ref{ed:admissible} with $T_1$ the maximum of $T$ and the constants required.

\emph{Step 4: the shifted decision.} Let $W:=W_{s,\bullet}$ be the day-$k$ Gaussian row model and, for $u\in\mb{R}$, let
\[
\pi_b(u):=\mb{P}\left[\mc{J}^{u}_{sb}(W)\mid\mc{I}_s(W)\right],\qquad\text{so that}\qquad\pi_b(0)=\frac{\nu[sb]}{\nu[s]}
\]
by \eqref{eq:nu-recursion}. For $u>0$, $\mc{J}^u_{s0}(W)\supseteq\mc{J}^0_{s0}(W)$ and $\pi_0(u)-\pi_0(0)=\mb{P}[-u<Z_{s,k}(W)\le0\mid\mc{I}_s(W)]$, while $\pi_1(u)-\pi_1(0)=-(\pi_0(u)-\pi_0(0))$ because $\pi_0(u)+\pi_1(u)=1$. The coefficient vectors of $Z_{s,1},\ldots,Z_{s,k}$ are linearly independent, so $x\mapsto(Z_{s,1}(x),\ldots,Z_{s,k}(x))$ maps $\mb{R}^{\{0,1\}^k}$ onto $\mb{R}^k$, and the conditional law of $Z_{s,k}(W)$ given $W\in\mc{C}_s$ has a density that is positive on all of $\mb{R}$ (the set $\mc{C}_s\cap\{Z_{s,k}=z\}$ is a nonempty relatively open subset of the hyperplane for every $z$, and the density of $W$ is positive). Hence $\pi_0(u)-\pi_0(0)>0$ for every $u>0$, and since it is continuous in $u$, taking the minimum over $u\in[u_{\min},u_{\max}]$ and the finitely many $s\in\{0,1\}^k$ gives a constant $\zeta'>0$ depending only on $\theta,T$. Thus
\begin{equation}\label{eq:edge-zeta}
(-1)^b\left(\pi_b(u)-\frac{\nu[sb]}{\nu[s]}\right)\ge\zeta'\qquad\text{for all }u\in[u_{\min},u_{\max}]\text{ and }b\in\{0,1\}.
\end{equation}

\emph{Step 5: the template sizes.} By \Cref{def:local-template}, \Cref{lem:gaussian-limits}\ref{gl:prob} at $\sigma_s$ with the shifted events, \Cref{lem:gaussian-regularity} (Lipschitz dependence on the mean, at the fixed shift $u_n$), and \eqref{eq:edge-sigma},
\[
n^{\mr{loc}}[sb]=n[s]\,\mb{P}\left[\mc{J}_{sb}(A_{s,\bullet})\mid\mc{I}_s(A_{s,\bullet})\right]=n[s]\left(\pi_b(u_n)+O(N^{-\delta'})\right).
\]
Therefore, by \eqref{eq:edge-zeta}, $u_n\in[u_{\min},u_{\max}]$, and $|n[s]-N\nu[s]|=O(N\epsilon''+\beta_0N)=o(N)$,
\begin{align*}
(-1)^b\left(n^{\mr{loc}}[sb]-N\nu[sb]\right)&=n[s]\,(-1)^b\left(\pi_b(u_n)-\frac{\nu[sb]}{\nu[s]}\right)+\frac{\nu[sb]}{\nu[s]}(-1)^b\left(n[s]-N\nu[s]\right)+O\left(N^{1-\delta'}\right)\\
&\ge n[s]\zeta'-o(N)\ge\tfrac12\nu[s]\zeta'N-o(N),
\end{align*}
which is at least $\zeta N$ for large $N$ with $\zeta:=\frac14\min_s\nu[s]\,\zeta'$, a constant depending only on $\theta$ and $T$. Thus the fixed displacement of the conditional split probability contributes the order-$N$ first term, while the discrepancy of the parent size and the Gaussian-comparison error are both $o(N)$. This is \ref{ed:sizes}.
\end{proof}

\bibliographystyle{amsplain0}
\bibliography{main}

\end{document}